\newcommand{\arxiv}[1]{\href{https://arxiv.org/abs/#1}{\texttt{ArXiv:#1}}}
\newcommand{\arxivmath}[1]{\href{https://arxiv.org/abs/math/#1}{\texttt{ArXiv:#1}}}
\newcommand{\arxivmaph}[1]{\href{https://arxiv.org/abs/math-ph/#1}{\texttt{ArXiv:#1}}}
\newcommand{\arxivcnma}[1]{\href{https://arxiv.org/abs/cond-mat/#1}{\texttt{ArXiv:#1}}}
\theoremstyle{plain}
\newtheorem{thm}{Theorem}
\newtheorem{lem}{Lemma}[section]
\newtheorem{prop}{Proposition}[section]
\theoremstyle{definition}
\newtheorem{defn}{Definition}[section]
\newtheorem{rem}{Remark}[section]
\numberwithin{equation}{section}
\newcommand{\pr}[1]{\mathbf{Pr}\left[#1\right]}
\newcommand{\ind}[1]{\mathbf{1}_{\{ #1 \}}}
\newcommand{\dt}[1]{\mathrm{det}\left(#1\right)}
\newcommand{\intz}[1]{\oint\limits_{\gamma_{\tau_{#1}}} d \zeta_{#1}}
\newcommand{\D}{\Delta}
\newcommand{\dr}{\nabla}
\newcommand{\vt}{\, | \,}
\newcommand{\veps}{\vec{\varepsilon}}
\newcommand{\vdel}{\vec{\delta}}
\newcommand{\eps}{\varepsilon}
\newcommand{\Z}{\mathbb{Z}}
\newcommand{\R}{\mathbb{R}}
\newcommand{\C}{\mathbb{C}}
\newcommand{\W}{\mathbb{W}}
\newcommand{\Ai}{\mathrm{Ai}}
\newcommand{\A}{\mathcal{A}}
\newcommand{\hb}{\mathcal{H}}
\newcommand{\G}{\mathcal{G}}
\newcommand{\sm}{\mathcal{S}}
\title{Multi-time distribution in discrete polynuclear growth}
\author{Kurt Johansson}
\address{
	Department of Mathematics,
	KTH Royal Institute of Technology,
	SE-100 44 Stockholm, Sweden}
\email{kurtj@kth.se}
\author{Mustazee Rahman}
\address{
	Department of Mathematics,
	KTH Royal Institute of Technology,
	SE-100 44 Stockholm, Sweden}
\email{mustazee@gmail.com}
\keywords{Airy kernel, exclusion processes, Fredholm determinant, KPZ universality,
	last-passage percolation, random growth, Tracy-Widom law.}
\subjclass[2000]{Primary 60F05, 60K35, 82C23, 82C24; Secondary 05E10, 15A15, 30E20.}
\thanks{Supported by the grant KAW 2015.0270 from the Knut and Alice Wallenberg Foundation
and grant 2015-04872 from the Swedish Science Research Council (VR)}
\date{}
\begin{document}

\begin{abstract}
\normalsize
We study the multi-time distribution in a discrete polynuclear growth model or, equivalently,
in directed last-passage percolation with geometric weights. A formula for the joint multi-time distribution
function is derived in the discrete setting. It takes the form of a multiple contour integral of a block Fredholm determinant.
The asymptotic multi-time distribution is then computed by taking the appropriate KPZ-scaling limit of this formula.
This distribution is expected to be universal for models in the Kardar-Parisi-Zhang universality class.
\end{abstract}

\maketitle

\newpage

\begin{doublespace}
\addtocontents{toc}{~\hfill \text{Page}\par}
\tableofcontents
\end{doublespace}

\newpage

\section{Introduction} \label{sec:intro}

Decorate points of $\Z^2$ with independent and identically distributed random weights $\omega(m,n)$
that are non-negative. Associated to this random environment is a growth function $\mathbold{G}$ as follows.
For every $m, n \geq 1$,
\begin{equation} \label{gmn}
\mathbold{G}(m,n) = \max \left \{ \, \mathbold{G}(m-1,n), \mathbold{G}(m,n-1) \, \right \} + \omega(m,n)
\end{equation}
with boundary conditions $\mathbold{G}(m,0) = \mathbold{G}(0,n) = 0$ for $m,n \geq 0$.
The function grows out from the corner of the first quadrant along up-right directions, so
it is a model of local random growth.

Consider weights chosen according to the geometric law: for some $0 < q < 1$,
\begin{equation*} \label{geom}
\pr{\omega(m,n) = k} = (1-q)q^{k} \quad \text{for}\;\; k \geq 0.
\end{equation*}

The subject of this article is the calculation, and then a derivation of the asymptotic value, of
the multi-point probability
\begin{equation} \label{ptime}
\pr{\mathbold{G}(m_1,n_1) < a_1, \mathbold{G}(m_2, n_2) < a_2, \ldots, \mathbold{G}(m_p,n_p) < a_p},
\end{equation} 
where $m_1 < m_2 < \cdots < m_p$ and $n_1 < n_2 < \cdots < n_p$.
In the asymptotic derivation the parameters $m,n$ and $a$ are scaled according to
Kardar-Parisi-Zhang (KPZ) scaling \cite{KPZ, Kr}. This means that for a large parameter $T$, the $m_k$s, $n_k$s
and $a_k$s are written (ignoring rounding) as
\begin{align} \label{KPZscaling}
	n_k  &= t_kT-c_1x_k(t_kT)^{\frac{2}{3}}, \\ \nonumber
	m_k &= t_k T+c_1 x_k(t_kT)^{\frac{2}{3}}, \\ \nonumber
	a_k  &= c_2t_kT+c_3\xi_k(t_kT)^{\frac{1}{3}}.
\end{align}
The $c_i$s are constants that depend on $q$ and will be specified in $\S$\ref{sec:theorems}.
They are determined from the macroscopic behaviour of $\mathbold{G}(m,n)$. The parameters above are
$0 < t_1 < t_2 < \cdots < t_p$, $x_1, x_2, \ldots, x_p \in \R$ and $\xi_1, \xi_2, \ldots, \xi_p \in \R$.
One is interested in the large $T$ limit of \eqref{ptime} with this scaling.

In Theorem \ref{thm:1} we provide the asymptotic distribution function of $\mathbold{G}$ under KPZ scaling \eqref{KPZscaling}.
Theorem \ref{thm:2} provides an expression for the distribution function \eqref{ptime}.
Theorem \ref{thm:1} is based on an asymptotical analysis of the latter. The calculations leading to Theorem \ref{thm:2},
contained in $\S$\ref{sec:discrete} and $\S$\ref{sec:fredholm}, should be more broadly applicable.

The probability \eqref{ptime} is expressed in terms of a $(p-1)$-fold contour integral of a
Fredholm determiant involving an $n_p \times n_p$ matrix with a $p \times p$ block structure.
This structure persists in the large $T$ limit, and the limiting multi-point probability is expressed by such
an integral of some Fredholm determinant over
$\hb = \underbrace{L^2(\R_{<0}) \oplus \cdots \oplus L^2(\R_{<0})}_{p-1} \oplus \, L^2(\R_{>0})$.
\smallskip 

\paragraph{\textbf{Interpretation as a growing interface and a non-equilibrium system}}
The growth model \eqref{gmn} has several interpretations. It can be seen as a randomly growing Young diagram,
or as a totally asymmetric exclusion process, or yet a directed last passage percolation model, also as
a kind of first passage percolation model (with non-positive weights), a system of queues in tandem,
and a type of random polymer at zero temperature. A natural interpretation is as a randomly growing
interface called discrete polynuclear growth, which we explain.

Rotating the first quadrant 45 degrees, define a function $h(x,t)$ by
\begin{equation*}
h(x,t)=\mathbold{G}\left(\frac{t+x+1}{2},\frac{t-x+1}{2} \right),
\end{equation*}
where $x+t$ is odd, $|x| < t$ and $h(x, 0) \equiv 0$. Extend $h(x,t)$ to $x \in \Z$ by linear interpolation.
Then \eqref{gmn} leads to the rule, see \cite{JoDPG}, that
$$h(x,t+1) = \max \, \{\, h(x-1,t), h(x,t), h(x+1,t) \, \} + \eta(x, t+1).$$
The $\eta(x,t)$ are independent and identically distributed with the geometric law if $x+t$ is odd and $|x| < t$,
and zero otherwise. This is an instance of the discrete polynuclear growth model, see \cite{KS}.
If we extend $h(x,t)$ to every $x \in \R$ by linear interpolation then $h(x,t)$ can be thought of as the height
above $x$ at time $t$ of a randomly growing interface.

Theorem \ref{thm:1} considers the re-scaled process
\begin{equation} \label{hxt}
\mathbold{H}_T(x,t) = \frac{h(2c_1 x (tT)^{\frac{2}{3}},\, 2tT) - c_2tT}{c_3(tT)^{\frac{1}{3}}},
\end{equation}
and provides its joint distribution at the points $(x_1,t_1), \ldots, (x_p,t_p)$ in the large $T$ limit.
Since the times are distinct it does not provide all the asymptotic finite dimensional distributions of $\mathbold{H}_T$,
although those could be obtained by considering limits in the time parameters. There is in fact a limit function
$\mathbold{H}(x,t)$ that is continuous almost surely, see \cite{MQR}, which means that in principle the aforementioned
distributions do determine the law of $\mathbold{H}$. As can be seen from \eqref{KPZscaling} and \eqref{hxt}, we study 
time-like distributions of $\mathbold{H}_T$ in the $(1,1)$ direction of the $(m,n)$-plane. In other directions we expect
the distributions to become asymptotically independent since non-trivial spatial correlations only occur at a scale of $T^{2/3}$.
Therefore we look in the so called characteristic direction; see \cite{Fe} for further discussion on this.

By re-scaling variables in the kernel from Theorem \ref{thm:1} it may be seen that for every $\lambda > 0$,
$\mathbold{H}(x,\lambda t)$ has the same distribution as $\mathbold{H}(x,t)$ as functions of $x$ and $t$.
If we define $\mathbold{A}(x,t) = t^{1/3} \mathbold{H}(t^{-2/3}x,t) + t^{-1}x^2$, then this means that
$$ \lambda^{-\frac{1}{3}} \cdot \mathbold{A}(\lambda^{\frac{2}{3}} x, \, \lambda t) \overset{law}{=} \mathbold{A}(x,t).$$
The relation above is known as KPZ scale invariance which, in this context, makes the polynuclear growth model a part
of the KPZ universality class. The latter is a collection of 1+1 dimensional statistical mechanical
systems whose fluctuations demonstrate the scale invariance above. Within the KPZ universality class lies the $\mathrm{Airy}_2$
process (see \cite{CH, JoDPG, PS} for reference), which represents asymptotic spatial fluctuations in $x$ of the height function
at a fixed time $t$. So $\mathbold{A}(x,t)$ may be thought of as the space-time surface sketched out by a growing Airy interface.
Some surveys that discuss these topics in depth are \cite{BG, CoKPZ, FSW, QuKPZ}, and \cite{SeCG} is a nice introduction to the growth model.

The papers \cite{BaLi, BGW, DOV, Hm, MQR} have recently studied various aspects of limit distributions in the KPZ universality class.
Here we find for the first time a full multi-time distribution function in the KPZ-scaling limit. A multi-time distribution function is actually derived in \cite{BaLi}
for the related continuous time TASEP in a periodic setting, and the asymptotic limit is computed in the relaxation time-scale,
when the TASEP is affected by the finite geometry. It is not obvious how to get the asymptotic result of the present paper from theirs,
since it means computing asymptotics in a situation where the TASEP is not affected by the finite geometry. However, after the completion of this work, the paper \cite{Liu} derived the multi-time distribution for the continuous time TASEP in the infinite geometry.
The relation between the formulas before the limit in \cite{BaLi, Liu} and the one in this paper is not clear so far.

The present paper generalizes previous work on the two-time distribution in \cite{JoTwo}. The two-time distribution has also been
investigated in the theoretical physics literature, see \cite{NarDou, NaDoTa, dNLD} and references there. Moreover, correlation function
of the two-time distribution has been studied in \cite{BaGa, FO}. The distribution of this growth model under a different asymptotic scaling,
related to the slow decorrelation phenomenon, has also been explored in \cite{BFS, CFP, Fe, IS}. Finally, see the paper \cite{ST} for some nice experimental work involving growth interfaces in liquid crystal.
\medskip

\paragraph{\textbf{Additional remarks}}
The formula for the limiting distribution function for $\mathbold{H}(x,t)$ in Theorem \ref{thm:1} is rather complicated.
It is built from kernels given by compositions of Airy functions, which thus generalizes the
Airy kernel. In the two-time case it is possible to rewrite the formula in such a way that the limits $t_2/t_1\to 1$ and $t_2/t_1\to \infty$ may be studied in detail,
see \cite{JoLim}. It would be interesting to do the same for the Fredholm determinant in Theorem \ref{thm:1}, so that these types of limits 
can be analyzed in the multi-time case as well. The distribution can in fact be computed numerically starting from the formula in Theorem \ref{thm:1}
in the two-time case, see \cite{DNJLD}, which shows that although complicated the formula is useful nonetheless.

In this paper we study the case of geometrically distributed weights $\omega(m,n)$. The case of exponentially distributed weights can be obtained 
by taking the appropriate limit ($q\to 1$) in the discrete formula. Similarly, the Brownian directed polymer model can be obtained as a limit. The asymptotic 
analysis is completely analogous. We expect the limiting multi-time formula in Theorem \ref{thm:1} to be universal within a large class of
models. It should be possible to study the limit of Poissonian last-passage percolation (Poissonized Plancherel) ($q\to 0$) from our formula in Theorem \ref{thm:2},
but this would entail taking a limit to an infinite Fredholm determinant before the large time asymptotics are computed.

\section{Statement of results} \label{sec:theorems}

In order to state the theorems we have to introduce notation. There is quite a bit of
notation throughout the article, so in the following, we introduce notation for both statement
of theorems and ones that recur.

\subsection{Some notation and conventions} \label{sec:notation}

Consider times $0<t_1<t_2 < \cdots < t_p$, points $x_1,x_2, \ldots x_p \in\R$ and
$\xi_1,\xi_2, \ldots, \xi_p \in \R$. Introduce the scaling constants
\begin{equation}\label{scalingconstants}
c_0=q^{-\frac{1}{3}}(1+\sqrt{q})^{\frac{1}{3}},\quad c_1=q^{-\frac{1}{6}}(1+\sqrt{q})^{\frac{2}{3}},\quad
c_2=\frac{2\sqrt{q}}{1-\sqrt{q}},\quad c_3=\frac{q^{\frac{1}{6}}(1+\sqrt{q})^{\frac{1}{3}}}{1-\sqrt{q}},
\end{equation}
where $q$ is the parameter of the geometric distribution. We will investigate the asymptotics
of the probability distribution given by \eqref{ptime} under the scaling \eqref{KPZscaling}.

\paragraph{\textbf{Delta notation}}
For integers $0 \leq k_1 < k_2 \leq p$, and $y$ being $m, n$ or $a$ from \eqref{KPZscaling}, define
\begin{equation} \label{delta}
\D_{k_1, k_2} y = y_{k_2} - y_{k_1} \quad \text{and} \;\; \D_k y = y_k - y_{k-1}.
\end{equation}
Also, define
\begin{align}\label{deltanotation}
\D_{k_1, k_2} t &= t_{k_2} - t_{k_1} \quad \text{and}\;\; \D_{k} t= t_k - t_{k-1}, \\ \nonumber
\D_{k_1, k_2} x & = x_{k_2} \, \Big (\frac{t_{k_2}}{\D_{k_1, k_2}t} \Big)^{\frac{2}{3}} - x_{k_1} \, \Big(\frac{t_{k_1}}{\D_{k_1, k_2}t} \Big)^{\frac{2}{3}}
\quad \text{and}\;\; \D_k x = \D_{k-1,k} x\, , \\ \nonumber
\D_{k_1, k_2} \xi & = \xi_{k_2} \, \Big (\frac{t_{k_2}}{\D_{k_1, k_2}t} \Big)^{\frac{1}{3}} - \xi_{k_1} \, \Big(\frac{t_{k_1}}{\D_{k_1, k_2}t} \Big)^{\frac{1}{3}}
\quad \text{and}\;\;  \D_k \xi = \D_{k-1,k} \xi \, .
\end{align}
By convention, $y_0 = 0$ for $y = n,m,a,t,x,\xi$. To understand \eqref{deltanotation} note that it is such that
$\D_{k_1, k_2} n = (\D_{k_1, k_2}t) T - c_1 \D_{k_1, k_2}x \, (\D_{k_1, k_2}t \, T)^{\frac{2}{3}}$,
and similarly for the differences between $m_k$s and $a_k$s. We will also use the shorthand
\begin{equation*}
\D_{k_1,k_2} (y^1, \ldots, y^{\ell}) = (\D_{k_1,k_2}y^1, \ldots, \D_{k_1,k_2}y^{\ell}) \quad \text{and}\;\;
\D_k (y^1, \ldots, y^{\ell}) = (\D_k y^1, \ldots, \D_k y^{\ell}).
\end{equation*}

\paragraph{\textbf{Block notation}}
The matrices that appear will have a $p \times p$ block structure
with the rows and columns partitioned according to
$$\{ 1, 2, \ldots, n_p \} = (0,n_1] \cup (n_1, n_2] \cup \cdots \cup (n_{p-1}, n_p].$$
The following notation will help us with calculations that depend on this structure.
For $y = m, n, a$, set
\begin{align} \label{blocknot}
y(k) &= y_{\min \{r, p-1\}} \;\;\;  \text{if} \;\; k \in (n_{r-1},n_r], \\ \nonumber
r^{*} &= \min \{r,p-1\} \;\;\; \text{if}\;\; 1 \leq r \leq p. 
\end{align}
For an $n_p \times n_p$ matrix $M$, $1 \leq i,j \leq n_p$ and $1 \leq r,s \leq p$, write
\begin{equation*}
	M(r,i ; s,j) = \mathbf{1}_{\big\{ i \in (n_{r-1}, \, n_r], \; j \in (n_{s-1}, \, n_s] \big\} } \cdot M(i,j).
\end{equation*}
This is the $p \times p$ block structure of $M$ according to the partition of rows and columns above.

Suppose $1 \leq i \leq n_p$. For $\vec{\eps} = (\eps_1, \ldots, \eps_{p-1}) \in \{1,2\}^{p-1}$ and
$\theta = (\theta_1, \ldots, \theta_{p-1}) \in (\C \setminus 0)^{p-1}$, define the following quantities.
\begin{align} \label{thetaeps}
	\theta^{\veps}(i) &= \prod_{k=1}^{p-1} \theta_k^{2-\eps_k - \ind{i \leq n_k}}, \\ \nonumber
	\theta(r \vt \veps) &= \prod_{k=1}^{r-1} \theta_k^{2-\eps_k} \, \prod_{k=r}^{p-1} \theta_k^{1-\eps_k} \quad \text{for}\;\; 1 \leq r \leq p.
\end{align}
Observe that $\theta^{\veps}(i) = \theta(r \vt \veps)$ for every $i \in (n_{r-1}, n_r]$, so these are block functions.
Particularly notable $\veps$ will be
$$ \eps^{k} = (\overbrace{2, \ldots,2}^{k-1}, 1, \ldots, 1) \quad \text{for}\;\; 1 \leq k \leq p.$$
For these we define
\begin{equation} \label{theta}
\Theta(r \vt k) = \theta(r \vt \eps^k) - (1- \ind{r=p, k=p-2}) \cdot \theta(r \vt \eps^{k+1}) \quad \text{for}\;\; 1 \leq k < \min \{r,p-1\}, \;\; 1 \leq r \leq p.
\end{equation}
We may set $\Theta(r \vt k)$ to be zero otherwise. Let us also set
$$ (-1)^{\eps_{[k_1,k_2]}} = (-1)^{\sum_{k = \max \{1,k_1\}}^{\min \{k_2,p-1\}} \, \eps_k} \quad \text{for}\;\; 0 \leq k_1 < k_2 \leq p.$$
It will be convenient to write  $(-1)^{\eps_{[k_1,k_2]}} \cdot (-1)^x$ as $(-1)^{\eps_{[k_1, k_2]}+x}$.

Define also the indicators functions
\begin{equation} \label{chi}
\chi_{\eps}(x) =
\begin{cases}
\ind{x < 0} & \text{if}\;\; \eps \equiv 1 \mod 2,\\
\ind{x \geq 0} & \text{if}\;\; \eps \equiv 2 \mod 2.
\end{cases}
\end{equation}

\paragraph{\textbf{Complex integrands}}
Define, for $n,m,a \in \Z$ and $w \in \C \setminus \{0,1-q,1\}$,
\begin{equation}\label{gnmx}
G^{*}(w \vt n,m,a)=\frac{w^n(1-w)^{a+m}}{\big(1-\frac{w}{1-q}\big)^m},
\end{equation}
as well as the function
\begin{equation} \label{Gnmx}
G(w \vt n,m,a) = \frac{G^{*}(w \vt n,m,a)}{G^{*} \big (1-\sqrt{q} \vt n,m,a \big)}.
\end{equation}
The number $ w_c = 1- \sqrt{q} $ is the critical point around which we will perform steepest descent analysis.
During the asymptotical analysis it will be convenient to write in terms of $G$ rather than $G^{*}$.
Consider also the following function $\G$ that will become the asymptotical value of $G$.
\begin{equation} \label{Gtxx}
\G(w \vt t, x, \xi) = \exp \Big \{ \, \frac{t}{3} w^3 + t^{\frac{2}{3}} xw^2 - t^{\frac{1}{3}} \xi w \, \Big \}
\quad \text{for}\; w \in \C \;\text{and}\; t,x,\xi \in \R.
\end{equation}

\paragraph{\textbf{Contour notation}}
We will always denote the contour integral
$$ \frac{1}{2 \pi \, \mathbold{i}} \int\limits_{\gamma} dz \quad  \text{as} \quad \oint\limits_{\gamma} dz\,.$$
There will be two types of contours in our calculations: circles and vertical lines.
Throughout, $\gamma_r$ denotes a circular contour around the origin of radius $r > 0$
with counterclockwise orientation. Also, $\gamma_r(1)$ is such a circular contour
around 1. A vertical contour through $d \in \R$ oriented upwards is denoted $\Gamma_d$.

\paragraph{\textbf{Conjugations}}
Throughout the article $\mu$ will denote a sufficiently large constant used with a conjugation factor.
Its value will depend only on the parameters $q$, $t_k$, $x_k$ and $\xi_k$. It will be convenient to
not state the value of $\mu$ explicitly, although in the upcoming theorem it suffices to consider
$$ \mu > \frac{\max_k \, \{x_k t_k^{2/3} \} - \min_k \,\{x_k t_k^{2/3} \}}{\min_k \, \{ (\D_k t)^{1/3} \} } \,.$$

Define, with $c_0$ given by \eqref{scalingconstants},
$$ \nu_T = c_0 T^{1/3}.$$
Let us introduce discrete conjugation factors, which will be needed for
asymptotical analysis. Recall $n(k), m(k)$ and $a(k)$ from \eqref{blocknot}. For $1 \leq k \leq n_p$,
\begin{equation}\label{conjugation}
c(k)=G^{*} \big (1 - \sqrt{q} \vt k, m(k), a(k) \big ) \cdot e^{\mu \frac{ (n(k)-k)}{\nu_T}}.
\end{equation}
Finally, set
\begin{equation} \label{conjugation2}
c(i,j) = \exp \big \{ \mu \frac{ (n(i)-i) - (n(j) - j) }{\nu_T} \big \}.
\end{equation}

\subsection{Statement of main theorem}
For $p \geq 1$ consider the Hilbert space
$$ \hb = \underbrace{L^2(\R_{<0}) \oplus \cdots \oplus L^2(\R_{<0})}_{p-1} \oplus \, L^2(\R_{>0}).$$
A kernel $F$ on $\hb$ has a $p \times p$ block structure, and we denote by $F(r,u; s,v)$ its $(r,s)$-block. So
$$ F(u,v) = \begin{bsmallmatrix}
F(1,u;\, 1,v)& \cdots&  F(1,u;\,p,v)\\
\vdots &  & \vdots \\
F(p,u;\,1,v)& \cdots  & F(p,u;\,p,v)
\end{bsmallmatrix}_{p \times p}\,. $$

Recall the function $\G$ from \eqref{Gtxx}, the notation $r^{*} = \min \{r,p-1\}$ and likewise $s^{*}$ from \eqref{blocknot}.

\begin{defn} \label{def:F}
The following basic matrix kernels over $\hb$ will constitute a final kernel.

(1) Let $d_1 > 0$ and $D > 0$. Define
\begin{align*}
& F[{\scriptstyle p \vt p}]\, (r,u;s,v) = \ind{r=p} \, e^{\mu(v-u)} \oint\limits_{\Gamma_{-d_1}} d \zeta_1 \oint\limits_{\Gamma_{D}} d z_p\,
\frac{\G \big(z_p \vt \D_p(t,x,\xi) \big) \, e^{\zeta_1 v - z_p u}}{\G \big( \zeta_1 \vt \D_{s^{*},p}(t,x,\xi)\big) \, (z_p-\zeta_1)}.
\end{align*}
Recall $\Gamma_{d}$ is a vertical contour oriented upwards that intersects the real axis at $d$.

(2) Let $0 < d_1 < d_2$. For $0 \leq k \leq p$, define
\begin{align*}
& F[{\scriptstyle k,k \vt \emptyset}]\, (r,u; s,v)	=  \ind{ s < k < r^{*}} \, e^{\mu(v-u)}\,
\oint\limits_{\Gamma_{-d_1}} d\zeta_1 \oint\limits_{\Gamma_{-d_2}} d\zeta_2 \\
& \frac{(\zeta_1 - \zeta_2)^{-1} \, e^{\zeta_2 v - \zeta_1 u}}
{\G \big(\zeta_1 \vt \D_{k,r^{*}}(t,x,\xi)\big) \, \G \big (\zeta_2 \vt \D_{s,k}(t,x,\xi) \big)}\,.
\end{align*}

(3) Let $0 < d_3 < d_2$ and $D > 0$. For $0 \leq k \leq p$, define
\begin{align*}
& F[{\scriptstyle p,k \vt p}]\, (r,u;s,v) = \ind{r=p, \, s < k < p} \, e^{\mu(v-u)}\,
\oint\limits_{\Gamma_{-d_2}} d\zeta_2 \oint\limits_{\Gamma_{-d_3}} d\zeta_3 \oint\limits_{\Gamma_{D}} dz_p \\
& \frac{\G \big(z_p \vt \D_p(t,x,\xi) \big) (z_p-\zeta_2)^{-1} (\zeta_2-\zeta_3)^{-1} e^{\zeta_3 v - z_p u}}
{\G \big(\zeta_2 \vt \D_{k,p}(t,x,\xi) \big) \G \big(\zeta_3 \vt \D_{s,k}(t,x,\xi) \big)}\,.
\end{align*}

(4) Let $0 < d_1,  d_3 < d_2$. For $0 \leq k_1,k_2 \leq p$, define
\begin{align*}
& F[{\scriptstyle k_1,k_1,k_2 \vt \emptyset}]\, (r,u;s,v) = \ind{k_1 < r^{*}, \, s < k_2 < k_1} \, e^{\mu(v-u)}\,
\oint\limits_{\Gamma_{-d_1}} d\zeta_1 \oint\limits_{\Gamma_{-d_2}} d\zeta_2 \oint\limits_{\Gamma_{-d_3}} d\zeta_3 \\
& \frac{(\zeta_1-\zeta_2)^{-1}\, (\zeta_2-\zeta_3)^{-1}\, e^{\zeta_3 v - \zeta_1 u}}
{\G \big(\zeta_1 \vt \D_{k_1,r^{*}}(t,x,\xi)\big) \, \G \big(\zeta_2 \vt \D_{k_2,k_1}(t,x,\xi) \big )\, \G \big(\zeta_3 \vt \D_{s,k_2} (t,x,\xi) \big)}\,.
\end{align*}

The upcoming kernels are determined in terms of integer parameters $0 \leq k_1 < k_2 \leq p$ and a vector parameter $\veps = (\eps_1, \ldots, \eps_{p-1}) \in \{1,2\}^{p-1}$.
Given $k_1$, $k_2$ and $\veps$, consider any set of distinct positive real numbers $D_k$ for integers $k \in (k_1, k_2]$ that
satisfy the following pairwise ordering:
\begin{equation} \label{Dorder}
D_k < D_{k+1}\;\; \text{if}\;\; \eps_k = 1\;\; \text{while}\;\; D_k > D_{k+1}\;\; \text{if}\;\; \eps_k = 2.
\end{equation}
It is easy to see, for instance by induction, that it is always possible to order distinct real numbers such that they satisfy
these constraints imposed by $\veps$. An explicit choice would be
$$D_1 = 2^p \quad \text{and} \quad D_{k+1} = D_k + (-1)^{\eps_k+1} \, 2^k.$$
Denote the contour
$$\vec{\Gamma}_{D^{\veps}} = \Gamma_{D_{k_1+1}} \times \cdots \times \Gamma_{D_{k_2}}.$$
(5)  Let $d_1 > 0$. Define
\begin{align*}
& F^{\veps}[{\scriptstyle k_1 \vt (k_1, k_2]}]\, (r,u; s,v) = \ind{k_1 < r^{*},\, s=k_2 < p,\, k_1<k_2}\, e^{\mu(v-u)}\,
\oint\limits_{\Gamma_{-d_1}} d\zeta_1 \, \oint\limits_{\vec{\Gamma}_{D^{\veps}}} dz_{k_1+1} \cdots dz_{k_2} \\
& \frac{\prod_{k_1 < k \leq k_2} \G \big(z_k \vt \D_k (t,x,\xi) \big)\, \prod_{k_1 < k < k_2} (z_k - z_{k+1})^{-1}\, e^{z_{k_2} v - \zeta_1 u}}
{\G \big(\zeta_1 \vt \D_{k_1,r^{*}} (t,x,\xi) \big) (z_{k_1+1}- \zeta_1)}.
\end{align*}

(6) Let $d_1, d_2 > 0$. Define
\begin{align*}
& F^{\veps}[{\scriptstyle k_1,k_2 \vt (k_1, k_2]}]\, (r,u;s,v) = \ind{k_1 < r^{*}, \, s^{*} < k_2,\, k_1<k_2}\, e^{\mu(v-u)}\,
\oint\limits_{\Gamma_{-d_1}} d\zeta_1 \oint\limits_{\Gamma_{-d_2}} d\zeta_2 \, \oint\limits_{\vec{\Gamma}_{D^{\veps}}} dz_{k_1+1} \cdots dz_{k_2} \\
& \frac{\prod_{k_1 < k \leq k_2} \G \big(z_k \vt \D_k (t,x,\xi)\big )\, \prod_{k_1 < k < k_2} (z_k - z_{k+1})^{-1}\, e^{\zeta_2 v - \zeta_1 u}}
{\G \big(\zeta_1 \vt \D_{k_1,r^{*}} (t,x,\xi) \big) \, \G \big(\zeta_2 \vt \D_{s^{*},k_2}(t,x,\xi) \big)\, (z_{k_1+1}-\zeta_1)\, (z_{k_2}-\zeta_2)}.
\end{align*}

(7) Let $0 < d_1,  d_3 < d_2$ and recall $k_1 < k_2$. Define
\begin{align*}
& F^{\veps}[{\scriptstyle k_1, k_2, k_3 \vt (k_1, k_2]}] (r,u;s,v) = \ind{k_1 < r^{*}, \, s < k_3 < k_2} e^{\mu(v-u)}
\oint\limits_{\Gamma_{-d_1}} d\zeta_1 \oint\limits_{\Gamma_{-d_2}} d\zeta_2 \oint\limits_{\Gamma_{-d_3}} d\zeta_3\,
\oint\limits_{\vec{\Gamma}_{D^{\veps}}} dz_{k_1+1} \cdots dz_{k_2} \\
&  \frac{\prod_{k_1 < k \leq k_2} \G \big(z_k \vt \D_k (t,x,\xi) \big)\, \prod_{k_1 < k < k_2} (z_k - z_{k+1})^{-1}\, (\zeta_2-\zeta_3)^{-1}\, e^{\zeta_3 v - \zeta_1 u}}
{\G \big(\zeta_1 \vt \D_{k_1,r^{*}}(t,x,\xi) \big) \, \G \big(\zeta_2 \vt \D_{k_3,k_2}(t,x,\xi) \big)\, \G \big(\zeta_3 \vt \D_{s,k_3} (t,x,\xi) \big ) (z_{k_1+1}-\zeta_1)\, (z_{k_2}-\zeta_2)}.
\end{align*}
\end{defn}
When the conjugation constant $\mu$ is sufficiently large these kernels decay rapidly to be of trace class,
which will be a byproduct of the proof of Theorem \ref{thm:1}. (Specifically, their entries are bounded by quantities
of the form $e^{-\widetilde{\mu}u} \Ai(-u) \, e^{\widetilde{\mu}v}\Ai(v)$ where $\Ai$ is the Airy function.)

Using these basic kernels we compose five other as weighted sums. Let $\theta_1, \ldots, \theta_{p-1}$ be
non-zero complex numbers and $\mathbold{\theta} = (\theta_1,\ldots, \theta_{p-1})$. Recall $\theta(r \vt \veps)$
and $\Theta(r \vt k)$ from \eqref{thetaeps} and \eqref{theta}, respectively. Define the following kernels over $\hb$.
\begin{align*}
F^{(0)}(r,u; s,v) & = \sum_{0 \leq k \leq p} (1+ \Theta(r \vt k)) \cdot (1 + \Theta(k \vt s)) \cdot F[{\scriptstyle k,k \vt \emptyset}]\, (r,u;s,v).\\
F^{(1)}(r,u; s,v) & = \sum_{0 \leq k \leq p} \Theta(r \vt k)\cdot F[{\scriptstyle k,k \vt \emptyset}]\, (r,u;s,v).\\
F^{(3)}(r,u;s,v) & = \sum_{0 \leq k_1, k_2 \leq p} \Theta(r \vt k_1) \cdot \big(1 + \Theta(k_2 \vt s)\big) \cdot F[{\scriptstyle k_1,k_1,k_2 \vt \emptyset}]\, (r,u;s,v).
\end{align*}

In the following, the variables $k_1, k_2, k_3 \in \{0,\ldots, p\}$ and $\veps \in \{1,2\}^{p-1}$. They satisfy
\begin{equation} \label{sumcond}
k_1 < k_2; \quad \text{given}\;\; k_1, k_2,\; \veps = (\overbrace{2, \ldots, 2}^{\substack{\eps_i = 2 \;\text{if} \\ i < \max \{k_1,1\}}},
\overbrace{\eps_{\max \{k_1,1\}}, \ldots, \eps_{\min\{k_2,p-1\}}}^{\text{arbitrary 1 or 2}},
\overbrace{1,\ldots, 1}^{\substack{\eps_i = 1 \;\text{if} \\ i > \min\{k_2,p-1\}}}).
\end{equation}
Recall the notation $(-1)^{\eps_{[k_1,k_2]}}$ following \eqref{theta}. Define
\begin{align*}
& F^{(2)}(r,u;s,v)  = \sum_{\substack{k_1, k_2,\, \veps \\ \text{satisfies}\; \eqref{sumcond}}}
(-1)^{\eps_{[k_1,k_2]} + \ind{k_2 =p}} \cdot \theta(r \vt \veps) \, \times \\
& \quad  \Big [  F^{\veps}[{\scriptstyle k_1 \vt (k_1,k_2]}] + F^{\veps}[{\scriptstyle k_1,k_2 \vt (k_1,k_2]}] +
\ind{k_1=p-1,\,k_2=p} F[{\scriptstyle p \vt p}] \Big ] (r,u;s,v). \\ \\
& F^{(4)}(r,u;s,v) = \sum_{\substack{k_1, k_2, k_3,\, \veps \\ \text{satisfies}\; \eqref{sumcond}}}
(-1)^{\eps_{[k_1,k_2]} + \ind{k_2 =p}} \cdot \theta(r \vt \veps) \, \times \\
& \Big [ \big(1+ \Theta(k_3 \vt s)\big) F^{\veps}[{\scriptstyle k_1,k_2,k_3 \vt (k_1,k_2]}]
-\ind{k_2=p,\, k_3=p-1} \big (1 + \Theta(p \vt s) \big) F^{\veps}[{\scriptstyle k_1, p,p-1 \vt (k_1, p]}] \; + \\
& \ind{k_2<p,\, k_3=p} \big(1+\Theta(k_2\vt s)\big ) F^{\veps}[{\scriptstyle k_1,k_2 \vt (k_1,k_2]}] \; + \\
& \ind{k_1=p-1,k_2=p} \big ( 1 + \Theta(k_3 \vt s)\big ) F[{\scriptstyle p,k_3 \vt p}] 
-\ind{k_1=p-1,k_2=p,k_3=p-1} \big (1 + \Theta(p \vt s)\big) F[{\scriptstyle p,p-1 \vt p}]\big) \Big] (r,u;s,v).
\end{align*}
Finally, define the kernel
\begin{equation} \label{F}
F(\mathbold{\theta}) = -F^{(0)} + F^{(1)} + F^{(2)} - F^{(3)} - F^{(4)}.
\end{equation}

\begin{thm} \label{thm:1}
Consider the function $\mathbold{G}(m,n)$ from \eqref{gmn}. Let $n_k, m_k$ and $a_k$ be scaled
according to \eqref{KPZscaling} with respect to parameters $T$, $t_k, x_k$ and $\xi_k$. Suppose $p \geq 2$.
Then,
\begin{align*}
\lim_{T \to \infty}\, & \pr{\mathbold{G}(m_1,n_1) < a_1, \, \ldots, \, \mathbold{G}(m_p,n_p) < a_p} = \\
& \oint\limits_{\gamma_r} d\theta_1 \cdots \oint\limits_{\gamma_r} d\theta_{p-1}\, \frac{\dt{I + F(\mathbold{\theta})}_{\hb}}{\prod_k (\theta_k - 1)}
\end{align*}
where $\gamma_r$ is a counter-clockwise circular contour around the origin of radius $r > 1$ and $F(\mathbold{\theta})$ is from \eqref{F}.
Moreover, the limit defines a consistent family of probability distribution functions.
\end{thm}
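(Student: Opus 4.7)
The plan is to derive the theorem by an asymptotic analysis of the prelimit formula provided by Theorem~\ref{thm:2}. That formula expresses \eqref{ptime} as a $(p-1)$-fold contour integral in $\theta_1,\ldots,\theta_{p-1}$ of a Fredholm determinant $\dt{I+M_T(\theta)}$ of an $n_p \times n_p$ matrix $M_T$ built from the functions $G^*(w\vt n,m,a)$ of \eqref{gnmx} and carrying the $p\times p$ block structure tied to the partition $(0,n_1]\cup\cdots\cup(n_{p-1},n_p]$. The strategy is: (i) conjugate the matrix by $c(i,j)$ from \eqref{conjugation2} so that the entries decay like shifted Airy-kernel products; (ii) perform steepest descent for each $w$-contour integral around the critical point $w_c=1-\sqrt q$; (iii) rescale discrete row and column indices within each block to continuous variables, producing the Hilbert space $\hb$; (iv) upgrade entrywise convergence to convergence of Fredholm determinants; and (v) exchange the $\theta$-integration with the $T\to\infty$ limit.

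Step (ii) is the analytical heart. With the substitution $w = w_c + \zeta/\nu_T$ and the KPZ scaling \eqref{KPZscaling}, a Taylor expansion of $\log G^*(w_c+\zeta/\nu_T \vt n_k,m_k,a_k)$ produces the cubic exponent $\tfrac{\D_k t}{3}\zeta^3 + (\D_k t)^{2/3}\D_k x\,\zeta^2 -(\D_k t)^{1/3}\D_k \xi\,\zeta$ of $\G(\zeta\vt \D_k(t,x,\xi))$, up to a linear drift cancelled exactly by the $e^{\mu(n(i)-i)/\nu_T}$ factor in \eqref{conjugation}--\eqref{conjugation2} and a constant that is cancelled by the normalisation defining $G$ in \eqref{Gnmx}. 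Deforming each $w$-contour through $w_c$ at the steepest-descent angles $\pm\pi/3$, and agreeing outside a small neighbourhood of $w_c$ with the original circle or line from Theorem~\ref{thm:2}, converts it into a $\zeta$-contour of the form $\Gamma_{-d_\ell}$ or $\Gamma_{D}$ appearing in Definition~\ref{def:F}. The resulting integrands are pointwise bounded by $C\,e^{-\widetilde\mu u}\Ai(-u)\,e^{\widetilde\mu v}\Ai(v)$ once $\mu$ is taken large enough, which is what justifies the trace-class claim made after Definition~\ref{def:F} and drives all subsequent dominated-convergence arguments.

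Step (iii) turns the $n_p\times n_p$ matrix into an operator on $\hb$. For $i\in(n_{r-1},n_r]$ with $r<p$ one writes $u = (i-n_r)/\nu_T \in\R_{<0}$, and for $i\in(n_{p-1},n_p]$ one writes $u = (i-n_{p-1})/\nu_T\in\R_{>0}$; this explains the $(p-1)$ summands $L^2(\R_{<0})$ and the trailing $L^2(\R_{>0})$. After this rescaling each $(r,s)$-block of $M_T$ is a Riemann sum which, combined with the $\zeta$-integrals from step (ii), converges to an explicit $\theta$-weighted linear combination of the basic kernels $F[\cdots]$ and $F^{\veps}[\cdots]$ of Definition~\ref{def:F}. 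The weights $\theta(r\vt\veps)$ and $\Theta(r\vt k)$ appearing in $F^{(0)},\ldots,F^{(4)}$ arise from collecting residues when the $w$-contours are deformed past poles of the form $w=\theta_k w_c$ produced by the Fredholm expansion of Theorem~\ref{thm:2}; the ordering constraints \eqref{Dorder} encode which residues are picked up, and the signs $(-1)^{\eps_{[k_1,k_2]}+\ind{k_2=p}}$ record the orientations of these deformations. Resumming over all residue patterns is designed to reproduce exactly the decomposition $F(\theta) = -F^{(0)}+F^{(1)}+F^{(2)}-F^{(3)}-F^{(4)}$.

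Steps (iv) and (v) are then comparatively routine. The uniform Airy bound of step (ii) supplies a trace-norm dominant for the Fredholm expansion, giving $\dt{I+M_T(\theta)}\to \dt{I+F(\theta)}_{\hb}$ uniformly in $\theta$ on the fixed contour $\gamma_r^{p-1}$, and a further application of dominated convergence allows the $\theta$-integration to be exchanged with the $T\to\infty$ limit. Consistency of the resulting family follows from the analogous property at finite $T$: taking $\xi_k\to+\infty$ corresponds to $a_k\to+\infty$ in \eqref{ptime}, which trivially removes the $k$-th event, and the same reduction performed directly on the limiting kernel collapses $F(\theta)$ to the corresponding $(p-1)$-time expression. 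The principal obstacle will be the combinatorial bookkeeping of step (iii): the proliferation of kernels in Definition~\ref{def:F}, together with the weights $\Theta(r\vt k)$ and signs $(-1)^{\eps_{[k_1,k_2]}+\ind{k_2=p}}$, must be matched residue-by-residue against the deformation scheme, and this must be done without introducing spurious contributions from the poles $(z_k-z_{k+1})^{-1}$ inside the $F^{\veps}[\cdots]$ kernels, which is precisely what the ordering constraints \eqref{Dorder} on $\vec{\Gamma}_{D^{\veps}}$ are engineered to ensure.
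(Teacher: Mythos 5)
Your outline captures the right preparatory machinery (the conjugation by $c(i,j)$, the steepest-descent contours through $w_c$, the embedding of blocks into $\hb$, and the entrywise Airy-type limits), but it misses the essential algebraic step that makes the Fredholm determinant actually converge, and it therefore proposes a step that would fail. In step (iv) you claim a uniform Airy bound supplies a trace-norm dominant for the full matrix $M_T$ from Theorem~\ref{thm:2} and conclude $\dt{I+M_T(\theta)}\to\dt{I+F(\theta)}_{\hb}$ directly. But the matrix of Theorem~\ref{thm:2} is $A(\theta)+B(\theta)$, and while $A$ has good Airy decay, the matrix $B$ does not: under KPZ rescaling its entries converge to kernels of the form $\Ai(v-u)$, which are not trace class and have no finite Fredholm determinant. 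So there is no trace-norm dominant for $\dt{I+A+B}$ itself, and dominated convergence does not apply at the level of the Fredholm expansion. The paper circumvents this with an algebraic trick: $B$ is strictly block-lower-triangular with the last two column blocks zero, so $B^{p-1}=0$ and $\dt{I-B}=1$; one multiplies $\dt{I+A+B}$ by $\dt{I-B}$ to get $\dt{I-B^2+A-AB}$, then splits $B^2=B_1-B_2$ and $AB=(AB)_g+(AB)_s$ into ``good+small'' pieces (Lemma~\ref{lem:Bsquare}, Propositions~\ref{prop:Alimit} and~\ref{prop:ABlimit}), shows $PB_2$ is small for $P=A-(AB)_g-B_1$ (Proposition~\ref{prop:small}), and peels off $\dt{I+B_2}=1$. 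The limit kernel $F(\theta)$ arises as the limit of $P$, with the five pieces $F^{(0)},\dots,F^{(4)}$ coming precisely from $B_1$, $A_1$, $A_2$, $(A_1B)_g$, $(A_2B)_g$. Without this decoupling, no dominated-convergence argument exists.

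A secondary inaccuracy: you attribute the weights $\theta(r\vt\veps)$ and $\Theta(r\vt k)$ to ``collecting residues when the $w$-contours are deformed past poles of the form $w=\theta_k w_c$.'' There are no such poles. The $\theta_k$ enter through the contour-integral encoding $\ind{\#\{x_j<0\}\geq n}=\oint d\theta\,\theta^{\ind{x_j<0}-n}/(\theta-1)$ from Lemma~\ref{lem:CB}, and after expanding $\theta^{\ind{x<0}}$ in the indicator basis $\chi_\eps$ they appear only as multiplicative prefactors $\theta^{\veps}(i)$ tagging the basic matrices $L^{\veps}_{\vdel}$. The residue calculus in the paper happens exclusively in the $z$- and $\zeta$-contours (splitting $\gamma_R(1)$ into pieces around $0$ and $1$, contracting contours in Lemmas~\ref{lem:deltazero}--\ref{lem:Lcdhelper}, and multiplying by $B$ via Lemma~\ref{lem:multiply}); $\theta$ plays no role in it. The ordering constraints \eqref{Dorder} on $\vec\Gamma_{D^{\veps}}$ indeed prevent spurious residue contributions, as you say, but they are inherited from the constraints \eqref{radiicondition} already fixed in Lemma~\ref{lem:Leps}, not imposed during a later deformation past $\theta$-dependent poles.
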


When $p=2$ this theorem agrees with the two-time distribution function from \cite{JoTwo}.
In this case the only non-zero component of $F(\mathbold{\theta})$ is $F^{(2)}$, whose non-zero basic
kernels are $F[{\scriptstyle 0 \vt (0,1]}]$, $F[{\scriptstyle 2 \vt 2}]$ and $F^{\eps}[{\scriptstyle 0,2 \vt (0,2]}]$ for $\eps = 1,2$.
Our other theorem that presents a similar expression for the probability \eqref{ptime} is stated as Theorem \ref{thm:2},
towards the end of $\S$\ref{sec:fredholm}.

\subsection{A discussion of results}
\smallskip

\paragraph{\textbf{Single point law}}
When $p=1$ there is a simpler approach for the single point limit as explained in $\S$\ref{sec:singletime},
where we express $\pr{\mathbold{G}(m,n) < a}$ as a Fredholm determinant of a matrix whose entries
are in terms of a double contour integral. More precisely, $\pr{\mathbold{G}(m,n) < a} = \dt{I+M}$ with
$$M(i,j) = \oint\limits_{\gamma_{\tau}} d\zeta \oint\limits_{\gamma_r(1)} dz\, \frac{G^{*}(z \vt n-i,m,a-1)}{G^{*}(\zeta \vt n-j+1,m,a-1) (z-\zeta)}.$$
Here $1 \leq i,j \leq n$ and the radii satisfy $\tau < 1-\sqrt{q} < 1-r < 1-q$.

An asymptotical analysis of it leads to
\begin{align} \label{singletime}
\lim_{T \to \infty}\, \pr{\mathbold{G}(m_1,n_1) < a_1} & = \dt{I - K}_{L^2(\R_{>0})},\; \text{where} \\ \nonumber
K(u,v) &= \oint\limits_{\Gamma_{-d}} d\zeta \oint\limits_{\Gamma_D} dz\, \frac{\G(z \vt t_1, x_1, \xi_1)}{\G(\zeta \vt t_1,x_1,\xi_1)}
\cdot \frac{e^{\zeta v - z u}}{z - \zeta}\,.
\end{align}
One may observe that
\begin{equation} \label{Gintegral}
\oint\limits_{\Gamma_D} dz\, \G(z \vt t,x,\xi) e^{-zu} = t^{-\frac{1}{3}}\,  e^{\frac{2}{3}x^3 + (\xi + t^{-\frac{1}{3}}u)x} \, \Ai(\xi + x^2 + t^{-\frac{1}{3}}u).
\end{equation}
Using this, as well as $\oint\limits_{\Gamma_{-d}} d\zeta \, \G(\zeta \vt t,x,\xi)^{-1} e^{\zeta v} =
\oint\limits_{\Gamma_d} d\zeta \, \G(\zeta \vt t,-x,\xi) e^{- \zeta v}$, and that $(z-\zeta)^{-1} = \int_0^{\infty} d \lambda\, e^{\lambda(\zeta - z)}$,
we find that
$$e^{x(v-u)} t^{\frac{1}{3}} K(t^{\frac{1}{3}} u, t^{\frac{1}{3}} v) = \int_0^{\infty} d\lambda\, \Ai(\xi + x^2 + u + \lambda) \Ai(\xi + x^2 + v + \lambda)
=K_{\mathrm{Ai}}(\xi + x^2 +u, \xi + x^2 + v).$$
This implies that $\dt{I-K}_{L^2(\R_{>0})}$ equals $F_{\mathrm{GUE}}(\xi + x^2)$, where $F_{\mathrm{GUE}}$
is the distribution function of the GUE Tracy-Widom law from \cite{TW}. The single point law recovers a result from \cite{JoSh}.
\smallskip

\paragraph{\textbf{Kernels expressed in terms of Airy function}}
The kernels in Definition \ref{def:F} may be written as products of more basic ones.
Consider the following kernel for $x,\xi \in \R$ and $t > 0$:
$$\A[t,x,\xi](u,v) = \oint\limits_{\Gamma_D}dw\, \G(w \vt t,x,\xi) e^{w(u-v)} =
t^{-\frac{1}{3}} \Ai \big(x^2 + \xi + t^{-\frac{1}{3}} (v-u)\big) \, e^{\frac{2}{3}x^2 + x(\xi +t^{-\frac{1}{3}}(v-u))}.$$

We will show how to write $F^{\veps}[{\scriptstyle k_1, k_2 \vt (k_1,k_2]}]$ using $\A$ and the others are done similarly.
Observe $(w_1-w_2)^{-1} = \int_{0}^{\infty} d\lambda \, e^{-\lambda(w_1-w_2)\cdot \mathrm{sgn} (\Re(w_1-w_2))}$.
As a result,
\begin{align*}
& (z_k-z_{k+1})^{-1} = \int_0^{\infty} d\lambda_k \, e^{\lambda_k (-1)^{\eps_k}\, (z_{k+1}-z_k)} \;\; \text{for}\;\; k_1 < k < k_2, \\
& (z_{k_1+1}-\zeta_1)^{-1} = \int_0^{\infty} d\lambda_{k_1} e^{\lambda_{k_1}(\zeta_1-z_{k_1+1})},\;\;
(z_{k_2}-\zeta_2)^{-1} = \int_0^{\infty} d\lambda_{k_2}\, e^{\lambda_{k_2}(\zeta_2 - z_{k_2})}.
\end{align*}
Let us set $\eps_{k_1} = 1$ and $\eps_{k_2} = 2$ in the following. Then we see that
\begin{align*}
& F^{\veps}[{\scriptstyle k_1, k_2 \vt (k_1,k_2]}](r,u;s,v) = \ind{k_1 < r^{*},\, s^{*} < k_2, \, k_1 < k_2} \, e^{\mu(v-u)}\,
\int_{[0,\infty)^{[k_1,k_2]}} \prod_{k_1 \leq k \leq k_2} d\lambda_k \\
& \oint\limits_{\Gamma_{-d_1}} d\zeta_1 \, \G \big(\zeta_1 \vt \D_{k_1, r^{*}} (t,x,\xi) \big)^{-1} e^{\zeta_1(\lambda_{k_1}-u)} \,
\oint\limits_{\Gamma_{-d_2}} d\zeta_2\,  \G \big (\zeta_2 \vt \D_{s^{*},k_2} (t,x,\xi) \big )^{-1} e^{\zeta_2(\lambda_{k_2}+v)} \\
& \prod_{k_1 < k \leq k_2} \, \oint\limits_{\Gamma_{D_k}} dz_k\, \G \big(z_k \vt \D_k (t,x,\xi) \big)\,
e^{z_k[(-1)^{\eps_{k-1}} \cdot \lambda_{k-1} - (-1)^{\eps_k} \cdot \lambda_k]}.
\end{align*}

We can evaluate the $\zeta$-integrals by changing variables $\zeta \to -\zeta$ as in the single time discussion.
Let us consider also the reflection $R$ for which $R\cdot K(u,v) = K(-u,v)$.
We have $K((-1)^{\eps}u, (-1)^{\eps'}v) = R^{\eps} K R^{\eps'}(u,v)$. Then we find that
\begin{align*}
& F^{\veps}[{\scriptstyle k_1, k_2 \vt (k_1,k_2]}](r,u;s,v) = \ind{k_1 < r^{*},\, s^{*} < k_2, \, k_1 < k_2} \, e^{\mu(v-u)}\,
\int_{[0,\infty)^{]k_1,k_2]}} \prod_{k_1 \leq k \leq k_2} d\lambda_k \\
& \A[\D_{k_1,r^{*}}t,\, - \D_{k_1,r^{*}}x,\, \D_{k_1,r^{*}} \xi ]\, (u,\lambda_{k_1})
\prod\limits_{k_1 < k \leq k_2} R^{\eps_{k-1}} \A[\D_k(t,x,\xi)] R^{\eps_k} \, (\lambda_{k-1}, \lambda_k) \, \times \\
& R\, \A[\D_{s^{*}, k_2}t, \,- \D_{s^{*}, k_2}x, \,\D_{s^{*}, k_2} \xi ]\, (\lambda_{k_2},v).
\end{align*}
We note that $R^{\eps} \chi_0 R^{\eps} = \chi_{\eps}$, where the latter is from \eqref{chi}. Therefore,
\begin{align*}
& F^{\veps}[{\scriptstyle k_1, k_2 \vt (k_1,k_2]}](r,u;s,v) = \ind{k_1 < r^{*},\, s^{*} < k_2, \, k_1 < k_2} \, e^{\mu(v-u)}\, 
 \A[\D_{k_1,r^{*}}t, \, - \D_{k_1,r^{*}}x, \, \D_{k_1,r^{*}} \xi ] \, \chi_0 R \, \times\\
& \prod_{k_1 < k < k_2} \A[\D_k(t,x,\xi)] \, \chi_{\eps_k} \; \A[\D_{k_2}(t,x,\xi)]\, 
\chi_0 R \, \A[\D_{s^{*}, k_2}t, \, - \D_{s^{*}, k_2}x, \, \D_{s^{*}, k_2} \xi ]\, (u,v).
\end{align*}

We now express all of the matrix kernels from Definition \ref{def:F} like above.
We will omit the conjugation factor $e^{\mu(v-u)}$ and the variables $u,v$ from these expressions.
Let us also use the shorthand $\D_{a,b} (t,-x,\xi) = (\D_{a,b} t, \, - \D_{a,b}x,\,  \D_{a,b} \xi)$. We then have the following.
\begin{align*}
(1) & \; F[{\scriptstyle p \vt p}] (r,s) = \ind{r=p}\, R \A[\D_p(t,x,\xi)] \, \chi_0 R \, \A[\D_{s^{*},p}(t,-x,\xi)].\\
(2) & \; F[{\scriptstyle k,k\vt \emptyset}](r;s) = \ind{s < k< r^{*}} \, \A[\D_{k,r^{*}} (t,-x,\xi)] \, \chi_1 \, \A[\D_{s,k}(t,-x,\xi)]. \\
(3) & \; F[{\scriptstyle p,k \vt p}](r,s) = \ind{r=p, \, s < k < p}\, R \A[\D_p(t,x,\xi)] \, \chi_0 R \, \A[\D_{k,p}(t,-x,\xi)] \, \chi_0\, \A[\D_{s,k}(t,-x,\xi)].\\
(4) & \; F[{\scriptstyle k_1,k_1,k_2 \vt \emptyset}](r;s) = \ind{k_1 < r^{*},\, s< k_2 <k_1}\,
\A[\D_{k_1,r^{*}} (t,-x,\xi)] \, \chi_1 \, \A[\D_{k_2,k_1} (t,-x,\xi)] \, \chi_0\,  \A[\D_{s,k_2} (t,-x,\xi)]. \\
(5) & \; F^{\veps}[{\scriptstyle k_1 \vt (k_1,k_2]}](r;s) = \ind{k_1 < r^{*},\, s =k_2 < p,\, k_1 < k_2}\, \A[\D_{k_1,r^{*}} (t,-x,\xi)] \, \chi_0 R \, \times \\
& \; \prod\limits_{k_1 < k < k_2} \A[\D_k(t,x,\xi)] \, \chi_{\eps_k}\, \A[\D_{k_2}(t,x,\xi)] \, R.\\
(7) & \; F^{\veps}[{\scriptstyle k_1,k_2,k_3 \vt (k_1,k_2]}](r;s) = \ind{k_1 < r^{*},\, s < k_3 < k_2, \, k_1 < k_2}\, \A[\D_{k_1,r^{*}} (t,-x,\xi)] \, \chi_0 R \, \times\\
&   \; \prod\limits_{k_1 < k < k_2} \A[\D_k(t,x,\xi)] \, \chi_{\eps_k}\,
\A[\D_{k_2}(t,x,\xi)] \, \chi_0 R\, \A[\D_{k_3,k_2}(t,-x,\xi)] \, \chi_0\, \A[\D_{s,k_2}(t,-x,\xi)].
\end{align*}

\section{Discrete considerations: multi-point distribution function} \label{sec:discrete}
In this section we derive a determinantal expression for the probability in \eqref{ptime}.
As $\mathbold{G}(m,n)$ depends only on the values of $\mathbold{G}$ to the left or below $(m,n)$,
the joint law of $\mathbold{G}(m_1,n_1), \ldots, \mathbold{G}(m_p,n_p)$ depends on the restriction
of $\mathbold{G}$ to $[0, m_p] \times [0,n_p]$.

Let us set $N = n_p$ throughout this section. Define the vector
$$ \mathbold{\vec{G}}(m) = \big (\mathbold{G}(m,1), \mathbold{G}(m,2), \ldots, \mathbold{G}(m,N) \big) \quad \text{for}\;\; m \geq 0.$$
The process $\mathbold{\vec{G}}(m)$ is a Markov chain by definition. It turns out to have an explicit transition rule.

\subsection{Markov transition rule} \label{sec:markov}
Let $\dr$ be the finite difference operator acting on $f: \Z \to \C$ as
\begin{equation} \label{eqn:derivative}\dr f(x) = f(x+1) - f(x).\end{equation}
The operator has as inverse given by
\begin{equation} \label{eqn:inverse}\dr^{-1}f (x) = \sum_{y < x} f(y),\end{equation}
valid so long as $f$ vanishes identically to the left of some integer. This will be the
case for functions that we consider. Since $\dr f$ and $\dr^{-1}f$ are then also functions of the same type,
we may consider integer powers of $\dr$ acting on such functions.

Define the negative binomial weight
$$ w_m(x) = \binom{x+m-1}{x} (1-q)^m q^x \, \ind{x \geq 0} \quad \text{for}\;\; m \geq 1 \;\;\text{and}\;\; x \in \Z.$$
This is the probability of observing the $m$-th head at $x+m$ tosses of a coin that lands heads with probability $1-q$.
It is a probability density, being the $(0,x)$-entry of $\big (I - \frac{q}{1-q} \dr \big)^{-m}$.

Define also
$$ \W_N = \{ (x_1, \ldots, x_N) \in \Z^N : x_1 \leq \cdots \leq x_N \},$$
noting that $\mathbold{\vec{G}}$ takes values in $\W_N$.

\begin{prop}\label{prop:transition}
	The process $\mathbold{\vec{G}}(m)$ is a Markov chain with transition rule
	\begin{equation}\label{Trans}
	\pr{ \mathbold{\vec{G}}(m)=\mathbold{y} \vt \mathbold{\vec{G}}(\ell)=\mathbold{x}}=\dt{\dr^{j-i}w_{m-\ell}(y_j-x_i)}_{i,j}
	\end{equation}
	for every $\mathbold{x},\mathbold{y}\in \W_N$ and $m > \ell$.
\end{prop}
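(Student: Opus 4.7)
The Markov property of $\mathbold{\vec{G}}(m)$ is immediate from \eqref{gmn}: once $\mathbold{\vec{G}}(\ell) = \mathbold{x}$ is known, the subsequent columns are generated using only $\mathbold{x}$ together with the independent weights $\{\omega(i,j): i > \ell\}$. The content of Proposition \ref{prop:transition} is therefore the explicit determinantal form of the transition kernel. My plan has two parts: (a) verify \eqref{Trans} in the single-step case $m - \ell = 1$, and (b) upgrade to general $m > \ell$ via Chapman--Kolmogorov combined with a semigroup identity for the kernel on the right of \eqref{Trans}.

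For (a), put $\omega_n := \omega(\ell+1, n)$, so that the one-step dynamics reads $y_n = \max\{x_n, y_{n-1}\} + \omega_n$ with $y_0 = 0$. Here $w_1(k) = (1-q) q^k \ind{k \geq 0}$ is the geometric density. I would derive the joint density of $\mathbold{y}$ given $\mathbold{x}$ by partitioning according to which of $x_n$ or $y_{n-1}$ realises each max, and then reassemble the resulting piecewise sum of products of geometric weights as the $N \times N$ determinant on the right of \eqref{Trans}; the entries $\dr^{j-i} w_1$ with $j < i$ (iterated summations) precisely encode the ``carry-over'' of the max when $y_{n-1} > x_n$, while $j > i$ encodes the forward differences. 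The cleanest implementation probably goes via the Lindström--Gessel--Viennot lemma applied to $N$ non-intersecting walks with geometric increments, or equivalently via the Robinson--Schensted--Knuth correspondence on the column of weights $\omega(\ell+1, \cdot)$, which deliver the stated determinant directly.

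For (b), the semigroup identity to establish is
\begin{equation*}
\sum_{\mathbold{z} \in \W_N} \det\bigl(\dr^{j-i} w_{m_1}(z_j - x_i)\bigr)_{i,j} \det\bigl(\dr^{j-i} w_{m_2}(y_j - z_i)\bigr)_{i,j} = \det\bigl(\dr^{j-i} w_{m_1+m_2}(y_j - x_i)\bigr)_{i,j}\,.
\end{equation*}
The two elementary inputs are (i) $w_{m_1} \ast w_{m_2} = w_{m_1+m_2}$ (convolution of negative binomials, which follows since $w_m$ is the density of the $m$-fold sum of geometrics) and (ii) the commutativity of $\dr$ with convolution in its argument. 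Expanding both determinants as signed sums over permutations of $\{1, \ldots, N\}$, the inner $\mathbold{z}$-sum turns each factor into a convolution $\dr^a w_{m_1} \ast \dr^b w_{m_2} = \dr^{a+b} w_{m_1+m_2}$ with $a + b = j - i$ along the composite permutation, and the remaining permutation sum reassembles the target determinant. Compatibility with the $\mathbold{z} \in \W_N$ constraint (rather than $\mathbold{z} \in \Z^N$) is verified by a sign-cancellation argument, cleanest in the Karlin--McGregor interpretation of the kernel as an $h$-transform of non-intersecting walks. Iterating from the base case (a) then yields \eqref{Trans} for all $m > \ell$.

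The main obstacle is (a): the max-plus recursion is not intrinsically linear-algebraic, so combinatorial input---via LGV or RSK---is required to bridge it with the determinantal form, and the sign-and-support bookkeeping of $\dr^{j-i}$ must be matched term-by-term with the combinatorics of the maxima. In (b), the delicate step is reconciling the Weyl-chamber restriction with the permutation-sum algebra; the Schur-process / Karlin--McGregor perspective sidesteps this by making the semigroup property an automatic consequence of the convolution identity for the underlying kernels.
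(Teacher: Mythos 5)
The paper does not prove Proposition~\ref{prop:transition}; it is imported from \cite{JoMar}, following \cite{Warr}, where the transition kernel is obtained by intertwining $\mathbold{\vec{G}}$ with a Karlin--McGregor transition on non-intersecting paths via an RSK-type construction of a larger Markov chain on interlacing patterns. That argument produces \eqref{Trans} for all step sizes $m-\ell$ in one shot, with no separate Chapman--Kolmogorov phase.

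Your proposal is a genuinely different architecture: derive the one-step formula, then propagate by a semigroup identity. That is a legitimate route in principle, but both halves are left as sketches, and there is one point where the sketch is too optimistic. In stage (b) you plan to reduce the Weyl-chamber sum over $\mathbold{z}\in\W_N$ to a $\Z^N$-sum ``by a sign-cancellation argument, cleanest in the Karlin--McGregor interpretation.'' The difficulty is that $\dr^{j-i}w_m(y_j-x_i)$ is \emph{not} a Karlin--McGregor kernel: the off-diagonal exponent $j-i$ is tied to the matrix position, so exchanging the values $z_a\leftrightarrow z_b$ does \emph{not} just flip the sign of either determinant --- it changes the entries (because $\dr^{a-i}w(z_b-x_i)\neq\dr^{b-i}w(z_b-x_i)$). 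Hence the product of the two determinants is not antisymmetric in $\mathbold{z}$, and the usual Lindström--Gessel--Viennot cancellation of out-of-chamber configurations does not apply directly. What actually makes the $\W_N$-sum tractable is the derivative-transfer (summation-by-parts) mechanism --- precisely the content of the paper's Lemma~\ref{lem:sbp}, which is deployed for the related sums in \eqref{ptimemarkov} --- which moves the index-dependent exponents off the inner variable so that Cauchy--Binet can be used. Without invoking that machinery (or the intertwining), the semigroup step has a real gap. The one-step derivation (a) is also only named, not carried out; you acknowledge this, so I flag it mainly to note that the LGV/RSK bridge from the max-plus recursion to the determinant is exactly the nontrivial content that \cite{Warr,JoMar} supply and that would need to be reproduced, not just appealed to.
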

The proposition is proved in \cite{JoMar} following the paper \cite{Warr} by Warren.
It is related to determinantal expressions for non-intersecting path probabilities
that appear in Karlin-McGregor or Lindstr\"{o}m-Gessel-Viennot type arguments.
The paths in this case are trajectories of the components of $\mathbold{\vec{G}}(m)$.
The transition matrix of this chain turns out to be intertwined with a Karlin-McGregor
type matrix by way of an RSK mechanism, which allows calculation of the former.
The papers \cite{DiWa, OC} also give a systematic exposition to such computations.

\begin{rem}
Formula \eqref{Trans} has very similar structure to Sch\"{u}tz type formulas \cite{BFPS, Sas, Sch} for the transition
rule of $\mathbold{\vec{G}}$. Sch\"{u}tz's formula for the $N$-particle continuous time TASEP $\mathbf{X}(t)$ is
$$ \pr{ \mathbold{X}(t) = \mathbold{y} \,|\, \mathbold{X}(0) = \mathbold{x}} = \dt{\dr^{j-i} F_t(\tilde{y}_j - \tilde{x}_i)}_{i,j}$$
where $F_t(x) = \frac{e^{-t}t^x}{x!} \ind{x \geq 0}$ is the Poisson density. Here the finite difference operator
$\dr$ means $\dr f(x)=f(x)-f(x+1)$, and its inverse is $\dr^{-1} f(x) = \sum_{y \geq x} f(y)$. Particle
locations are ordered such that $x_1 > x_2 > \cdots > x_N$, we let $\tilde{x}_j=x_{N+1-j}$, and likewise for $\mathbold{y}$.
	
A similar formula holds for the discrete time $N$-particle TASEP with sequential updates (see \cite{DiWa, RS}),
where the rightmost particle attempts to jump first with probability $q$, followed by the particle to its left, and so on.
The transition rule above is then modified by replacing $F_t(x)$ with the binomial density $F_{t,q}(x) = (1-q)^{-1} w_{t-x+1}(x)$.
With parallel updates, discrete time TASEP becomes equivalent to the discrete polynuclear growth model as explained,
for instance, in \cite{BFS, JoSh}.
\end{rem}

Denote $Pr$ the probability \eqref{ptime} that $\mathbold{G}(m_r,n_r) < a_r$ for every $r$.
By proposition \ref{prop:transition},
\begin{equation} \label{ptimemarkov} Pr = \sum_{\substack{x^1, \ldots, \, x^p \in \W_N \\ x^r_{n_r} < a_r }} \;
\prod_{r=1}^p \dt{ \dr^{j-i}w_{m_r - m_{r-1}}(x^r_j - x^{r-1}_i)}_{i,j}
\end{equation}
with the convention that $x^0 = 0$. We will drop subscripts $i,j$ from
the determinants since all of them will be of $N \times N$ matrices
with rows indexed by $i$ and columns by $j$.

\begin{lem} \label{lem:Prsimple}
Recall the $\Delta_k$ notation: $\Delta_k y = y_k - y_{k-1}$ for $y = n,m$.
The sum \eqref{ptimemarkov} simplifies to
\begin{align} \label{simpleptime}
Pr = \sum_{\substack{x^1, \ldots, \, x^{p-1} \in \W_N \\ x^r_{n_r} < a_r}} &
\dt{\dr^{n_1-i}w_{m_1}(x^1_j)} 
\prod_{r=2}^{p-1} \dt{ \dr^{\D_r n}w_{\D_r m}(x^r_j - x^{r-1}_i)} \times \\ \nonumber
& \times \dt{\dr^{j-1-n_{p-1}}w_{\D_p m}(a_p - x^{p-1}_i)}.
\end{align}
\end{lem}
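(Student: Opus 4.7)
Proof plan. The argument proceeds in two steps: first summing out $x^p$ in \eqref{ptimemarkov}, and then rewriting the remaining determinants to match \eqref{simpleptime}.

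For step one, I would establish the identity
\[
\sum_{x \in \W_N,\, x_N < a}\det\bigl(\dr^{j-i}\,w(x_j - z_i)\bigr)_{i,j=1}^N = \det\bigl(\dr^{j-i-1}\,w(a - z_i)\bigr)_{i,j=1}^N,
\]
valid for any $z \in \W_N$ and any $w$ supported on $\Z_{\geq 0}$. The key point is that the Weyl-chamber sum on the left agrees with the unordered sum $\sum_{x \in \Z^N,\, x_j < a\,\forall j}$: the out-of-order and the coincident configurations cancel in pairs via the alternating structure of the Leibniz expansion of the determinant. The unordered sum then splits across coordinates, and on each coordinate $\sum_{y < a}\dr^k f(y) = \dr^{k-1}f(a)$ by the definition \eqref{eqn:inverse} of $\dr^{-1}$. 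Applied to the last determinant in \eqref{ptimemarkov} with $w = w_{\D_p m}$ and $z = x^{p-1}$, this produces $\det(\dr^{j-i-1}\,w_{\D_p m}(a_p - x^{p-1}_i))$, matching the last factor in \eqref{simpleptime} up to the conversion of the exponent in step two.

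For step two I would decompose, in the $r$-th determinant, $\dr^{j-i} = \dr^{n_{r-1}-i}\,\dr^{\D_r n}\,\dr^{j-n_r}$ and absorb the row piece $\dr^{n_{r-1}-i}$ into the $(r-1)$-th determinant and the column piece $\dr^{j-n_r}$ into the $(r+1)$-th by an Andreief-type computation of the sum over $x^r$. The convolution semigroup $w_\alpha \ast w_\beta = w_{\alpha+\beta}$ combined with the commutation $\dr(f\ast g) = (\dr f)\ast g = f\ast(\dr g)$ allows these $\dr$-factors to move freely through the resulting convolutions, leaving only the constant piece $\dr^{\D_r n}$ in the $r$-th middle determinant. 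At the two endpoints, the non-absorbed pieces combine to yield the boundary exponents $\dr^{n_1-i}$ in the first determinant and $\dr^{j-1-n_{p-1}}$ in the last (the $-1$ being inherited from step one), exactly as in \eqref{simpleptime}.

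The main obstacle is that the simplification is genuinely global rather than term-by-term: already at $p = 2$, the first determinant times the $x^p$-summed last determinant does not equal the simplified product pointwise in $x^1$, and equality with \eqref{simpleptime} emerges only after summing against the indicator $\ind{x^1_{n_1} < a_1}$. The rewriting in step two must therefore be carried out under the outer summation, and the absorption of the row and column pieces must be checked to be compatible with the single-coordinate constraints $x^r_{n_r} < a_r$. Verifying this compatibility, together with the pairwise cancellation of out-of-order terms in step one, constitutes the technical heart of the proof.
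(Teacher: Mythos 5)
Your overall logic — sum out $x^p$ first, then shift $\dr$-powers toward the boundary determinants — follows the same trajectory as the paper, which carries out exactly these moves using its Lemma~\ref{lem:sbp}. But both of your steps rest on unproved or incorrect claims, and the common missing ingredient is precisely that lemma.

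The cancellation argument in step one does not work. The determinant $\det(\dr^{j-a_i}g(z_j-x_i))$ is not antisymmetric in the variables $z_1,\ldots,z_N$: the exponent $j-a_i$ is tied to the \emph{column position} $j$, not to whichever $z$ happens to sit there, so transposing $z_j\leftrightarrow z_{j'}$ is not the same as swapping two columns. In particular a coincident configuration $z_j=z_{j+1}$ produces columns $\dr^{j-a_i}g(z_j-x_i)$ and $\dr^{j+1-a_i}g(z_j-x_i)$, which differ, so the determinant does not vanish there; and there is no involution on out-of-order configurations that flips the sign. The conclusion you want (ordered sum $=$ unordered sum $=\det(\dr^{j-1-a_i}g(A-x_i))$) is true, but for a different reason: the paper's proof of \eqref{sbpb} telescopes $z_N$ from $z_{N-1}$ up to $A-1$, producing an upper boundary term (the answer) and a lower boundary term in which columns $N-1$ and $N$ become identical and the determinant is $0$; one then iterates down to $z_1$, where the lower boundary vanishes by decay. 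That is what silently discharges the Weyl-chamber constraint, not a sign-cancellation.

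Step two has a genuine gap. Decomposing $\dr^{j-i}=\dr^{n_{r-1}-i}\dr^{\D_r n}\dr^{j-n_r}$ and distributing the factors via Andreief/Cauchy-Binet and the semigroup $w_\alpha*w_\beta=w_{\alpha+\beta}$ is a valid manipulation only when the intermediate variable $x^r$ is summed over the \emph{full} Weyl chamber; the constraint $x^r_{n_r}<a_r$ breaks the Cauchy-Binet factorization, which is exactly why this lemma needs a proof in the first place. What actually makes the redistribution legitimate under the constrained sum is the summation-by-parts identity \eqref{sbpa}: a derivative can be moved from one determinant to its neighbour column by column, and the resulting boundary terms again vanish because two consecutive columns coincide (or because a column dies at $\pm\infty$), with the restriction $x^r_{n_r}<a_r$ left intact throughout because the transfers only touch columns $\ell\neq n_r$. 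You flag this compatibility as ``the technical heart of the proof'' — correctly so — but you do not supply it, and the convolution/Andreief framework you propose does not produce it. To complete the argument you would need to prove \eqref{sbpa} (or an equivalent), which the paper does in Lemma~\ref{lem:sbp} via an explicit discrete integration by parts with a boundary-term analysis.
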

Proving this is the subject of the next section.

\subsection{Summation by parts} \label{sec:sbp}
The following is Lemma 3.2 in \cite{JoTwo} and related to Lemma 3.2 in \cite{JoMar}.

\begin{lem} \label{lem:sbp}
	Let $f, g : \Z \to \C$ be such that $f(x) = g(x) = 0$ if $x  < L$ (typically $L$ is very negative).
	Let $a_i, b_i \in \Z$ for $i = 1, \ldots, N$, and consider $k$ such that $1 \leq k \leq N$. Then,
	\begin{align} \label{sbpa}
	 &\sum_{\substack{x \in \W_N \\ x_k < A}} \dt{\dr^{j-a_i} f(x_j-y_i)} \dt{ \dr^{b_j-i} g(z_j-x_i)} \\ \nonumber
	= & \sum_{\substack{x \in \W_N \\ x_k < A}} \dt{\dr^{k-a_i} f(x_j-y_i)} \dt{ \dr^{b_j-k} g(z_j-x_i)}.
	\end{align}
Moreover,
\begin{equation} \label{sbpb}
\sum_{\substack{z \in \W_N \\ z_N < A}} \dt{\dr^{j - a_i} g(z_j-x_i)} = \dt{\dr^{j-1-a_i}g(A-x_i)}.
\end{equation}
\end{lem}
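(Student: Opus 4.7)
The plan is to prove the two displayed identities separately; (\ref{sbpb}) is the cleaner iterative argument, while (\ref{sbpa}) is the technical heart of the lemma.

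For (\ref{sbpb}), I would proceed by iterated column reduction. Only the $N$-th column of the matrix depends on $z_N$, with entries $\dr^{N-a_i}g(z_N-x_i)$. Telescoping the innermost sum gives
\[
\sum_{z_N = z_{N-1}}^{A-1}\dr^{N-a_i}g(z_N - x_i) \;=\; \dr^{N-1-a_i}g(A - x_i) \,-\, \dr^{N-1-a_i}g(z_{N-1} - x_i),
\]
and the second term is exactly the $(N-1)$-th column of the original matrix, so it contributes zero to the determinant by linearity. The new $N$-th column is $\dr^{N-1-a_i}g(A-x_i)$, independent of the remaining $z$'s. Iterating for $z_{N-1}, \ldots, z_1$, each $j$-th column in turn becomes $\dr^{j-1-a_i}g(A-x_i)$; the $z_1$ sum incurs no boundary cancellation since $g$ vanishes at $-\infty$, yielding (\ref{sbpb}).

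For (\ref{sbpa}), the key algebraic input is the convolution identity $\sum_{x\in\Z}\dr^{\alpha}f(x-y)\dr^{\beta}g(z-x) = \dr^{\alpha+\beta}(f*g)(z-y)$, which shows that only the total differentiation order matters once an intermediate variable is summed out. On the right-hand side, the two determinants factor cleanly as $\dt{\tilde{f}_i(x_j)}$ and $\dt{\tilde{g}_j(x_i)}$ with $\tilde{f}_i(x) := \dr^{k-a_i}f(x-y_i)$ and $\tilde{g}_j(x) := \dr^{b_j-k}g(z_j-x)$, each vanishing whenever any two $x$'s coincide. This lets one symmetrize the constrained $\W_N$-sum and apply a Cauchy-Binet-style collapse to a determinant in the convolved kernel $\dr^{b_l-a_i}(f*g)(z_l-y_i)$, appropriately truncated by the $x_k<A$ condition. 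The left-hand side does not admit this treatment directly because its determinants do not vanish on coincident $x$'s; instead, one rewrites the $(i,j)$-entry of the first LHS determinant as $\dr_{x_j}^{j-k}$ acting on the corresponding right-hand-side entry, and similarly for the second determinant (with a sign convention arising from the fact that $x_i$ appears negatively in the argument of $g$). Iterated summation by parts in each $x_j$ then transfers $\dr$-operators between the two determinant factors, equalizing exponents column-by-column to match those on the right-hand side.

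The main obstacle is the careful bookkeeping of the boundary contributions from these summations by parts in the presence of the weak ordering $x_1 \leq \cdots \leq x_N$ and the cutoff $x_k < A$. The boundary terms vanish either by the decay of $f$ and $g$ at $-\infty$ or by cancellations that produce a determinant with two equal columns or rows at the Weyl-chamber boundary; the cutoff $x_k<A$ itself anchors the summation so that the transferred differentiation halts precisely at exponent $k$ in each determinant. The full justification is carried out as Lemma~3.2 in \cite{JoTwo}, and we adopt it here.
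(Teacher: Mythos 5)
Your treatment of \eqref{sbpb} is correct and essentially identical to the paper's: sum the $z_N,\ldots,z_1$ variables in decreasing order, telescope each column, and observe that the negative boundary term duplicates the neighbouring column (or, for $z_1$, vanishes by the support assumption on $g$). No issues there.

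For \eqref{sbpa} the core strategy you identify — transferring $\dr$'s between the two determinant factors by iterated summation by parts in the $x_\ell$ variables — is the same as the paper's. However, the Cauchy--Binet framing that opens your discussion is a red herring and would not actually carry you through: even when the integrand vanishes on the diagonal and one replaces the $\W_N$-sum by $\tfrac{1}{N!}$ times a sum over $\Z^N$, the constraint $x_k<A$ is \emph{not} symmetric in the $x$'s, so the constrained product of determinants does not collapse to a single determinant of the convolved kernel. This is precisely why the lemma is nontrivial, and you in fact acknowledge it a moment later. More substantively, your sketch skips the three ingredients that make the summation-by-parts argument actually close: (i) the reduction to the subsum over $\{x_k=A\}$, which is what allows the $k$-th column to serve as a fixed anchor; (ii) the precise one-derivative transfer identity (the paper's \eqref{sbpc}), whose validity hinges on local conditions like $c_\ell=c_{\ell+1}$ and $d_{\ell-1}=d_\ell-1$ that must hold at each application to kill the boundary terms via equal columns or column decay; and (iii) the specific iteration order — $\ell=N,\ldots,k+1$, then $\ell=N,\ldots,k+2$, and so on, and the mirror procedure for columns below $k$ — which is what guarantees those local conditions are maintained at each step and halts the exponents at $k$. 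You assert the boundary terms vanish "at the Weyl-chamber boundary or by decay" without verifying this, and you defer the full argument to \cite{JoTwo}. Since the paper outlines these steps explicitly, your proposal should at minimum state the one-step transfer identity and the iteration schedule rather than relying entirely on the citation.
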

It is instructive to understand the proof of this lemma, so we will outline
the argument. It should be contrasted with the approach in \cite{Sas}, see also \cite{BFPS},
which manipulates determinants by using that $\dr^{-1}$ is a summation operator.

\begin{proof}
	For identity \eqref{sbpb}, first note that $\sum_{x=a}^{b-1} \dr f(x) = f(b) - f(a)$. Now perform the summation
	from $z_N$ down to $z_1$, using multi-linearity of the determinant, which reduces $\dr$ by 1 in the corresponding column.
	After each step one finds a difference of two determinants, and the one with minus sign is zero
	due to two consecutive columns being equal. After the $z_1$-sum, the determinant with minus sign is zero
	because its first column stabilizes to zero as $z_1 \to -\infty$. For example, during the summation over $z_N$ we have
	\begin{align*}
	&\sum_{\substack{z \in \W_{N-1} \\ z_{N-1} < A}} \, \sum_{z_N =
	z_{N-1}}^{A-1} \dt{\dr^{1 - a_i} g(z_1-x_i) \cdots \dr^{N-1-a_i}g(z_{N-1}-x_i) \, \dr^{N-a_i}g(z_N-x_i)} =\\
	& \sum_{\substack{z \in \W_{N-1} \\ z_{N-1} < A}} \dt{\dr^{1 - a_i} g(z_1-x_i) \cdots \dr^{N-1-a_i}g(z_{N-1}-x_i) \, \dr^{N-1-a_i}g(A-x_i)} - \\
	& \qquad \dt{\dr^{1 - a_i} g(z_1-x_i) \cdots \dr^{N-1-a_i}g(z_{N-1}-x_i) \, \dr^{N-1-a_i}g(z_{N-1}-x_i)} = \\
	& \sum_{\substack{z \in \W_{N-1} \\ z_{N-1} < A}} \dt{\dr^{1 - a_i} g(z_1-x_i) \cdots \dr^{N-1-a_i}g(z_{N-1}-x_i) \, \dr^{N-1-a_i}g(A-x_i)}.
	\end{align*}
	
	Identity \eqref{sbpa} in based on the following idea. First, it is enough to establish it
	for the sum over $\{x \in \W_N: x_k = A\}$. Suppose $[a_{i,j}]$ is a square
	matrix, the $\ell$-th column of which has the from $a_{i,\ell} = \dr f_{i,\ell}(x_{\ell})$,
	and variable $x_{\ell}$ appears nowhere else. Then
	$\dt{a_{i,j}} = \dr_{\ell} \dt{a_{i,1} \, \cdots \, f_{i,\ell}(x_{\ell}) \, \cdots}$, where $\dr_{\ell}$
	is the difference operator in the $x_{\ell}$ variable. Now recall the summation by parts identity:
	$$ \sum_{x=a}^b u(x) \dr [v(-x)] = \sum_{x=a}^b \dr u(x) v(-x) + u(b+1)v(-b) - u(a)v(-a +1).$$
	
	Combining these we have the following. Suppose $c_j, d_j \in \Z$ are such that for
	an index $\ell > k$, $c_{\ell} = c_{\ell+1}$ if $\ell < N$ and $d_{\ell-1} = d_{\ell} -1$. Define
	$d^{-}_j = d_j - \ind{j = \ell}$ and $c^{-}_j = c_j - \ind{j = \ell}$. Then,
	\begin{align} \label{sbpc}
	& \sum_{\substack{x \in \W_N \\ x_k = A}} \dt{\dr^{d_j - a_i}f(x_j - y_i)} \dt{\dr^{b_j-c_i} g(z_j-x_i)} \\ \nonumber
	= & \sum_{\substack{x \in \W_N \\ x_k = A}} \dt{\dr^{d^{-}_j - a_i}f(x_j - y_i)} \dt{\dr^{b_j-c^{-}_i} g(z_j-x_i)}.
	\end{align}
	In plain words, one can move a derivative from column $\ell$ of the first determinant to the second's,
	decreasing $d_{\ell}$ and $c_{\ell}$ by 1 as a result. Indeed, consider the sum over variable $x_{\ell}$
	on the l.h.s.~of \eqref{sbpc} while holding the other variables fixed. Upon transposing the second matrix
	and using the aforementioned observations in order, we see that
	\begin{align*}
		& \sum_{x_{\ell} = x_{\ell -1}}^{x_{\ell+1}} \dt{\dr^{d_j - a_i}f(x_j - y_i)} \dt{\dr^{b_i-c_j} g(z_i-x_j)} \\
		= & \sum_{x_{\ell} = x_{\ell -1}}^{x_{\ell+1}} \dt{\dr^{d^{-}_j - a_i}f(x_j - y_i)} \dt{\dr^{b_j-c^{-}_i} g(z_j-x_i)} + (\text{boundary term}).
	\end{align*}
If $\ell = N$ then $x_{\ell+1} = + \infty$, and if $\ell = 1$ then $x_{\ell-1} = -\infty$. The boundary term equals $(I) - (II)$, where
\begin{align*}
	(I) &= \dt{\dr^{d^{-}_j - a_i} f(x_j-y_i)} \Big |_{x_{\ell} \coloneqq x_{\ell+1}+1} \, \cdot \, \dt{\dr^{b_i-c_j} g(z_i-x_j)} \Big |_{x_{\ell} \coloneqq x_{\ell+1}} \\
	(II) & = \dt{\dr^{d^{-}_j - a_i} f(x_j-y_i)} \Big |_{x_{\ell} \coloneqq x_{\ell-1}} \, \cdot \, \dt{\dr^{b_i-c_j} g(z_i-x_j)} \Big |_{x_{\ell} \coloneqq x_{\ell-1}-1}.
\end{align*}
	The term $(I) = 0$ because the $\ell$ and $(\ell + 1)$-st column of the second determinant agree due to $c_{\ell} = c_{\ell +1}$
	when $\ell < N$. If $\ell = N$ then it is 0 because $\dr^{m} g(z-x) = 0$ for all sufficiently large $x$, which makes the last
	column of the second determinant 0. The term $(II) = 0$ for the same reason with respect to the first determinant
	since $d_{\ell-1} = d_{\ell} - 1$.
	
	Analogously, for an $\ell < k$, suppose $c_{\ell+1} = c_{\ell} + 1$ and $d_{\ell} = d_{\ell - 1}$ if $\ell > 1$.
	Then we may move a derivative from the $\ell$-th column of the first determinant to the second's
	in the l.h.s. of \eqref{sbpc}, which will result in $c_{\ell}$ and $d_{\ell}$ being increased by 1.
	
	Identity \eqref{sbpa} follows by first applying \eqref{sbpc} to columns $\ell = N, N-1, \ldots, k+1$ \emph{in that order}.
	The conditions on $c_{\ell}$ and $d_{\ell}$ are then satisfied during each application. Then we apply \eqref{sbpc}
	to $\ell = N, \ldots, k+2$, followed by to $\ell = N, \ldots, k+3$, and so on. The derivative in column $j > k$
	is reduced by $j-k$. Similarly, we apply the derivative incrementing procedure first to columns $\ell = 1, \ldots, k-1$,
	then to columns $\ell = 1, \ldots, k-2$, and so forth to increase the derivative in column $j < k$ by $k-j$.
\end{proof}

\paragraph{\textbf{Proof of Lemma \ref{lem:Prsimple}}}
In order to simplify the expression for $Pr$ from \eqref{ptimemarkov} we apply Lemma \ref{lem:sbp} iteratively.
Apply \eqref{sbpa} to the expression \eqref{ptimemarkov} with respect to the sum over $x^1$,
which involves the first two determinants. In doing so, set $k = n_1, a_i = i, b_j = j, f = w_{n_1}$, etc.
We find that
\begin{align*}
Pr = \sum_{\substack{x^1, \ldots, \, x^p \in \W_N \\ x^r_{n_r} < a_r }} &
\dt{ \dr^{n_1-i}w_{m_1}(x^1_j)} \dt{ \dr^{j-n_1}w_{\D_2 m}(x^2_j - x^1_i)} \times \\
 & \times \prod_{r=3}^p \dt{ \dr^{j-i}w_{\D_r m}(x^r_j - x^{r-1}_i)}.
\end{align*}
Next, apply \eqref{sbpa} to the sum over $x^2$ -- involving the 2nd and 3rd determinants -- with
$k = n_2$ and $a_i \equiv n_1$. Then,
\begin{align*}
	Pr = \sum_{\substack{x^1, \ldots, \, x^p \in \W_N \\ x^r_{n_r} < a_r }} &
	\dt{ \dr^{n_1-i}w_{m_1}(x^1_j)} \dt{ \dr^{\D_2 n}w_{\D_2 m}(x^2_j - x^1_i)} \dt{ \dr^{j-n_2}w_{\D_3 m}(x^3_j - x^2_i)}  \\
	& \times \prod_{r=4}^p \dt{ \dr^{j-i}w_{\D_r m}(x^r_j - x^{r-1}_i)}.
\end{align*}
After iterating like this for all the variables, we finally use \eqref{sbpb} to perform the sum over $x^p$
with $x^p_{N} < a_p$ (recall $n_p = N$). This gives the expression \eqref{simpleptime}.
\qed

We would like to express $Pr$ as a single $N \times N$ determinant.
This would ordinarily be done by using the Cauchy-Binet identity iteratively over each of
the sums. However, the constraints $x^r_{n_r} < a_r$ prevent
a direct application. This is addressed in the following section.

\subsection{Cauchy-Binet identity} \label{sec:cb}
Let us manipulate the expression from  \eqref{simpleptime} in the following way.
First, consider $N \times N$ matrices $A = [a_{ij}]$ and $B = [b_{ij}]$ such that $\dt{A} \cdot \dt{B} = 1$.
In fact, we will chose $A$ and $B$ to be triangular with $a_{ii} = b_{ii}^{-1}$. We multiply the matrix of
the first determinant from \eqref{simpleptime} by $A$ and of the last one by $B$. Doing so will set
us up for the orthogonalization procedure of the next section.

Formally, introduce functions $f_{0,1}, f_{1,2}, \ldots, f_{p-1,p}$ as follows. We assume that $p \geq 2$.
When $p =1$ we can use a simpler approach as explained in $\S$\ref{sec:singletime}.
For $1 \leq i,j \leq N$ as well as $x,y \in \Z$,
\begin{align} \label{orthogf}
	f_{0,1}(i,x) &= \sum_{k=1}^N a_{ik} \, \dr^{n_1-k} w_{m_1}(x+a_1) \cdot (-1)^{n_1}, \\ \nonumber
	f_{r-1,r}(x,y) & = \dr^{\D_r n} w_{\D_r m}(y-x + \D_r a) \cdot (-1)^{\D_r n} \;\; \text{for}\; 1 < r < p, \\ \nonumber
	f_{p-1,p}(x,j) & = \sum_{k=1}^N \dr^{k-1-n_{p-1}} w_{\D_p m}(\D_p a - x) \, b_{kj} \cdot (-1)^{n_{p-1}}.
\end{align}

Then $Pr$ equals
\begin{equation} \label{fptime}
Pr = \sum_{\substack{x^1, \, \ldots, \, x^{p-1} \in \W_N \\ x^k_{n_k} < 0}} \;
\dt{f_{0,1}(i,x^1_j)} \prod_{k=2}^{p-1} \, \dt{f_{k-1,k}(x^{k-1}_i, x^k_j)} \dt{f_{p-1,p}(x^{p-1}_i,j)}.
\end{equation}
The summation constraints became $x^k_{n_k} < 0$ because we have shifted
$x^k_i \mapsto x^k_i + a_r$ in defining $f_{k-1,k}$. Also, the powers of $-1$
in the $f$s do not change the product of the determinants because they
factor out as $(-1)^{N \cdot(n_1 + \D_2n + \cdots + \D_{p-1}n + n_{p-1})} = (-1)^{2Nn_{p-1}}$.

Consider $\mathbold{\theta} = (\theta_1, \ldots, \theta_{p-1})$ where each $\theta_k \in \C \setminus 0$.
Define an $N \times N$  matrix $L = L(i,j \vt \mathbold{\theta})$ as follows with the convention that $\theta_k^0 = 1$.
\begin{equation} \label{matrixL}
L(i,j \vt \mathbold{\theta}) = \sum_{(x_1, \ldots, x_{p-1}) \in \Z^{p-1}} f_{0,1}(i,x_1)
\prod_{k=2}^{p-1} f_{k-1,k}(x_{k-1},x_k) \, f_{p-1,p}(x_{p-1},j)
\prod_{k=1}^{p-1} \theta_k^{\ind{x_k < 0} - \ind{i \leq n_k}}.
\end{equation}
The sum is actually finite because $f_{r-1,r}(x,y)$ vanishes for all sufficiently large $x$ or small $y$.
Apart from the factors involving $\theta$, $L$ is the convolution $f_{0,1} * \cdots * f_{p-1,p}$ or,
if we think of the $f$s as matrix kernels, then it is the product $f_{0,1} \cdots f_{p-1,p}$.
The conclusion of this section is
\begin{lem} \label{lem:CB}
	Let $\gamma_r$ be a counterclockwise circular contour of radius $r > 1$. Set
	$\gamma_r^{p-1} = \overbrace{\gamma_r \times \cdots \times \gamma_r}^{p-1}$.
	\begin{equation} \label{discreteformula}
		Pr = \oint\limits_{\gamma_r^{p-1}} d\theta_1 \cdots d \theta_{p-1} \; \frac{\dt{L(i,j \,|\, \mathbold{\theta})}}{\prod_{k=1}^{p-1} (\theta_k-1)} \,.
	\end{equation}
\end{lem}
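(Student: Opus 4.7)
\emph{Proof plan.} The goal is to convert the formula \eqref{fptime} for $Pr$ into the single-determinant contour-integral form \eqref{discreteformula}. The natural tool for coalescing a sum of products of determinants into one determinant is the (weighted) Cauchy--Binet composition identity, but the localization constraints $x^k_{n_k}<0$ in \eqref{fptime} obstruct its direct application. The role of the auxiliary variables $\theta_k$ is precisely to encode these constraints: the contour integral $\oint (\theta_k-1)^{-1}d\theta_k$ on $\gamma_r$ with $r>1$ will act as an indicator via residue calculus, thereby removing the constraints and freeing up Cauchy--Binet.

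First I would state and iterate the weighted Cauchy--Binet composition identity: if $A,B$ are kernels on $\Z$ and $w$ is a weight, then
\begin{equation*}
\dt{\sum_x A(i,x)\,w(x)\,B(x,j)}_{i,j=1}^N = \sum_{x\in \W_N}\dt{A(i,x_a)}\dt{B(x_a,j)}\prod_{a=1}^N w(x_a),
\end{equation*}
extended across the $p-1$ intermediate variables of $f_{0,1}\ast f_{1,2}\ast\cdots\ast f_{p-1,p}$. Applied to $L(i,j\vt\mathbold{\theta})$ in \eqref{matrixL}, the row-dependent factor $\prod_k \theta_k^{-\ind{i\leq n_k}}$ pulls out of row $i$ and contributes $\prod_k \theta_k^{-n_k}$ to $\dt{L}$; the remaining factor $\prod_k \theta_k^{\ind{x_k<0}}$ plays the role of the weight $w$ on the intermediate variables. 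After Cauchy--Binet each intermediate $x_k$ splits into $x^k_a$, $a=1,\ldots,N$, so this weight becomes $\prod_k\theta_k^{\ell_k(x^k)}$ with $\ell_k(x^k)=\#\{a : x^k_a<0\}$. Combining,
\begin{equation*}
\dt{L(i,j\vt\mathbold{\theta})}=\sum_{x^1,\ldots,x^{p-1}\in\W_N}\dt{f_{0,1}(i,x^1_j)}\prod_{k=2}^{p-1}\dt{f_{k-1,k}(x^{k-1}_i,x^k_j)}\dt{f_{p-1,p}(x^{p-1}_i,j)}\prod_{k=1}^{p-1}\theta_k^{\ell_k(x^k)-n_k}.
\end{equation*}

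The rest is residue calculus. Since $r>1$, the contour $\gamma_r$ encloses $\theta=0$ and $\theta=1$; a direct computation gives
\begin{equation*}
\oint_{\gamma_r}\frac{\theta^m}{\theta-1}\,d\theta=\ind{m\geq 0},
\end{equation*}
because the residue at $\theta=1$ is always $1$, while for $m<0$ the Laurent expansion of $1/(\theta-1)$ around $\theta=0$ contributes residue $-1$, producing a cancellation. Since $x^k\in\W_N$ is ordered, $\ell_k(x^k)\geq n_k$ if and only if $x^k_{n_k}<0$, so taking $m=\ell_k(x^k)-n_k$ inserts exactly the desired constraint indicator $\ind{x^k_{n_k}<0}$. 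Interchanging the finite sum with the contour integrals then reproduces \eqref{fptime} and yields \eqref{discreteformula}.

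The only subtle point is verifying the iterated Cauchy--Binet identity for the specific $f_{k-1,k}$: whenever two intermediate variables coincide, say $x^k_a=x^k_{a+1}$, two columns of the determinant $\dt{f_{k-1,k}(\cdot,x^k_j)}$ agree and the contribution vanishes, so weak versus strict ordering in $\W_N$ is immaterial and the composition identity applies as stated. This is really the only place where the particular form of the $f_{k-1,k}$'s enters the argument; beyond it, the proof is a careful bookkeeping of how the exponents $-\ind{i\leq n_k}$ in the row factor and $\ind{x_k<0}$ in the intermediate-variable factor combine, under Cauchy--Binet and the residue formula, to produce exactly the constraint $x^k_{n_k}<0$. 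This bookkeeping is the main potential source of sign or off-by-one errors, but follows the pattern already verified in the two-time case of \cite{JoTwo}.
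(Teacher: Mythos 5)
Your proposal is correct and uses exactly the same two ingredients as the paper's proof, namely the residue identity $\oint_{\gamma_r}\theta^m(\theta-1)^{-1}d\theta=\ind{m\geq 0}$ and the iterated Cauchy--Binet composition identity, merely in reverse order: you expand $\dt{L}$ into the constrained sum, whereas the paper inserts the contour indicator into \eqref{fptime} first and then collapses via Cauchy--Binet. Your observation that weak versus strict ordering in $\W_N$ is immaterial because tied columns annihilate the determinants, and that $\ell_k(x^k)\geq n_k\Leftrightarrow x^k_{n_k}<0$ by monotonicity, are the correct justifications, and the row-factor bookkeeping producing $\prod_k\theta_k^{-n_k}$ matches the paper.
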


\begin{proof}
For $x \in \W_N$, the condition $x_n < 0$ is equivalent to $\# \{x_j < 0\} \geq n$.
Now for $\ell \in \Z$,
$$ \ind{ \ell \geq 0} = \oint\limits_{\gamma_r} d\theta \, \frac{\theta^{\ell}}{\theta -1}\,.$$
Consequently,
\begin{equation} \label{constraintint}
\ind{ \# \, \{x_j < 0\} \, \geq n} = \oint\limits_{\gamma_r} d\theta \, \frac{\prod_{j=1}^N \theta^{\ind{x_j < 0}}}{\theta^n (\theta -1)}.
\end{equation}
If we apply \eqref{constraintint} to the expression \eqref{fptime} for $Pr$ we find
\begin{align*}
	Pr = \oint\limits_{\gamma_r^{p-1}} & d\theta_1 \cdots d \theta_{p-1} \, \prod_{k=1}^{p-1}\frac{\theta_k^{-n_k}}{\theta_k -1} \,
	\Bigg [  \sum_{\substack{x^k \in \W_n \\ 1 \leq k < p}} \dt{f_{0,1}(i,x^1_j) \theta_1^{\ind{x^1_j < 0}}} \times \\
	 & \prod_{k=2}^{p-1} \dt{f_{k-1,k}(x^{k-1}_i, x^k_j)\theta_k^{\ind{x^k_j < 0}}} \dt{f_{p-1,p}(x^{p-1}_i,j) } \Bigg ].
\end{align*}

We push $\theta_k^{-n_k}$ into the first determinant by inserting $\theta_k^{-1}$ into its first $n_k$
rows. Then, by the Cauchy-Binet identity, the quantity inside square brackets is $\dt{L(i,j \,| \mathbold{\theta})}$.
\end{proof}

Expression \eqref{discreteformula} is a discrete determinantal formula for the multi-point
distributions functions \eqref{ptime}. However, matrix $L$ does not have good asymptotical behaviour
for the KPZ scaling limit (or numerical estimates). It is necessary to express $\dt{L}$
as a Fredholm determinant over a space free of parameter $N$. This is the subject of
the following section.

\section{Orthogonalization: representation as a Fredholm determinant} \label{sec:fredholm}

Recall the triangular matrices $A$ and $B$ from $\S$\ref{sec:cb}. Multiplication by them is essentially performing
elementary row and column operations, which is an orthogonalization procedure. The entries of $A$ and $B$,
vaguely put, will be like inverses to entries of the first and last determinant in \eqref{simpleptime}. These are
obtained by extending $\dr^n w_m(x)$ to negative $m$, which motivates the following. Later in $\S$\ref{sec:singletime}
we provide intuition for this orthogonalization by explaining it for the single point law.

\subsection{Contour integrals} \label{sec:cint}
Recall the functions $G^{*}$ and $G$ from \eqref{gnmx} and \eqref{Gnmx}.
The 3-parameter family $G^{*}(\cdot \vt n,m,a)$ and $G(\cdot \vt n,m,a)$
form a group in that for $w \neq 0, 1-q, 1$:
\begin{align} \label{Hgroup}
 G^{*}(w \vt n+n', m+m', a+a') &= G^{*}(w \vt n,m,a) \cdot G^{*}(w\,|\, n',m',a'), \\ \nonumber
 G^{*}(w \vt -n, -m, -a) &= G^{*}(w \vt n,n,a)^{-1}, \\ \nonumber
 G^{*}(w \vt 0,0,0) = 1,
\end{align}
and analogously for $G$. The \emph{group property} will make it convenient to
follow upcoming calculations and give further intuition for the orthogonalization procedure.

From the generating function $(1+z)^{-k} = \sum_{x \geq 0} \binom{-k}{x} z^x$ for negative binomials, it follows that
$$ w_m(x) = \oint\limits_{\gamma_{\rho}} dz \, \Big(\frac{1-qz}{1-q} \Big)^{-m}z^{-x-1},$$
where $\rho < 1$. Changing variables $z \mapsto (1-z)^{-1}$ gives a contour
integral representation of $w_m(x)$ that, upon applying integer powers of $\dr$ according to
\eqref{eqn:derivative} and \eqref{eqn:inverse}, shows that
\begin{equation}\label{Deltakwm}
\dr^n w_m(x)=(-1)^{n-1}\oint\limits_{\gamma_r(1)}dz\, G^{*}(z \vt n,m,x-1)
\end{equation}
with radius $r>1$ (so $\gamma_r(1)$ encloses all possible poles at $z = 0,1-q,1$). The condition $r > 1$
ensures that the summation needed to apply $\dr^{-1}$ to $G^{*}(z \vt n,m,x)$ in the $x$-variable,
is legal throughout $z \in \gamma_r(1)$.
The right hand side of \eqref{Deltakwm} continues $\dr^n w_m(x)$ to integer values of all parameters.

Define the matrices $A$ and $B$ as follows. Let $c(k)$ be the conjugation factor defined in \eqref{conjugation},
and recall $m(k)$ and $a(k)$ from \eqref{blocknot}.
Consider any radius $\tau < 1-q$.
\begin{align}\label{ABdef}
a_{ik} & =c(i)(-1)^{k} \, \oint\limits_{\gamma_{\tau}} d\zeta \, \frac{1}{G^{*}\big (\zeta \vt i-k+1, m(i), a(i)-1\big)}\, , \\ \nonumber
b_{kj} & =c(j)^{-1}(-1)^k \, \oint\limits_{\gamma_{\tau}} d\zeta \, \frac{1}{G^{*}\big (\zeta \vt k-j+1, m_p - m(j), a_p - a(j)\big)}\, .
\end{align}
The matrices $A$ and $B$ are lower-triangular with $a_{ii} = c(i) (-1)^i = b_{ii}^{-1}$; so $\dt{A} \dt{B} = 1$.
This is because
\begin{equation*}
\oint\limits_{\gamma_{\tau}} d \zeta \, \frac{1}{G^{*}(\zeta \vt n+1, m, a)} = \begin{cases}
	1 & \text{if}\;\; n = 0 \\
	0 & \text{if}\;\; n < 0.
\end{cases}
\end{equation*}

\begin{lem} \label{lem:Gintegral}
The following identities hold.
\begin{enumerate}
	\item If $1 \leq i \leq N$ and $|z| > \tau$,
	\begin{equation*}
	\oint\limits_{\gamma_\tau} d\zeta\, \frac{1}{G^{*}(\zeta \vt i,m,a) \, (z-\zeta)}\, =
	\sum_{k=1}^N \, \oint\limits_{\gamma_{\tau}} d\zeta \, \frac{z^{-k}}{G^{*}(\zeta \vt i-k+1, m, a)}.
	\end{equation*}
	\item If $1 \leq j \leq N$ and $|z| > \tau$,
	\begin{equation*}
		\oint\limits_{\gamma_\tau} d\zeta\, \frac{z^{N+1}}{G^{*}(\zeta \vt N+1-j,m,a)\, (z-\zeta)}\, =
		\sum_{k=1}^N \, \oint\limits_{\gamma_{\tau}} d\zeta \, \frac{z^k}{G^{*}(\zeta \vt k-j+1, m, a)}.
	\end{equation*}
\end{enumerate}
\end{lem}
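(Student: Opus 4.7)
The plan is to use the group property \eqref{Hgroup} of $G^{*}$ to concentrate all of the $k$-dependence on the right-hand side into a single power of $\zeta$, then sum the resulting geometric series in closed form and check that the ``leftover'' term produced by closing the sum integrates to zero by Cauchy's theorem.

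For part (1), the group property gives
$$\frac{1}{G^{*}(\zeta \vt i-k+1,m,a)} = \frac{\zeta^{k-1}}{G^{*}(\zeta \vt i,m,a)},$$
so after interchanging sum and integral and using the elementary identity
$$\sum_{k=1}^N z^{-k} \zeta^{k-1} = \frac{1}{z-\zeta} - \frac{\zeta^N}{z^N(z-\zeta)},$$
the right-hand side equals the left-hand side minus
$$\frac{1}{z^N}\oint\limits_{\gamma_\tau} d\zeta\, \frac{\zeta^N}{G^{*}(\zeta \vt i,m,a)(z-\zeta)}.$$
From the explicit form of $G^{*}$ in \eqref{gnmx}, this last integrand is $\zeta^{N-i}\bigl(1-\zeta/(1-q)\bigr)^m (1-\zeta)^{-a-m}(z-\zeta)^{-1}$. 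The power $\zeta^{N-i}$ is nonnegative precisely because $i \leq N$, so it kills the would-be pole at the origin, while the other candidate singularities $\zeta = 1-q$, $\zeta = 1$, $\zeta = z$ all lie outside $\gamma_\tau$ by the hypotheses $\tau < 1-q$ and $|z| > \tau$. The integrand is therefore holomorphic inside $\gamma_\tau$ and the error integral vanishes by Cauchy's theorem, proving the identity.

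Part (2) follows by the same scheme. The group property now yields $1/G^{*}(\zeta \vt k-j+1,m,a) = \zeta^{N-k}/G^{*}(\zeta \vt N+1-j,m,a)$, and the geometric identity
$$\sum_{k=1}^N z^k \zeta^{N-k} = \frac{z^{N+1}}{z-\zeta} - \frac{z\,\zeta^N}{z-\zeta}$$
splits the right-hand side into the claimed left-hand side minus an error integral. The integrand of the error carries the factor $\zeta^N/G^{*}(\zeta \vt N+1-j,m,a) = \zeta^{j-1}(\cdots)$, which is holomorphic at the origin since $j \geq 1$, and the same contour argument finishes the job.

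There is no substantive obstacle. The only things to track are that the telescoping geometric sums take the two specific forms above, and that the exponent of $\zeta$ at the origin after multiplying by $\zeta^N/G^{*}$ is nonnegative exactly under the stated hypothesis ($i \leq N$ in part (1), $j \geq 1$ in part (2)); this matching is what makes the identities cut off cleanly at $k = N$.
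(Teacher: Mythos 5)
Your proof is correct and rests on the same core idea as the paper's: expand $(z-\zeta)^{-1}$ geometrically, use the group property of $G^*$ to absorb the power of $\zeta$, and note that the terms beyond the cutoff contribute nothing to the contour integral. The only cosmetic differences are that you use the finite geometric sum with a vanishing remainder integral where the paper uses the infinite expansion and drops the tail terms with $k>i$, and that you prove part (2) directly rather than by the re-indexing $k\mapsto N+1-k$, $i=N+1-j$ that the paper uses.
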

\begin{proof}
The first identity follows by expanding $(z-\zeta)^{-1}$ in powers of $\zeta/z$. The contribution
of terms on the r.h.s.~with $k > i$ is zero. The second one follows from the first by re-indexing
$k \mapsto N+1 -k$ and substituting $i = N+1-j$.
\end{proof}

For the rest of this section we will deduce an expression for $L(i,j\vt \mathbold{\theta})$ in terms of contour integrals.
Recalling the $f_{r-1,r}$s from \eqref{orthogf}, then \eqref{Deltakwm} and \eqref{ABdef}, we infer the
following.
\begin{align*}
	f_{0,1}(i,x_1) &= - c(i) \oint\limits_{\gamma_{\tau_1}} d\zeta_1 \oint\limits_{\gamma_{R_1}(1)} d z_1 \,
	\frac{G^{*}\big(z_1 \vt n_1, m_1, a_1 + x_1 -1\big)}{G^{*}\big(\zeta_1 \vt i, m(i), a(i)-1\big) \, (z_1 - \zeta_1)}\, , \\
	f_{r-1,r}(x_{r-1}, x_r) &= - \oint\limits_{\gamma_{R_r}(1)} d z_r \, G^{*}(z_r \vt \D_r n, \D_r m, \D_r a -1) \quad \text{for}\; 1 < r < p, \\
	f_{p-1,p}(x_{p-1},j) &= c(j)^{-1} \oint\limits_{\gamma_{\tau_2}} d\zeta_2 \oint\limits_{\gamma_{R_p}(1)} d z_p \,
	\frac{G^{*}\big(z_p \vt \D_p n, \D_p m, \D_p a -x_{p-1} -1\big)}{G^{*} \big(\zeta_2 \vt n_p -j +1, m_p-m(j), a_p-a(j)\big) \, (z_p - \zeta_2)}\, .
\end{align*}
The contours above are circular and arranged as follows. Contours $\gamma_{\tau_1}$ and $\gamma_{\tau_2}$
are around the origin with $\tau_2 < \tau_1 < 1-q$ ($\tau_1$ and $\tau_2$ are ordered for definiteness).
Contours $\gamma_{R_k}(1)$ are around 1 with every $R_k > 1+ \tau_1$, that is, they enclose the contours around
the origin and the numbers $0,1-q,1$. In deriving expressions for $f_{0,1}$ and $f_{p-1,p}$ we have used
Lemma \ref{lem:Gintegral}.

Upon multiplying all the $f$s we get $(-1)^{p-1} c(i)c(j)^{-1} \times \big(\text{a}\;(p+2)-\text{fold contour integral}\big)$.
In this integral we would like to replace every $G^{*}$ by the corresponding $G$. In doing so we obtain
factors of $G^{*}(1-\sqrt{q} \vt \cdot, \cdot, \cdot)$, which, by the group property of $G^{*}$, multiply to
$$G^{*}\big(1-\sqrt{q} \vt j-i-1, m(j)-m(i),a(j)-a(i)\big).$$
When multiplied by $c(i)c(j)^{-1}$ this equals $c(i,j)/ (1- \sqrt{q})$,
where $c(i,j)$ is the conjugation factor \eqref{conjugation2}.

We may plug the product above into the definition of $L(i,j \vt \mathbold{\theta})$ from \eqref{matrixL}.
There we have a sum over $\vec{x} \in \Z^{p-1}$ and a product involving $\mathbold{\theta}$. Let us write
the product of $\theta_k$s as follows, recalling $\chi_{1}(x) = \ind{x < 0}$ and $\chi_{2}(x) = \ind{x \geq 0}$ from \eqref{chi}.
Note $\theta^{\ind{x<0}} = \theta^{2-1} \, \chi_{1}(x) + \theta^{2-2} \, \chi_{2}(x)$. Therefore,
\begin{align*}
	\prod_{k=1}^{p-1} \theta_k^{\ind{x_k < 0} - \ind{i \leq n_k}} &= \sum_{\vec{\eps} \in \{1,2\}^{p-1}}
	\prod_{k=1}^{p-1} \theta_1^{2-\eps_k-\ind{i \leq n_k}} \cdot \chi_{\eps_1}(x_1) \cdots \chi_{\eps_{p-1}}(x_{p-1}) \\
	& =  \sum_{\vec{\eps} \in \{1,2\}^{p-1}} \theta^{\vec{\eps}}(i) \, \chi_{\vec{\eps}}(\vec{x}),
\end{align*}
where $\chi_{\vec{\eps}}(\vec{x}) = \prod_{k=1}^{p-1} \chi_{\eps_k}(x_k)$ and $\theta^{\veps}(i)$ is notation from \eqref{thetaeps}.
From this expression we find that
\begin{equation} \label{Lepssum}
L(i,j \vt \mathbold{\theta}) = \sum_{\vec{\eps} \in \{1,2\}^{p-1}}
\frac{(-1)^{p-1} c(i,j)}{1-\sqrt{q}} \, \theta^{\vec{\eps}}(i) \, L^{\vec{\eps}}(i,j),
\end{equation}
where $L^{\vec{\eps}}(i,j)$ is the sum over $x \in \Z^{p-1}$ of $\chi_{\vec{\eps}}(x)$ times
the aforementioned $(p+2)$-fold contour integral.

\begin{lem} \label{lem:Leps}
Given $\vec{\eps} = (\eps_1, \ldots, \eps_{p-1}) \in \{1,2\}^{p-1}$, $L^{\vec{\eps}}(i,j)$ has the following
contour integral form. Consider radii $\tau_2 < \tau_1  < 1 - q$, as well as radii $R_1, \ldots, R_{p}$
such that every $R_k > 1+\tau_1$ and they satisfy the following pairwise ordering.
\begin{equation} \label{radiicondition}
R_k < R_{k+1} \;\;\text{if}\;\; \eps_k = 2 \quad \text{while} \quad R_k > R_{k+1} \;\;\text{if}\;\; \eps_k = 1.
\end{equation}
There is such a choice of radii, and given these,
\begin{align*}
&L^{\vec{\eps}}(i,j) = (-1)^{\eps_1 + \cdots + \eps_{p-1}} \, \oint_{\gamma_{\tau_1}} d\zeta_1 \, \oint_{\gamma_{\tau_2}} d\zeta_2 \,
\oint_{\gamma_{R_1}(1)} dz_1 \cdots \oint_{\gamma_{R_p}(1)} dz_p \\
& \frac{\prod_{k=1}^p G \big(z_k \vt \D_k (n,m,a) \big) \prod_{k=1}^{p-1} \big(z_k - z_{k+1}\big)^{-1} \, \big (\frac{1-\zeta_1}{1-z_1} \big)}
{G\big(\zeta_1 \vt i, m(i), a(i) \big) \, G\big (\zeta_2 \vt n_p -j +1,m_p-m(j), a_p-a(j)\big) \, (z_1 - \zeta_1)\, (z_p - \zeta_2)}\,.
\end{align*}
\end{lem}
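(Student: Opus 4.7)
The plan is to substitute the contour-integral formulas for $f_{0,1}$, $f_{r-1,r}$ ($1<r<p$), and $f_{p-1,p}$ derived just above the statement into the definition \eqref{matrixL} of $L(i,j\vt \mathbold{\theta})$, and then to perform the sum over $\vec{x}=(x_1,\ldots,x_{p-1})\in\Z^{p-1}$ as an iterated geometric series. The decomposition $\prod_k \theta_k^{\ind{x_k<0}}=\sum_{\veps}\prod_k \theta_k^{2-\eps_k}\chi_{\veps}(\vec{x})$ has already split $L$ into $\vec{\eps}$-summands as recorded in \eqref{Lepssum}, so it suffices to compute, for each fixed $\vec{\eps}$, the sum over $\vec{x}$ of $\chi_{\veps}(\vec x)$ times the $(p+2)$-fold contour integrand and to match the result with the claimed formula.

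The key observation is that each coordinate $x_k$ enters the combined integrand in exactly two $G^{*}$-factors: additively with sign $+1$ in the $a$-slot of $G^{*}(z_k\vt\cdots)$ and with sign $-1$ in the $a$-slot of $G^{*}(z_{k+1}\vt\cdots)$, where $z_1$ refers to the factor from $f_{0,1}$ and $z_p$ to the one from $f_{p-1,p}$. Since $a$ enters $G^{*}(w\vt n,m,a)$ only through $(1-w)^a$ by \eqref{gnmx}, the total $x_k$-dependence collapses to $\big(\frac{1-z_k}{1-z_{k+1}}\big)^{x_k}$. The sum
\[
\sum_{x_k\in\Z}\chi_{\eps_k}(x_k)\Big(\frac{1-z_k}{1-z_{k+1}}\Big)^{x_k}
\]
converges for $\eps_k=1$ exactly when $|1-z_k|>|1-z_{k+1}|$, i.e.~$R_k>R_{k+1}$, and for $\eps_k=2$ exactly when $R_k<R_{k+1}$. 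This is precisely \eqref{radiicondition}, and the existence of such radii is the same elementary inductive fact used for the $D_k$'s in the definition of $\vec{\Gamma}_{D^{\veps}}$. In both cases the sum evaluates to $(-1)^{\eps_k}(1-z_{k+1})/(z_k-z_{k+1})$, contributing an overall sign $(-1)^{\eps_1+\cdots+\eps_{p-1}}$ together with the factor $\prod_{k=1}^{p-1}(1-z_{k+1})/(z_k-z_{k+1})$ in the integrand.

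The remaining work is algebraic cleanup. Writing $G^{*}(z_r\vt\D_r n,\D_r m,\D_r a-1)=G^{*}(z_r\vt\D_r(n,m,a))/(1-z_r)$, the product $\prod_{k=1}^{p-1}(1-z_{k+1})=\prod_{r=2}^{p}(1-z_r)$ just produced telescopes with $\prod_{r=1}^{p}(1-z_r)^{-1}$ to leave the $(1-z_1)^{-1}$ appearing in the lemma. Likewise $G^{*}(\zeta_1\vt i,m(i),a(i)-1)^{-1}=(1-\zeta_1)\,G^{*}(\zeta_1\vt i,m(i),a(i))^{-1}$, which produces the $(1-\zeta_1)$ numerator factor. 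Converting every remaining $G^{*}$ to $G$ via \eqref{Gnmx} extracts a product of $G^{*}(1-\sqrt{q}\vt\cdots)$ factors that, by the group property \eqref{Hgroup}, telescope down to $G^{*}(1-\sqrt{q}\vt -1,0,0)=(1-\sqrt{q})^{-1}$ once multiplied against $c(i)c(j)^{-1}$ through the definition \eqref{conjugation}; the residual exponential piece is exactly $c(i,j)$ from \eqref{conjugation2}. Matching coefficients of $\theta^{\vec{\eps}}(i)$ in \eqref{Lepssum} then extracts $L^{\vec{\eps}}(i,j)$ in the stated form with leading sign $(-1)^{\eps_1+\cdots+\eps_{p-1}}$.

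The analytic content --- a geometric series on each coordinate and the existence of admissible radii --- is elementary. The main obstacle is purely bookkeeping: correctly tracking the $-1$'s in the $a$-arguments generated by \eqref{Deltakwm}, the signs appearing in the definitions \eqref{orthogf} and \eqref{ABdef}, the telescoping of $(1-z_r)$ and $(1-\zeta_1)$ factors with the geometric sums, and the final collapse of $G^{*}(1-\sqrt{q}\vt\cdots)$ factors against $c(i)c(j)^{-1}$ via the group property \eqref{Hgroup}.
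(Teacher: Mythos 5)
Your proposal follows the same route as the paper: factor the $x_k$-dependence into $\big(\frac{1-z_k}{1-z_{k+1}}\big)^{x_k}$ using the multiplicativity of $G$ (or $G^{*}$) in the $a$-slot, evaluate each coordinate sum as a one-sided geometric series whose convergence forces the radii ordering \eqref{radiicondition} and whose value yields the factors $(-1)^{\eps_k}(1-z_{k+1})/(z_k-z_{k+1})$, establish existence of admissible radii by the same induction, and then telescope the $(1-z_r)$ and $(1-\zeta_1)$ factors into $\frac{1-\zeta_1}{1-z_1}$. The only cosmetic difference is that you postpone the $G^{*}\!\to G$ normalization and the absorption of $c(i)c(j)^{-1}$ into $c(i,j)/(1-\sqrt q)$ until after the summation, whereas the paper performs it in the paragraph immediately preceding the lemma; this is harmless since the $x_k$-dependent factors are identical for $G$ and $G^{*}$.
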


\begin{proof}
From the discussion preceeding the lemma we see that
\begin{align*}
& L^{\vec{\eps}}(i,j) = \sum_{(x_1, \ldots, x_{p-1}) \in \Z^{p-1}}
\oint_{\gamma_{\tau_1}} d\zeta_1 \, \oint_{\gamma_{\tau_2}} d\zeta_2 \, \oint_{\gamma_{R_1}(1)} dz_1 \cdots \oint_{\gamma_{R_p}(1)} dz_p \;
\chi_{\eps_1}(x_1) \cdots \chi_{\eps_{p-1}}(x_{p-1}) \\
& \frac{\prod_{k=1}^{p-1} G(z_k \vt \D_k n, \D_k m, \D_k a +\D_k x -1) \, G(z_p \vt \D_p n, \D_p m, \D_k a - x_{p-1} -1)}
{G\big(\zeta_1 \vt i, m(i), a(i) -1\big) \, G\big (\zeta_2 \vt n_p -j +1,\, m_p-m(j), \, a_p-a(j)\big) \, (z_1 - \zeta_1)\, (z_p - \zeta_2)}\,.
\end{align*}
From the group property, $G(z \vt n,m,a + x -1) = G(z \vt n,m,a) (1-z)^{x-1}$.
Using this, we factor out every $(1-z_k)^{\D_k x -1}$, $(1-z_p)^{-x_{p-1}-1}$ and $(1-\zeta_1)^{-1}$.
Their contribution is
$$ \prod_{k=1}^{p-1} \left (\frac{1-z_k}{1-z_{k+1}} \right )^{x_k} \; \cdot \; \frac{1- \zeta_1}{\prod_{k=1}^p (1-z_k)}\,.$$
Now suppose $z \in \gamma_{\rho_1}(1)$, $w \in \gamma_{\rho_2}(1)$ and $\eps \in \{1,2\}$. Then,
$$\sum_{x \in \Z} \chi_{\eps}(x)\, \left ( \frac{1-z}{1-w} \right )^x = (-1)^{\eps}\, \frac{1-w}{z-w},$$
so long as $\rho_1 < \rho_2$ in the case $\eps = 2$ or $\rho_1 > \rho_2$ in the case $\eps = 1$.
The radii $R_1, \ldots, R_p$ have been chosen precisely to satisfy these constraints imposed by $\vec{\eps}$.
That it is possible to do so may be seen by induction on $p$ as follows.

The base case of $p=2$ is trivial. Now suppose there is an arrangement of radii $R_1, \ldots, R_p$
that satisfy the constraints given by $\eps_1, \ldots, \eps_{p-1}$, and we introduce an $\eps_p \in \{1,2\}$.
Find previous radii $R_a$ and $R_b$ such that $R_a < R_p < R_b$ (one of these may be vacuous).
Now choose any radius $R_{p+1} > 1 + \tau_1$ such that if $\eps_p = 1$ then $R_a < R_{p+1} < R_p$,
while if $\eps_p = 2$ then have $R_p < R_{p+1} < R_b$. This proves the claim. An explicit choice
of such radii is the following:
\begin{equation} \label{radii}
R_1 \; \text{satisfies}\; R_1 \cdot \big (1 -\frac{1}{2} - \cdots - \frac{1}{2^{p-1}} \big) > 1 + \tau_1; \quad
R_k = R_1 \cdot \big (1 + \sum_{j=1}^{k-1} \frac{(-1)^{\eps_j}}{2^j} \big).
\end{equation}

Finally, using the summation identity above to carry out the sum over every $x_k$, and simplifying the
resulting integrand, we get the representation of $L^{\vec{\eps}}(i,j)$ stated in the lemma.
\end{proof}

We conclude the section with a presentation of $L(i,j \vt \theta)$ that will be used to get
a Fredholm determinant form in the next section and also for its asymptotics.
Consider the contour integral form of $L^{\veps}(i,j)$ in Lemma \ref{lem:Leps}.
Deform each contour $\gamma_{R_k}(1)$ to a union of a contour around 0,
say $\gamma_{\rho_k}(0)$, and a contour around 1, say $\gamma_{\rho'_k}(1)$.
The first of these should enclose $\gamma_{\tau_1}$ and $\gamma_{\tau_2}$ and lie
within the circle of radius $1 - \sqrt{q}$. That is,
$$\tau_2 < \tau_1 < \rho_k < 1- \sqrt{q} \;\; \text{for every}\;\; k.$$
The second should enclose non-zero poles in variable $z_k$ and lie outside the circle of radius $1 - \sqrt{q}$. That is,
$$1 - \sqrt{q} < 1-\rho'_k < 1- q \;\;\text{for every}\;\; k.$$
See Figure \ref{fig:contours} for an illustration.
\begin{figure}[htpb!]
\centering
\includegraphics[scale=0.4]{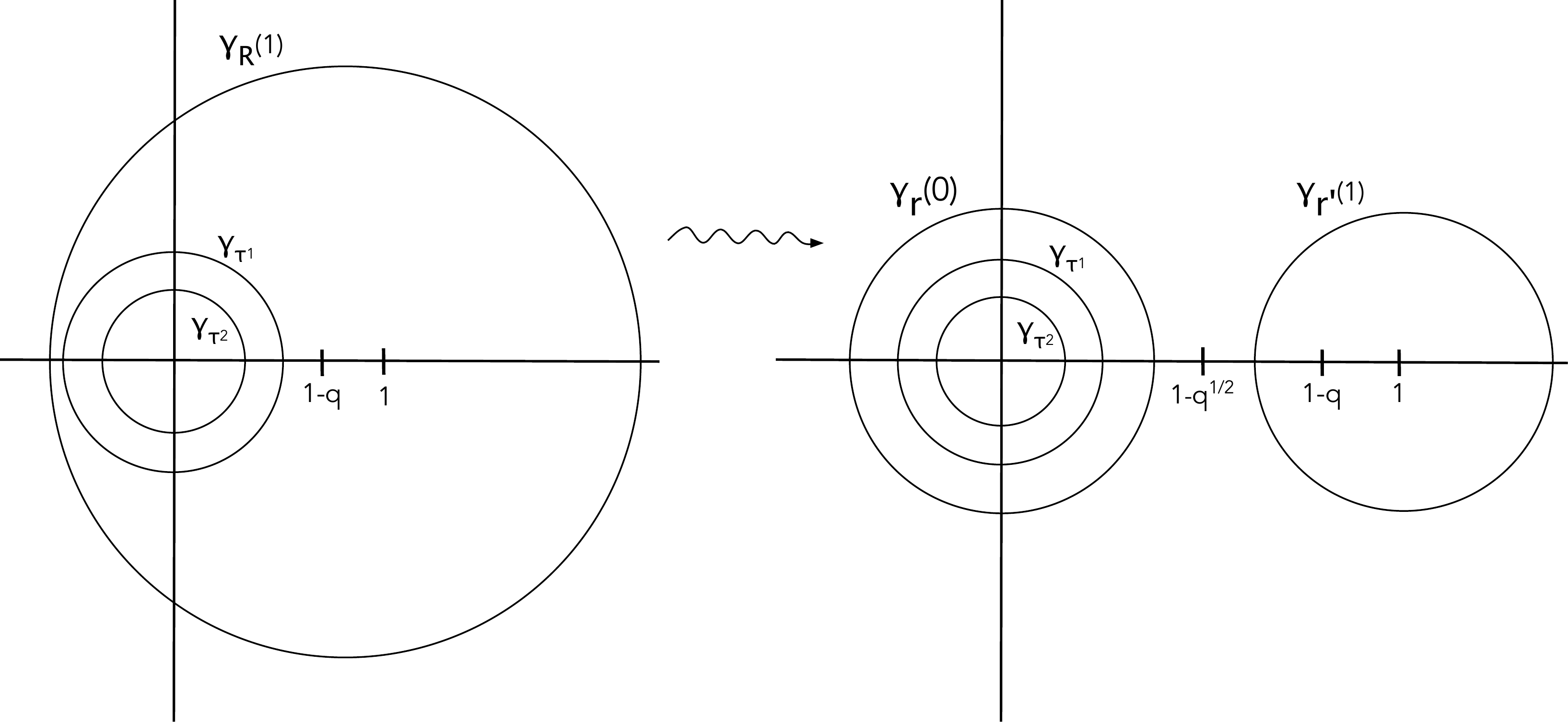}
\caption{The deformation of $\gamma_R(1)$ into two contours $\gamma_{r}(0)$ and $\gamma_{r'}(1)$.}
\label{fig:contours}
\end{figure}

The radii of the contours should be arranged so that the ordering imposed by $\veps$ remains,
that is, if $\eps_k =2$ then $\rho_k < \rho_{k+1}$ and $\rho'_k < \rho'_{k+1}$, etc. In order to simplify
notation, we denote $\gamma_{\rho_k}(0)$ as $\gamma_{R_k}(0)$ and $\gamma_{\rho'_k}(1)$ as
$\gamma_{R_k}(1)$. In this notation we write the contour integral for $L^{\veps}(i,j)$ as a sum of
$2^p$ contour integrals, where for each integral we make a choice of contours $z_1 \in \gamma_{R_1}(\delta_1),
z_2 \in \gamma_{R_2}(\delta_2), \ldots, z_p \in \gamma_{R_p}(\delta_p)$, and
$\vec{\delta} = (\delta_1, \ldots, \delta_p) \in \{0,1\}^p$. Thus,
\begin{equation} \label{Lepsdelta}
L(i,j \vt \mathbold{\theta}) = \sum_{\vdel \in \{0,1\}^p} \sum_{\veps \in \{1,2\}^{p-1}}\,
(-1)^{p-1 + \eps_1 + \cdots + \eps_{p-1}} \, \frac{c(i,j)}{1- \sqrt{q}} \, \theta^{\veps}(i) \, L^{\veps}_{\vdel}(i,j).
\end{equation}
The entry $L^{\veps}_{\vdel}(i,j)$ looks the same as the integral in Lemma \ref{lem:Leps}
except $\gamma_{R_k}(1)$ is replaced by $\gamma_{R_k}(\delta_k)$ in our simplified notation.

\subsection{Fredholm determinant form} \label{sec:fdet}

Looking at \eqref{Lepsdelta}, the identity matrix in the Fredholm determinantal form for $L$
will come from the contribution at $\vdel = \vec{0}$. So we define some matrices by
which the $L^{\veps}_{\vdel}$s will be expressed. Recall notations from $\S$\ref{sec:notation}.

\begin{defn} \label{def:Lk}
Let $L_0 = 0$. For $1 \leq k \leq p$, define a matrix $L_k$ as follows. For $1 \leq i, j \leq N$ (recall $N = n_p$),
	\begin{align*}
		&L_k(i,j) = \frac{1}{1-\sqrt{q}}\, \oint\limits_{\gamma_{\tau_1}} d\zeta_1\,  \oint\limits_{\gamma_{\tau_2}} d \zeta_2\,
		\frac{G \big(\zeta_1 \vt n_k - i, m_k- m(i), a_k-a(i) \big)}{G \big(\zeta_2 \vt n_k - j+1, m_k-m(j), a_k-a(j)\big) \,(\zeta_1 - \zeta_2)}.
	\end{align*}
The radii should satisfy $\tau_2 < \tau_1 < 1-\sqrt{q}$.
\end{defn}

\begin{defn} \label{def:Lcd}
	Suppose $0 \leq k_1 < k_2 \leq p$ and $\veps  \in \{1,2\}^{p-1}$. Let $\tau_2 < \tau_1 < 1-\sqrt{q}$.
	Consider radii $R_{k_1+1}, \ldots, R_{k_2}$ such that $q < R_k < \sqrt{q}$ for every $k$, and they are ordered in the following way:
	$$ R_k < R_{k+1} \;\; \text{if}\;\; \eps_k = 2 \quad \text{while} \quad R_k > R_{k+1} \;\;\text{if}\;\; \eps_k = 1.$$
	Note this depends only on $\eps_{k_1+1}, \ldots, \eps_{k_2-1}$.
	(It is possible to arrange the radii according to $\veps$ as shown in Lemma \ref{lem:Leps}.)
	Set $\vec{\gamma}_{R^{\veps}} = \gamma_{R_{k_1+1}}(1) \times \cdots \times \gamma_{R_{k_2}}(1)$.
	Define a matrix $L^{\veps}_{(k_1,k_2]}$ as follows.
	\begin{align*}
		&L^{\veps}_{(k_1,k_2]}(i,j) = \ind{i > n_{k_1}, \, j \leq n_{k_2}} \, \frac{1}{1-\sqrt{q}}\,
		\oint\limits_{\gamma_{\tau_1}} d \zeta_1 \oint\limits_{\gamma_{\tau_2}} d \zeta_2 \,
		\oint\limits_{\vec{\gamma}_{R^{\veps}}} d z_{k_1+1} \, d z_{k_1+2} \cdots d z_{k_2} \\
		& \frac{\prod_{k_1 < k \leq k_2} G \big ( z_k \vt \D_k (n,m,a) \big) \, \prod_{k_1 < k < k_2} (z_k - z_{k+1})^{-1}
			\left (\frac{1-\zeta_1}{1-z_1}\right)^{\ind{k_1 =0}}(z_{k_1+1}-\zeta_1)^{-1} (z_{k_2}-\zeta_2)^{-1}}
		{G\big(\zeta_1 \vt  i-n_{k_1}, m(i)-m_{k_1}, a(i)-a_{k_1} \big) \, G \big(\zeta_2 \vt n_{k_2}-j+1, m_{k_2}-m(j), a_{k_2}-a(j) \big)}.
	\end{align*}
\end{defn}

\begin{lem} \label{lem:deltazero}
	Suppose $\vdel$ is identically zero.
	Then $L^{\veps}_{\vec{0}} = 0$ unless $\veps = (\overbrace{2, \ldots, 2}^{k-1}, \overbrace{1, \ldots, 1}^{p-k})$ for some $k$.
	In other words, it is the zero matrix unless there is a $k \in [1,p]$ such that the radii of contours $\gamma_{R_1}(0), \ldots, \gamma_{R_p}(0)$
	satisfy $R_1 < R_2 < \cdots < R_k$ and $R_p < \cdots < R_{k+1} < R_k$.
\end{lem}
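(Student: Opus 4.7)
My plan is to track which singularities of the $z$-integrand in Lemma \ref{lem:Leps} actually lie inside the small contours $\gamma_{R_k}(0)$ (which replace $\gamma_{R_k}(1)$ once $\vdel = \vec 0$), and argue that any interior local minimum in the sequence $R_1, \ldots, R_p$ forces the integral to vanish by Cauchy's theorem. The crucial inputs are the explicit factorization $G^{*}(w \vt n,m,a) = w^n(1-w)^{a+m}(1-w/(1-q))^{-m}$ from \eqref{gnmx}, together with the chain of inequalities $\tau_2 < \tau_1 < R_k < 1-\sqrt q < 1-q$ satisfied by the relevant contours.

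Fix an interior index $k$, i.e.\ $2 \leq k \leq p-1$, and examine the $z_k$-dependence of the integrand. The factor $G(z_k \vt \D_k n, \D_k m, \D_k a)$ is $G^{*}$ divided by a nonzero constant, so its potential singularities in $z_k$ sit only at $0$, $1-q$ and $1$; the latter two lie outside $\gamma_{R_k}(0)$ because $R_k < 1-\sqrt q < 1-q < 1$, while at $z_k = 0$ the factor vanishes to order $\D_k n \geq 1$ rather than producing a pole. Hence $G$ contributes no singularities in the disk bounded by $\gamma_{R_k}(0)$. The only other $z_k$-dependent factors are $(z_{k-1}-z_k)^{-1}$ and $(z_k-z_{k+1})^{-1}$, which contribute a pole inside $\gamma_{R_k}(0)$ iff $R_{k-1} < R_k$, respectively $R_{k+1} < R_k$. (The factors $(z_1-\zeta_1)^{-1}$, $(z_p-\zeta_2)^{-1}$ and $(1-z_1)^{-1}$ involve only $z_1$ or $z_p$, so they are irrelevant for interior $k$.) Consequently, if $R_{k-1} > R_k$ and $R_{k+1} > R_k$ both hold---a strict local minimum at $k$---the integrand is analytic in $z_k$ throughout the closed disk bounded by $\gamma_{R_k}(0)$, so the $z_k$-integral is zero and $L^{\veps}_{\vec 0} = 0$.

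It follows that nonvanishing forces $(R_1,\ldots,R_p)$ to have no strict interior local minimum, which is equivalent to being unimodal: $R_1 < R_2 < \cdots < R_k > R_{k+1} > \cdots > R_p$ for some $k \in \{1,\ldots,p\}$. Translating via the rule $\eps_j = 2 \Leftrightarrow R_j < R_{j+1}$ and $\eps_j = 1 \Leftrightarrow R_j > R_{j+1}$ from \eqref{radiicondition} recovers exactly the claimed pattern $\veps = (\underbrace{2,\ldots,2}_{k-1}, \underbrace{1,\ldots,1}_{p-k})$. I do not anticipate any real obstacle here: the whole argument is a single application of Cauchy's theorem per variable once the bookkeeping at $z_k = 0$---that $\D_k n \geq 1$ turns a would-be pole into a zero---is in hand.
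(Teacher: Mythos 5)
Your argument is essentially identical to the paper's: you observe that a strict interior local minimum of the radii at index $k$ leaves the integrand analytic in $z_k$ inside $\gamma_{R_k}(0)$, so that $z_k$-contour collapses and the whole integral vanishes; non-vanishing therefore forces unimodality of $(R_1,\ldots,R_p)$, which is the stated pattern for $\veps$. The paper simply asserts that the only $z$-variable poles come from the denominator factors $(z_j - z_{j+1})^{-1}$, $(z_1-\zeta_1)^{-1}$, $(z_p-\zeta_2)^{-1}$; you additionally spell out why $G(z_k\vt\D_k(n,m,a))$ contributes none---the candidate singularities at $1$ and $1-q$ lie outside the contour of radius $R_k<1-\sqrt q$, and at the origin the factor $z_k^{\D_k n}$ with $\D_k n \geq 1$ is a zero, not a pole---which is a worthwhile justification the paper leaves implicit.
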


\begin{proof}
	The contour integral for $L^{\veps}_{\vec{0}}$ has every contour arranged around the origin.
	The poles of the integrand in $z$-variables come from the term $(z_1-\zeta_1) (z_p-\zeta_2) \prod_k (z_k - z_{k+1})$
	in the denominator. Given $\veps$, suppose there is an index $\ell$ with $1 < \ell < p$ such that $R_{\ell} < R_{\ell-1}$
	and $R_{\ell} < R_{\ell +1}$. Then we may contract the $z_{\ell}$-contour without passing
	any poles in that variable. Hence, $L^{\veps}_{\vec{0}}(i,j) = 0$. It follows that $L^{\veps}_{\vec{0}}$ can only
	be non-zero if there is no such $\ell$, which is the condition on $\eps$ in the lemma.
\end{proof}

\begin{lem} \label{lem:deltanonzero}
	Suppose $\vdel$ is not identically zero. Then $L^{\veps}_{\vdel} = 0$ unless $\vdel = (0, \ldots, 0, 1, \dots, 1, 0, \ldots, 0)$,
	i.e., $\vdel$ consists of a run of 0s (possibly empty), followed by a run of 1s (non-empty), and ending with a run of 0s (again, possibly empty).
	Moreover, suppose $\vdel$ equals 1 for indices on the interval $(k_1,k_2]$ with $0 \leq k_1 < k_2 \leq p$. Then for $L^{\veps}_{\vdel}$ to be
	non-zero it must be that $\eps_1 = \cdots = \eps_{k_1-1} = 2$ and $\eps_{k_2+1} = \cdots \eps_{p-1} = 1$, i.e., $R_1 < \cdots < R_{k_1}$
	and $R_{k_2+1} > \cdots > R_{p}$ (some of these may be vacuous).
\end{lem}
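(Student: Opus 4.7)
The strategy mirrors that of Lemma~\ref{lem:deltazero}: identify variables $z_\ell$ whose contour $\gamma_{R_\ell}(\delta_\ell)$ can be contracted to a point without crossing any pole of the integrand, forcing $L^{\veps}_{\vdel}=0$. First I would catalogue the pole structure of the integrand of $L^{\veps}_{\vdel}(i,j)$ in each variable $z_\ell$. The factor $G(z_\ell \vt \D_\ell(n,m,a))$ has its only pole at $z_\ell = 1-q$ and vanishes at $z_\ell = 0$ and $z_\ell = 1$ with positive orders $\D_\ell n$ and $\D_\ell(a+m)$. The remaining $z_\ell$-dependence is $(z_{\ell-1}-z_\ell)^{-1}(z_\ell-z_{\ell+1})^{-1}$, together with the extra factors $(z_1-\zeta_1)^{-1}(1-z_1)^{-1}$ at $\ell=1$ and $(z_p-\zeta_2)^{-1}$ at $\ell=p$; the $(1-z_1)^{-1}$ is cancelled by the higher-order zero of $G(z_1\vt\cdot)$ at $1$. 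A contour $\gamma_{R_\ell}(0)$ with $R_\ell<1-\sqrt{q}$ encloses $\zeta_1,\zeta_2$ but excludes $\{1-q,1\}$, whereas $\gamma_{R_\ell}(1)$ with $q<R_\ell<\sqrt{q}$ encloses $\{1-q,1\}$ and excludes $\zeta_1,\zeta_2$. The pole $z_\ell = z_{\ell'}$ lies inside $\gamma_{R_\ell}(\delta_\ell)$ iff $\delta_{\ell'}=\delta_\ell$ and $R_{\ell'}<R_\ell$. Consequently every $\delta_\ell=1$ contour contains $1-q$ and is non-contractible, and the $z_1,z_p$ contours around $0$ always enclose $\zeta_1,\zeta_2$ respectively, so contraction-to-zero can occur only at an index $\ell$ with $\delta_\ell=0$ and $1<\ell<p$.

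\emph{Step 1: the block structure of $\vdel$.} Suppose $\vdel$ is not of the stated form; then there is a maximal run $\delta_\alpha=\cdots=\delta_\beta=0$ sandwiched strictly between two $1$-blocks, so $\delta_{\alpha-1}=\delta_{\beta+1}=1$ and $1<\alpha\leq\beta<p$. If $\alpha=\beta$, the contour $\gamma_{R_\alpha}(0)$ has no pole inside and contraction gives $L^{\veps}_{\vdel}=0$. When $\alpha<\beta$: non-vanishing at $\ell=\alpha$ requires $z_{\alpha+1}$ to lie inside $\gamma_{R_\alpha}(0)$, hence $\eps_\alpha=1$; symmetrically at $\ell=\beta$ one needs $\eps_{\beta-1}=2$; and for strictly interior $\ell\in(\alpha,\beta)$ the absence of the adjacency $(\eps_{\ell-1},\eps_\ell)=(1,2)$ is required. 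A sequence $(\eps_\alpha,\ldots,\eps_{\beta-1})$ that begins with $1$ and ends with $2$ must contain at least one $(1,2)$ transition, yielding a contractible $\ell$ and a contradiction. Hence $\vdel$ must take the form $(0^{k_1},1^{k_2-k_1},0^{p-k_2})$.

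\emph{Step 2: the $\veps$-constraints at the ends.} Assume $\vdel=(0^{k_1},1^{k_2-k_1},0^{p-k_2})$ and analyse the prefix $0$-block $[1,k_1]$ (non-trivial only when $k_1\geq 2$). At $\ell=k_1$, which is overall interior since $k_1<p$, non-vanishing demands $z_{k_1-1}$ inside $\gamma_{R_{k_1}}(0)$, i.e., $\eps_{k_1-1}=2$; for strictly interior $\ell\in(1,k_1)$ the usual forbidden-$(1,2)$ rule applies to $(\eps_{\ell-1},\eps_\ell)$. A backward induction starting from $\eps_{k_1-1}=2$ then forces $\eps_1=\cdots=\eps_{k_1-1}=2$. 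The mirror argument on the suffix $0$-block $[k_2+1,p]$ (non-trivial when $k_2\leq p-2$) begins at $\ell=k_2+1$ with the requirement $\eps_{k_2+1}=1$ and proceeds by forward induction to deliver $\eps_{k_2+1}=\cdots=\eps_{p-1}=1$. When $k_1\in\{0,1\}$ or $k_2\in\{p-1,p\}$ the corresponding conditions degenerate to vacuous statements.

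The main obstacle is the systematic book-keeping of boundary cases --- ensuring that the $\zeta_j$-poles, which make $z_1$ and $z_p$ always non-contractible when $\delta_1=0$ or $\delta_p=0$, do not spoil the induction at the ends, and that the prescribed radius orderings are realizable under the $\veps$-constraints; this last point reduces to the explicit construction already carried out in the proof of Lemma~\ref{lem:Leps}.
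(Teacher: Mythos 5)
Your proof is correct and takes essentially the same contour-contraction approach as the paper: both analyze, for each $z_\ell$ in a $0$-block, which poles the contour $\gamma_{R_\ell}(0)$ encloses (neighbouring $z$-contours around $0$ with smaller radius and, at the ends, $\zeta_1$ or $\zeta_2$) and contract when none lies inside. Your reformulation of non-contractibility as the forbidden adjacency $(\eps_{\ell-1},\eps_\ell)\neq(1,2)$, followed by a forced-transition argument, is a cosmetic repackaging of the paper's linear scan through the $0$-block contours.
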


\begin{proof}
	Given $\vdel = (\delta_1, \ldots, \delta_p)$ suppose there are indices $k_1 < k_2$ such that $\delta_{k_1} =1$, $\delta_{k_1+1} = 0$
	and $\delta_{k_2} = 1$. Consider the integral of $L^{\eps}_{\vdel}(i,j)$ involving the $z_{k_1+1}$-contour, which is around 0.
	As the $z_{k_1}$-contour is around 1, we may contract the $z_{k_1+1}$-contour to 0 unless the $z_{k_1+2}$-contour lies
	below it (around 0). But then we may contour that one unless the $z_{k_1+3}$-contour lies below it, and so on, until we
	get to the $z_{k_2-1}$-contour. In that case, we can always contact the $z_{k_2-1}$-contour because the $z_{k_2}$-contour is around 1.
	So $L^{\veps}_{\vdel}(i,j) = 0$ for such $\vdel$, which implies the condition on $\vdel$ in the lemma.
	
	Now suppose $\vdel = (0, \ldots, 0, \overbrace{1, \ldots, 1}^{k_2-k_1}, 0, \ldots, 0)$. Consider the contours in the integral
	for $L^{\veps}_{\vdel}(i,j)$ in variables $z_{1}, \ldots, z_{k_1}$. They lie around 0 and we may contract
	the $z_{k_1}$-contour unless the $z_{k_1-1}$-contour lies below it, and so forth, which shows $L^{\veps}_{\vdel}(i,j) = 0$
	unless $R_1 < R_2 < \cdots < R_{k_1}$. Similarly, it will be zero unless $R_p < \cdots < R_{k_2+1}$. This proves
	the condition stipulated on $\eps$.
\end{proof}

\begin{lem} \label{lem:Lk}
	For $1 \leq k \leq p$, set $\eps^k = (\overbrace{2, \ldots, 2}^{k-1}, \overbrace{1, \ldots, 1}^{p-k})$.
	Then, $L^{\eps^k}_{\vec{0}} = (-1)^{k-1} (1-\sqrt{q}) \big( L_k - L_{k-1} \big)$.
\end{lem}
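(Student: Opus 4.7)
The plan is to evaluate the contour integral expression for $L^{\eps^k}_{\vec 0}(i,j)$ directly by residue calculus, collapsing all the $z_j$-contours one at a time from both ends toward the middle index $k$. Recall that for $\veps = \eps^k$ the $z$-contours are arranged around the origin with radii satisfying $\tau_1,\tau_2 < R_1 < R_2 < \cdots < R_k$ and $R_p < R_{p-1} < \cdots < R_{k+1} < R_k$, with all $R_j < \sqrt q$ so no poles of $G(z_j \vt \D_j(n,m,a))$ (at $1-q$ or $1$) lie inside the $z_j$-contour. Inside each of the outer $z_j$-contours ($j \neq k$), the only pole comes from the chain of factors $(z_j - z_{j+1})^{-1}$, $(z_1 - \zeta_1)^{-1}$, $(z_p - \zeta_2)^{-1}$.

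First I would integrate $z_1$: the pole at $z_1 = \zeta_1$ (the factor $(1-z_1)^{-1}$ gives no pole since $R_1 < 1$, and $(1-\zeta_1)$ cancels with the numerator $(1-\zeta_1)$ upon evaluation) yields $G(\zeta_1 \vt \D_1(n,m,a))\cdot(\zeta_1 - z_2)^{-1}$. Iterating on $z_2, z_3, \ldots, z_{k-1}$, each successive pole at $\zeta_1$ contributes a factor $(-1)$ coming from rewriting $(\zeta_1 - z_j)^{-1} = -(z_j - \zeta_1)^{-1}$, giving after all these steps the accumulated factor $(-1)^{k-2}\prod_{j=1}^{k-1} G(\zeta_1 \vt \D_j(n,m,a))\cdot (\zeta_1 - z_k)^{-1}$. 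Symmetrically, integrating $z_p, z_{p-1}, \ldots, z_{k+1}$ picks up the pole at $\zeta_2$ each time (with no sign flip), producing $\prod_{j=k+1}^{p} G(\zeta_2 \vt \D_j(n,m,a)) \cdot (z_k - \zeta_2)^{-1}$. By the group property \eqref{Hgroup} these products telescope into $G(\zeta_1 \vt n_{k-1},m_{k-1},a_{k-1})$ and $G(\zeta_2 \vt n_p - n_k, m_p - m_k, a_p - a_k)$ respectively.

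The remaining $z_k$-integrand $G(z_k \vt \D_k(n,m,a)) (\zeta_1 - z_k)^{-1}(z_k - \zeta_2)^{-1}$ has both $\zeta_1$ and $\zeta_2$ inside $\gamma_{R_k}(0)$, so partial fractions
\[
\frac{1}{(\zeta_1 - z_k)(z_k - \zeta_2)} = \frac{-1}{\zeta_1 - \zeta_2}\left[\frac{1}{z_k - \zeta_1} - \frac{1}{z_k - \zeta_2}\right]
\]
gives the residues, producing $(-1)(\zeta_1 - \zeta_2)^{-1}\big[G(\zeta_1\vt \D_k) - G(\zeta_2\vt \D_k)\big]$. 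Multiplying through and again applying the group property promotes $G(\zeta_1\vt n_{k-1})G(\zeta_1\vt \D_k)$ to $G(\zeta_1 \vt n_k,m_k,a_k)$, and similarly absorbs $G(\zeta_2\vt \D_k)$ into the $\zeta_2$-product shifted to include index $k$.

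Finally, the $\zeta_1$- and $\zeta_2$-denominators $G(\zeta_1\vt i, m(i), a(i))^{-1}$ and $G(\zeta_2\vt n_p - j + 1, m_p - m(j), a_p - a(j))^{-1}$ are combined with the $\zeta_1$- and $\zeta_2$-numerators using the group property once more, yielding exactly the integrands defining $L_k(i,j)$ and $L_{k-1}(i,j)$ (for $\zeta_2$ this uses $G(w\vt -n,-m,-a) = G(w\vt n,m,a)^{-1}$ to flip indices). Collecting the accumulated sign $(-1)^{k-1}$ and the factor $(1-\sqrt q)$ (which is the normalizer absent in my $z$-computation but present in $L_k$) identifies the result as $(-1)^{k-1}(1-\sqrt q)(L_k - L_{k-1})$. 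The main bookkeeping obstacle is tracking the alternating sign produced by the $(\zeta_1 - z_j)^{-1}$ flips as one marches $z_j$ toward $z_k$; once this is settled, the group property does all the simplification automatically.
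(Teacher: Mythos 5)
Your proof is correct and follows essentially the same approach as the paper's: collapse the $z$-contours by residues onto $\zeta_1$ (from the $\eps^k=2$ side) and $\zeta_2$ (from the $\eps^k=1$ side), track the sign $(-1)^{k-2}$ from the $(\zeta_1 - z_j)^{-1}$ factors, then evaluate the remaining $z_k$-integral via its two enclosed poles and telescope with the group property \eqref{Hgroup}. The only cosmetic differences are that you contract the $\zeta_1$-side first (the paper does the $\zeta_2$-side first) and use an explicit partial-fraction identity where the paper simply picks up the $z_k=\zeta_1$ and $z_k=\zeta_2$ residues in sequence.
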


\begin{proof}
	Look at the contour integral presentation of $L^{\veps}_{\vec{0}}(i,j)$ from Lemma \ref{lem:Leps}.
	Since $\vdel = \vec{0}$, all contours are around the origin. We will contract the $z$-contours
	$\gamma_{R_1}, \ldots, \gamma_{R_p}$ in the order specified by $\eps^k$, and use the group property
	of $G$ to simplify the integrand. We have $R_1 < \cdots < R_k$ and $R_p < \cdots < R_{k+1}< R_k$.
	
	First we contract the $z_p$-contour and pick up residue at $z_p =\zeta_2$. This eliminates the variable $z_p$
	from the integral. We continue by contracting the $z_{p-1}$-contour, again with residue at $z_{p-1} = \zeta_2$, and so
	on until variable $z_{k+1}$ is eliminated. Next, we contract the $z_1$-contour and gain a residue at $z_1 = \zeta_1$. We keep
	doing so until we have contracted all contours except for the variables $\zeta_1, \zeta_2$ and $z_k$. We will also obtain a
	factor of $(-1)^{k-2}$ while eliminating variables $z_2, \ldots, z_{k-1}$ due to the factor $(\zeta_1-z_2) \cdots (z_{k-1}-z_k)$
	in the integrand. Factoring out another $-1$ shows that
	\begin{align*}
		L^{\eps^k}_{\vec{0}}(i,j) = & (-1)^{k-1}  \oint\limits_{\gamma_{\tau_1}} d \zeta_1
		\oint\limits_{\gamma_{\tau_2}} d \zeta_2 \oint\limits_{\gamma_{R_k}} dz_k \\
		 & \frac{G \big(z_k \vt \D_k n, \D_k m, \D_k a \big) \, G \big(\zeta_1 \vt n_{k-1}-i, m_{k-1} - m(i), a_{k-1} - a(i) \big)
		 	\big (\frac{1-\zeta_1}{1-z_1}\big)^{\ind{k=1}}}
		{G\big (\zeta_2 \vt n_k - j+1, m_k - m(j), a_k - a(j)\big )\, (z_k-\zeta_1) \, (z_k-\zeta_2)}.
	\end{align*}
	Finally, we eliminate the $z_k$-contour and gain a residue at $z_k = \zeta_1$ followed by one at $z_k = \zeta_2$
	(recall $\tau_1 > \tau_2$). This gives the difference $(1-\sqrt{q}) \big( L_k(i,j) - L_{k-1}(i,j)\big)$.
\end{proof}
We remark that the identity matrix in the Fredholm determinantal representation for $L(i,j \vt \mathbold{\theta})$
will appear from the sum $\sum_k \theta^{\eps^k}(i) L^{\eps^k}_{\vec{0}}(i,j)$ by way of Lemma \ref{lem:Lk}.

\begin{lem} \label{lem:Lcd}
	Consider $0 \leq k_1 < k_2 \leq p$ and
	$$\veps = (2, \ldots, 2, \eps_{\max \{k_1,1\}}, \ldots, \eps_{\max \{k_2,p-1\}}, 1, \ldots, 1) \in \{1,2\}^{p-1}.$$
	Suppose $\vdel$ equals 1 on indices over the interval $(k_1,k_2]$ and 0 elsewhere. Then,
	$ L^{\veps}_{\vdel} = (-1)^{k_1} \, (1-\sqrt{q}) L^{\veps}_{(k_1,k_2]}$.
	Furthermore, $L^{\veps}_{(p-1,p]} $ equals $L_p$ where
	\begin{equation*}
	L_p(i,j)  = \frac{\ind{i > n_{p-1}}}{1-\sqrt{q}} \intz{2} \oint_{\gamma_{R_p}(1)} dz_p\,
	 \frac{G \big(z_p \vt n_p-i, \D_p m, \D_p a\big)}{G\big (\zeta_2 \vt n_p-j+1, m_p-m(j), a_p-a(j)\big)\, (z_p-\zeta_2)}.
	\end{equation*}
\end{lem}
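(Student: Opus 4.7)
The plan is to adapt the contour-contraction strategy of Lemma \ref{lem:Lk} to the case where $\vdel$ has 1s exactly on $(k_1,k_2]$. From Lemma \ref{lem:Leps} (with $z_k$-contour centred at $\delta_k$), the outer $z$-contours $z_1,\ldots,z_{k_1}$ and $z_p,\ldots,z_{k_2+1}$ encircle $0$, while $z_{k_1+1},\ldots,z_{k_2}$ encircle $1$. The hypothesis on $\veps$ forces $R_1<\cdots<R_{k_1}$ and $R_p<\cdots<R_{k_2+1}$ among the outer $0$-radii, so the outer $0$-contours are nested with $z_1$ innermost on the left and $z_p$ innermost on the right. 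One contracts these one at a time, using the group property \eqref{Hgroup} to telescope the resulting $G$-factors.

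For the left block, the only pole of the integrand in $z_1$ inside $\gamma_{R_1}(0)$ is $\zeta_1$, arising from $(z_1-\zeta_1)^{-1}$ when $k_1 \ge 1$ (the factor $(1-\zeta_1)/(1-z_1)$ from the case $k_1=0$ is holomorphic at the origin, so no contraction is needed then). Residue at $z_1=\zeta_1$ and iteration over $z_2,\ldots,z_{k_1}$ accumulates
\begin{equation*}
\prod_{k=1}^{k_1} G\big(\zeta_1 \vt \D_k(n,m,a)\big) \;=\; G\big(\zeta_1 \vt n_{k_1}, m_{k_1}, a_{k_1}\big)
\end{equation*}
by the group property, which combined with the original $G(\zeta_1 \vt i,m(i),a(i))^{-1}$ in the denominator telescopes to $G\big(\zeta_1 \vt i-n_{k_1}, m(i)-m_{k_1}, a(i)-a_{k_1}\big)^{-1}$. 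The Cauchy factor that survives after $k_1$ successive residue-and-flip steps $(\zeta_1-z_k)^{-1}=-(z_k-\zeta_1)^{-1}$ is $(-1)^{k_1}(z_{k_1+1}-\zeta_1)^{-1}$. A mirror computation on the right contracts $z_p,\ldots,z_{k_2+1}$ at $\zeta_2$, producing $G\big(\zeta_2 \vt n_{k_2}-j+1, m_{k_2}-m(j), a_{k_2}-a(j)\big)^{-1}$ and a clean $(z_{k_2}-\zeta_2)^{-1}$ with no additional sign, since those residues do not require flipping. The final integrand matches Definition \ref{def:Lcd}, and the identity $L^{\veps}_{\vdel}=(-1)^{k_1}(1-\sqrt{q})L^{\veps}_{(k_1,k_2]}$ follows once the overall $1-\sqrt{q}$ cancels the $1/(1-\sqrt{q})$ baked into the definition.

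The indicator $\ind{i>n_{k_1},\, j\le n_{k_2}}$ drops out of pole analysis: if $i\le n_{k_1}$ then $G(\zeta_1 \vt i-n_{k_1},\cdots)^{-1}$ is holomorphic at $\zeta_1=0$, and the only other singularity in $\zeta_1$ sits at $z_{k_1+1}$, which is around $1$ and outside the $0$-contour, so the $\zeta_1$-integral vanishes; similarly for $j>n_{k_2}$. For the final identity $L^{\veps}_{(p-1,p]}=L_p$, note that $i \in (n_{p-1},n_p]$ forces $m(i)=m_{p-1}$ and $a(i)=a_{p-1}$, so $G(\zeta_1 \vt i-n_{p-1},0,0)^{-1}=(1-\sqrt{q})^{i-n_{p-1}}\,\zeta_1^{-(i-n_{p-1})}$. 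Expanding $(z_p-\zeta_1)^{-1}$ as a geometric series in $\zeta_1/z_p$ and extracting the $\zeta_1^{-1}$ coefficient gives $(1-\sqrt{q})^{i-n_{p-1}}/z_p^{i-n_{p-1}}$; the group property then folds this into $G(z_p \vt \D_p(n,m,a))$ to yield $G(z_p \vt n_p-i,\D_p m,\D_p a)$, exactly matching $L_p$.

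The main obstacle is the sign bookkeeping: the Cauchy-kernel flips $(\zeta_1-z_k)^{-1}=-(z_k-\zeta_1)^{-1}$ must be combined with the alternating residue directions to produce a cumulative $(-1)^{k_1}$ that depends only on the left count, and this must be verified across the boundary cases $k_1=0$ (where no left contractions occur but the $(1-\zeta_1)/(1-z_1)$ factor must survive intact) and $k_2=p$ (where no right contractions occur and the sign contribution is purely from the left).
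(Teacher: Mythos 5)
Your proof is correct and follows essentially the same contour-contraction route as the paper: adapt the argument of Lemma \ref{lem:Lk} by collapsing the outer $0$-centred $z$-contours at $\zeta_1$ on the left and $\zeta_2$ on the right, telescoping the $G$-factors via the group property, and observing that the indicator $\ind{i>n_{k_1},\,j\le n_{k_2}}$ is automatic since otherwise the relevant $\zeta$-integrand has no pole inside $\gamma_\tau$. Your explicit $(-1)^{k_1}$ sign accounting supplies the step the paper labels "straightforward" and omits, and your geometric-series evaluation of the $\zeta_1$-integral in $L^{\veps}_{(p-1,p]}$ is a cosmetically different but equivalent route to the paper's deformation of $\gamma_{\tau_1}$ out to infinity.
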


\begin{proof}
	By Lemma \ref{lem:deltanonzero}, $L^{\veps}_{\vdel} = 0$ unless $\veps$ is as given in the
	statement of this lemma. Consider again the contour integral presentation of $L^{\veps}_{\vdel}(i,j)$
	from Lemma \ref{lem:Leps}. The contours around 0 are those in variables $z_1, \ldots, z_{k_1}$ and $z_{k_2 +1}, \ldots, z_p$.
	We also have $R_1 < \cdots < R_{k_1}$ and $R_p < \cdots < R_{k_2+1}$.
	
	As in the proof of the previous lemma, we contract the contours around 0, gaining residues, and present
	$L^{\veps}_{\vdel}(i,j)$ as an integral involving variables $\zeta_1, \zeta_2, z_{k_1 +1}, \ldots, z_{k_2}$.
	The calculation of this is straightforward and we omit the details. The reason a factor
	$\big ( \frac{1-\zeta_1}{1-z_1} \big )^{\ind{k_1=0}}$ appears is that when $k_1=0$ the $z_1$-contour
	is not contracted, so no residue is obtained at $z_1 = \zeta_1$.
	
	The final result is a presentation of $L^{\veps}_{\vdel}(i,j)$ that appears like $(1-\sqrt{q}) L^{\veps}_{(k_1,k_2]}(i,j)$
	from Definition \ref{def:Lcd} except the indicator $\ind{i > n_{k_1}, \, j \leq n_{k_2}}$ is absent. To see why we
	may assume $i > n_{k_1}$, observe the variable $\zeta_1$ appears in the integrand of $L^{\veps}_{(k_1,k_2]}(i,j)$ as
	$$\frac{G(\zeta_1 \vt n_{k_1}-i, m_{k_1}-m(i), a_{k_1}-a(i))}{z_{k_1 +1}-\zeta_1}.$$
	When $n_{k_1} - i \geq 0$, there is no pole in the $\zeta_1$ variable inside $\gamma_{\tau_1}$ and the contour
	may be contracted to 0. Similarly, if $j > n_{k_2}$, there is no pole in $\zeta_2$ inside $\gamma_{\tau_2}$.
	
	To simplify $L^{\veps}_{(p-1,p]} $ note that it does not depend on $\veps$ as there is a single contour around 1 (the $z_p$-contour).
	Since $i > n_{p-1}$, its integrand decays at least to the order $\zeta_1^{-2}$ in the $\zeta_1$ variable
	(the dependence is displayed above). Further, $m(i) = m_{p-1}$ and $a(i) = a_{p-1}$. So there are no poles at $\zeta_1 = 1-q$ and $1$,
	and the $\zeta_1$-contour can be contracted to $\infty$. In doing so we gain a residue at $z_1 = z_p$ whose value is
	$G(z_p \vt n_{p-1}-i,0,0)$. Then simplifying the integrand using the group property gives the desired
	expression for $L^{\veps}_{(p-1,p]}$.
\end{proof}

The following simplifies $L^{\veps}_{(k_1,k_2]}$ when $k_2 < p$.
\begin{lem} \label{lem:Lcdhelper}
If $0 \leq k_1 < k_2 < p$ and $\veps \in \{1,2\}^{p-1}$ then
$$L^{\veps}_{(k_1,k_2]}(i,j) = \ind{j\leq n_{k_2-1}} \, L^{\veps}_{(k_1,k_2]}(i,j) + \ind{i > n_{k_1}, \, j \in (n_{k_2-1}, n_{k_2}]} \, J^{\veps}_{(k_1,k_2]}(i,j),$$
where
\begin{align*}
& J^{\veps}_{(k_1,k_2]}(i,j) = \frac{1}{1-\sqrt{q}}\, \oint\limits_{\gamma_{\tau_1}} d\zeta_1 \oint\limits_{\vec{\gamma}_{R^{\veps}}}\, dz_{k_1+1} \cdots d z_{k_2} \\
& \frac{\prod_{k_1 < k < k_2} G \big(z_k \vt \D_k (n,m,a)\big) G \big( z_{k_2} \vt j-1-n_{k_2-1}, \, \D_{k_2}(m, a)\big) \,
\left (\frac{1-\zeta_1}{1-z_1}\right)^{\ind{k_1=0}}}
{G\big(\zeta_1 \vt  i-n_{k_1}, m(i)-m_{k_1}, a(i)-a_{k_1} \big) \prod_{k_1 < k < k_2}(z_k - z_{k+1}) \, (z_{k_1 +1} - \zeta_1)}.
\end{align*}
The contours in $J^{\veps}_{(k_1,k_2]}$ are arranged like those in $L^{\veps}_{(k_1,k_2]}$.
\end{lem}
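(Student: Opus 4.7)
The plan is to split the indicator in the definition of $L^{\veps}_{(k_1,k_2]}$ based on whether $j$ lies in $(n_{k_2-1}, n_{k_2}]$ or in the lower block, and then, for $j$ in the top block, eliminate the $\zeta_2$ contour by residue calculus.

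First, I would observe that the indicator $\ind{i > n_{k_1},\, j \leq n_{k_2}}$ in Definition~\ref{def:Lcd} splits cleanly as
\[
\ind{i > n_{k_1}, \, j \leq n_{k_2-1}} + \ind{i > n_{k_1}, \, j \in (n_{k_2-1}, n_{k_2}]}.
\]
For the first piece, no manipulation is needed: the $\ind{i > n_{k_1}}$ is already built into $L^{\veps}_{(k_1,k_2]}$, so this contribution is exactly $\ind{j \leq n_{k_2-1}} \, L^{\veps}_{(k_1,k_2]}(i,j)$. All work happens in the second piece.

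For $j \in (n_{k_2-1}, n_{k_2}]$, the block notation \eqref{blocknot} gives $m(j) = m_{\min\{k_2,p-1\}} = m_{k_2}$ and $a(j) = a_{k_2}$, using the hypothesis $k_2 < p$. Hence the $\zeta_2$-factor in the integrand of $L^{\veps}_{(k_1,k_2]}$ collapses to
\[
\frac{1}{G(\zeta_2 \vt n_{k_2}-j+1,\, 0,\, 0) \, (z_{k_2}-\zeta_2)} = \frac{(1-\sqrt{q})^{n_{k_2}-j+1}}{\zeta_2^{n_{k_2}-j+1} (z_{k_2}-\zeta_2)},
\]
where I used the group property \eqref{Hgroup} together with $G^*(1-\sqrt{q} \vt n,0,0) = (1-\sqrt{q})^n$. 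Since $n_{k_2}-j+1 \geq 1$, the only pole of this rational function inside $\gamma_{\tau_2}$ is at $\zeta_2 = 0$ (the pole at $\zeta_2 = z_{k_2}$ lies outside because $|z_{k_2}| = R_{k_2} > q > \tau_2$, as $\tau_2 < 1-\sqrt{q}$ while $R_{k_2} \in (q, \sqrt{q})$ —wait, the radii conditions for the deformed contours $\gamma_{\tau_j}$ and $\gamma_{R_k}$ give $\tau_2 < \tau_1 < 1-\sqrt{q}$ while the $z$-contours encircle $1$; the important thing is that the $z_{k_2}$-contour lies outside $\gamma_{\tau_2}$, as set up in Definition~\ref{def:Lcd}).

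Expanding $(z_{k_2} - \zeta_2)^{-1} = z_{k_2}^{-1} \sum_{n \geq 0} (\zeta_2/z_{k_2})^n$ and taking the residue at $\zeta_2 = 0$ gives
\[
\oint_{\gamma_{\tau_2}} d\zeta_2 \, \frac{(1-\sqrt{q})^{n_{k_2}-j+1}}{\zeta_2^{n_{k_2}-j+1}(z_{k_2}-\zeta_2)} = \frac{(1-\sqrt{q})^{n_{k_2}-j+1}}{z_{k_2}^{n_{k_2}-j+1}}.
\]
By the group property applied in the variable $z_{k_2}$, this factor converts $G\big(z_{k_2} \vt \Delta_{k_2}(n,m,a)\big)$ into $G\big(z_{k_2} \vt j-1-n_{k_2-1},\, \Delta_{k_2}(m,a)\big)$, because $(n_{k_2} - n_{k_2-1}) - (n_{k_2}-j+1) = j - 1 - n_{k_2-1}$ accounts precisely for the shift in the first argument. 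Substituting this into the remaining integrand yields the stated formula for $J^{\veps}_{(k_1,k_2]}(i,j)$, with the contours in the remaining variables $\zeta_1, z_{k_1+1}, \ldots, z_{k_2}$ untouched and thus still arranged as in $L^{\veps}_{(k_1,k_2]}$. The main (minor) obstacle is simply keeping track of the block-notation substitutions and the $(1-\sqrt{q})$ bookkeeping; there is no genuine analytic difficulty.
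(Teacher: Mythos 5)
Your proof is correct and follows essentially the same route as the paper: split the indicator by the block containing $j$, observe that $k_2 < p$ forces $m(j) = m_{k_2}$ and $a(j) = a_{k_2}$ so the $\zeta_2$-dependence collapses to $G(\zeta_2\vt n_{k_2}-j+1,0,0)^{-1}(z_{k_2}-\zeta_2)^{-1}$, evaluate the $\zeta_2$-integral, and fold the result into $G(z_{k_2}\vt\cdot)$ via the group property. The only cosmetic difference is that you compute the residue at $\zeta_2=0$ by power-series expansion, whereas the paper deforms $\gamma_{\tau_2}$ outward and picks up the residue at $\zeta_2=z_{k_2}$ (legitimate since the integrand decays at least as $\zeta_2^{-2}$); these give the same value $G(z_{k_2}\vt n_{k_2}-j+1,0,0)^{-1}$. (Your parenthetical about pole positions briefly conflates $|z_{k_2}|$ with $R_{k_2}$ — the $z_{k_2}$-contour is centered at $1$, not $0$, so the correct reason is $|z_{k_2}|\geq 1-R_{k_2} > 1-\sqrt q > \tau_2$ — but you self-correct and the conclusion is right.)
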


\begin{proof}
Consider $L^{\veps}_{(k_1,k_2]}(i,j)$ when $j \in (n_{k_2-1},n_{k_2}]$. Since $k_2 < p$, we have $m(j) = m_{k_2}$ and $a(j) = a_{k_2}$.
Therefore, the integrand depends on $\zeta_2$ according to the term $G\big(\zeta_2 \vt n_{k_2} -j+1,0,0\big)\, (z_{k_2}-\zeta_2)$
in the denominator. Since $n_{k_2} -j \geq 0$, we may contract the $\zeta_2$-contour to infinity with residue at
$\zeta_2 = z_{k_2}$ to find that
$$\oint\limits_{\gamma_{\tau_2}} d \zeta_2\, \frac{1}{G(\zeta_2 \vt n_{k_2}-j+1,0,0) (z_{k_2}-\zeta_2)} = \frac{1}{G(z_{k_2} \vt n_{k_2}-j+1,0,0)}.$$
So we evaluate the integral in $\zeta_2$ and simplify the integrand using the group property of $G$, which results in $J^{\veps}_{(k_1,k_2]}$.
\end{proof}

We may now write $L(i,j \vt \theta)$ from \eqref{Lepsdelta} in the following way by using Definitions \ref{def:Lk} and \ref{def:Lcd},
as well as Lemmas \ref{lem:deltazero}, \ref{lem:deltanonzero}, \ref{lem:Lk} and \ref{lem:Lcd}.
Observe that for $\veps = \eps^k$ as in Lemma \ref{lem:Lk}, $(-1)^{p-1 + \sum_i \eps^k_i + k-1} = 1$.
Also, for $0 \leq k_1 < k_2 \leq p$ and $\veps$ as in Lemma \ref{lem:Lcd},
$$ (-1)^{p-1 + \sum_i \eps_i + k_1} = (-1)^{k_1 + \min \{k_2,p-1\}} \cdot (-1)^{\eps_{[k_1,k_2]}}, \quad \text{where}\;\;
(-1)^{\eps_{[k_1,k_2]}} \;\;\text{is around}\;\; \eqref{theta}.$$
Putting all this together with \eqref{Lepsdelta} we find that
\begin{align} \label{Lkcd}
L(i,j \vt \mathbold{\theta}) & = \sum_{k=1}^p c(i,j)\, \theta^{\eps^k}(i) \,
\big(L_k - L_{k-1} \big)(i,j) \;\; + \\ \nonumber
& \sum_{0 \leq k_1 < k_2 \leq p}
\sum_{\substack{\veps \in \{1,2\}^{p-1} \\ \eps_i = 2 \;\text{if}\; i < \max \, \{k_1,1\} \\ \eps_i = 1 \;\text{if}\; i > \min \,\{k_2,p-1\}}}
(-1)^{\eps_{[k_1,k_2]}+k_1 + \min \{k_2,p-1\}} \, c(i,j) \, \theta^{\veps}(i) \, L^{\veps}_{(k_1,k_2]} (i,j)\,.
\end{align}

It will be convenient to write the matrices associated to $L(i,j \vt \theta)$ from \eqref{Lkcd}
in the $p \times p$ block form, which motivates the following definition.
\begin{defn} \label{def:blockform}
$A(\theta)$ and $B(\theta)$ are $N \times N$ matrices with a $p \times p$ block form as follows.
Recall Definitions \ref{def:Lk} and \ref{def:Lcd}, and notation introduced in $\S$\ref{sec:notation}.
In particular, from \eqref{blocknot}, that the $(r,s)$-block of a matrix $M$ is denoted $M(r,i;s,j)$,
and that $r^{*} = \min \, \{r,p-1\}$.

(1) Define matrix $B(\mathbold{\theta})$, $\mathbold{\theta} = (\theta_1, \ldots, \theta_{p-1})$, by
\begin{align*}
&B(r,i; s,j \vt \mathbold{\theta}) = (1 + \Theta(r \vt s)) \cdot c(r,i;s,j) \cdot \ind{ s < r^{*}}\,
\frac{1}{1-\sqrt{q}} \oint\limits_{\gamma_{\tau}} dw\, \frac{1}{G\big(w \vt i-j+1, \, \D_{s, r^{*}}(m,a) \big)},
\end{align*}
where the circular contour $\gamma_{\tau}$ around 0 had radius $\tau < 1-\sqrt{q}$ and $\Theta(r \vt s)$ is given by \eqref{theta}.

(2) Define matrix $A(\mathbold{\theta}) = A_1(\mathbold{\theta}) + A_2(\mathbold{\theta})$ as follows.
\begin{align*}
 A_1(r,i; s,j \vt \mathbold{\theta}) &=  \sum_{k=0}^p \Theta(r \vt k) \cdot L [ {\scriptstyle k,k \vt \emptyset } ](r,i;s,j), \; \text{where}\\
 L [{\scriptstyle k,k \vt \emptyset}]\, (r,i;s,j) &= c(r,i;s,j) \, \ind{s < k < r^{*}} \cdot L_k(r,i;s,j).
\end{align*}
Let $0 \leq k_1, k_2 \leq p$ and $\veps \in \{1,2\}^{p-1}$. Set
\begin{align*}
 A_2(r,i;s,j \vt \mathbold{\theta})  = & 
 \sum_{\substack{k_1 < k_2, \, \veps \\ \eps_k = 2 \;\text{if}\; k < \max \, \{k_1,1\} \\ \eps_k = 1 \;\text{if}\; k > \min \,\{k_2,p-1\}}}
 (-1)^{\eps_{[k_1,k_2]} + k_1 + k_2^{*}} \cdot \theta(r \vt \veps) \, \times \\
&  \Big [ L^{\veps} [{\scriptstyle k_1, k_2 \vt (k_1, k_2]}]  +
L^{\veps} [{ \scriptstyle k_1 \vt (k_1, k_2]}] +
\ind{k_1=p-1,k_2=p} L[{\scriptstyle p \vt p}] \Big ] (r,i;s,j),
\end{align*}
where recalling $L_p$ and $J^{\veps}_{(k_1,k_2]}$ from Lemmas \ref{lem:Lcd} and \ref{lem:Lcdhelper}, respectively, we define
\begin{align*}
L^{\veps} [{\scriptstyle k_1, k_2 \vt (k_1, k_2]}] (r,i;s,j)  &= c(r,i;s,j)\, \ind{k_1 < r^{*}, \; s^{*} < k_2,\, k_1<k_2} \cdot L^{\veps}_{(k_1,k_2]}(r,i;s,j). \\
L^{\veps} [{ \scriptstyle k_1 \vt (k_1, k_2]}] (r,i;s,j) &= c(r,i;s,j) \, \ind{k_1 < r^{*}, \; s = k_2 < p,\, k_1<k_2} \cdot J^{\veps}_{(k_1,k_2]}(r,i;s,j).\\
L[{\scriptstyle p \vt p}](r,i;s,j) & = c(r,i;s,j)\, \ind{r=p} \cdot L_p(r,i;s,j).
\end{align*}
\end{defn}
Some comments on these matrices. In terms of the $p \times p$ block structure, $B(\theta)$ is lower
triangular with zeroes on the diagonal blocks. Its last two column blocks are zero as well. The matrix $A_1(\theta)$
is also strictly block-lower-triangular with the last three column blocks being zero. The matrix $L^{\veps}[{\scriptstyle k_1, k_2\vt (k_1, k_2]}]$
has non-zero blocks strictly above row $k_1$ ($r > k_1$) and at or below column $k_2$. The matrix $L^{\veps}[{\scriptstyle k_1 \vt (k_1,k_2]}]$
has non-zero blocks only on column $k_2 < p$ and above row $k_1$. The matrix $L[{\scriptstyle p \vt p}]$ has non-zero
block only on row $p$.

\begin{thm} \label{thm:2}
Let $\mathbold{G}$ be the growth function defined by \eqref{gmn}. Let $A(\mathbold{\theta})$ and $B(\mathbold{\theta})$ be
from Definition \ref{def:blockform}, and suppose $p \geq 2$. For $m_1 < m_2 < \cdots < m_p$ and $n_1 < n_2 < \cdots < n_p$,
we have
\begin{align*}
&\pr{\mathbold{G}(m_1,n_1) < a_1, \mathbold{G}(m_2,n_2) < a_2, \ldots, \mathbold{G}(m_p,n_p) < a_p} = \\
& \quad \oint\limits_{\gamma_r^{p-1}} d\theta_1 \cdots d\theta_{p-1}\,
\frac{1}{\prod_{k=1}^{p-1} (\theta_k -1)}\, \dt{I + A(\mathbold{\theta}) + B(\mathbold{\theta})}.
\end{align*}
Here, $\gamma_r^{p-1} = \gamma_r \times \cdots \times \gamma_r$ ($p-1$ times) and $\gamma_r$
is a counter-clockwise, circular contour around the origin of radius $r > 1$.
\end{thm}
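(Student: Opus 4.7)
The plan is to combine Lemma \ref{lem:CB} with the decomposition \eqref{Lkcd} of $L(i,j \vt \mathbold{\theta})$ and identify the resulting determinant with $\dt{I + A(\mathbold{\theta}) + B(\mathbold{\theta})}$. The conjugation factor $c(i,j) = c(i)/c(j)$ appears multiplicatively in every entry of $L$, $A$, and $B$, so as a diagonal similarity it drops out of the determinant; I would cancel it at the outset and work with the reduced matrices.

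For the second (double) sum in \eqref{Lkcd} I would invoke Lemmas \ref{lem:Lcd} and \ref{lem:Lcdhelper}: for each $0 \leq k_1 < k_2 < p$ the latter splits $L^{\veps}_{(k_1, k_2]}$ into a piece supported in columns $j \leq n_{k_2-1}$ (matching $L^{\veps}[{\scriptstyle k_1, k_2 \vt (k_1, k_2]}]$) and a piece in $j \in (n_{k_2-1}, n_{k_2}]$ (matching $L^{\veps}[{\scriptstyle k_1 \vt (k_1, k_2]}]$), while the former reduces $L^{\veps}_{(p-1, p]}$ to $L_p = L[{\scriptstyle p \vt p}]$. Matching the signs $(-1)^{\eps_{[k_1,k_2]} + k_1 + k_2^*}$ and the block function $\theta(r \vt \veps) = \theta^{\veps}(i)$ for $i$ in block $r$ then reproduces $A_2(\mathbold{\theta})$ from Definition \ref{def:blockform}.

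For the first, telescoping, sum I would apply Abel summation (using $L_0 = 0$) to obtain, for $i$ in block $r$,
\begin{equation*}
\sum_{k=1}^p \theta^{\eps^k}(i)(L_k - L_{k-1})(i,j) = \theta^{\eps^p}(i) L_p(i,j) + \sum_{k=1}^{p-1} \big(\theta(r \vt \eps^k) - \theta(r \vt \eps^{k+1})\big) L_k(i,j).
\end{equation*}
A direct comparison with \eqref{theta} shows the coefficient equals $\Theta(r \vt k)$ for $1 \leq k < r^*$. A case-by-case contour analysis of $L_k(r,i;s,j)$, by extracting residues at $\zeta_1 = 0$ (present precisely when $k < r$) and $\zeta_1 = \zeta_2$ followed by the analogous contraction in $\zeta_2$, would then show this sum splits into three pieces: on diagonal blocks $r = s$, the matching $m(i) = m(j)$ and $a(i) = a(j)$ collapses the integrand after the $\zeta_1 = \zeta_2$ residue to $G(\zeta_2 \vt j-i-1, 0, 0)$, integrating to $\delta_{i,j}$ and producing the identity matrix; on strict sub-diagonal blocks $s < k < r^*$ one recovers $L[{\scriptstyle k, k \vt \emptyset}]$, reproducing $A_1(\mathbold{\theta})$; on super-diagonal blocks $s \geq r^*$ (with $r \leq p-1$) the double contour contracts to the single $\gamma_\tau$-integral in the definition of $B(\mathbold{\theta})$, with the weight $1 + \Theta(r \vt s)$ emerging from the Abel-summed coefficients.

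The main obstacle is this case analysis of $L_k(r,i;s,j)$ across all combinations of $(r, s, k)$, together with the delicate accounting of the boundary term $\theta^{\eps^p}(i) L_p(i,j)$ and the exceptional convention at $(r, k) = (p, p-2)$ in \eqref{theta}. These row $r = p$ contributions must be combined with the $A_2(\mathbold{\theta})$ part carefully in order to recover the asymmetric structure of $B(\mathbold{\theta})$ and the $L[{\scriptstyle p \vt p}]$ term. The consistency statement for the family of distribution functions then follows from uniqueness of the $p$-point joint laws once the determinantal identity is established.
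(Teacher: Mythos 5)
Your outline follows essentially the same route as the paper: start from Lemma~\ref{lem:CB}, pass through the decomposition~\eqref{Lkcd}, and identify the resulting matrix with $I+A(\mathbold{\theta})+B(\mathbold{\theta})$. The treatment of the second sum via Lemmas~\ref{lem:Lcd} and~\ref{lem:Lcdhelper} matches the paper exactly. For the first sum the paper works not by an up-front Abel summation but through the ad-hoc decomposition of $(L_k - L_{k-1})$ in Lemma~\ref{lem:Lkhelper}, which builds in the constraints $k\leq p-2$ and $k\leq p-1$ separating the $L_k$ and $L_{k-1}$ remainders; your Abel summation is an equivalent reorganisation of that telescoping. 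The one imprecise point is your claim that the Abel coefficient $\theta(r\vt\eps^k)-\theta(r\vt\eps^{k+1})$ ``equals $\Theta(r\vt k)$ for $1\leq k<r^*$'' — by~\eqref{theta} the two differ, precisely at $r=p$, $k=p-2$, by the extra $\theta(p\vt\eps^{p-1})$; this is the exceptional case you flag later as an obstacle, and reconciling it requires the bookkeeping the paper carries out when it tracks which $B$-contributions arise from $L_s$ versus $L_{s+1}$ and from the exceptional indicator $\ind{s+1=r,\,r<p}$. Since you correctly identify where the difficulty lies, and the rest of the plan (dropping the diagonal conjugation, the role of $L_s(r,i;s,j)=B(r,i;s,j)$ for $j$ in block $s$, and the emergence of the identity from the telescoped diagonal weights $\theta(r\vt\eps^r)=1$) matches the paper's argument, the proposal is a sound blueprint for the same proof.
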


In order to prove the theorem we need the following.
\begin{lem} \label{lem:Lkhelper}
Set, for $0< \tau < 1- \sqrt{q}$,
$$B(i,j) = \frac{1}{1-\sqrt{q}} \, \oint\limits_{\gamma_{\tau}}dw \, \frac{1}{G \big(w \vt i-j+1,\, m(i)-m(j), \, a(i) - a(j)\big)}.$$
Then,
\begin{align*}
& (L_k-L_{k-1})(i,j) =  \mathbf{1} \{ \, i,j \in (n_{k-1}, n_k] \, \} \cdot \mathbf{1} \{i=j\} \;\; + \\
& \qquad  \mathbf{1} \{ \, i \in (n_{k-1}, n_k], \; j \leq n_{\min \{k-1,\, p-2\}} \, \} \cdot B(i,j) \;\; + \\
& \qquad  \mathbf{1} \{ \, i > n_k, \; j \leq n_k, \; k \leq p-2 \,\} \cdot L_k(i,j) \,-\,
 \mathbf{1} \{\, i > n_{k-1}, \; j \leq n_{k-1}, \; k \leq p-1 \,\} \cdot L_{k-1}(i,j).
\end{align*}
\end{lem}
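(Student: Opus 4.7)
The plan is to prove the identity via a single contour-deformation formula plus three vanishing statements, then assemble by case analysis on where $(i,j)$ sits relative to the block boundaries.

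First, in $L_k(i,j)$ I would deform the $\zeta_2$-contour from $\gamma_{\tau_2}$ outward past $\gamma_{\tau_1}$ to a circle $\gamma_{\tilde\tau}$ with $\tau_1<\tilde\tau<1-q$. The only pole crossed is the simple one at $\zeta_2=\zeta_1$, whose residue in $\zeta_2$ is $-1/G\big(\zeta_1 \vt n_k-j+1,\,m_k-m(j),\,a_k-a(j)\big)$. After multiplying by the numerator factor $G\big(\zeta_1 \vt n_k-i,\,m_k-m(i),\,a_k-a(i)\big)$, the group property \eqref{Hgroup} collapses this contribution to $\frac{1}{1-\sqrt{q}}\oint G(\zeta_1 \vt j-i-1,\,m(j)-m(i),\,a(j)-a(i))\,d\zeta_1 = B(i,j)$. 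This yields the master identity
\begin{equation*}
L_k(i,j) = B(i,j) + \tilde L_k(i,j),
\end{equation*}
where $\tilde L_k$ denotes the same integral with $\gamma_{\tilde\tau}$ in place of $\gamma_{\tau_2}$.

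Next I would establish three vanishing facts. \emph{(F1)} If $i\leq n_k$ then $\tilde L_k(i,j)=0$: with $\zeta_2$ outside $\gamma_{\tau_1}$, the only candidate $\zeta_1$-pole inside $\gamma_{\tau_1}$ is $\zeta_1=0$, but the factor $\zeta_1^{n_k-i}$ in $G(\zeta_1 \vt n_k-i,\ldots)$ with $n_k-i\geq 0$ is a zero, not a pole, so contracting the contour gives $0$; consequently $L_k(i,j)=B(i,j)$ whenever $i\leq n_k$. \emph{(F2)} If $j>n_k$ then $L_k(i,j)=0$, because $1/G(\zeta_2 \vt n_k-j+1,\ldots)\sim \zeta_2^{j-1-n_k}$ is analytic at $\zeta_2=0$ and its remaining poles lie outside $\gamma_{\tau_2}$. \emph{(F3)} If $i>n_{p-1}$ then $L_{p-1}(i,j)=0$: the block convention forces $m(i)=m_{p-1}$ and $a(i)=a_{p-1}$, so $G(\zeta_1 \vt n_{p-1}-i,0,0)=(\zeta_1/(1-\sqrt{q}))^{n_{p-1}-i}$ is a pure monomial and the inner $\zeta_1$-integral reduces to $\oint \zeta_1^{n_{p-1}-i}/(\zeta_1-\zeta_2)\,d\zeta_1$; for $n_{p-1}-i<0$ with $\zeta_2$ inside the contour, the residues at $\zeta_1=0$ and $\zeta_1=\zeta_2$ cancel by direct Laurent expansion.

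With these in hand, I would verify the four-term decomposition by cases on $(i,j)$. For $i\leq n_{k-1}$, F1 gives $L_k=L_{k-1}=B$ so the difference is $0$, matching the RHS. For $i\in(n_{k-1},n_k]$, F1 gives $L_k=B$; F2 kills $L_{k-1}$ whenever $j>n_{k-1}$, and at the boundary $k=p$ (where $i>n_{p-1}$) F3 kills $L_{p-1}$ even when $j\in(n_{p-2},n_{p-1}]$ -- this is precisely why the $B$-indicator uses $\min\{k-1,p-2\}$ rather than $k-1$. The identification $B(i,j)=\mathbf{1}\{i=j\}$ whenever $m(i)=m(j)$ and $a(i)=a(j)$ then matches Case~1 and eliminates spurious same-block contributions. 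For $i>n_k$ (forcing $k\leq p-1$), Cases 3 and 4 reconstruct $L_k-L_{k-1}$ tautologically when $k\leq p-2$; at the boundary $k=p-1$, F3 makes $L_{p-1}(i,j)=0$, which both justifies switching off Case~3 by the restriction $k\leq p-2$ and leaves only the residual $-L_{p-2}$ of Case~4 (which in turn vanishes via F2 whenever $j>n_{p-2}$).

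The main obstacle is the careful bookkeeping at the boundaries $k=p-1$ and $k=p$, where the block convention $m(i)=m_{p-1}$ on the final block (rather than $m_p$) forces the extra cancellation captured by F3 and explains the asymmetric $\min\{k-1,p-2\}$ cutoff. Once F1--F3 are proved, the remainder of the argument is pure indicator matching against the four regions of the RHS.
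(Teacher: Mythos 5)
Your proof is correct and follows essentially the same route as the paper's: both isolate the residue at $\zeta_1=\zeta_2$ to produce $B(i,j)$ (you deform $\zeta_2$ outward past $\gamma_{\tau_1}$, the paper contracts $\zeta_1$ inward to $0$ — the same residue calculation via the group property of $G$), establish the same three vanishing facts, and invoke $B(i,j)=\mathbf{1}\{i=j\}$ on same-$m$-block pairs to close the case analysis. One small imprecision in your narrative: the $\min\{k-1,p-2\}$ cutoff on the $B$-term at $k=p$ is not explained by F3 killing $L_{p-1}$ (F3 is what removes the $L_{k-1}$-term at $k=p$ via the $k\le p-1$ restriction); rather, for $i\in(n_{p-1},n_p]$ and $j\in(n_{p-2},n_{p-1}]$ one has $m(i)=m(j)=m_{p-1}$ and $i\neq j$, so $B(i,j)=0$, which is the observation you correctly invoke later to ``eliminate spurious same-block contributions.''
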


\begin{proof}
Recall from Definition \ref{def:Lk}:
\begin{align*}
		&L_k(i,j) = \frac{1}{1-\sqrt{q}}\, \oint\limits_{\gamma_{\tau_1}} d\zeta_1\,  \oint\limits_{\gamma_{\tau_2}} d \zeta_2\,
		\frac{G \big(\zeta_1 \vt n_k - i, m_k- m(i), a_k-a(i) \big)}{G \big(\zeta_2 \vt n_k - j+1, m_k-m(j), a_k-a(j)\big) \,(\zeta_1 - \zeta_2)}.
	\end{align*}
\begin{itemize}
\item If $j > n_k$ then there is no pole at $\zeta_2 = 0$ in the above and we can contract the $\zeta_2$-contour to 0.
So $L_k(i,j) = 0$, which means $L_k(i,j) = \ind{j \leq n_k} L_k(i,j)$.
\item If $i > n_k$ and $m(i) = m_k$ (so $a(i) = a_k$ as well), then $L_k(i,j) = 0$ because the $\zeta_1$-contour may be contracted to $\infty$.
The condition $i > n_k$ and $m(i) = m_k$ is the same as $i > n_k$ and $k \geq p-1$. Indeed, if $i > n_k$ and $k \geq p-1$ then
$m(i) = m_k = m_{p-1}$ ($i > n_p$ is vacuous). Therefore,
$L_k(i,j) = \ind{i \leq n_k, \, j\leq n_k} L_k(i,j) + \ind{i > n_k, \, j \leq n_k, \, k \leq p-2} L_k(i,j)$.
\item When $i \leq n_k$ we can contract the $\zeta_1$-contour to 0, picking up a residue at $\zeta_1 = \zeta_2$, which equals $B(i,j)$.
Also, $B(i,j) = 0$ if $j > i$ because there is no pole at $w =0$ in that case. Consequently,
$$L_k(i,j) = \ind{i \leq n_k, \, j \leq n_k, \, j \leq i} B(i,j) + \ind{i > n_k, \, j \leq n_k, \, k \leq p-2} L_k(i,j).$$
\item If $m(i) = m(j)$ then
$$B(i,j) = (1-\sqrt{q})^{i-j} \oint\limits_{\gamma_{\tau}} d\zeta\, \zeta^{j-i-1} = \ind{i=j}.$$
\end{itemize}
Putting all this together we infer that
\begin{align*}
L_k(i,j)  =& \;  \mathbf{1} \{\, i \leq n_k, \; j \leq n_k, \; i = j \,\} \;\; + \\
& \;  \mathbf{1} \{ \, i \leq n_k, \; j \leq n_k, \; j \leq i, \; m(i) \neq m(j)\,\} \cdot B(i,j) \;\; + \\
& \; \mathbf{1} \{ \, i > n_k, \; j \leq n_k, \; k \leq p-2 \, \} \cdot L_k(i,j)
\end{align*}
Taking the difference of $L_k(i,j)$ from $L_{k-1}(i,j)$ by using the expression above gives the expression in
the lemma, except that the indicator in front of $B(i,j)$ reads $i \in (n_{k-1}, n_k]$, $j \leq n_{k-1}$ and
$m(i) \neq m(j)$. However, when $j \leq n_{k-1}$, the condition $m(i) \neq m(j)$ is precisely $j \leq n_{\min \{k-1,p-2\}}$.
\end{proof}

\paragraph{\textbf{Proof of Theorem \ref{thm:2}}}
We have the basic integral expression for the multi-point probability from Lemma \ref{lem:CB}.
The matrix $L(i,j \vt \mathbold{\theta})$ is given by \eqref{Lkcd}, which we will prove to equal
$I + A(\mathbold{\theta}) + B(\mathbold{\theta})$.

The matrix $A_2(\mathbold{\theta})$ is the one written in the second line of equation \eqref{Lkcd}.
We should explain the conditions $k_1 < \min \{r,p-1\}$ and $\min \{s,p-1\} < k_2$ in $L^{\veps} [{\scriptstyle k_1,k_2 \vt (k_1, k_2]} ]$.
Also, why is it that $k_1 < \min \{r,p-1\}$ and $s = k_2 < p$ in $L^{\veps} [{\scriptstyle k_1 \vt (k_1,k_2]}]$.

The condition $k_1 < r$ appears because in the definition of $L^{\veps}_{(k_1,k_2]}(i,j)$ we have
$i < n_{k_1}$, while we know $i \in (n_{r-1}, n_r]$. The condition $k_1 < p-1$ appears because
$L^{\veps}_{(k_1,k_2]}$ is zero if $k_1 \geq p-1$ by Lemma \ref{lem:Lcd}.
The condition on $s$ arises from the decomposition of $L^{\veps}_{(k_1,k_2]}$ in Lemma \ref{lem:Lcdhelper}.
Since $j \in (n_{s-1}, n_s]$, we have $s \leq k_2$, which we decompose
into two conditions: (a) $\ind{s \leq k_2, k_2 = p} + \ind{s < k_2, k_2 < p} = \ind{\min \{s,p-1\} < k_2}$ and (b) $\ind{s = k_2 < p}$.
In case (b) the matrix $L^{\veps}_{(k_1,k_2]}$ becomes $J^{\veps}_{(k_1,k_2]}$ by Lemma \ref{lem:Lcdhelper}, and this
results in the matrix $L^{\veps} [{\scriptstyle k_1 \vt (k_1,k_2]} ]$.

We have to show that the matrix associated to the first line in \eqref{Lkcd} equals
$I + A_1(\mathbold{\theta}) + B(\mathbold{\theta})$. If we write the statement of Lemma \ref{lem:Lkhelper}
in block notation, it reads
\begin{align} \label{eqn:Lkdiff}
& (L_k-L_{k-1})(r,i; s,j) =  \ind{r=k=s} \cdot \ind{i=j} \; + \;
\ind{r = k, \, s+1 \, \leq\,  \min \{r,p-1\}} \cdot B(r,i; s,j) \;\; + \\ \nonumber
& \qquad \ind{r > k, \, s \leq k, \, k \leq p-2} \cdot L_k(r,i;s,j) \,-\,
\ind{r > k-1, \, s \leq k-1, \, k \leq p-1} \cdot L_{k-1}(r,i; s,j).
\end{align}
We need to consider the weighted sum $\sum_k \theta^{\eps^k}(i) \cdot c(r,i;s,j) \times \eqref{eqn:Lkdiff}$.

Observe that if $i \in (n_{k-1}, n_k]$ then
$$\theta^{\eps^k}(i) = \theta(k \vt \eps^k) = \theta_1^{- \ind{i \leq n_1}} \cdots \theta_{k-1}^{-\ind{i \leq n_{k-1}}}\,
\theta_k^{\ind{i > n_k}} \cdots \theta_{p-1}^{\ind{i > n_{p-1}}} = 1.$$
Therefore, summing $\theta^{\eps^k}(i) \ind{r=k=s} \ind{i=j}$ over $k$ and
multiplying by $c(r,i;s,j)$ gives the matrix $\ind{i=j} c(r,i;s,j)$, which is the identity since $c(r,i;s,j)$ is a conjugation factor.

Consider the third term on the r.h.s.~of \eqref{eqn:Lkdiff} containing the difference between $L_k$ and $L_{k-1}$.
This term is zero unless $s < r$, and $k$ satisfies $s \leq k \leq r$. When $s < k < r$, it equals
$\ind{k < p-1} (L_k - L_{k-1})(r,i;s,j)$. Also, the condition $s < k < r$ is vacuous unless $s < r-1$.
When $k=s$, the term becomes $\ind{s < p-1} L_s(r,i;s,j)$. When $k=r$, it equals $\ind{r < p} L_{r-1}(r,i;s,j)$.
We will see in the following paragraph that $L_s(r,i;s,j) = B(r,i;s,j)$. Thus, we find appearances of $B(r,i;s,j)$ in
the third term from $L_k$ when $k=s$, and from $L_{k-1}$ when $k = s+1$. Accounting for these $B(r,i;s,j)$,
we find the weighted sum
\begin{align*}
& \sum_k \theta^{\eps^k}(i) \big ( \text{third term of}\;\; \eqref{eqn:Lkdiff} \big)  = (I) + (II), \;\;\text{where} \\
(I) & = \ind{s < r, \, s < p-1} \Big ( \theta(r \vt \eps^s) - \big (\ind{s+1 <r, \, s+1 <p-1} + \ind{s+1=r, \, r <p}\big ) \theta(r \vt \eps^{s+1}) \Big ) B(r,i;s,j), \\
(II) & = \ind{s+1 \,< r}  \Big ( \sum_{k: \, s+1 < k < r, \, k <p-1} \theta(r \vt \eps^k) \big (L_k-L_{k-1} \big)(r,i;s,j)\; + \\
 & \qquad \ind{s < p-2} \, \theta(r \vt \eps^{s+1}) L_{s+1}(r,i;s,j) - \ind{r < p} \, L_{r-1}(r,i;s,j) \Big ).
\end{align*}
We have used that $\theta^{\eps^k}(i) = \theta(r \vt \eps^k)$.

Consider term (I). If $s < r$ and $s < p-1$ then
$$\ind{s+1 < r, \, s+1<p-1} + \ind{s+1=r, \, r< p} = 1 - \ind{r=p,\, s=p-2},$$
which gives the coefficient $\Theta(r \vt s)$ in term (I) if we recall its definition from \eqref{theta}.
If we take this contribution of $\ind{s < r, \, s < p-1} \Theta(r \vt s) B(r,i;s,j)$, and combine it with
$$\sum_k \theta^{\eps^k}(i) \, \ind{r=k,\, s < r, \, s<p-1} B(r,i;s,j) = \ind{s < \min\{r,p-1\}}\, B(r,i;s,j)$$
coming from the $k$-summation of the second term of \eqref{eqn:Lkdiff}, then, after conjugation
by $c(r,i;s,j)$, we get the matrix $B(\mathbold{\theta})$ from Definition \ref{def:blockform}.

Now consider term (II). If we express it as a sum involving the $L_k(r,i;s,j)$ then the coefficient of
$L_k(r,i;s,j)$ is $\ind{s < k < \min \{r,p-1\}} \cdot (\theta(r \vt \eps^k) - \theta(r \vt \eps^{k+1}))$. Recalling
$\Theta(r \vt k)$, we see that $\theta(r \vt \eps^k) - \theta(r \vt \eps^{k+1})
= \Theta(r \vt k)$ because $s < p-2$, due to $s < k < \min \{r,p-1\}$. Hence, the contribution of $L_k$ appears
as $\Theta(r \vt k) \, L_k(r,i;s,j)$. The sum over $k$ followed by multiplication by $c(r,i;s,j)$ equals
the matrix $A_1(\mathbold{\theta})$.

Finally, we show that $L_s(i,j) = B(i,j)$ for $j \in (n_{s-1},n_s]$ and $s \leq p-2$ as is the case above.
Indeed, we have $m(j) = m_s$ and $a(j) = a_s$, which means that
\begin{align*}
		&L_s(i,j) = \oint\limits_{\gamma_{\tau_1}} d\zeta_1\,  \oint\limits_{\gamma_{\tau_2}} d \zeta_2\,
		\frac{G \big(\zeta_1 \vt n_s - i, m_s- m(i), a_s-a(i) \big)}{G \big(\zeta_2 \vt n_s - j+1, 0, 0 \big) \,(\zeta_1 - \zeta_2)}.
\end{align*}
We can contract the $\zeta_2$-contour to $\infty$, since $j \leq n_s$, but doing so leaves a residue at $\zeta_2 = \zeta_1$.
Its value is $B(i,j)$.
\qed

\subsection{Distribution function of the single point law} \label{sec:singletime}
When $p=1$ one can write a Fredholm determinantal expression for $\pr{\mathbold{G}(m,n) < a}$
where the matrix is in terms of a double contour integral. Such formulas are nowadays frequent
as discrete approximations to Tracy-Widom laws, so this section is meant to provide some intuition
for our orthogonalization procedure.

From Lemma \ref{lem:Prsimple} we see that $\pr{\mathbold{G}(m,n) < a} = \dt{\dr^{j-i-1} w_m(a)}_{n \times n}$.
Consider the following matrix $B = [b_{kj}]$, which is a slight variant of $B$ from \eqref{ABdef}.
$$ b_{kj}  = \oint\limits_{\gamma_{\tau}} d\zeta \, \frac{1}{G^{*}\big (\zeta \vt k-j+1, m, a -1\big)}\, .$$
The radius $\tau < 1-q$. The matrix is lower triangular with 1s on the diagonal, so $\dt{B} = 1$. We have
$$ \pr{\mathbold{G}(m,n) < a} = \dt{\ell_{ij}}, \;\; \ell_{ij} = \sum_{k=1}^N (-1)^{k+i} \dr^{k-i-1} w_m(a) b_{kj}\,.$$
Using \eqref{Deltakwm} and Lemma \ref{lem:Gintegral} we find that
$$\ell_{ij} = \oint\limits_{\gamma_{\tau}} d\zeta \oint\limits_{\gamma_R} dz\, \frac{G^{*}(z \vt n-i,m,a-1)}{G^{*}(\zeta \vt n-j+1,m,a-1) (z-\zeta)}\,.$$
The radii $\tau < 1-q$ and $R > 1$. By collecting residue of the $z$-integral at $z = \zeta$, we infer that
\begin{align*}
\ell_{ij} &= \oint\limits_{\gamma_{\tau}} d\zeta\, \zeta^{j-i-1} \;+\; 
\oint\limits_{\gamma_{\tau}} d\zeta \oint\limits_{\gamma_r(1)} dz\, \frac{G^{*}(z \vt n-i,m,a-1)}{G^{*}(\zeta \vt n-j+1,m,a-1) (z-\zeta)} \\
&= \ind{i=j} + M(i,j).
\end{align*}
Now we arrange the radii to have $\tau < 1 - \sqrt{q} < 1 - r < 1 - q$.

If we write $i = \lfloor c_0 n^{1/3} u \rfloor$ and $j = \lceil c_0 n^{1/3} v \rceil$, then a direct asymptotical analysis of
$M(i,j)$ leads to the Airy kernel \eqref{singletime} under KPZ scaling.

\section{Asymptotics: formulation in the KPZ-scaling limit} \label{sec:asymptotics}
In order to prove Theorem \ref{thm:1} we will consider the limit of the determinantal expression
from Theorem \ref{thm:2} under KPZ scaling. We will do so in several steps. In $\S$\ref{sec:asy1} we define
the Hilbert space where all matrices are embedded in the pre and post limit. The proof of convergence of
the determinant will be based on a steepest descent analysis of the matrix entries. In $\S$\ref{sec:asy2}
we provide contours of descent and behaviour of the entries around critical points. The proof of
convergence is in $\S$\ref{sec:asy3}. There is a technical addendum in $\S$\ref{sec:asy4}, where it is
also proved that the limit from Theorem \ref{thm:1} is a probability distribution.

\subsection{Setting for asymptotics} \label{sec:asy1}

Consider the space $X = \overbrace{\R_{< 0} \oplus \cdots \oplus \R_{<0}}^{p-1} \, \oplus \, \R_{>0}$ and a measure
$\lambda$ on it defined by
$ \int\limits_{X} d\lambda \, f = \sum_{k=1}^{p-1} \int_{-\infty}^{0} dx\, f(k,x) \; + \; \int_{0}^{\infty} dx\, f(p,x)$.
Define the Hilbert space
\begin{equation} \label{Hspace}
\hb = L^2(X,\lambda) \cong \underbrace{L^2(\R_{<0}, dx) \oplus \cdots \oplus L^2(\R_{<0},dx)}_{p-1} \, \oplus \,L^2(\R_{>0},dx)\,.
\end{equation}
Recall the partition $\{1\, \ldots, N\} = (0,n_1] \cup \cdots (n_{p-1},n_p]$.
Embed indices from $\{1, \ldots, N\}$ into $X$ by mapping each index $i$ into a unit length interval in the following manner.
\begin{equation} \label{indexembed}
 i \mapsto \begin{cases}
        \text{points}\; (k, u ) & \text{for}\;\; i-1 < n_k +u\leq i \;\; \text{if}\; i \in (n_{k-1},n_k] \;\text{and}\; k < p, \\
        \text{points}\; (p, u) & \text{for} \;\;  i-1 < n_{p-1}+u \leq i \;\; \text{if}\; i \in (n_{p-1},n_p].
       \end{cases}
\end{equation}
Observe that for $k <p$ the block $(n_{k-1},n_k]$ is mapped to the interval $(-\D_k n, 0]$ and for $k=p$ it is mapped to $(0, \D_p n]$.

An $N \times N$ matrix $M$ embeds as a kernel $\widetilde{M}$ on $\hb$ by
\begin{equation} \label{matrixembed}
\widetilde{M}(r,u ; \, s,v) = M \big(r, \, n_{\min \{r,p-1\}} + \lceil u \rceil; \; s, \, n_{\min \{s,p-1\}} + \lceil v \rceil \big).
\end{equation}
Here we have used the block notation \eqref{blocknot} and $\lceil u \rceil$ is the integer part of $u$ after rounding up.
The range of $u$ and $v$ lie in the aforementioned intervals determined by each block, but we may extend it to all of
$\R_{<0}$ (and to $\R_{>0}$ for the final blocks) by making $\widetilde{M}$ zero.
Then, by design,
$$ \dt{I + \widetilde{M}}_{\hb} = \dt{I+M}_{N \times N}$$
where
$$ \dt{I + \widetilde{M}}_{\hb} = 1 + \sum_{k\geq 1} \frac{1}{k!} \int\limits_{X^k} d\lambda(r_1, u_1) \cdots d \lambda(r_k, u_k)\,
\dt{\widetilde{M}(r_i,u_i;\, r_j,u_j)}_{k \times k}.$$
This is because $\widetilde{M}$ is constant to $M(i,j)$ on a square of the form
$[\widetilde{i}-1,\widetilde{i}) \times [\widetilde{j}-1,\widetilde{j})$ determined according to the
correspondence \eqref{indexembed}, and zero elsewhere.

In order to perform asymptotics we should rescale variables of $\widetilde{M}$ according to KPZ scaling \eqref{KPZscaling}.
In this regard, recalling $\nu_T = c_0 T^{1/3}$, we change variables $(r,u) \mapsto (r, \nu_T \cdot u)$
in the Fredholm determinant of $\widetilde{M}$ above. So if we define a new matrix kernel
\begin{equation} \label{Fredholmembed}
F(r,u ;\, s,v) = \nu_T \, \widetilde{M} \big ( r,\, \nu_T \cdot u;\; s,\, \nu_T \cdot v\big ),
\end{equation}
then
$$ \dt{I+ F}_{\hb} = \dt{I + M}_{N \times N}.$$

We will use the following estimate about Fredholm determinants.
\begin{lem} \label{lem:detest}
Let $A$ and $E$ be matrix kernels over a space $L^2(X, \mu)$, which satisfy the following for some
positive constants $C_1$, $C_2$ and $\eta \leq 1$. There are non-negative functions $a_1(x), a_2(x), e_1(x), e_2(x)$ on $X$
such that
$$|A(x,y)| \leq a_1(x) a_2(y) \quad \text{and} \quad |E(x,y)| \leq \eta \, e_1(x) e_2(y).$$
Moreover, both $a_1(x), e_1(x) \leq C_1$ and both $\int_{X} d \mu(x) \, a_2(x)$, $\int_{X} d \mu(x)\, e_2(x) \leq C_2$.
Then there is a constant $C_3 = C_3(C_1,C_2)$ such that
$$ \left | \dt{I + A + E}_{L^2(X,\mu)} - \dt{I+A}_{L^2(X,\mu)} \right | \leq \eta \, C_3.$$
\end{lem}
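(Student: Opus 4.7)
The plan is to expand both Fredholm determinants in the standard series and compare term-by-term. Write
\begin{equation*}
\dt{I+A+E}_{L^2(X,\mu)} - \dt{I+A}_{L^2(X,\mu)} = \sum_{k \geq 1} \frac{1}{k!} \int_{X^k} \Big( \dt{(A+E)(x_i,x_j)}_{k\times k} - \dt{A(x_i,x_j)}_{k \times k} \Big) \, d\mu^k,
\end{equation*}
and for each $k$ expand the mixed determinant by multilinearity in its $k$ columns, splitting $(A+E)(x_i,x_j) = A(x_i,x_j) + E(x_i,x_j)$. This produces $2^k$ sub-determinants, one of which is $\dt{A(x_i,x_j)}$ and drops out; what remains is indexed by non-empty subsets $S \subseteq \{1,\ldots,k\}$ marking which columns are taken from $E$.

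For a fixed $S$ with $|S|=\ell \geq 1$, the corresponding $k \times k$ matrix $M^S$ has column $j$ bounded in absolute value by $\eta\, e_1(x_i) e_2(x_j)$ if $j \in S$, and by $a_1(x_i) a_2(x_j)$ otherwise. Applying Hadamard's inequality column-wise, and using $a_1, e_1 \leq C_1$,
\begin{equation*}
\big| \dt{M^S} \big| \leq \prod_{j=1}^k \Big( \sum_{i=1}^k |M^S_{ij}|^2 \Big)^{1/2} \leq (C_1 \sqrt{k})^k \, \eta^{\ell} \prod_{j \in S} e_2(x_j) \prod_{j \notin S} a_2(x_j).
\end{equation*}
Integrating against $d\mu^k$ and using $\int e_2, \int a_2 \leq C_2$ gives $(C_1 \sqrt{k})^k \, \eta^{\ell} \, C_2^k$. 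Since $\eta \leq 1$, summing over non-empty $S$ yields
\begin{equation*}
\sum_{\ell=1}^k \binom{k}{\ell} \eta^{\ell} \leq \eta k (1+\eta)^{k-1} \leq \eta \, k \, 2^{k-1}.
\end{equation*}

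Putting everything together,
\begin{equation*}
\big| \dt{I+A+E}_{L^2(X,\mu)} - \dt{I+A}_{L^2(X,\mu)} \big| \leq \eta \sum_{k \geq 1} \frac{k \, 2^{k-1}}{k!} (C_1 C_2)^k k^{k/2}.
\end{equation*}
By Stirling's formula $k!\sim \sqrt{2\pi k}(k/e)^k$, the ratio $k^{k/2}/k!$ decays like $e^k/(\sqrt{k}\, k^{k/2})$, so the series converges and its sum is a finite constant $C_3 = C_3(C_1,C_2)$. This establishes the claimed bound. No serious obstacle arises; the only delicate point is to make sure the column-wise Hadamard estimate is applied uniformly across all $S$ so that the $\eta$ factor can be extracted cleanly before the combinatorial sum.
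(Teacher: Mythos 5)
Your proof is correct and follows essentially the same route as the paper's: expand both determinants by multilinearity over the $2^k$ choices of $A$ versus $E$ columns, apply Hadamard's inequality column-by-column using the row bounds $a_1, e_1 \le C_1$, integrate the column factors against $\mu$ to get $C_2^k$, and extract a single factor of $\eta$ from the binomial sum (the paper bounds $(1+\eta)^k - 1 \le \eta\, 2^k$; you bound $\sum_{\ell\ge1}\binom{k}{\ell}\eta^\ell \le \eta\, k\, 2^{k-1}$, which is the same estimate up to a harmless polynomial-in-$k$ factor absorbed by the factorial).
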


\begin{proof}
For $x_1, \ldots, x_k \in X$, consider the determinant of $[A(x_i,x_j) + E(x_i,x_j)]$.
Using multi-linearity, Hadamard's inequality, and the bounds on $a_1(x)$ and $e_1(x)$, we find that
$$ \left | \dt{A(x_i,x_j) + E(x_i,x_j)} - \dt{A(x_i,x_j)} \right | \leq
\sum_{S \subset [k], \, S \neq \emptyset} \, \eta^{|S|} k^{k/2} C_1^k \prod_{j\in S} e_2(x_j) \prod_{j \notin S} a_2(x_j).$$
If we integrate the above over every $x_j$, use the bound on the integrals of $a_2(x)$ and $e_2(x)$, and then
collect contributions of $\eta$, we see that
$$\int_{X^k} d\mu(x_1) \cdots d\mu(x_k) \,\left | \dt{A(x_i,x_j) + E(x_i,x_j)} - \dt{A(x_i,x_j)} \right | \leq
k^{k/2} (C_1 C_2)^k ((1+\eta)^k - 1).$$
Since $0 \leq \eta \leq 1$ we have that $(1+\eta)^k -1 \leq \eta 2^k$. Consequently,
$$\left |\dt{I+A+E}_{L^2(X,\mu)} - \dt{I+A}_{L^2(X,\mu)} \right | \leq
\eta \sum_{k \geq 1} \frac{k^{k/2}}{k!} (2C_1 C_2)^k =: \eta C_3. \qedhere $$
\end{proof}

We will use the following nomenclature for matrix kernels in the proof of convergence.
\begin{defn} \label{kerneldef}
Let $M_1, M_2, \ldots$, be a sequence of matrices where $M_N$ is an $N \times N$ matrix understood
in terms of the $p \times p$ block structure above. Let $\widetilde{M}_N$ be the embedding of $M_N$
into $\hb$ as in \eqref{matrixembed}, and $F_N$ the rescaling according to \eqref{Fredholmembed}.
\begin{itemize}
	\item The matrices are \emph{good} if there are non-negative, bounded and integrable functions
	$g_1(x)$, $\ldots, g_p(x)$ on $\R$ such that following holds. For every $N$,
	$$ |F_N(r;u,\, s,v)| \leq g_r(u) g_s(v) \quad \text{for every}\;\; 1\leq r,s \leq p \;\; \text{and}\;\; u,v \in \R.$$
	\item The matrices are \emph{convergent} if there is a matrix kernel $F$ on $\hb$ such that
	the following holds uniformly in $u,v$ restricted to compact subsets of $\R$.
	$$ \lim_{N \to \infty}\, F_N(r,u; \,s,v) = F(r,u;\, s,v) \quad \text{for every}\;\; 1\leq r,s \leq p.$$
	\item The matrices are \emph{small} if there is a sequence $\eta_N \to 0$ and functions $g_1, \ldots, g_p$ as for
	good matrices such the the following holds.
	$$ |F_N(r;u,\, s,v)| \leq \eta_N \, g_r(u) g_s(v) \quad \text{for every}\;\; 1\leq r,s \leq p \;\; \text{and}\;\; u,v \in \R.$$
\end{itemize}
\end{defn}
Remark in the above definition that $u$ and $v$ will be negative or positive depending on the blocks,
and we can think of $F_N$ being zero outside the stipulated domain. It will be convenient to hide dependence
of parameter $N$ when discussing matrices and call a matrix good/convergent/small with $N$ understood implicitly.
The following are straightforward consequences of the definitions, dominated convergence theorem and Lemma \ref{lem:detest}.
\begin{enumerate}
	\item If $M_1, M_2, \ldots$ are good and convergent matrices with limit $F$ on $\hb$ then
	$$\dt{I+ F_N}_{\hb} \to \dt{I+F}_{\hb} < \infty.$$
	$F$ satisfies the same goodness bound as its approximants.
	\item If $M_1, M_2, \ldots$ are good and $S_1, S_2, \ldots$ are small then
	$$ \dt{I+F_{M_N} + F_{S_N}}_{\hb} - \dt{I + F_{M_N}}_{\hb} \to 0,$$
	where $F_{M_N}$ is the rescaling of $M_N$ according to \eqref{Fredholmembed} and similarly for $F_{S_N}$.
\end{enumerate}

\subsection{Preparation} \label{sec:asy2}
In order to apply the method of steepest descent to the determinant from Theorem \ref{thm:2},
we have to identify the limit of matrix kernels and also establish some decay estimates
for them at infinity, so that the series expansion of the Fredholm determinant converges.
To do this we need three things regarding the function $G(w \vt n,m,a)$.

First, we need to understand the asymptotic behaviour of $G(w \vt n,m,a)$ locally around its
critical point under KPZ scaling of $n,m,a$. This is the content of Lemma \ref{lem:Glimit}.
Second, we have to find descent contours for $\gamma_{\tau}$ and $\gamma_R(1)$
that appear in the description of $A(\theta)$ and $B(\theta)$. These are provided by Definition \ref{def:contours}.
Third, we have to establish decay of $G$ along these contours, which is the subject of Lemma \ref{lem:Gest}.

Recall $G(w \vt n,m,a)$ from \eqref{Gnmx} with the indices scaled as
\begin{align}\label{klbscaling}
	n&=K-c_1x K^{2/3}+c_0vK^{1/3},\\ \nonumber
	m&=K+c_1x K^{2/3}, \\ \nonumber
	a&=c_2K+c_3\xi K^{1/3}.
\end{align}
The constants $c_i$ are given by \eqref{scalingconstants}. When $n=m$ and $a = c_2 n$ the function
$$ \log G(w \vt n,m,a)=n\log w+(m+a)\log(1-w)- m \log(1-\frac{w}{1-q}) - \log(G^{*}(1-\sqrt{q}\vt n,m,a))$$
has a double critical point at
\begin{equation}\label{wc}
w_c=1-\sqrt{q}.
\end{equation}

\begin{lem}\label{lem:Glimit}
	Assume that we have the scaling \eqref{klbscaling} and that $|x|, |\xi|, |v| \leq L$ for a fixed $L$.
	Then uniformly in $x,\xi, v$ and $w \in \C$ restricted to compact subsets, 
	\begin{equation}\label{Hstarlimit}
	\lim_{K\to\infty} G\left(w_c+\frac{c_4 \cdot w}{K^{1/3}} \, \Big | \, n,m,a \right)=\G(w \vt 1, x,\xi-v) =
	\exp \left \{ \frac{1}{3}w^3+ x w^2-(\xi-v)w \right\},
	\end{equation}
	where
	\begin{equation}\label{c4}
	c_4=\frac{q^{1/3}(1-\sqrt{q})}{(1+\sqrt{q})^{1/3}} = \frac{w_c}{c_0}.
	\end{equation}
\end{lem}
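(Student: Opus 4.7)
The plan is to Taylor-expand $\log G(w_c + \epsilon \vt n,m,a)$ in $\epsilon = c_4 w K^{-1/3}$ and show that the limit is $\tfrac{1}{3}w^3 + xw^2 - (\xi-v)w$; the lemma then follows by exponentiation. From \eqref{gnmx}--\eqref{Gnmx},
\[
\log G(w_c+\epsilon) = n\log\!\bigl(1+\tfrac{\epsilon}{w_c}\bigr) + (a+m)\log\!\bigl(1-\tfrac{\epsilon}{1-w_c}\bigr) - m\log\!\bigl(1-\tfrac{\epsilon}{(1-q)-w_c}\bigr).
\]
I would split $n = K + \Delta n$, $m = K + \Delta m$, $a = c_2 K + \Delta a$ according to \eqref{klbscaling} (so $\Delta n,\Delta m = O(K^{2/3})$ and $\Delta a = O(K^{1/3})$), which separates $\log G$ into a bulk piece $K\bigl[\phi_0(w_c+\epsilon) - \phi_0(w_c)\bigr]$, where $\phi_0(w) = \log w + (1+c_2)\log(1-w) - \log(1-w/(1-q))$, and a perturbation piece.

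For the bulk, I would first verify $\phi_0'(w_c) = \phi_0''(w_c) = 0$ by direct differentiation; this is where the choices $w_c = 1-\sqrt{q}$ and $c_2 = 2\sqrt{q}/(1-\sqrt{q})$ enter decisively, since they are precisely the values that annihilate the first and second derivatives at $w_c$. A further differentiation gives $\phi_0'''(w_c) = 2(1+\sqrt{q})/(q(1-\sqrt{q})^3)$, so Taylor's theorem yields
\[
K\bigl[\phi_0(w_c+\epsilon) - \phi_0(w_c)\bigr] = \frac{\phi_0'''(w_c)\,c_4^3}{6}\,w^3 + O(K^{-1/3}),
\]
and plugging \eqref{c4} into \eqref{scalingconstants} confirms $\phi_0'''(w_c)\,c_4^3 = 2$, producing the $\tfrac{1}{3}w^3$ term.

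For the perturbation piece I would expand each logarithm in $\epsilon$ and retain only the linear and quadratic orders, since higher orders are $O(K^{-1/3})$. The $K^{2/3}$ corrections in $\Delta n,\Delta m$ (both proportional to $x$) cancel at linear order because the bracket $[-1/w_c - 1/(1-w_c) + 1/((1-q)-w_c)]$ vanishes, which one checks from $\phi_0'(w_c) = 0$ together with the explicit form of $c_2$; at quadratic order they combine to $c_1 x K^{2/3}\epsilon^2/(\sqrt{q}(1-\sqrt{q})^2)$, which becomes $xw^2$ after using $c_1 c_4^2 = \sqrt{q}(1-\sqrt{q})^2$. The $K^{1/3}$ corrections (the $c_0 v$-part of $\Delta n$ and the $c_3\xi$-part of $\Delta a$) contribute only linearly in $\epsilon$, yielding $K^{1/3}\epsilon\bigl[c_0 v/w_c - c_3\xi/(1-w_c)\bigr]$, which collapses to $-(\xi-v)w$ via the identities $c_4 c_0/w_c = 1$ and $c_4 c_3/\sqrt{q} = 1$. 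All remaining cross-terms are $O(K^{-1/3})$ uniformly for $w,x,\xi,v$ in compacts, and uniform convergence is preserved by $\exp(\cdot)$. The hard part is purely algebraic: each of $c_0,\ldots,c_4$ from \eqref{scalingconstants},\eqref{c4} is determined by the requirement that exactly one of the coefficients $\tfrac{1}{3}$, $x$, or $-(\xi-v)$ emerge with the correct value, and I would organize the bookkeeping by expressing every quantity in terms of $w_c = 1-\sqrt{q}$ and simplifying rational functions of $\sqrt{q}$.
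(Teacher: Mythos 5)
Your proof is correct and follows the same approach the paper attributes to Lemma 5.3 of \cite{JoTwo}: Taylor expansion of $\log G$ around the double critical point $w_c$ under the scaling \eqref{klbscaling}. The algebraic verifications $\phi_0'(w_c)=\phi_0''(w_c)=0$, $\phi_0'''(w_c)\,c_4^3=2$, $c_1 c_4^2=\sqrt{q}(1-\sqrt{q})^2$, $c_4 c_0=w_c$, and $c_4 c_3=\sqrt{q}$ are exactly what is needed, and all check out.
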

The lemma is proved in Lemma 5.3 of \cite{JoTwo} by considering the Taylor expansion of $\log G$
with the scaling \eqref{KPZscaling}.

The circular contours $\gamma$ around 0 and 1 will be chosen according to the following two contours
with appropriate values for the parameters.
\begin{defn} \label{def:contours}
Let $K>0$ and $0 < d < K^{1/3}$. For $|\sigma|\le\pi K^{1/3}$, set
\begin{equation}\label{w1}
w_0(\sigma)=w_0(\sigma;d)=w_c(1-\frac d{K^{1/3}})e^{\mathrm{i}\sigma K^{-1/3}}
\end{equation}
and
\begin{equation}\label{w2}
w_1(\sigma)=w_1(\sigma;d)=1- \sqrt{q}(1-\frac d{K^{1/3}})e^{\mathrm{i}\sigma K^{-1/3}}\,.
\end{equation}
Thus, $w_0$ is a circle around the origin of radius $w_c(1-\frac d{K^{1/3}})$
and $w_1$ is a circle around 1 of radius $ \sqrt{q}(1-\frac d{K^{1/3}})$.
\end{defn}

Recall the notation $(v)_{+} = \max \{ v,0\}$ and $(v)_{-} = \max \{-v,0\}$.
\begin{lem}\label{lem:Gest}
	Assume $|x|, |\xi| \leq L$ for some fixed $L>0$. Consider the scaling \eqref{klbscaling} where $v$ is such that $n \geq 0$.
	There are positive constants $C_0, C_1,C_2,C_3,C_4, C_5$ that depend on $q$ and $L$ such that the following holds.
	Let $0 < \delta \leq C_0$.  There are positive constants $\mu_1$ and $\mu_2$ that depend on $q,L, \delta$ with
	the following property. If $K \geq C_5$, there is a choice of $d = d(v)$ such that
	\begin{equation}\label{Gest0}
	\big|G(w_0(\sigma;d(v)) \vt n,m,a)\big|^{-1}\leq C_3e^{-C_4\sigma^2-\mu_1(v)_{-}^{3/2}+\mu_2(v)_+}
	\end{equation}
	and
	\begin{equation}\label{Gest1}
	\big|G(w_1(\sigma;d(v))\vt n,m,a)\big|\leq C_3e^{-C_4\sigma^2-\mu_1(v)_{-}^{3/2}+\mu_2(v)_+}
	\end{equation}
	for every $|\sigma|\leq \pi K^{1/3}$. If $v \geq 0$ then $d(v)$ may be any point in the interval
	$[C_1, C_2K^{1/3}]$ ($C_1 < C_2 < 1$). If $v < 0$ then $d(v)$ may be any point in the interval
	$[C_1 + \delta \cdot (v)_{-}^{1/2}, C_2 K^{1/3}]$. 
\end{lem}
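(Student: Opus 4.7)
The estimates come from steepest-descent analysis at the double critical point $w_c=1-\sqrt{q}$ of the leading phase $\phi(w) = \log w + (c_2+1)\log(1-w) - \log(1-w/(1-q))$. Under the scaling \eqref{klbscaling},
\[
\log G(w\vt n,m,a) = K[\phi(w)-\phi(w_c)] + K^{2/3} x\,[\psi(w)-\psi(w_c)] + K^{1/3}\bigl(c_0 v\log(w/w_c)+c_3\xi\log\tfrac{1-w}{1-w_c}\bigr),
\]
for an explicit $\psi$. The constants in \eqref{scalingconstants} are tuned precisely so that with $w=w_c+c_4 z/K^{1/3}$, Lemma \ref{lem:Glimit} gives the cubic limit $\log G \to z^3/3 + xz^2 - (\xi-v)z$. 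On the inner contour $w_0(\sigma;d)$, the closest point to $w_c$ is $w_c(1-d/K^{1/3})$, corresponding to $z=-c_0 d$; locally $z\approx c_0(-d+i\sigma)$. On $w_1(\sigma;d)$ the nearest point sits on the opposite side of $w_c$, giving $\mathrm{Re}(z)=Ad>0$ at $\sigma=0$ with $A=\sqrt{q}/c_4$; the cubic contribution changes sign accordingly, producing the reversed inequality \eqref{Gest1}.

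\textbf{Local optimization.} For $|\sigma|=O(1)$ the cubic limit gives on $w_0$
\[
\mathrm{Re}\log G \approx c_0^3 d\,\sigma^2 - \tfrac{1}{3}c_0^3 d^3 + c_0^2 x(d^2-\sigma^2) + c_0 d(\xi-v),
\]
and the mirror expression on $w_1$ with $c_0$ replaced by $A$ and with the $\sigma^2$ and $d^3$ coefficients flipped in sign. The $\sigma^2$ coefficient has the correct sign for descent once $C_1$ exceeds a fixed multiple of $L$, furnishing the $e^{-C_4\sigma^2}$ factor in both \eqref{Gest0} and \eqref{Gest1}. For the $\sigma=0$ contribution, when $v\geq 0$ any fixed $d\in[C_1,C_2]$ suffices, yielding $\mu_2=O(1)$; when $v<0$, the cubic $-\tfrac{1}{3}c_0^3 d^3$ and linear $c_0(v)_- d$ conspire to give a minimum value $-\tfrac{2}{3}(v)_-^{3/2}$ at $d_\star = c_0^{-1}\sqrt{(v)_-}$ (and analogously at $A^{-1}\sqrt{(v)_-}$ on $w_1$). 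Choosing $\delta\leq C_0$ with $C_0$ smaller than $\sqrt{3}/\max(c_0,A)$ guarantees that every $d \geq C_1 + \delta\sqrt{(v)_-}$ still delivers descent of order $(v)_-^{3/2}$, which is the $\mu_1$-bound.

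\textbf{Global extension (the main obstacle).} The Taylor expansion above is only valid for $|\sigma|=O(1)$, whereas the lemma demands the bound uniformly over the full arc $|\sigma|\leq \pi K^{1/3}$. I would extend it by showing that on the constant-modulus circle $|w|=w_c(1-d/K^{1/3})$, the function $\theta\mapsto \mathrm{Re}\log G^{*}(re^{i\theta})$ attains a unique maximum at $\theta=0$ with a uniformly quadratic rate of descent matching $C_4\sigma^2$ after the scaling $\theta=\sigma/K^{1/3}$. Direct differentiation using the closed formulas for $\log|w|$, $\log|1-w|$ and $\log|1-w/(1-q)|$ on the circle pins down the stationary structure: $\partial_\theta\mathrm{Re}\log G^{*}$ vanishes only at $\theta=0,\pi$, with negative second derivative at $0$ and positive second derivative at $\pi$. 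The subleading corrections in $x,v,\xi$ perturb the optimum by $O(K^{-1/3})$ and are absorbed into $\mu_1,\mu_2$. The hypothesis $K\geq C_5$ keeps these perturbations small, the bound $d\leq C_2 K^{1/3}$ keeps $w_0$ and $w_1$ uniformly away from the singularities at $w=0,1-q,1$, and $\delta\leq C_0$ forces $d_\star$ to stay in the regime where the cubic approximation governs. The analysis for $w_1$ is carried out identically after reflecting about $w_c$.
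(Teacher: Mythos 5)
The paper does not prove Lemma \ref{lem:Gest} itself; it cites Lemmas 5.6 and 5.7 of \cite{JoTwo}, describing the method only as ``a direct critical point analysis of the real parts of $\log G(w_0(\sigma,d))$ and $\log G(w_1(\sigma,d))$.'' Your sketch follows that same route, and your local analysis near the double critical point is correct: the identification $z\approx c_0(-d+i\sigma)$ on $w_0$ and $z\approx A(d-i\sigma)$ on $w_1$ with $A=\sqrt{q}/c_4$, the resulting expansion of $\mathrm{Re}\log G$ with positive $\sigma^2$-coefficient once $d$ exceeds a multiple of $L$, and the $d^3/(v)_-d$ trade-off at $d_\star\sim\sqrt{(v)_-}$ with the constraint $\delta<\sqrt{3}/\max(c_0,A)$ are all right.

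Two issues remain, both in the ``Global extension.'' First, you have a sign slip: on the inner circle the function $\theta\mapsto\mathrm{Re}\log G^{*}(re^{i\theta})$ must attain a \emph{minimum} at $\theta=0$ (so that $|G|^{-1}$ is largest there and decays for $\sigma\neq 0$), not a maximum — this is consistent with your own local computation, where the $\sigma^2$-coefficient of $\mathrm{Re}\log G$ on $w_0$ came out positive. Correspondingly $\theta=\pi$ is a maximum of $\mathrm{Re}\log G^{*}$, not a minimum.

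Second, the pivotal claim that $\partial_\theta\mathrm{Re}\log G^{*}(re^{i\theta})$ vanishes only at $\theta=0,\pi$ is asserted, not established. For the leading phase this does hold: writing $u=\cos\theta$, one finds $\partial_\theta\mathrm{Re}\,K\phi(re^{i\theta})=K\sin\theta\,[(1+c_2)g(u)-h(u)]$ with $g(u)=r/(1-2ru+r^2)$, $h(u)=\rho/(1-2\rho u+\rho^2)$, $\rho=r/(1-q)$, and the bracket is affine in $u$ with negative slope and positive value at $u=1$ for $r<w_c$, hence positive on the whole arc. But this algebraic factorization is the heart of the global argument and needs to be exhibited, not waved at. Moreover the subleading $K^{2/3}x$ and $K^{1/3}(v,\xi)$ contributions to $\partial_\theta\mathrm{Re}\log G$ are not uniformly dominated by the leading term when $\theta=O(K^{-1/3})$: in that regime all three scales contribute comparably, and one must check carefully that $d\geq C_1$ (respectively $d\geq C_1+\delta\sqrt{(v)_-}$) keeps the total positive. ``Absorbed into $\mu_1,\mu_2$'' is too quick; this is precisely where the interval constraints on $d(v)$ and the lower bound $C_1\gtrsim L$ are earned.
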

The lemma is proved in combination of Lemmas 5.6 and 5.7 in \cite{JoTwo}.
It is based on a direct critical point analysis of the real parts of $\log G(w_0(\sigma,d))$
and $\log G(w_1(\sigma,d))$ with the scaling \eqref{KPZscaling}.

Now we mention the choice of conjugation constant $\mu$ from \eqref{conjugation2}.
During asymptotic analysis we have to set $\mu$ and the parameter $\delta$ from Lemma \ref{lem:Gest}
such that they satisfy the following bounds (in addition to $0 < \delta \leq C_0$).
\begin{equation} \label{mudelta}
\delta < C_2 c_0^{1/2}t_p^{-1/2} \cdot \min_k \, \{(\D_k t)^{1/2} \} \quad \text{and} \quad \mu > \mu_2 \cdot \max_k \,\{(\D_k t)^{-1/3}\}.
\end{equation}
So long as $t_k, |x_k|, |\xi_k| \leq L$, these constraints depend only on $q$, $L$ and $\min_k \, \{ \D_k t\}$.

The goodness and smallness of matrices will be certified as follows. Write
\begin{equation} \label{psi}
\psi(x) = -\mu_1 \cdot (x)_{-}^{3/2} + \mu_2 \cdot (x)_{+}
\end{equation}
where $\mu_1$ and $\mu_2$ are according to Lemma \ref{lem:Gest} and $\delta$ is set to satisfy \eqref{mudelta}.
(The parameters $t_k$, $x_k$ and $\xi_k$ from \eqref{KPZscaling} are now fixed.) Suppose $\D \geq \min_k \, \{ (\D_k t)^{1/3} \} > 0$
and $\mu$ is as in \eqref{mudelta}. Then,
\begin{align} \label{psiproperty}
(1) & \;\; e^{-\mu x + \psi(x/\D)} \leq e^{\frac{4 (\mu \D)^3}{27 \mu_1^2}} \;\;\text{for}\;\; x \in \R.\; \text{So it is bounded.} \\ \nonumber
(2) & \;\; \int_{-\infty}^{\infty} dx\, e^{-\mu x + \psi(x/\D)} =
\int_{- \infty}^0 e^{-\mu_1 \cdot (x/\D)_{-}^{3/2} + \, \mu \cdot (x)_{-}} + \, \int_{0}^{\infty} dx\, e^{(\frac{\mu_2}{\D} - \mu)\cdot (x)_{+}} < \infty. \\ \nonumber
(3) & \;\; e^{-\mu x + \psi(x/\D)} \to 0 \;\; \text{as}\;\; x \to \pm \infty. \\ \nonumber 
(4) & \;\; \int_{-\infty}^{0} dx\, e^{\mu x + \psi(x/\D)} < \infty.
\end{align}

\subsection{Convergence of the determinant} \label{sec:asy3}
In order to prove Theorem \ref{thm:1} by using Theorem \ref{thm:2}, it suffices to show
there is uniform convergence of $\dt{I + A(\mathbold{\theta}) + B(\mathbold{\theta})}$ to $\dt{I - F(\mathbold{\theta})}_{\hb}$
in terms of $\mathbold{\theta}$ over the integration contour $\gamma_{r}^{p-1}$.
Parameter $\mathbold{\theta}$ enters the matrices in terms of $\theta(r \vt \veps)$ and $\Theta(r \vt k)$
from \eqref{thetaeps} and \eqref{theta}. These quantities will play no role in the asymptotical analysis as all estimates
will involve the basic matrices $L[\cdots]$. So all error terms will be uniform in $\mathbold{\theta}$,
and we may suppress $\mathbold{\theta}$ from notation as convenient.

The matrix $A$ is good and convergent but $B$ is not. (Under KPZ scaling, entries of $B$ converge to
entries of the form $\Ai(v-u)$, which does not have finite Fredholm determinant).
On the other hand, $B^{p-1} = 0$ because $B$ is strictly block-lower-triangular with last two column blocks being zero.
So $(I+B)^{-1} = I - B + B^2 + \cdots + (-1)^{p-2}B^{p-2}$. We may then consider instead the determinant
of $I +A - AB + \cdots + (-1)^{p-2}AB^{p-2}$. These matrices turn out to be small from
$AB^2$ onward, and the first 2 are good and convergent. These considerations motivate the following.

Since $\dt{I - B} = 1$,
$$ \dt{I + A + B} = \dt{I + A +B} \dt{I - B} = \dt{I - B^2 + A - AB}.$$
We will see in Lemma \ref{lem:Bsquare} that $B^2 = B_1 - B_2$, where $B_1$ is good and convergent.
Proposition \ref{prop:Alimit} will prove that $A$ is good and convergent. We will also find, from
Proposition \ref{prop:ABlimit}, that $AB = (AB)_g + (AB)_s$ with $(AB)_g$ being good and convergent
while $(AB)_s$ is small. Thus, under KPZ scaling, as $T \to \infty$,
$$\dt{I + A +B} \approx \dt{I +B_2 + (A - (AB)_g - B_1)}.$$
Proposition \ref{prop:small} will prove that $P = A - (AB)_g - B_1$ is such that $P B_2$ is small. So
$$\dt{I + B_2 + P} \approx \dt{I + B_2 + P + P B_2} = \dt{I+P} \dt{I+B_2}.$$
The matrix $B_2$ is strictly block-lower-triangular due to $B$ being such. So $\dt{I+B_2} = 1$.
This means that
$$\dt{I + A + B} \approx \dt{I + P},$$
and the latter determinant converges under KPZ scaling. The limit of $P$ is precisely the matrix kernel $F$ from \eqref{F}.
So we will have proved Theorem \ref{thm:1} after proving the upcoming lemmas and propositions.

\begin{lem} \label{lem:Bsquare}
The matrix $B^2 = B_1 - B_2$, where $B_1$ and $B_2$ are as follows.
Recall $w_c = 1- \sqrt{q}$, $r^{*} = \min \, \{r,p-1\}$ and likewise for $s^{*}$.
\begin{align*}
B_1(r,i;s,j) = \sum_{k=0}^p (1 + \Theta(r \vt k)) \cdot (1 + \Theta(k \vt s)) \cdot L[{\scriptstyle k,k \vt \emptyset}]\, (r,i;s,j).\\
B_2(r,i;s,j = \sum_{k=0}^p (1 + \Theta(r \vt k)) \cdot (1 + \Theta(k \vt s)) \cdot (\sm L)[{\scriptstyle k,k \vt \emptyset}]\, (r,i;s,j).
\end{align*}
The matrix $(\sm L)$ is given by
\begin{align*}
(\sm L)[{\scriptstyle k,k \vt \emptyset}]\, (r,i;s,j) &= \ind{s < k < r^{*}}\, c(r,i;s,j)\, 
\frac{1}{w_c} \oint\limits_{\gamma_{\tau_1}} d\zeta_1 \oint\limits_{\gamma_{\tau_2}} d\zeta_2 \\
& \frac{(\zeta_1 - \zeta_2)^{-1}}{G \big(\zeta_1 \vt i - n_{k-1},\, \D_{k, r^{*}} (m,a)\big) \, G \big(\zeta_2 \vt n_{k-1}-j+1, \, \D_{s, k}(m,a)\big)}.
\end{align*}
The matrix $B_1$ is good and convergent in the KPZ scaling limit with limiting kernel on $\hb$ given by
$$F^{(0)}(r,u;s,v) = \sum_{k=0}^p (1 + \Theta(r \vt k)) \cdot (1 + \Theta(k \vt s)) \cdot F[{\scriptstyle k,k \vt \emptyset}]\, (r,i;s,j).$$
(Recall $F$s from Definition \eqref{def:F}.)
\end{lem}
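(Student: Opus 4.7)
My plan is to compute $B^2$ by direct block matrix multiplication, identify the resulting expression with $B_1 - B_2$ via a geometric sum combined with the group property of $G$, and then establish the goodness and convergence of $B_1$ by the steepest-descent machinery of \S\ref{sec:asy2}.

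For the algebraic identification, I would expand $B^2(r,i;s,j) = \sum_k \sum_{\ell \in (n_{k-1}, n_k]} B(r,i;k,\ell) \cdot B(k,\ell;s,j)$. The product of indicators $\ind{k < r^*} \cdot \ind{s < k^*}$ forces $s < k < r^*$ (hence $k \leq p - 2$ and $k^* = k$). The conjugation factors telescope as $c(r,i;k,\ell) \cdot c(k,\ell;s,j) = c(r,i;s,j)$, and the coefficients $(1 + \Theta(r \vt s))$ become $(1 + \Theta(r \vt k)) \cdot (1 + \Theta(k \vt s))$. Inside, one has a double contour integral over $w_1 \in \gamma_{\tau_1}$ and $w_2 \in \gamma_{\tau_2}$ with an inner sum over $\ell$. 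Using \eqref{Hgroup} twice, the integrand factors as $(w_c/w_1) \cdot (w_1/w_2)^{\ell - n_{k-1}}$ times $\ell$-independent $G$'s at arguments $i - n_{k-1}$ and $n_{k-1} - j + 1$. Summing the geometric series gives $\frac{w_c}{w_2 - w_1}\bigl(1 - (w_1/w_2)^{\D_k n}\bigr)$, splitting the integrand into two pieces. After cancelling $w_c = 1-\sqrt{q}$ against one factor of $1/(1-\sqrt{q})$ and extracting the sign from $w_2 - w_1 = -(w_1 - w_2)$, the constant-in-$\ell$ piece is precisely $-(\sm L)[{\scriptstyle k,k \vt \emptyset}](r,i;s,j)$. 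The piece carrying $(w_1/w_2)^{\D_k n}$ requires one further application of the group property to shift the arguments to $i - n_k$ and $n_k - j + 1$; combined with the identity $G(\zeta \vt n_k - i, m_k - m(i), a_k - a(i)) = G(\zeta \vt i - n_k, \D_{k, r^*}(m,a))^{-1}$, valid since $i > n_k$ under the indicator and then $m(i) = m_{r^*}$, $a(i) = a_{r^*}$, this is exactly $L_k(r,i;s,j)$. Summing over $k$ with the $\Theta$-weights proves $B^2 = B_1 - B_2$.

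For the asymptotics of $B_1$, each summand $L[{\scriptstyle k,k \vt \emptyset}]$, after the embedding and rescaling \eqref{matrixembed}--\eqref{Fredholmembed}, is a double contour integral that I would analyze by steepest descent. Deform $\gamma_{\tau_1}, \gamma_{\tau_2}$ to the descent contours $w_0(\sigma; d_1), w_0(\sigma; d_2)$ of Definition \ref{def:contours} with $d_1 < d_2$, and substitute $\zeta_j = w_c + c_4 z_j/(\D_{k,r^*}t \cdot T)^{1/3}$, which rescales the circles to vertical lines $\Gamma_{-d_j}$ near the critical point. The Jacobians combine with $\nu_T$ to yield a finite limit. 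By Lemma \ref{lem:Glimit}, each $G^{-1}$ converges to the corresponding $\G^{-1}$ at the KPZ-scaled parameters; the conjugation factor $c(r,i;s,j)$ rescales to $e^{\mu(v - u)}$; and $(\zeta_1 - \zeta_2)^{-1}$ is unchanged. The pointwise limit matches $F[{\scriptstyle k,k \vt \emptyset}]$ of Definition \ref{def:F}(2), and summing over $k$ gives $F^{(0)}$. Goodness follows from Lemma \ref{lem:Gest}, whose Gaussian-in-$\sigma$ bounds together with the factors $e^{-\mu u + \psi(u/\D)}$ produce the required dominating functions in the sense of Definition \ref{kerneldef}, with integrability guaranteed by \eqref{psiproperty}.

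The main obstacle is the bookkeeping in the algebraic step: verifying that each group-property rearrangement yields argument shifts lining up exactly with the definitions of $L_k$ and $(\sm L)[{\scriptstyle k, k \vt \emptyset}]$, and tracking the sign arising from $w_2 - w_1 = -(w_1 - w_2)$. Boundary values $k = 0$ and $k = p$ contribute nothing because $\ind{s < k < r^*}$ excludes them. The asymptotic step is routine given \S\ref{sec:asy2}; the only delicate point is checking that deforming $\gamma_{\tau_1}, \gamma_{\tau_2}$ to the descent contours does not cross the poles of $G$ at $0, 1-q, 1$, which holds because $\tau_1, \tau_2 < w_c = 1 - \sqrt{q} < 1-q$ and the deformed contours remain inside the disk of radius $1-q$.
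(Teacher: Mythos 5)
Your proposal is correct and follows essentially the same route as the paper: block-expand $B^2$, telescope conjugation factors, collapse the inner sum over $\ell \in (n_{k-1}, n_k]$ via a geometric series to split each summand into an $L[{\scriptstyle k,k \vt \emptyset}]$ piece and a shifted $(\sm L)[{\scriptstyle k,k \vt \emptyset}]$ piece, and then handle goodness and convergence of $B_1$ by the steepest-descent analysis of \S\ref{sec:asy2}. The only structural difference is that the paper packages the geometric-sum step as a standalone multiplication lemma (Lemma \ref{lem:multiply}) and defers the goodness/convergence of $L[{\scriptstyle k,k \vt \emptyset}]$ to Proposition \ref{prop:Alimit}(3), whereas you carry out both inline; the content is identical. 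Two small points worth tightening: the identification $m(i) = m_{r^*}$, $a(i) = a_{r^*}$ follows directly from $i \in (n_{r-1}, n_r]$ (block membership), not from $i > n_k$; and when deforming to descent contours you should note that $\zeta_2$ is rescaled with $K = \D_{s,k}t\, T$ rather than $\D_{k,r^*}t\, T$, and that maintaining $\tau_2 < \tau_1$ when $u < 0$ is exactly where the constraint \eqref{mudelta} on $\delta$ enters. Neither affects the validity of the argument.
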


\begin{prop} \label{prop:Alimit}
The matrix $A$ is good and convergent due to the following. Suppose $0 \leq k_1 < k_2 \leq p$.
\begin{enumerate}
\item The matrix $L^{\veps}[{\scriptstyle k_1, k_2 \vt (k_1, k_2]}]$ is good and convergent with limit
$(-1)^{k_2-k_1} \, F^{\veps}[{\scriptstyle k_1, k_2 \vt (k_1, k_2]}]$.
\item The matrix $L^{\veps}[{\scriptstyle k_1 \vt (k_1, k_2]}]$ is good and convergent with limit
$(-1)^{k_2-k_1} \, F^{\veps}[{\scriptstyle k_1 \vt (k_1, k_2]}]$.
\item The matrix $L[{\scriptstyle k,k \vt \emptyset}]$ is good and convergent with limit $F[{\scriptstyle k,k \vt \emptyset}]$.
\item The matrix $L[{\scriptstyle p \vt p}]$ is good and convergent with limit $- F[{\scriptstyle p \vt p}]$.
\end{enumerate}
\end{prop}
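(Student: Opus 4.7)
The plan is to prove items (1)--(4) separately; the claim that $A$ itself is good and convergent then follows because $A = A_1 + A_2$ is a finite weighted sum of these four basic kernels with weights depending only on $\theta$, which we regard as fixed. For each of the four matrices the argument has the same skeleton: deform the circular $\zeta$- and $z$-contours to the descent contours $w_0(\sigma;d)$ and $w_1(\sigma;d)$ of Definition \ref{def:contours}, substitute $w = w_c + c_4 z/T^{1/3}$ to pass to local coordinates around the critical point $w_c$, apply Lemma \ref{lem:Glimit} for pointwise convergence of the integrand, and use the contour estimates of Lemma \ref{lem:Gest} to dominate.

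First I would handle the contour deformation. The $\zeta$-contours in Definitions \ref{def:Lk}--\ref{def:Lcd} all lie inside $\{|w| < w_c\}$ and the $z$-contours lie on $\gamma_R(1)$ with $1-w_c < 1-R < 1-q$; these may be freely deformed to $w_0$- and $w_1$-circles of nested radii encoded via the parameters $d$, respecting the ordering imposed by $\veps$ as in the arrangement of Lemma \ref{lem:Leps}. A check of the radial orderings ensures no zero of $(z_k-z_{k+1})$, $(z_{k_1+1}-\zeta_1)$, $(z_{k_2}-\zeta_2)$, or $(\zeta_1-\zeta_2)$, and no pole at $0$, $1-q$, or $1$, is crossed during the deformation.

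Next, I would make the change of variables $\zeta = w_c + c_4\zeta'/T^{1/3}$ on $w_0$-contours and $z = w_c + c_4 z'/T^{1/3}$ on $w_1$-contours. Under this map the curves $w_0$ and $w_1$ become, in the new variable, local deformations of the vertical lines $\Gamma_{-d'}$ and $\Gamma_{D'}$ respectively. A tangent computation at $\sigma = 0$ shows that $w_1$ is traversed \emph{clockwise} around $1$, so each $z$-contour contributes a factor $-1$; this produces the $(-1)^{k_2-k_1}$ in items (1) and (2) (from the $k_2-k_1$ $z$-variables of $L^{\veps}_{(k_1,k_2]}$ and $J^{\veps}_{(k_1,k_2]}$) and the lone $-1$ in item (4) from the single $z_p$-contour of $L_p$. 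Item (3) has no $z$-contour, so no sign is produced. The Jacobians from $d\zeta$, $dz$, the factors $(z_k-z_{k+1})^{-1}$ etc., the prefactor $1/(1-\sqrt{q}) = 1/w_c$, the Fredholm rescaling $\nu_T = c_0 T^{1/3}$ of \eqref{Fredholmembed}, and the identity $c_0 c_4 = w_c$ combine to cancel exactly to order $1$. The correction factor involving $(1-\zeta_1)/(1-z_1)$ tends to $1$ since both points approach $w_c \neq 1$, and the conjugation factor $c(r,i;s,j)$ of \eqref{conjugation2} tends to $e^{\mu(v-u)}$.

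For pointwise convergence on compact sets in $u,v,\zeta',z'$ I would invoke Lemma \ref{lem:Glimit} with $K$ taken as the appropriate $(\Delta_{\ast} t) T$ coming from each $G$-factor; after an additional rescaling by $(\Delta_{\ast} t)^{1/3}$, each $G$ converges to $\G(\cdot \vt \Delta_{\ast}(t,x,\xi))$ multiplied by an exponential of the form $e^{\pm u\zeta'}$ or $e^{\mp v z'}$ accounting for the embedding shifts. Matching term by term then identifies the limiting integrand with that of Definition \ref{def:F}. For goodness I would apply Lemma \ref{lem:Gest} with $d$ chosen according to the sign of the $\hb$-variable: each inverse $G$-factor along a descent contour supplies Gaussian decay in $\sigma$ (so that the $\sigma$-integrals are uniformly bounded in $u,v$) together with $e^{\psi(v_{\ast})}$ for the appropriate block variable $v_{\ast}$, with $\psi$ as in \eqref{psi}. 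Combined with the conjugation $e^{\mu(v-u)}$ and the constraints \eqref{mudelta}, the bound factors as $g_r(u)g_s(v)$ with each $g_\bullet$ bounded and integrable by \eqref{psiproperty}, so dominated convergence delivers the limit. The most delicate step is the sign bookkeeping for the goodness bound: one must verify that in each kernel a positive $\hb$-variable on the $p$-th block produces a \emph{negative} effective `$v$' inside Lemma \ref{lem:Gest}, yielding $e^{-\mu_1 v^{3/2}}$-type decay rather than growth; this is where the specific form of the embedding \eqref{indexembed} and the shifts $n_k - i$, $n_k - j + 1$ appearing inside each $G$ really enters.
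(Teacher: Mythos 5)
Your proposal follows essentially the same route as the paper: deform the circular contours to the steepest-descent contours $w_0$ and $w_1$ of Definition \ref{def:contours}, rescale locally around $w_c$ via the factor $c_4 T^{-1/3}$, invoke Lemma \ref{lem:Glimit} for pointwise convergence and Lemma \ref{lem:Gest} for the dominating bounds, track the cancellation $c_0 c_4 = w_c$ together with the conjugation factor and $(1-\zeta_1)/(1-z_1)\to1$, and read off the sign $(-1)^{k_2-k_1}$ from re-orienting the $z$-contours. One small slip: the circle $w_1(\sigma)$ is traversed \emph{counterclockwise} around $1$ (not clockwise); the factor $-1$ per $z$-variable arises because at $\sigma=0$ the tangent $dw_1/d\sigma$ points in the $-i$ direction, so the locally vertical piece near $w_c$ is downward-oriented and must be flipped to $\Gamma_{D_k}$ oriented upward.
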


\begin{lem} \label{lem:LcdB}
Suppose $0 \leq k_1 < k_2 < p$. We have
$$L^{\veps}[{\scriptstyle k_1, k_2 \vt (k_1, k_2]}] \cdot B = \sum_{k_3=0}^p \big (1 + \Theta(k_3 \vt s)\big) \Big [
L^{\veps}[{\scriptstyle k_1, k_2,k_3 \vt (k_1, k_2]}] - (\sm L)^{\veps}[{\scriptstyle k_1, k_2,k_3 \vt (k_1, k_2]}] \Big ].$$
\begin{align*}
& L^{\veps}[{\scriptstyle k_1, k_2,k_3 \vt (k_1, k_2]}]\, (r,i;s,j) = \ind{k_1 < r^{*}, \, s < k_3 < k_2} \, c(r,i;s,j)\, \times \\
& \qquad \frac{1}{w_c} \oint\limits_{\gamma_{\tau_1}} d\zeta_1 \oint\limits_{ \gamma_{\tau_2}} d\zeta_2 \oint\limits_{\gamma_{\tau_3}} d\zeta_3
\oint\limits_{\vec{\gamma}_{R^{\veps}}} dz_{k_1+1} \cdots dz_{k_2} \, \left ( \frac{1-\zeta_1}{1-z_1} \right)^{\ind{k_1=0}} \times \\
& \frac{\prod_{k_1 < k \leq k_2} G \big(z_k \vt \D_k (n,m,a) \big) \prod_{k_1 < k < k_2}(z_k - z_{k+1})^{-1} \,
(z_{k_1+1}-\zeta_1)^{-1} (z_{k_2}-\zeta_2)^{-1} (\zeta_2-\zeta_3)^{-1}}
{G \big(\zeta_1 \vt i-n_{k_1}, \D_{k_1, r^{*}}(m,a)\big) \,G \big(\zeta_2 \vt \D_{k_3,k_2}(n,m,a)\big)
\,G \big(\zeta_3 \vt n_{k_3}-j+1, \D_{s, k_3}(m,a)\big )}.
\end{align*}
The contours are arranged such that $\tau_2 < \tau_1, \tau_3 < 1- \sqrt{q}$.
Also, $\vec{\gamma}_{R^{\veps}} = \gamma_{R_{k_1+1}}(1) \times \cdots \times \gamma_{R_{k_2}}(1)$,
and these are same as the equally denoted contours in $L^{\veps}[{\scriptstyle k_1, k_2 \vt (k_1, k_2]}]$ (see Definition \eqref{def:Lcd}).
\begin{align*}
& (\sm L)^{\veps}[{\scriptstyle k_1, k_2,k_3 \vt (k_1, k_2]}]\, (r,i;s,j) = \ind{k_1 < r^{*}, \, s < k_3 < k_2} \, c(r,i;s,j) \, \times  \\
& \qquad \frac{1}{w_c} \oint\limits_{\gamma_{\tau_1}} d\zeta_1 \oint\limits_{\gamma_{\tau_2}} d\zeta_2  \oint\limits_{\gamma_{\tau_3}} d\zeta_3
\oint\limits_{\vec{\gamma}_{R^{\veps}}} dz_{k_1+1} \cdots dz_{k_2} \left ( \frac{1-\zeta_1}{1-z_1} \right)^{\ind{k_1=0}} \times \\
& \frac{\prod_{k_1 < k \leq k_2} G \big(z_k \vt \D_k (n,m,a)\big) \prod_{k_1 < k < k_2}(z_k - z_{k+1})^{-1} \,
(z_{k_1+1}-\zeta_1)^{-1} (z_{k_2}-\zeta_2)^{-1} (\zeta_2-\zeta_3)^{-1}}
{G \big(\zeta_1 \vt i-n_{k_1}, \D_{k_1, r^{*}}(m,a)\big) \, G \big(\zeta_2 \vt n_{k_2} - n_{k_3-1}, \D_{k_3,k_2}(m,a)\big) \,
G \big(\zeta_3 \vt n_{k_3-1}-j+1, \D_{s, k_3}(m,a) \big)}.
\end{align*}
The difference here from $L^{\veps}[{\scriptstyle k_1, k_2,k_3 \vt (k_1, k_2]}]$ is that the number $n_{k_3}$ is replaced by $n_{k_3-1}$
in the second and third $G$-functions of the denominator.

The matrix $L^{\veps}[{\scriptstyle k_1, k_2,k_3 \vt (k_1, k_2]}]$ is good and convergent.
Its limit is $(-1)^{k_2-k_1}\, F^{\veps}[{\scriptstyle k_1, k_2,k_3 \vt (k_1, k_2]}]$. The matrix
$(\sm L)^{\veps}[{\scriptstyle k_1, k_2,k_3 \vt (k_1, k_2]}]$ is small.

When $k_2 = p$ there is an additional term in the representation above:
\begin{align*} L^{\veps}[{\scriptstyle k_1, p \vt (k_1, p]}] \cdot B &= \sum_{k_3=0}^p \big (1 + \Theta(k_3 \vt s)\big) \big [
L^{\veps}[{\scriptstyle k_1, p,k_3 \vt (k_1, p]}] \big ] - (1 + \Theta(p \vt s))\cdot L^{\veps}[{\scriptstyle k_1, p,p-1 \vt (k_1, p]}] \\
& \;\; - \sum _{k_3=0}^{p-1} (\sm L)^{\veps}[{\scriptstyle k_1, k_2,k_3 \vt (k_1, k_2]}] \Big ].
\end{align*}
\end{lem}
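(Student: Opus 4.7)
The plan is to compute the matrix product $L^{\veps}[{\scriptstyle k_1, k_2 \vt (k_1, k_2]}] \cdot B$ directly from the integral representations of Definitions \ref{def:Lcd} and \ref{def:blockform}, reducing the multiplication to an explicit sum over the intermediate index. Writing the middle index as $(k_3, \ell)$ and decomposing by blocks, the indicators $\ind{k_3^{*} < k_2}$ from $L^{\veps}$ and $\ind{s < k_3^{*}}$ from $B$ restrict $k_3$ to satisfy $s < k_3^{*} < k_2$, and the factor $(1 + \Theta(k_3 \vt s))$ is inherited directly from $B$.

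The central calculation is the sum over $\ell$ within a block. By the group property \eqref{Hgroup}, all of the $\ell$-dependence of the product of integrands collapses to the single factor $(\zeta_2/w)^{\ell}$, where $\zeta_2$ is the second contour variable of $L^{\veps}$ and $w$ is the contour variable of $B$. Arranging the circular contours so that $|\zeta_2| < |w|$ (consistent with $\tau_2 < \tau$), the finite geometric series
\[
\sum_{\ell = n_{k_3-1}+1}^{n_{k_3}} (\zeta_2/w)^{\ell}
\]
evaluates in closed form; up to an overall factor of $1/(w - \zeta_2) = -1/(\zeta_2 - \zeta_3)$ (after relabelling $w \to \zeta_3$), it splits into an upper- and a lower-endpoint contribution. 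Reinstating the suppressed $G^{*}$-prefactors, the $\ell = n_{k_3}$ boundary reconstructs $L^{\veps}[{\scriptstyle k_1, k_2, k_3 \vt (k_1, k_2]}]$ while the $\ell = n_{k_3-1}$ boundary reconstructs $(\sm L)^{\veps}[{\scriptstyle k_1, k_2, k_3 \vt (k_1, k_2]}]$, yielding the combination $(1+\Theta(k_3 \vt s))(L^{\veps} - (\sm L)^{\veps})$ claimed when $k_2 < p$.

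The $k_2 = p$ case requires separate treatment because the block $k_3 = p$ embeds into $\R_{>0}$ and carries $m(\ell) = m_{p-1}$ rather than $m_p$. Repeating the geometric sum for $\ell \in (n_{p-1}, n_p]$, the $\ell = n_p$ boundary term is proportional to $\zeta_2^{0}$; the remaining $\zeta_2$-poles all lie outside $\gamma_{\tau_2}$, so this contribution vanishes by contracting the $\zeta_2$-contour to the origin. Only the $\ell = n_{p-1}$ boundary survives, producing the additional $-(1+\Theta(p \vt s)) \cdot L^{\veps}[{\scriptstyle k_1, p, p-1 \vt (k_1, p]}]$ correction that appears in the stated formula.

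Finally, the asymptotic claims follow from a steepest descent analysis on the contours of Definition \ref{def:contours}: take each inner $\zeta$-contour to be $w_0(\cdot; d)$ and each outer $z$-contour to be $w_1(\cdot; d)$, with $d$ tuned to the KPZ-rescaled parameters of the corresponding $G$-factor. Lemma \ref{lem:Gest} provides uniform Gaussian decay $e^{-C\sigma^2}$ in the contour parameters and envelope factors $e^{\psi(\cdot)}$ which combine through \eqref{psiproperty} to dominate the rescaled entries; Lemma \ref{lem:Glimit} then gives pointwise convergence of the integrand to that of $F^{\veps}[{\scriptstyle k_1, k_2, k_3 \vt (k_1, k_2]}]$, with the sign $(-1)^{k_2 - k_1}$ recording the orientation reversals accrued in contracting the $\vec{\gamma}_{R^{\veps}}$-contours. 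The smallness of $(\sm L)^{\veps}$ is the main technical point: the $n$-parameter of two of its $G$-factors is offset by $\D_{k_3} n \sim \D_{k_3} t \cdot T$ from the KPZ-balanced value, which breaks the double critical point at $w_c$; a short deformation of the affected $\zeta$-contours to pass through the shifted critical point avoids all relevant poles and produces an exponentially small factor in $T$. Careful sign bookkeeping and the contour deformation in the $k_2 = p$ boundary case are where the bulk of the work concentrates.
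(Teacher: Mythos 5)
Your algebraic decomposition of $L^{\veps}[{\scriptstyle k_1,k_2 \vt (k_1,k_2]}]\cdot B$ is exactly the paper's argument: both reduce the intermediate sum to a finite geometric series via the group property (the paper packages this as Lemma~\ref{lem:multiply}), read off the upper and lower endpoints as the two kernels, and treat $k_2 = p$ separately, with the $\ell = n_p$ boundary vanishing by contraction of the innermost $\zeta_2$-contour. The good/convergent claims via Lemmas~\ref{lem:Gest} and~\ref{lem:Glimit} also match the paper.

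The smallness argument, however, has a genuine gap. You propose that the offset $\D_{k_3}n$ shifts the critical point away from $w_c$, and that ``a short deformation of the affected $\zeta$-contours to pass through the shifted critical point avoids all relevant poles and produces an exponentially small factor.'' Two problems. First, for the $\zeta_2$-factor the $n$-parameter is \emph{increased} by $\D_{k_3}n$, which pushes the critical point in the \emph{outward} direction, past the $\zeta_3$- and $z_{k_2}$-contours that enclose it; such a deformation crosses the poles of $(\zeta_2-\zeta_3)^{-1}$ and $(z_{k_2}-\zeta_2)^{-1}$, so it is obstructed, and within the admissible region the bound from Lemma~\ref{lem:Gest} is actually \emph{growing}, $e^{\Psi(\lambda/\D_2)} = e^{\mu_2\lambda/\D_2}$. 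Second, the $\zeta_3$-factor does give super-exponential decay via $e^{-\mu_1((v+\lambda)/\D_3)_+^{3/2}}$, but this vanishes when $v\approx -\lambda$, which is a genuine portion of the integration domain, so the decay from the steepest-descent envelope alone is not uniform in $v$. The paper's actual mechanism is the conjugation factor $c(r,i;s,j)$: writing $e^{\mu(v-u)} = e^{-\mu\lambda}e^{-\mu u}e^{\mu(v+\lambda)}$ and invoking the constraint $\mu > \mu_2 / \min_k (\D_k t)^{1/3}$ from~\eqref{mudelta}, one extracts a uniform factor $\eta_T = C e^{-\mu\lambda + \Psi(\lambda/\D_2)} = C e^{(\mu_2/\D_2 - \mu)\lambda}\to 0$, while the remaining factors $e^{-\mu u + \Psi(u/\D_1)}$ and $e^{\mu(v+\lambda) + \Psi(-(v+\lambda)/\D_3)}$ stay bounded and integrable by~\eqref{psiproperty}. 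This interplay between the conjugation and the $\Psi$-envelope is indispensable; without it, smallness does not follow.
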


\begin{lem} \label{lem:JcdB}
Suppose $0 \leq k_1 < k_2 < p$. We have
$$L^{\veps}[{\scriptstyle k_1 \vt (k_1, k_2]}] \cdot B = \big (1 + \Theta(k_2 \vt s) \big) \Big [ L^{\veps}[{\scriptstyle k_1, k_2 \vt (k_1, k_2]}] -
(\sm L)^{\veps}[{\scriptstyle k_1, k_2 \vt (k_1, k_2]}] \Big ], \;\; \text{where}$$
\begin{align*}
& (\sm L)^{\veps}[{\scriptstyle k_1, k_2 \vt (k_1, k_2]}]\, (r,i;s,j) = \ind{k_1 < r^{*},\, s < k_2} \, c(r,i;s,j)\, \frac{1}{w_c}
\oint\limits_{\gamma_{\tau_1}} d\zeta_1 \oint\limits_{\gamma_{\tau_2}} d\zeta_2
\oint\limits_{\vec{\gamma}_{R^{\veps}}} dz_{k_1+1} \cdots d z_{k_2} \\
&\frac{\prod_{k_1 < k < k_2} G\big(z_k \vt \D_k (n,m,a)\big) \, G \big(z_{k_2} \vt 0,\D_{k_2}(m,a)\big)\prod_{k_1 < k < k_2} (z_k - z_{k+1})^{-1}
\left ( \frac{1-\zeta_1}{1-z_1} \right)^{\ind{k_1=0}}}
{G\big(\zeta_1 \vt i-n_{k_1}, \D_{k_1, r^{*}}(m,a)\big) \, G \big(\zeta_2 \vt n_{k_2-1}-j+1, \, \D_{s,k_2}(m,a)\big)\,
(z_{k_1+1}-\zeta_1)\, (z_{k_2}-\zeta_2)}.
\end{align*}
The contours are as in the lemma above. The matrix $(\sm L)^{\veps}[{\scriptstyle k_1, k_2 \vt (k_1, k_2]}]$ is small.
\end{lem}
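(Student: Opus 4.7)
The plan is to compute the matrix product $L^{\veps}[{\scriptstyle k_1 \vt (k_1,k_2]}] \cdot B$ directly, mirroring the strategy of Lemma \ref{lem:LcdB} but in the simpler setting with no $\zeta_3$-contour. The nonvanishing indicator of $L^{\veps}[{\scriptstyle k_1 \vt (k_1,k_2]}]$ forces the middle block label to be $k_2$, so the middle integer index $k$ runs over $(n_{k_2-1},n_{k_2}]$, and the conjugation factors telescope via the cocycle identity $c(r,i;k_2,k)\cdot c(k_2,k;s,j)=c(r,i;s,j)$. The indicators combine to $\ind{k_1<r^{*},\, s<k_2}$, and the prefactor $(1+\Theta(k_2\vt s))$ is inherited from $B$.

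Next I would isolate the $k$-dependence in the two integrands using the group property \eqref{Hgroup}. Inside $J^{\veps}_{(k_1,k_2]}$ the only $k$-dependent factor is $G(z_{k_2} \vt k-1-n_{k_2-1}, \D_{k_2}(m,a)) = (z_{k_2}/w_c)^{k-1-n_{k_2-1}}\, G(z_{k_2} \vt 0, \D_{k_2}(m,a))$; inside $B$ it is $G(w \vt k-j+1, \D_{s,k_2}(m,a))^{-1} = (w/w_c)^{-(k-j+1)}\, G(w \vt 0, \D_{s,k_2}(m,a))^{-1}$. Relabel $w$ as $\zeta_2$, so that its contour $\gamma_{\tau}$ plays the role of the $\zeta_2$-contour of $L^{\veps}[{\scriptstyle k_1,k_2\vt(k_1,k_2]}]$. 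Since $|z_{k_2}| \geq 1-\sqrt{q} > \tau = |\zeta_2|$, the finite geometric series converges:
\begin{equation*}
\sum_{k=n_{k_2-1}+1}^{n_{k_2}} (z_{k_2}/\zeta_2)^{k} = \frac{z_{k_2}}{z_{k_2}-\zeta_2}\bigl[(z_{k_2}/\zeta_2)^{n_{k_2}}-(z_{k_2}/\zeta_2)^{n_{k_2-1}}\bigr],
\end{equation*}
and it supplies the missing $(z_{k_2}-\zeta_2)^{-1}$ pole that distinguishes $L^{\veps}_{(k_1,k_2]}$ from $J^{\veps}_{(k_1,k_2]}$. Reassembling via \eqref{Hgroup}, the $(z_{k_2}/\zeta_2)^{n_{k_2}}$ piece rebuilds $G(z_{k_2}\vt \D_{k_2}(n,m,a))/G(\zeta_2\vt n_{k_2}-j+1, \D_{s,k_2}(m,a))$, i.e.\ the integrand of $L^{\veps}[{\scriptstyle k_1,k_2\vt(k_1,k_2]}]$, while the $(z_{k_2}/\zeta_2)^{n_{k_2-1}}$ piece rebuilds $G(z_{k_2}\vt 0, \D_{k_2}(m,a))/G(\zeta_2\vt n_{k_2-1}-j+1, \D_{s,k_2}(m,a))$, i.e.\ the integrand of $(\sm L)^{\veps}[{\scriptstyle k_1,k_2\vt(k_1,k_2]}]$. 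A single factor of $w_c$ arising from the telescoped powers cancels one of the two $(1-\sqrt{q})^{-1}$ prefactors contributed by $J^{\veps}_{(k_1,k_2]}$ and $B$, leaving the correct $w_c^{-1}$ normalization for both kernels.

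The main obstacle is the smallness of $(\sm L)^{\veps}[{\scriptstyle k_1,k_2\vt(k_1,k_2]}]$ under the KPZ scaling of $\S$\ref{sec:asy1}. Inspection shows that $(\sm L)^{\veps}$ differs from $L^{\veps}[{\scriptstyle k_1,k_2\vt(k_1,k_2]}]$ only by an extra multiplicative factor $(\zeta_2/z_{k_2})^{\D_{k_2}n}$ inside the integrand. Deforming $\zeta_2$ onto the descent circle $w_0(\sigma;d)$ and $z_{k_2}$ onto $w_1(\sigma;d)$ from Definition \ref{def:contours}, and using $\D_{k_2}n \sim \D_{k_2}t\cdot T$ together with $|\zeta_2/z_{k_2}| \approx 1 - d/(w_c\nu_T)$ near the saddles, this extra factor contributes a gain of order $\exp(-c\,\D_{k_2}t\, T^{2/3})$. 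Combining this gain with the pointwise estimates \eqref{Gest0}--\eqref{Gest1} of Lemma \ref{lem:Gest} on the remaining $G$-factors, exactly as in the certification of goodness in Proposition \ref{prop:Alimit}, yields a bound of the form $\eta_T\cdot g_r(u)\,g_s(v)$ with $\eta_T \to 0$, which is the smallness criterion of Definition \ref{kerneldef}.
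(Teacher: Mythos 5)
Your algebraic decomposition follows the paper's route exactly: both proofs reduce the matrix product to a sum over the middle index $\ell\in(n_{k_2-1},n_{k_2}]$, telescope the conjugation factors, isolate the $\ell$-dependence via the group property \eqref{Hgroup}, and evaluate the resulting geometric series (this is precisely what Lemma~\ref{lem:multiply} packages), splitting it into a boundary term at $n_{k_2}$ that reconstructs $L^{\veps}[{\scriptstyle k_1,k_2\vt(k_1,k_2]}]$ and one at $n_{k_2-1}$ that gives $(\sm L)^{\veps}[{\scriptstyle k_1,k_2\vt(k_1,k_2]}]$. The bookkeeping of indicators ($\ind{k_1<r^*,\,s<k_2}$), the $(1+\Theta(k_2\vt s))$ prefactor from $B$, and the normalization $w_c^{-1}$ all check out.

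The smallness argument is where you take a genuinely different path. The paper treats $(\sm L)^{\veps}$ as a kernel in its own right: it observes that the argument of the $\zeta_2$--$G$-factor has effectively been shifted from $-v$ to $-(v+\lambda)$ with $\lambda=\D_{k_2}n/\nu_T\sim T^{2/3}$, re-chooses the descent radii $d$ for the shifted variables via Lemma~\ref{lem:Gest}, and extracts a factor $\eta_T\sim e^{-\mu\lambda+\Psi(-\lambda/\D_3)}$. You instead keep the $L^{\veps}$ contours fixed and observe that the integrand of $(\sm L)^{\veps}$ is exactly that of $L^{\veps}$ multiplied by $(\zeta_2/z_{k_2})^{\D_{k_2}n}$, which is uniformly bounded by $e^{-cT^{2/3}}$ on those contours because $|\zeta_2|<w_c<|z_{k_2}|$ there with a $T^{-1/3}$ gap near the saddle. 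Your observation that the integrand ratio is exactly $(\zeta_2/z_{k_2})^{\D_{k_2}n}$ is correct (it follows from $G(w\vt n,0,0)=(w/w_c)^n$), and factoring it out lets you piggyback on the $L^{\veps}$ goodness estimates without re-running Lemma~\ref{lem:Gest} with shifted indices. What you lose is the stronger $\Psi(-\lambda/\D_3)\sim -cT$ decay the paper gets from the stationary-phase estimate, but $e^{-cT^{2/3}}\to 0$ is all that smallness requires, so your route is both valid and somewhat more economical. One small caveat worth making explicit is that the uniform bound $|\zeta_2/z_{k_2}|\le 1-cT^{-1/3}$ needs the $\zeta_2$-depth $d(-v)\ge C_1>0$ and the $z_{k_2}$-depth $D_{k_2}\ge C_1>0$, both of which Lemma~\ref{lem:Gest} guarantees, so this is fine.
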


\begin{lem} \label{lem:LkB}
Suppose $0 \leq k_1 \leq p$. We have
$$ L[{\scriptstyle k_1, k_1 \vt \emptyset}] \cdot B = \sum_{k_2 = 0}^p \big(1 + \Theta(k_2 \vt s)\big) \Big [ L[{\scriptstyle k_1, k_1, k_2 \vt \emptyset}] -
(\sm L)[{\scriptstyle k_1, k_1,k_2 \vt \emptyset}] \Big ], \;\; \text{where}$$
\begin{align*}
& L[{\scriptstyle k_1, k_1, k_2 \vt \emptyset}]\, (r,i;s,j) = \ind{k_1 < r^{*}, \, s < k_2 < k_1}\, c(r,i;s,j)\, \frac{1}{w_c}
\oint\limits_{\gamma_{\tau_1}} d\zeta_1 \oint\limits_{\gamma_{\tau_2}} d\zeta_2 \oint\limits_{\gamma_{\tau_3}} d\zeta_3 \\
& \frac{(\zeta_1 - \zeta_2)^{-1} (\zeta_2 - \zeta_3)^{-1}}
{G\big(\zeta_1 \vt i-n_{k_1}, \D_{k_1, r^{*}}(m,a)\big) \,G \big(\zeta_2 \vt \D_{k_2, k_1} (n,m,a)\big)\,
G \big(\zeta_3 \vt n_{k_2}-j+1, \D_{s,k_2}(m,a)\big)}.
\end{align*}
We arrange the radii $\tau_2 < \tau_1, \tau_3 < 1-\sqrt{q}$.
\begin{align*}
& (\sm L)[{\scriptstyle k_1, k_1, k_2 \vt \emptyset}]\, (r,i;s,j) = \ind{k_1 < r^{*}, \, s < k_2 < k_1}\, c(r,i;s,j)\, \frac{1}{w_c}
\oint_{\gamma_{\tau_1}} d\zeta_1 \oint_{\gamma_{\tau_2}} d\zeta_2 \oint_{\gamma_{\tau_3}} d\zeta_3 \\
& \frac{(\zeta_1 - \zeta_2)^{-1} (\zeta_2 - \zeta_3)^{-1}}
{G \big(\zeta_1 \vt i-n_{k_1}, \D_{k_1, r^{*}}(m,a)\big) \, G \big(\zeta_2 \vt n_{k_1}-n_{k_2-1}, \D_{k_2, k_1} (m,a)\big)\,
G \big(\zeta_3 \vt n_{k_2-1}-j+1, \, \D_{s,k_2}(m,a)\big)}.
\end{align*}
The difference from $L[{\scriptstyle k_1, k_1, k_2 \vt \emptyset}]$ is that the number $n_{k_2}$ is replaced by $n_{k_2-1}$
in the second and third $G$-functions of the denominator.

The matrix $L[{\scriptstyle k_1, k_1, k_2 \vt \emptyset}]$ is good and convergent with limit
$F[{\scriptstyle k_1, k_1, k_2 \vt \emptyset}]$. The matrix $(\sm L)[{\scriptstyle k_1, k_1, k_2 \vt \emptyset}]$ is small.
\end{lem}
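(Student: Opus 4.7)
The argument splits into three parts: an algebraic identity for $L[{\scriptstyle k_1, k_1 \vt \emptyset}] \cdot B$, goodness and convergence of $L[{\scriptstyle k_1, k_1, k_2 \vt \emptyset}]$, and smallness of $(\sm L)[{\scriptstyle k_1, k_1, k_2 \vt \emptyset}]$. For the algebraic part, write out $L[{\scriptstyle k_1, k_1 \vt \emptyset}](r, i; r', i')$ and $B(r', i'; s, j)$ using Definitions \ref{def:Lk} and \ref{def:blockform}. The indicators force $s < r'^{*} < k_1 < r^{*}$, so grouping the intermediate index by the block $k_2 := r'^{*}$ reduces the product to a sum over $i' \in (n_{k_2 - 1}, n_{k_2}]$ of integrands containing the $i'$-dependent factors $G(\zeta_2 \vt n_{k_1} - i', \ldots)^{-1}$ and $G(w \vt i' - j + 1, \ldots)^{-1}$. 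By the group property of $G^{*}$ this is a geometric series in $(w/\zeta_2)^{i'}$; summing over $i' \in (n_{k_2 - 1}, n_{k_2}]$ produces a factor $(\zeta_2 - w)^{-1}$ times two boundary contributions. Renaming $w = \zeta_3$, the boundary at $i' = n_{k_2}$ yields $L[{\scriptstyle k_1, k_1, k_2 \vt \emptyset}]$ while the boundary at $i' = n_{k_2 - 1}$ yields $-(\sm L)[{\scriptstyle k_1, k_1, k_2 \vt \emptyset}]$. The prefactor $(1 + \Theta(k_2 \vt s))$ is inherited from the $(1 + \Theta(r' \vt s))$ in $B$.

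For goodness and convergence of $L[{\scriptstyle k_1, k_1, k_2 \vt \emptyset}]$, deform each $\zeta_{\ell}$-contour ($\ell = 1, 2, 3$) to the steepest-descent contour $w_0(\sigma; d_{\ell})$ of Definition \ref{def:contours}, choosing $d_1 = d_1(u)$ and $d_3 = d_3(v)$ as in Lemma \ref{lem:Gest} and $d_2$ a fixed constant respecting the pole structure $\zeta_1 \neq \zeta_2$ and $\zeta_2 \neq \zeta_3$. After the microscopic change of variables $\zeta_{\ell} = w_c + c_4 \widetilde{\zeta}_{\ell}/K_{\ell}^{1/3}$, with $K_{\ell}$ the appropriate $T$-scale for that factor, Lemma \ref{lem:Glimit} yields pointwise convergence of the integrand to the one defining $F[{\scriptstyle k_1, k_1, k_2 \vt \emptyset}]$ in Definition \ref{def:F}. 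Combining the Gaussian $\sigma$-decay of \eqref{Gest0}, the conjugation factor $e^{\mu(v - u)}$, and the $\psi$-estimates of \eqref{psiproperty} produces a bound of the product form $g_r(u) g_s(v)$ with $g_r, g_s$ bounded and integrable, certifying goodness; dominated convergence then delivers the limit.

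Smallness of $(\sm L)[{\scriptstyle k_1, k_1, k_2 \vt \emptyset}]$ is the main technical obstacle. The only change from $L$ is the shift $n_{k_2} \mapsto n_{k_2 - 1}$ in the $\zeta_2$- and $\zeta_3$-$G$-functions of the denominator, which by the group property inserts an extra factor $(\zeta_3/\zeta_2)^{\D_{k_2} n}$ into the integrand. Deforming both $\zeta_2$ and $\zeta_3$ to descent contours of Definition \ref{def:contours} with $d_3 > d_2$ yields $|\zeta_3| < |\zeta_2|$, and with $\D_{k_2} n$ of order $T$ the modulus of this extra factor is of order $\exp(-c (d_3 - d_2) T^{2/3})$. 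This provides the required $\eta_N \to 0$ per Definition \ref{kerneldef}; the remaining integral is bounded by the same kind of estimate as in the previous paragraph. The delicate point is to arrange $d_3 > d_2$ simultaneously with the $d_{\ell}$-constraints of Lemma \ref{lem:Gest} and with preserving the pole orderings uniformly in the rescaled spatial variables, but this is handled analogously to the contour arrangements in Lemmas \ref{lem:LcdB} and \ref{lem:JcdB}.
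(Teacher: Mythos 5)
Your decomposition into three parts (algebraic identity, goodness/convergence of $L[{\scriptstyle k_1,k_1,k_2 \vt \emptyset}]$, smallness of $(\sm L)[{\scriptstyle k_1,k_1,k_2 \vt \emptyset}]$) matches the paper's structure, and the first two parts are essentially correct; in particular your telescoping of the geometric sum over $i'\in(n_{k_2-1},n_{k_2}]$ is exactly the content of Lemma \ref{lem:multiply}.

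The third part has a genuine gap. You correctly observe via the group property that $(\sm L)[{\scriptstyle k_1,k_1,k_2 \vt \emptyset}]$ differs from $L[{\scriptstyle k_1,k_1,k_2 \vt \emptyset}]$ by an extra factor $(\zeta_3/\zeta_2)^{\D_{k_2}n}$ in the integrand, and you want to make this factor exponentially small by arranging $d_3 > d_2$, i.e., $\tau_3 < \tau_2$. But the lemma's definition of $(\sm L)$ stipulates $\tau_2 < \tau_1,\tau_3$; the pole at $\zeta_2 = \zeta_3$ means that moving the $\zeta_3$-contour inside the $\zeta_2$-contour picks up a residue that is \emph{not} small (by the group property it collapses the $\zeta_2,\zeta_3$ integrals into a single $G$-function and produces something of the same size as $L[{\scriptstyle k_1,k_1 \vt \emptyset}]$). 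So you either change the value of $(\sm L)$, or you must keep $\tau_2 < \tau_3$, in which case $|\zeta_3/\zeta_2| > 1$ and your extra factor is exponentially \emph{large}. You flag ``preserving the pole orderings'' as a delicate point handled analogously to earlier lemmas, but in those lemmas the ordering kept is precisely $\tau_2 < \tau_3$, i.e., $d_2/\D_2 > d_3/\D_3$---the opposite of what your mechanism needs. The two desiderata are directly incompatible, and your paragraph does not resolve the conflict.

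What the paper does instead: it keeps $\tau_2 < \tau_1,\tau_3$ and applies Lemma \ref{lem:Gest} to the \emph{shifted} $G$-functions, absorbing the offset $\nu_T\lambda$ (with $\lambda = \D_{k_2}n/\nu_T$) into the ``$v$''-argument of that lemma. This yields the bound $e^{\Psi(\lambda/\D_2)} = e^{\mu_2\lambda/\D_2}$, which is exponentially \emph{large}; smallness then comes from rewriting the conjugation factor $e^{\mu(v-u)} = e^{-\mu\lambda}\,e^{-\mu u}\,e^{\mu(v+\lambda)}$ and using the requirement $\mu > \mu_2\max_k(\D_k t)^{-1/3}$ from \eqref{mudelta} to make $\eta_T = C\,e^{(\mu_2/\D_2-\mu)\lambda} \to 0$, while the contour-ordering constraint $d(\lambda)/\D_2 > d(-v-\lambda)/\D_3$ is secured using the smallness of $\delta$ from \eqref{mudelta}. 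In short, the decay is a balance between conjugation and steepest-descent growth, not a small prefactor from a contour where the ratio $|\zeta_3/\zeta_2|$ is less than one. Your proof as written does not produce the bound required by Definition \ref{kerneldef}.
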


\begin{lem} \label{lem:LpB}
For the matrix $L[{\scriptstyle p \vt p}]$ we have
\begin{align*}
L[{\scriptstyle p \vt p}] \cdot B(r,i;s,j) &=
\sum_{k=0}^p \big (1 + \Theta(k \vt s)\big) L[{\scriptstyle p,k \vt p}]\,(r,i;s,j) - \big (1 + \Theta(p \vt s) \big) L[{\scriptstyle p,p-1 \vt p}]\,(r,i;s,j) \\
& - \sum_{k=0}^p \big (1 + \Theta(k \vt s) \big) (\sm L)[{\scriptstyle p,k \vt p}]\,(r,i;s,j).
\end{align*}
The matrices $L[{\scriptstyle p,k \vt p}]$ and $(\sm L)[{\scriptstyle p,k \vt p}]$ are as follows.
\begin{align*}
& L[{\scriptstyle p,k \vt p}]\,(r,i;s,j) = \ind{r=p, \, s < k < p}\, c(r,i;s,j)\, \frac{1}{w_c}
\oint\limits_{\gamma_{\tau_2}} d\zeta_2 \oint\limits_{\gamma_{\tau_3}} d\zeta_3 \oint\limits_{\gamma_{R_p}(1)} d z_p \\
&\frac{G \big (z_p \vt n_p-i, \D_p(m,a)\big) (z_p-\zeta_2)^{-1} (\zeta_2 - \zeta_3)^{-1}}
{G \big (\zeta_2 \vt n_p-n_k, \D_{k,p}(m,a)\big) G \big( \zeta_3 \vt n_k-j+1, \D_{s,k}(m,a) \big)},\\
&(\sm L) [{\scriptstyle p,k \vt p}]\,(r,i;s,j) = \ind{r=p, \, s < k < p}\, c(r,i;s,j)\, \frac{1}{w_c}
\oint\limits_{\gamma_{\tau_2}} d\zeta_2 \oint\limits_{\gamma_{\tau_3}} d\zeta_3 \oint\limits_{\gamma_{R_p}(1)} d z_p \\
&\frac{G \big (z_p \vt n_p-i, \D_p(m,a)\big) (z_p-\zeta_2)^{-1} (\zeta_2 - \zeta_3)^{-1}}
{G \big (\zeta_2 \vt n_p-n_{k-1}, \D_{k,p}(m,a)\big) G \big( \zeta_3 \vt n_{k-1}-j+1, \D_{s,k}(m,a) \big)}.
\end{align*}
The radii are arranged such that $\tau_2 < \tau_3 < 1- \sqrt{q}$.
(The difference between $L[{\scriptstyle p, k, \vt p}]$ and $(\sm L)[{\scriptstyle p, k, \vt p}]$ is that the number
$n_k$ is changed to $n_{k-1}$ in the second and third $G$-functions of the denominator.)

The matrix $L[{\scriptstyle p,k \vt p}]$ is good and convergent with limit $- F[{\scriptstyle p,k \vt p}]$.
The matrix $(\sm L)[{\scriptstyle p,k \vt p}]$ is small.
\end{lem}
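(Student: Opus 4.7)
The plan is to follow the same template as Lemmas~\ref{lem:LcdB}--\ref{lem:LkB}. First, expand the matrix product in block form, using the telescoping identity $c(p,i;\kappa,\ell)\,c(\kappa,\ell;s,j)=c(p,i;s,j)$ for the conjugation factors. The only $\ell$-dependence inside the inner sum over $\ell\in(n_{\kappa-1},n_{\kappa}]$ comes from $\zeta_2^{\ell}$ in $G(\zeta_2\vt n_p-\ell+1,\D_p(m,a))^{-1}$ (arising from $L_p$) and from $w^{-\ell}$ in $G(w\vt \ell-j+1,\D_{s,\kappa^{*}}(m,a))^{-1}$ (arising from $B$). Arrange radii so $|\zeta_2|<|w|$ and perform the geometric series
\[
\sum_{\ell=n_{\kappa-1}+1}^{n_{\kappa}}(\zeta_2/w)^{\ell}=\frac{(\zeta_2/w)^{n_{\kappa-1}+1}-(\zeta_2/w)^{n_{\kappa}+1}}{1-\zeta_2/w}.
\]
Reabsorbing the numerator powers into shifted $G$-functions via the group property, and renaming the $B$-variable $w\to\zeta_3$ with the attendant sign flip $(w-\zeta_2)^{-1}=-(\zeta_2-\zeta_3)^{-1}$, the $n_{\kappa}$-piece reproduces the integrand of $L[{\scriptstyle p,\kappa \vt p}]$ while the $n_{\kappa-1}$-piece reproduces $-(\sm L)[{\scriptstyle p,\kappa \vt p}]$.

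The case $\kappa=p$ needs separate treatment and accounts for the exceptional summand in the lemma. The ``upper endpoint'' contribution carries $\zeta_2^{n_p-n_p}=1$, so its $\zeta_2$-contour has no pole at the origin and can be contracted to $0$, killing that term. The ``lower endpoint'' contribution carries $\zeta_2^{n_{p-1}-n_p}$ and $\zeta_3^{j-n_{p-1}-1}$; comparing the denominators under the group property shows this piece is the integrand of $L[{\scriptstyle p,p-1 \vt p}]$, not of $(\sm L)[{\scriptstyle p,p \vt p}]$ (whose indicator $\ind{s<p<p}$ vanishes identically). This yields the extra term $-(1+\Theta(p\vt s))\,L[{\scriptstyle p,p-1 \vt p}]$. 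Assembling the $\kappa<p$ and $\kappa=p$ contributions, and observing that the $k=0$ and $k=p$ summands of the main sums vanish via their built-in indicators, gives the stated identity.

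For the asymptotic assertions, apply steepest descent with $\zeta_2,\zeta_3$ on the contours $w_0(\sigma;d(v))$ and $z_p$ on $w_1(\sigma;d(u))$ from Definition~\ref{def:contours}. Lemma~\ref{lem:Glimit} gives pointwise convergence of each $G$-factor to the corresponding $\G$-factor, so the integrand of $L[{\scriptstyle p,k \vt p}]$ converges to that of $F[{\scriptstyle p,k \vt p}]$; the overall sign $-1$ in the limit tracks the earlier sign flip together with the orientation of the rescaled contours and the Jacobian factors from the change of variable $\zeta\mapsto w_c+c_4\zeta/\nu_T$. Goodness in the sense of Definition~\ref{kerneldef} follows from Lemma~\ref{lem:Gest} combined with the conjugation $e^{\mu(v-u)}$ and the $\psi$-bounds of~\eqref{psiproperty}, exactly as in the parallel analysis for the $L[\cdots]$-matrices of the preceding lemmas.

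The main obstacle is the smallness of $(\sm L)[{\scriptstyle p,k \vt p}]$. Its integrand differs from that of $L[{\scriptstyle p,k \vt p}]$ by the substitution $n_k\to n_{k-1}$ in two denominator $G$-functions, which via the group property introduces the multiplicative factor $(w_c/\zeta_2)^{\D_k n}\,(\zeta_3/w_c)^{\D_k n}$. On the KPZ-rescaled contours one has $|\zeta_i|-w_c=O(T^{-1/3})$ and $\D_k n\sim T$, so the exponents are of order $T^{2/3}$; the radii parameters $d_2,d_3$ must be tuned so that the real part of the combined exponent is uniformly negative and large, producing a pointwise decay factor that beats the Gaussian steepest-descent envelope from Lemma~\ref{lem:Gest}. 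Verifying this uniform-in-$\sigma$ smallness estimate is the technical heart of the proof and proceeds along the same lines as in Lemmas~\ref{lem:LcdB}, \ref{lem:JcdB} and \ref{lem:LkB}.
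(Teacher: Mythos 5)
Your decomposition of $L[{\scriptstyle p \vt p}]\cdot B$ follows the paper's route: telescope the conjugation factors, apply the geometric-sum identity (Lemma~\ref{lem:multiply}) to the inner $\ell$-sum over each block $(n_{\kappa-1},n_\kappa]$, observe that the upper-endpoint contribution at $\kappa=p$ carries $G(\zeta_2 \vt 0,\D_{p-1,p}(m,a))$ in the denominator so that the innermost $\zeta_2$-contour has no pole and contracts to zero, and identify the lower-endpoint contribution at $\kappa=p$ with $L[{\scriptstyle p,p-1 \vt p}]$ (not $(\sm L)[{\scriptstyle p,p-1 \vt p}]$). Your treatment of this boundary case is in fact more precise than the paper's own prose at that step, which mislabels the $k=p$ piece as $(\sm L)[{\scriptstyle p,p-1\vt p}]$ even though the displayed identity it arrives at, with the extra $-\,(1+\Theta(p\vt s))\,L[{\scriptstyle p,p-1\vt p}]$, is what both you and the algebra actually produce.

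However, your stated mechanism for the smallness of $(\sm L)[{\scriptstyle p,k \vt p}]$ is wrong and would stall if pursued literally. The factor you extract via the group property, $(w_c/\zeta_2)^{\D_k n}(\zeta_3/w_c)^{\D_k n}=(\zeta_3/\zeta_2)^{\D_k n}$, is pointwise \emph{large}, not small, on the contours used: the ordering $\tau_2<\tau_3$ is not a free choice --- it is forced both by convergence of the geometric sum in Lemma~\ref{lem:multiply} and by the placement of the $(\zeta_2-\zeta_3)^{-1}$ pole --- and on those nested circles $|\zeta_3/\zeta_2|>1$, so the factor has magnitude of order $e^{+cT^{2/3}}$ with $c>0$. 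No tuning of $d_2,d_3$ can reverse this sign. The mechanism actually used in Lemmas~\ref{lem:LcdB}, \ref{lem:JcdB} and \ref{lem:LkB} is different. One treats the shift $n_k\to n_{k-1}$ as offsetting the rescaled argument by $\lambda=\D_k n/\nu_T\sim T^{2/3}$, applies Lemma~\ref{lem:Gest} to the shifted argument, and gets $C_3 e^{-C_4\sigma^2+\Psi(\lambda/(\D_{k,p}t)^{1/3})}$ where $\Psi(\lambda/(\D_{k,p}t)^{1/3})=\mu_2\lambda/(\D_{k,p}t)^{1/3}$ is large positive ($\lambda>0$). The decay to zero only emerges after pairing this with the conjugation factor $c(r,i;s,j)\approx e^{\mu(v-u)}$: writing $e^{\mu v}=e^{\mu(v+\lambda)}e^{-\mu\lambda}$, the product $\eta_T:=e^{-\mu\lambda+\Psi(\lambda/(\D_{k,p}t)^{1/3})}=e^{(\mu_2(\D_{k,p}t)^{-1/3}-\mu)\lambda}\to0$ precisely because $\mu$ is chosen to satisfy \eqref{mudelta}. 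In short, the smallness is supplied by the conjugation, not by pointwise control of the integrand; your sketch would need to be rewritten to make this the engine of the proof.
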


\begin{prop} \label{prop:ABlimit}
	The matrix $A B = (AB)_g + (AB)_s$, where $(AB)_g$ is good and convergent and $(AB)_s$ is small.
	This is due to the following reasons, which also provides the limit of $(AB)_g$. Recall from Definition \eqref{def:blockform}
	that $A = A_1 + A_2$. Then $(AB)_g = (A_1 B)_g + (A_2 B)_g$, given as follows.
	\begin{align*}
	&(A_1 B)_g(r,i;s,j) = \sum_{0\leq k_1,k_2 \leq p} \Theta(r \vt k_1)\cdot (1 + \Theta(k_2 \vt s)) \cdot
	L[{\scriptstyle k_1,k_1,k_2 \vt \emptyset}]\, (r,i;s,j). \\
	& (A_2 B)_g(r,i;s,j)  = \sum_{\substack{0 \leq k_1,k_2,k_3 \leq p, \, \veps \\ \mathrm{satisfies}\; \eqref{sumcond}}}
	(-1)^{\eps_{[k_1,k_2]} + k_1 + k_2^{*}} \cdot \theta(r \vt \veps) \, \times \\
	& \Big [ (1+\Theta(k_3 \vt s)) L^{\veps}[{\scriptstyle k_1,k_2,k_3 \vt (k_1,k_2]}]
	-\ind{k_2=p,\, k_3=p-1} (1 + \Theta(p \vt s)) L^{\veps}[{\scriptstyle k_1,p,p-1\vt (k_1,p]}] \; + \\
	& \ind{k_2 < p, k_3=p} (1 + \Theta(k_2 \vt s)) L^{\veps}[{\scriptstyle k_1, k_2 \vt (k_1,k_2]}] \; + \\
	& \ind{k_1=p-1, k_2 =p}(1 + \Theta(k_3 \vt s)) L[{\scriptstyle p,k_3 \vt p}]
	- \ind{k_1=p-1, k_2 =p, k_3 =p-1} (1+ \Theta(p\vt s)) L[{\scriptstyle p,p-1 \vt p}] \Big ] (c,i;s,j).
	\end{align*}
	The summation variables $k_i$ range over $0,1, \ldots, p$. The matrix $(AB)_s$ looks the same as $(AB)_g$
	except that every $L$ is replaced by $\sm L$.
\end{prop}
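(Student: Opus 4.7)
My plan is to exploit linearity of matrix multiplication together with the block decomposition $A = A_1 + A_2$ from Definition \ref{def:blockform}: I compute $A_1 B$ and $A_2 B$ separately using the product formulas already established in Lemmas \ref{lem:LcdB}, \ref{lem:JcdB}, \ref{lem:LkB}, \ref{lem:LpB}, and then split each result into its good-and-convergent part (built from the $L[\cdots]$ terms) and its small part (built from the $(\sm L)[\cdots]$ terms). Since each of those four lemmas already expresses $L[\cdots]\cdot B$ in precisely this form, the proposition reduces to a careful bookkeeping of the resulting sums.

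For $A_1$, since $A_1(r,i;s,j) = \sum_{k_1}\Theta(r\vt k_1)\, L[{\scriptstyle k_1,k_1\vt\emptyset}](r,i;s,j)$, Lemma \ref{lem:LkB} applied term by term yields
\begin{equation*}
A_1 B = \sum_{k_1,k_2} \Theta(r\vt k_1)\big(1+\Theta(k_2\vt s)\big)\Big[L[{\scriptstyle k_1,k_1,k_2\vt\emptyset}] - (\sm L)[{\scriptstyle k_1,k_1,k_2\vt\emptyset}]\Big],
\end{equation*}
which matches the claimed $(A_1 B)_g$ and identifies the corresponding small remainder. For $A_2$, I expand into its three constituents $L^{\veps}[{\scriptstyle k_1,k_2\vt(k_1,k_2]}]$, $L^{\veps}[{\scriptstyle k_1\vt(k_1,k_2]}]$, and $\ind{k_1=p-1,k_2=p}\,L[{\scriptstyle p\vt p}]$, and apply Lemmas \ref{lem:LcdB}, \ref{lem:JcdB}, \ref{lem:LpB} respectively. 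Lemma \ref{lem:LcdB} supplies the main family $\sum_{k_3}(1+\Theta(k_3\vt s))L^{\veps}[{\scriptstyle k_1,k_2,k_3\vt(k_1,k_2]}]$ together with the $\ind{k_2=p,k_3=p-1}$ boundary correction; Lemma \ref{lem:JcdB} supplies the term $\ind{k_2<p,k_3=p}(1+\Theta(k_2\vt s))L^{\veps}[{\scriptstyle k_1,k_2\vt(k_1,k_2]}]$, where the formal index $k_3=p$ is a placeholder under which Lemma \ref{lem:JcdB}'s output is conveniently indexed; and Lemma \ref{lem:LpB} supplies the $\ind{k_1=p-1,k_2=p}$ contributions involving $L[{\scriptstyle p,k_3\vt p}]$ and $-L[{\scriptstyle p,p-1\vt p}]$. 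Each piece inherits the sign-weight $(-1)^{\eps_{[k_1,k_2]}+k_1+k_2^{*}}\,\theta(r\vt\veps)$ from $A_2$, reproducing the stated formula for $(A_2 B)_g$, while the parallel $\sm L$ outputs assemble into $(A_2 B)_s$.

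Goodness, convergence, and smallness of each individual constituent follow directly from Lemmas \ref{lem:LkB}, \ref{lem:LcdB}, \ref{lem:LpB}, with the $L^{\veps}[{\scriptstyle k_1,k_2\vt(k_1,k_2]}]$ piece entering via Lemma \ref{lem:JcdB} being good and convergent by Proposition \ref{prop:Alimit}. Since these properties are preserved by finite weighted sums whose weights $\Theta$ and $\theta(r\vt\veps)$ are uniformly bounded on $\gamma_r^{p-1}$, the decomposition $AB = (AB)_g + (AB)_s$ with the claimed limit follows. The main obstacle is purely combinatorial: one must verify that the boundary corrections carrying $\ind{k_2=p,k_3=p-1}$ and $\ind{k_1=p-1,k_2=p,k_3=p-1}$ emerge exactly from the additional contour-contraction residues displayed in Lemmas \ref{lem:LcdB} and \ref{lem:LpB}; that Lemma \ref{lem:JcdB}'s output slots into the $k_3=p$ position of the proposition's formula; and that the nested support indicators $k_1<r^{*}$, $s<k_3<k_2$, $s^{*}<k_2$ compose consistently between the block supports of $A$ and $B$. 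No new analytic input beyond the preceding lemmas is needed.
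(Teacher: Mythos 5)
Your proof follows exactly the paper's own argument: expand $A = A_1 + A_2$ into the basic $L[\cdots]$ constituents via Definition \ref{def:blockform}, multiply each on the right by $B$ using Lemmas \ref{lem:LcdB}, \ref{lem:JcdB}, \ref{lem:LkB}, \ref{lem:LpB}, and collect the $L$ terms into the good-and-convergent part and the $\sm L$ terms into the small part. The only difference is that you spell out the bookkeeping (the $k_3=p$ placeholder slot and the boundary corrections) that the paper leaves implicit; the substance is identical.
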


\begin{proof}
We see in Definition \ref{def:blockform} that $A$ is a weighted sum - involving the $\theta_k$s - of the matrices
$L[{\scriptstyle k,k \vt \emptyset}]$, $L^{\veps}[{\scriptstyle k_1,k_2 \vt (k_1,k_2]}]$,
$L^{\veps}[{\scriptstyle k_1 \vt (k_1,k_2]}]$ and $L[{\scriptstyle p \vt p}]$. When we multiply $A$ by $B$ we replace
every $L[{\scriptstyle \cdots}]$ by $L[{\scriptstyle \cdots }] \cdot B$. Then if we substitute the representation of
these matrices by using Lemmas \ref{lem:LcdB}, \ref{lem:JcdB}, \ref{lem:LkB} and \ref{lem:LpB}, we get the
representation $(AB)_g + (AB)_s$ as given by the statement of the proposition.
\end{proof}

Lemma \ref{lem:Bsquare} along with Propositions \ref{prop:Alimit} and \ref{prop:ABlimit} imply that the matrix
$P = A - (AB)_g - B_1$ has limit $F$ from \eqref{F}. Specifically, the limit of $B_1$ is $F^{(0)}$. The limit of
$A_1$ is $F^{(1)}$ and that of $A_2$ is $F^{(2)}$. The limit of $(A_1 B)_g$ is $F^{(3)}$ and the one of $(A_2 B)_g$ is $F^{(4)}$.
Let us also remark that when comparing the matrix $A$ with $F$, we see the factors
$(-1)^{\eps_{[k_1,k_2]} + k_1 + k_2^{*}}$ have become $(-1)^{\eps_{[k_1,k_2]} + \ind{k_2 = p}}$.
This is because limits of the $L^{\veps}$ are of the form $(-1)^{k_2-k_1}F^{\veps}$,
and $k_2^{*} + k_2 = 2k_2 - \ind{k_2 = p}$. Likewise for $L[{\scriptstyle p \vt p}]$ with $k_1 = p-1$ and $k_2=p$.

We then arrive at the conclusion of Theorem \ref{thm:1} once we have proved
\begin{prop} \label{prop:small}
The matrix $P B_2$ is small, where $P = A - (AB)_g - B_1$ and $B_2$ is from Lemma \ref{lem:Bsquare}.
\end{prop}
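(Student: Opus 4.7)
The plan is to decompose the problem as (a) $P$ is good in the sense of Definition \ref{kerneldef}, and (b) $B_2$ is small, after which the claim follows because the composition of a good matrix with a small one is again small. The latter closure property is immediate from the defining bounds: if $|F_P(r,u;t,w)|\le g_r(u)g_t(w)$ and $|F_{B_2}(t,w;s,v)|\le \eta_T\, g_t(w)\, g_s(v)$ with $\eta_T\to 0$, then Fubini together with the integrability of the $g_t$s yields $|F_{PB_2}(r,u;s,v)| \le \eta_T\, C\, g_r(u)\, g_s(v)$ with a fixed constant $C$, which is the definition of smallness for $PB_2$.

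Goodness of $P = A - (AB)_g - B_1$ is a direct consequence of the results already proved in this section: Proposition \ref{prop:Alimit} bounds $A = A_1 + A_2$ by a good function, Proposition \ref{prop:ABlimit} does so for $(AB)_g$, and Lemma \ref{lem:Bsquare} does so for $B_1$. Goodness is preserved under finite linear combinations (take the sum of the bounding functions), so $P$ inherits such a bound, uniformly in $\theta$ over the contour $\gamma_r^{p-1}$.

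Smallness of $B_2$ is the principal technical step. By Lemma \ref{lem:Bsquare} it suffices to show that each summand $(\sm L)[{\scriptstyle k,k \vt \emptyset}]$ is small, and I would carry this out by the same steepest-descent argument used to certify smallness of the $(\sm L)$-matrices in Lemmas \ref{lem:LcdB}, \ref{lem:JcdB}, \ref{lem:LkB} and \ref{lem:LpB}. The mechanism producing the decay is structurally identical here: the denominator $G$ factors now evaluate at $n_{k-1}$ in place of $n_k$, so under the KPZ rescaling \eqref{KPZscaling} the $n$ and $m$ parameters of those $G$s become mismatched by a macroscopic quantity of order $\D_k n \sim \D_k t\, T$. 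Deforming the $\zeta_1, \zeta_2$ contours onto the descent circles $w_0(\sigma; d)$ of Definition \ref{def:contours}, with $d$ chosen according to the shifted indices, the estimate \eqref{Gest0} of Lemma \ref{lem:Gest} produces, on top of the usual Gaussian decay $e^{-C\sigma^2 + \psi(u)+\psi(v)}$, a global prefactor of order $e^{-c T^{1/3}}$ coming from the block mismatch at the critical point. Taking this prefactor as $\eta_T$ and using \eqref{psiproperty} to convert the remaining bounds into integrable functions $g_r(u) \propto e^{-\mu u + \psi(u/\D)}$, $g_s(v) \propto e^{\mu v + \psi(v/\D)}$ then provides the required smallness certificate.

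The main obstacle is controlling the simple-pole singularity $(\zeta_1 - \zeta_2)^{-1}$ once both contours are forced close to the common critical point $w_c = 1 - \sqrt{q}$. This is the same nuisance that appears in the earlier $(\sm L)$-lemmas and can be dealt with identically, by keeping the two radii separated on the scale $T^{-1/3}$ at the cost of at most a logarithmic prefactor that is trivially absorbed into $\eta_T$; uniformity in $\theta \in \gamma_r^{p-1}$ is automatic since the dependence on $\theta$ enters only through the bounded factors $\theta(r \vt \veps)$ and $\Theta(r \vt k)$, which do not affect the asymptotic estimates.
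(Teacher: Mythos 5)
Your step (a), that $P$ is good, is correct and matches the paper; and the closure property you invoke, that composing a good kernel with a small kernel yields a small kernel, is also valid. The fatal gap is step (b): the matrix $B_2$ is \emph{not} small in the sense of Definition \ref{kerneldef}, and the ``global prefactor of order $e^{-cT^{1/3}}$'' you claim does not materialize. Track the bound produced by Lemma \ref{lem:Gest} when the indices are shifted to $n_{k-1}$: setting $\lambda_k = \D_k n/\nu_T \sim T^{2/3}$, the rescaled kernel $F_{T,k}$ of $\sm L[{\scriptstyle k,k\vt\emptyset}]$ satisfies a bound of the form
\begin{equation*}
|F_{T,k}(r,u;s,v)| \leq C \, e^{-\mu(u+\lambda_k)+\Psi\bigl((u+\lambda_k)/\D_1\bigr)}\cdot e^{\mu(v+\lambda_k)+\Psi\bigl(-(v+\lambda_k)/\D_2\bigr)}.
\end{equation*}
The $\mu\lambda_k$ terms cancel between the two factors, so what remains is exactly the usual ``good'' profile $e^{-\mu x + \Psi(x/\D)}$, merely shifted in $u$ (respectively $v$) by $\lambda_k$. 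A shifted peak does not have a decaying sup-norm: on a block $(r,s)$ with $r<p$, the variable $u$ ranges over an interval of length $\sim T^{2/3}$ containing the shifted maximizer $u \approx -\lambda_k$, and the peak height is a constant independent of $T$. Hence no factorization $|F_{T,k}| \leq \eta_T\, g_r(u)\, g_s(v)$ with $\eta_T\to 0$ and $T$-independent integrable $g$'s is possible. (The $\sm L$-matrices in Lemmas \ref{lem:LcdB}--\ref{lem:LpB} that \emph{are} small have a third $G$-factor in the middle; it is that middle factor, evaluated at a mismatched index, that supplies the $T$-independent prefactor $\eta_T = C e^{-\mu\lambda + \Psi(\lambda/\D)}\to 0$. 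No such additional factor exists for $\sm L[{\scriptstyle k,k\vt\emptyset}]$, so the analogy you invoke breaks precisely where it matters.)

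The correct mechanism, used in the paper's proof, is that although $B_2$ is only ``shifted good'', \emph{its product with any good matrix is small}: this is Lemma \ref{lem:goodsmall}. Because $P$ is bounded by a profile peaked near $u=0$ and $\sm L[{\scriptstyle k,k\vt\emptyset}]$ is bounded by a profile peaked near $w\approx-\lambda_k$, the convolution
$\int dw\, g_\ell(w)\, f(w+\lambda_k)$
over the relevant half-line goes to zero as $\lambda_k\to\infty$, by splitting the domain at $-\lambda_k/2$ and using integrability of $g_\ell$ on one piece and decay of $f$ on the other. That separation argument, not smallness of $B_2$ itself, is what certifies $P B_2$ as small. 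Your proposal must be reorganized: prove instead Lemma \ref{lem:sLk} (the shifted bound on $\sm L[{\scriptstyle k,k\vt\emptyset}]$) and then Lemma \ref{lem:goodsmall} (good $\times$ shifted-good $=$ small), and only then conclude.
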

The proof of this is in the next section. For the remainder of this section we will prove Proposition \ref{prop:Alimit} and
the aforementioned lemmas. The proofs will be on a case by case basis, where we consider each of the three types
of matrices $L[{\scriptstyle k,k, \vt \emptyset}]$, $L[{\scriptstyle k_1, k_2 \vt (k_1,k_2]}]$ and $L[{\scriptstyle k_1 \vt (k_1,k_2]}]$,
and then prove the propositions claimed about them. 

The following lemma will be used again and again to multiply matrices by $B$.
\begin{lem} \label{lem:multiply}
Suppose $0 \leq N_1 < N_2$ are integers and $w_1 \neq w_2$ belong to $\C \setminus \{0,1,1-q\}$. Then,
\begin{align*}
&\sum_{N_1 < \ell \leq N_2} \frac{1}{G(w_1 \vt n-\ell + 1, m,a)\, G(w_2 \vt \ell - n', m',a')} = \frac{w_c}{w_1 - w_2} \,\times \\
& \left [ \frac{1}{G(w_1 \vt n-N_2, m,a)\, G(w_2 \vt N_2-n',m',a')} - \frac{1}{G(w_1 \vt n-N_1, m,a)\, G(w_2 \vt N_1-n',m',a')} \right ]
\end{align*}
\end{lem}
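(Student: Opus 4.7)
The lemma is a direct geometric-series computation once one invokes the group property of $G$ from \eqref{Hgroup}. Here is how I would organize it.

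The plan is to separate the $\ell$-dependence in each factor so that the summand becomes a constant times $(w_1/w_2)^\ell$. Using \eqref{Hgroup} and the observation $G(w \vt n,0,0) = (w/w_c)^n$ (which follows from $G^*(w \vt n,0,0)=w^n$ and the normalization by $G^*(w_c \vt n,0,0)=w_c^n$), I would write
$$\frac{1}{G(w_1 \vt n-\ell+1, m,a)} = \Big(\frac{w_c}{w_1}\Big)^{n-\ell+1}\frac{1}{G(w_1 \vt 0,m,a)}$$
and analogously
$$\frac{1}{G(w_2 \vt \ell - n', m', a')} = \Big(\frac{w_c}{w_2}\Big)^{\ell - n'}\frac{1}{G(w_2 \vt 0, m', a')}.$$
Collecting all $\ell$-independent factors into a single constant $C$, the summand becomes $C \cdot (w_c/w_1)^{-\ell}(w_c/w_2)^{\ell} = C \cdot (w_1/w_2)^{\ell}$ times a fixed power of $w_c/w_1$ and $w_c/w_2$ with exponents depending only on $n,n'$.

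Next I would evaluate $\sum_{\ell=N_1+1}^{N_2}(w_1/w_2)^\ell$ as the standard finite geometric series
$$\sum_{\ell=N_1+1}^{N_2}\Big(\frac{w_1}{w_2}\Big)^\ell \;=\; \frac{(w_1/w_2)^{N_2+1}-(w_1/w_2)^{N_1+1}}{w_1/w_2 - 1} \;=\; \frac{w_1}{w_1-w_2}\Big[(w_1/w_2)^{N_2}-(w_1/w_2)^{N_1}\Big],$$
which is legal since $w_1 \neq w_2$.

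Finally, I would match the right-hand side by inverting the group-property step: for $N \in \{N_1,N_2\}$,
$$\frac{1}{G(w_1 \vt n-N, m,a)\, G(w_2 \vt N-n',m',a')} = C \cdot \Big(\frac{w_c}{w_1}\Big)^{n-N}\Big(\frac{w_c}{w_2}\Big)^{N-n'} = C\cdot \Big(\frac{w_c}{w_1}\Big)^n\Big(\frac{w_c}{w_2}\Big)^{-n'}(w_1/w_2)^{N}.$$
Substituting into the bracket on the right-hand side reproduces exactly the factor $(w_1/w_2)^{N_2}-(w_1/w_2)^{N_1}$, and the extra $w_1$ from the geometric-series calculation combines with $(w_c/w_1)^{n+1}$ (rather than $(w_c/w_1)^n$, because the summand had exponent $n-\ell+1$ not $n-\ell$) to produce the advertised prefactor $w_c/(w_1-w_2)$.

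There is no real obstacle here: the only care needed is in bookkeeping the offset $+1$ in $n-\ell+1$ versus $\ell-n'$, which is precisely what produces the $w_c$ in the numerator of the prefactor rather than $w_1$ or $w_2$. Everything else is elementary.
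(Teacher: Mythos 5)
Your proof is correct and is essentially the same computation the paper uses: factor the summand via the group property of $G$, noting $G(w\vt n,0,0)=(w/w_c)^n$, evaluate the resulting finite geometric series $\sum_\ell (w_1/w_2)^\ell$, and re-absorb the constants to match the right-hand side. The paper groups the $\ell$-independent factors slightly differently (pulling out $G(w_1\vt n,m,a)\,G(w_2\vt -n',m',a')$ rather than $G(w_1\vt 0,m,a)\,G(w_2\vt 0,m',a')$ together with explicit powers of $w_c/w_1,w_c/w_2$), but this is only a cosmetic difference in bookkeeping, and you correctly account for the $+1$ offset that produces $w_c$ in the prefactor.
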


\begin{proof}
Due to the group property of $G$, the sum over $\ell$ can be written as
$$ \frac{1}{G(w_1 \vt n,m,a)\, G(w_2 \vt -n',m',a')}\, 
\sum_{N_1 < \ell \leq N_2} \left (\frac{w_1}{w_c}\right)^{\ell-1} \left ( \frac{w_c}{w_2} \right)^{\ell}.$$
The geometric sum evaluates to
\begin{align*}
& \frac{w_c}{w_1-w_2}\left [ (w_1/w_2)^{N_2} - (w_1/w_2)^{N_1} \right ] = \\
& \frac{w_c}{w_1-w_2} \left [ \frac{1}{G(w_1 \vt -N_2,0,0) \, G(w_2 \vt N_2,0,0)} - \frac{1}{G(w_1 \vt -N_1,0,0) \, G(w_2 \vt N_1,0,0)} \right ].
\end{align*}
Then by the group property we obtain the expression on the r.h.s.~of the identity .
\end{proof}

\paragraph{\textbf{Proof of Lemma \ref{lem:Bsquare}}}
We have that
$$B^2(r,i;s,j) = \sum_{k=0}^p \, \sum_{n_{k-1} < \ell \leq n_k} B(r,i;k,\ell) B(k,\ell; s,j).$$ 
Let us recall
$$B(r,i;s,j) = \ind{s < r^{*}} c(r,i;s,j) \frac{1+ \Theta(r \vt s)}{w_c} \oint\limits_{\gamma_{\tau}} d\zeta
\frac{1}{G \big(\zeta \vt i-j+1, \D_{s,r^{*}}(m,a)\big)}.$$
The conjugation factor satisfies $c(r,i;k,\ell) c(k,\ell;s,j) = c(r,i;s,j)$. Therefore,
\begin{align*}
B^2(r,i;s,j) = &c(r,i;s,j) \sum_{k=0}^p \ind{k < r^{*},\, s < k^{*}}\, (1+ \Theta(r \vt k)) \cdot (1 + \Theta(k\vt s))
\frac{1}{w_c^2} \oint\limits_{\gamma_{\tau_1}} d\zeta_1 \oint\limits_{\gamma_{\tau_2}} d\zeta_2 \\
& \sum_{n_{k-1} < \ell \leq n_k} \frac{1}{G \big(\zeta_1 \vt i-\ell + 1, \D_{k,r^{*}}(m,a)\big)\, G \big(\zeta_2 \vt \ell - j+1, \D_{s,k^{*}}(m,a)\big)}.
\end{align*}
Observe that $k^{*} = k$ because $k < r^{*} < p$. By Lemma \ref{lem:multiply}, the sum over $\ell$ gives
the difference of the integrand of $L[{\scriptstyle k,k\vt \emptyset}](r,i;s,j)$ from that of $(\sm L) [{\scriptstyle k,k\vt \emptyset}](r,i;s,j)$.
Consequently, the expressions for $B_1$ and $B_2$ follow and we have $B^2 = B_1 - B_2$.
That $B_1$ is good and convergent will follow due to every $L[{\scriptstyle k,k \vt \emptyset}]$ being such,
which will be shown in the proof of Proposition \ref{prop:ABlimit} below. \qed

Throughout the remaining argument we will assume the following.
\begin{enumerate}
	\item The parameters $t_k, x_k, \xi_k$ are bounded in absolute value by $L$ and $\min_k \, \{\D_k t\} > 0$.
	\item $C_{q,L}$ is a constant whose value may change from one appearance to the next, but depends on $q$ and $L$ only.
\end{enumerate}

\subsubsection{Proof of claims regarding $L^{\veps}[{\scriptstyle k_1, k_2 \vt (k_1, k_2]}]$}
The matrix $L^{\veps}[{\scriptstyle k_1, k_2 \vt (k_1, k_2]}]$ has the from
\begin{align} \label{Alimit}
& L^{\veps}[{\scriptstyle k_1, k_2 \vt (k_1, k_2]}](r,i;s,j) = \ind{k_1 < r^{*}, s^{*} < k_2} c(r,i;s,j) \frac{1}{w_c}
\oint\limits_{\gamma_{\tau_1}} d\zeta_1 \oint\limits_{\gamma_{\tau_2}} d\zeta_2 \\ \nonumber
&\frac{f(\zeta_1, \zeta_2)}{G(\zeta_1 \vt i-n_{k_1}, \D_{k_1,r^{*}}(m,a) )\, G(\zeta_2 \vt n_{k_2}-j+1, \D_{s^{*},k_2}(m,a))}, \;\;\text{where} \\ \nonumber
& f(\zeta_1, \zeta_2) = \oint\limits_{\vec{\gamma}_{R^{\veps}}} dz_{k_1+1} \cdots dz_{k_2}\,
\frac{\prod_{k_1 < k \leq k_2} G(z_k \vt \D_k (n,m,a))\, \left ( \frac{1-\zeta_1}{1-z_1}\right)^{\ind{k_1=0}}}
{\prod_{k_1 < k <k_2} (z_k - z_{k+1}) \, (z_{k_1+1}-\zeta_1) (z_{k_2} - \zeta_2)}.
\end{align}

Let us fix $k_1,k_2$ and $\veps$. Let $F_T$ be the KPZ re-scaling of our matrix according to \eqref{Fredholmembed}.
The indices $i$ and $j$ on the $(r,s)$-block are re-scaled as
\begin{equation} \label{ijrescale}
i = n_{r^{*}} + \lceil \nu_T u \rceil \quad \text{and}\quad j = n_{s^{*}} + \lceil \nu_T v \rceil\,.
\end{equation}
It is convenient to ignore the rounding as it makes no difference in the asymptotic analysis. Consequently,
\begin{align} \label{inmarescale}
i-n_{k_1} &=  \D_{k_1, r^{*}} t \, T - c_1 (\D_{k_1, r^{*}}x) \cdot (\D_{k_1, r^{*}} t\, T)^{\frac{2}{3}} +
c_0 \frac{u}{(\D_{k_1, r^{*}} t)^{1/3}} (\D_{k_1, r^{*}} t\, T)^{\frac{1}{3}}\,, \\ \nonumber
\D_{k_1, r^{*}} m & = \D_{k_1, r^{*}} t T + c_1 (\D_{k_1, r^{*}}x) \cdot(\D_{k_1, r^{*}} t\,T)^{\frac{2}{3}}\,, \\ \nonumber
\D_{k_1, r^{*}} a & = c_2 \, \D_{k_1, r^{*}}t\,T + c_3 (\D_{k_1, r^{*}} \xi) \cdot (\D_{k_1, r^{*}} t\,T)^{\frac{1}{3}}.
\end{align}
Similarly,
\begin{align} \label{jnmarescale}
n_{k_2}-j &=  \D_{s^{*},k_2} t\,T - c_1 (\D_{s^{*},k_2}x) \cdot (\D_{s^{*},k_2} t\,T)^{\frac{2}{3}}
+ c_0 \frac{-v}{(\D_{s^{*},k_2} t)^{1/3}} (\D_{s^{*},k_2} t\,T)^{\frac{1}{3}}\,, \\ \nonumber
\D_{k_1, r^{*}} m & = \D_{s^{*},k_2} t\,T + c_1 (\D_{s^{*},k_2}x) \cdot (\D_{s^{*},k_2} t\,T)^{\frac{2}{3}}\,, \\ \nonumber
\D_{k_1, r^{*}} a & = c_2 \, \D_{s^{*},k_2} t\,T + c_3 (\D_{s^{*},k_2} \xi) \cdot (\D_{s^{*},k_2} t\,T)^{\frac{1}{3}}.
\end{align}
We note that $\D_{k_1, r^{*}} t > 0$ and $\D_{s^{*},k_2} t > 0$ due to the conditions $k_1 < r^{*}$ and $s^{*} < k_2$.

Recalling Definition \ref{def:contours}, choose the contours $\gamma_{\tau_1}$ and $\gamma_{\tau_2}$ as follows.
\begin{equation*} \label{zetacontours}
\gamma_{\tau_1} = w_0(\sigma_1, d_1) \;\; \text{with}\;\; K \coloneqq \D_{k_1, r^{*}} t\, T; \;\;
\gamma_{\tau_2} = w_0(\sigma_2, d_2)\;\; \text{with}\;\; K \coloneqq \D_{s^{*},k_2} t\, T.
\end{equation*}
The choices for $d_1$ and $d_2$ will be made later.

With the re-scaling \eqref{ijrescale} the conjugation factor satisfies
\begin{equation} \label{rescaleconj}
c(r,i;s,j) = e^{\mu(v-u)} \, (1 + C_{q,L} T^{-1/3}).
\end{equation}

\paragraph{\textbf{Proof that $L^{\veps}[{\scriptstyle k_1, k_2 \vt (k_1, k_2]}]$ is good}}
From Lemma \ref{lem:Gest} we see there is a choice of $d_1 = d(u)$ such that we have
the following uniformly in $\zeta_1 = \zeta_1(\sigma_1) \in w_0(\sigma_1, d_1)$.
$$ |G(\zeta_1(\sigma_1) \vt i-n_{k_1}, \D_{k_1,r^{*}}(m,a))|^{-1} \leq C_3 e^{-C_4 \sigma_1^2 + \Psi \big(u/(\D_{k_1,r^{*}}t)^{1/3} \big)}.$$
Recall $\Psi(x) = -\mu_1 \cdot (x)_{-}^{3/2} + \mu_2 \cdot (x)_{+}$. Also, there is a choice of $d_2 = d(-v)$
such that the following holds uniformly in $\zeta_2 = \zeta_2(\sigma_2) \in w_0(\sigma_2, d_2)$.
$$ |G(\zeta_2(\sigma_2) \vt n_{k_2}-j+1, \D_{s^{*},k_2}(m,a))|^{-1} \leq C_3 e^{-C_4 \sigma_2^2 + \Psi \big(-v /(\D_{s^{*},k_2} t)^{1/3}\big)}.$$
We will see below that $f$ from \eqref{Alimit} satisfies the following uniformly in $\sigma_1$ and $\sigma_2$.
\begin{equation} \label{fbound}
| f(\zeta_1(\sigma_1), \zeta_2(\sigma_2))| \leq C_{q,L} T^{1/3}.
\end{equation}
When we change variables $\zeta_1 \mapsto \sigma_1$ and $\zeta_2 \mapsto \sigma_2$
we have $| d\zeta_{\ell} / d\sigma_{\ell}| \leq C_{q,L} T^{-1/3}$ for $\ell = 1,2$.
The conjugation factor also satisfies \eqref{rescaleconj}. Therefore,
\begin{align*}
|F_T(r,u;s,v)| & \leq C_{q,L} \, \nu_T \, T^{-2/3} \, e^{\mu(v-u)} \int_{\R^2} d \sigma_2 d \sigma_2\,
|f(\zeta_1(\sigma_1),\zeta_2(\sigma_2))| \, e^{-C_4(\sigma_1^2+\sigma_2^2)} \times \\
& \qquad \times e^{\Psi \big((u/(\D_{k_1,r^{*}}t)^{1/3})\big)} \cdot e^{\Psi \big(-v / (\D_{s^{*},k_2} t)^{1/3}\big)} \\
& \leq C_{q,L} e^{-\mu u + \Psi \big((u/(\D_{k_1,r^{*}}t)^{1/3})\big)} \cdot e^{\mu v + \Psi \big(-v / (\D_{s^{*},k_2} t)^{1/3}\big)}\,.
\end{align*}
Recall from \eqref{psiproperty} that $e^{-\mu x + \Psi(x/\D)}$ is bounded and integrable over $\R$
if $\mu$ satisfies the bound from \eqref{mudelta} and $\D \geq \min_k \, \{(\D_k t)^{1/3} \}$.
This is the case for us and the matrix is good.

\paragraph{\textbf{Proof of estimate \eqref{fbound} for $f(\zeta_1,\zeta_2)$}}
First, $|(1-\zeta_1)/(1-z_1)| \leq 2/(1-q)$. Suppose that $\zeta_1 \in w_0(\sigma, d_1)$ for some $d_1$
and $K = \kappa_1 T$, and $z_{k_1+1} \in w_1(\sigma,d_2)$ for some $d_2$ and $K = \kappa_2 T$.
Then $|\zeta_1 - z_{k_1+1}| \geq T^{-1/3} ((d_1/\kappa_1) + (d_2/\kappa_2))$. In our case, $d_1,d_2,\kappa_2,\kappa_2$
all remain uniformly positive in $T$, and depends on $q$ and $L$. Therefore,
$|\zeta_1 - z_{k_1+1}|^{-1} \leq C_{q,L} T^{1/3}$. Similarly, $|\zeta_2 - z_{k_2}|^{-1} \leq C_{q,L} T^{1/3}$
if $z_{k_2} \in w_1(\sigma,d_2)$.

The parameters $\D_k(n,m,a)$ are re-scaled according to
\begin{align} \label{nmarescale}
\D_k n &= \D_k t, T - c_1 (\D_k x)\cdot (\D_k t\, T)^{\frac{2}{3}},\\ \nonumber
\D_k m &= \D_k t, T + c_1 (\D_k x)\cdot (\D_k t\, T)^{\frac{2}{3}},\\ \nonumber
\D_k a &= c_2 \, \D_k t, T +c_3 (\D_k \xi)\cdot (\D_k t\, T)^{\frac{1}{3}}.
\end{align}

We choose $z_k$ to lie on the contour $w_1(\sigma_k, D_k)$ with the choice $K = \D_k t \,T$.
The number $D_k$ is chosen so that the estimate \eqref{Gest1} from Lemma \ref{lem:Gest} holds,
namely, uniformly in $\sigma_k$,
$$|G(z_k(\sigma_k) \vt \D_k(n,m,a)| \leq C_3 e^{-C_4 \sigma_k^2}.$$
This is for every $k_1 < k \leq k_2$.

We need the $D_k$s to be ordered according to $\veps$. The $D_k$s may be chosen from
an interval with length of order $T^{1/3}$. So we can choose them from the interval $[1,2p]$, say,
which ensures that they can be ordered accordingly and also that their pairwise distance is at least 1.
Consequently, $|z_k - z_{k+1}|^{-1} \leq C_{q,L} T^{1/3}$ for every $k$.

When we change variables $z_k \mapsto \sigma_k$ we have $|d z_k / d\sigma_k| \leq C_{q,L} T^{-1/3}$.
Thus, if $\zeta_1 \in w_0(\sigma, d_1)$ and $\zeta_2 \in w_0(\sigma',d_2)$, then uniformly
in $\zeta_1$ and $\zeta_2$,
\begin{align*}
 |f(\zeta_1,\zeta_2)| &\leq C_{q,L} \,(T^{-1/3})^{k_2-k_1} \int_{\R^{k_2-k_1}} \, d\sigma_{k_1+1} \cdots d\sigma_{k_2}
e^{-C_4 \sum_k \sigma_k^2} \, \cdot\, (T^{1/3})^{k_2-k_1-1 + 2} \\
&\leq C_{q,L} T^{1/3}.
\end{align*}

\paragraph{\textbf{Proof that $L^{\veps}[{\scriptstyle k_1,k_2 \vt (k_1,k_2]}]$ is convergent}}
Now we assume the kernel variables $u$ and $v$ in $F_T$ remain bounded and we are on the $(r,s)$-block.
We will choose contours for all the variables in the following way.
\begin{align*}
\zeta_1 &= \zeta_1(\hat{\sigma}_1) \in w_0 \left ( \frac{c_4}{\sqrt{q}} \hat{\sigma}_1, \frac{c_4 d_1}{\sqrt{q}} \right ), \quad K \coloneqq \D_{k_1,r^{*}}t\,T.\\
\zeta_2 &= \zeta_1(\hat{\sigma}_2) \in w_0 \left ( \frac{c_4}{\sqrt{q}} \hat{\sigma}_2, \frac{c_4 d_2}{\sqrt{q}} \right ), \quad K \coloneqq \D_{s^{*},k_2}t\,T.\\
z_k & = z_{\ell}(\sigma_k) \in w_1 \left ( \frac{c_4}{\sqrt{q}} \sigma_k, \frac{c_4 D_k}{\sqrt{q}} \right ), \quad K \coloneqq \D_k t\, T.
\end{align*}
The constant $c_4$ is from \eqref{c4}. The numbers $d_1$ and $d_2$ are as in the proof of goodness so that the estimate \eqref{Gest0}
holds. Since $u$ and $v$ are bounded, we may absorb the terms $e^{\Psi(u)}$ and $e^{\Psi(-v)}$ into the constant $C_3$ of the estimate.
The number $D_k$ are chosen so that the estimate \eqref{Gest1} holds. They are also to be ordered according to $\veps$.
As before, we may choose them so that they have pairwise distance at least 1 and are ordered accordingly;
the condition of the ordering is \eqref{Dorder}.

Due to this choice of contours, arguing as before, we find the following estimates. We have
$z_k = z_k(\sigma_k)$ and $\zeta_{\ell} = \zeta_{\ell}(\hat{\sigma}_{\ell})$.
\begin{align*}
& \frac{\prod_{k} |G(z_k \vt \D_k(n,m,a))|}
{|G(\zeta_1 \vt i-n_{k_1}, \D_{k_1,r^{*}}(m,a)) \cdot G(\zeta_2 \vt n_{k_2}-j+1, \D_{s^{*},k_2}(m,a))|}
\leq C_{q,L} \, e^{-C_4 (\sum_k \sigma_k^2 + \hat{\sigma}_1^2 + \hat{\sigma}_2^2 )}.\\
& \nu_T \cdot \big | \prod_{k_1 < k < k_2} (z_k-z_{k+1})^{-1} (z_{k_1+1} - \zeta_1)^{-1} (z_{k_2} - \zeta_2)^{-1} \big|
\cdot \prod_{k_1 < k \leq k_2} |\frac{d z_k}{d\sigma_k}| \cdot \prod_{\ell =1,2} |\frac{d\hat{\zeta_{\ell}}}{\hat{\sigma}_{\ell}}| \leq C_{q,L}.
\end{align*}
These estimates allow us to use the dominated convergence theorem to get the limit of the integral in $F_T(r,u;s,v)$.
So we consider the point-wise limit of the integral.

Suppose $\sigma_k$ and $\hat{\sigma}_{\ell}$ lie on compact subsets of $\R$. We have
\begin{align*}
\zeta_1(\hat{\sigma}_1) &= w_c + \frac{c_4}{(\D_{k_1,r^{*}}t\, T)^{1/3}} \, (\mathbold{i} \hat{\sigma}_1 + d_1) + C_{q,L}T^{-2/3}.\\
\zeta_2(\hat{\sigma}_2) &= w_c + \frac{c_4}{(\D_{s^{*},k_2}t\, T)^{1/3}} \, (\mathbold{i} \hat{\sigma}_2 + d_2) + C_{q,L}T^{-2/3}.\\
z_k(\sigma_k) &= w_c + \frac{c_4}{(\D_k t \, T)^{1/3}} \, (-\mathbold{i} \sigma_k + D_k) + C_{q,L}T^{-2/3}.
\end{align*}
Let us write $z_k' = (-\mathbold{i} \sigma_k + D_k)/\D_k t$, $\zeta'_1 = (\mathbold{i} \hat{\sigma}_1 + d_1)/(\D_{k_1,r^{*}}t)$
and $\zeta'_2 = (\mathbold{i} \hat{\sigma}_2 + d_2)/(\D_{s^{*},k_2}t)$.
With the new variables, in the large $T$ limit, the contour $\gamma_{\tau_{\ell}}$ becomes the vertical contour
$\Gamma_{-d_{\ell}}$ intersecting the real axis at $-d_{\ell}$ (recall $\zeta'_{\ell}$ now remains bounded).
The contour $\gamma_{R_k}(1)$ becomes the vertical contour $\Gamma_{D_k}$ oriented downward.
It is downward because $\gamma_{R_k}(1)$ crosses the real axis at the point $1-R_k$ (which is the one near $w_c$)
in the downward direction. If we re-orient the contours upward then we gain a factor of $(-1)^{k_2-k_1}$.

We see from Lemma \ref{lem:Glimit} that
\begin{align*}
G\big(z_k \vt \D_k(n,m,a)\big) &\longrightarrow \G \big(\D_k t \cdot z'_k \vt 1, \D_k(x,\xi)) = \G(z'_k \vt \D_k(t,x,\xi)\big). \\
G\big (\zeta_1 \vt i-n_{k_1}, \D_{k_1,r^{*}}(m,a)\big) & \longrightarrow
\G \big (\D_{k_1,r^{*}}t \cdot \zeta'_1 \vt 1, \D_{k_1,r^{*}} x, \, \D_{k_1,r^{*}}\xi - (\D_{k_1,r^{*}} t)^{-1/3}u \big)\\
&= \G \big (\zeta'_1 \vt \D_{k_1,r^{*}} (t, x,\xi) \big) \, e^{\zeta'_1 u}.\\
G\big (\zeta_2 \vt n_{k_2}-j+1, \D_{s^{*},k_2}(m,a)\big) & \longrightarrow
\G \big (\D_{s^{*},k_2}t \cdot \zeta'_2 \vt 1, \D_{s^{*},k_2} x,\, \D_{s^{*},k_2}\xi + (\D_{s^{*},k_2} t)^{-1/3}v\big)\\
&= \G \big (\zeta'_1 \vt \D_{s^{*},k_2} (t, x,\xi)\big) \, e^{-\zeta'_2 v}.
\end{align*}
These limits are uniformly so in $\zeta_{\ell}$ and $z_k$, as well as in $u$ and $v$, because
these variables are now restricted to compact subsets of their domains. We also have the following.
\begin{align*}
a) \, & \prod_{k_1 < k < k_2} (z_k - z_{k+1}) = (c_4)^{k_2-k_1-1} (T^{-1/3})^{k_2-k_1-1} \prod_{k_1 < k < k_2} (z'_k -z'_{k+1})
\,+ \,C_{q,L} (T^{-1/3})^{k_2-k_1}.\\
b) \, & \prod_{k_1 < k \leq k_2} d z_k \cdot \nu_T = c_0 (c_4)^{k_2-k_1} (T^{-1/3})^{k_2-k_1-1} \prod_{k_1 < k \leq k_2} d z'_k
\,+ \,C_{q,L} (T^{-1/3})^{k_2-k_1}.\\
c) \, & (z_k - \zeta_{\ell})^{-1} d\zeta_{\ell} = (z'_k - \zeta'_{\ell})^{-1} d \zeta'_{\ell} \,+\, C_{q,L}T^{-1/3};\;\; (k,\ell) = (k_1+1,1) \;\text{or}\; (k_2,2).
\end{align*}

Next, we have that $c_0 c_4 = 1-\sqrt{q} = w_c$, which is a factor we obtain from the ratio of the second product above to the first's.
This cancels the factor $1/w_c$ in $F_T(r,u;s,v)$. Also, as $T \to \infty$, the term $(\frac{1-\zeta_1}{1-z_1})^{\ind{k_1=0}} \to 1$
and the conjugation factor $c(r,i;s,j) \to c(r,u;s,v) = e^{\mu(v-u)}$ by \eqref{rescaleconj}.

Putting all this together we see that the limit of the kernel $F_T(r,u;s,v)$ is the kernel
$(-1)^{k_2-k_1} \times$ $F^{\veps}[{\scriptstyle k_1,k_2 \vt (k_1,k_2]}](r,u;s,v)$, the latter from part (3) of Definition \ref{def:F}.
This proves part (1) of Proposition \ref{prop:Alimit}. This same argument will be used with minor changes
to show goodness and convergence of all the other matrices.\qed

\paragraph{\textbf{Proof of Lemma \ref{lem:LcdB}}}
First we will prove the decomposition of $L^{\veps}[{\scriptstyle k_1,k_2 \vt (k_1,k_2]}]\cdot B$ given in the lemma.
We keep to the notation there. Using Lemma \ref{lem:multiply} we find that
$$L^{\veps}[{\scriptstyle k_1, k_2 \vt (k_1, k_2]}] \cdot B = \sum_{k_3=0}^p \big (1 + \Theta(k_3 \vt s)\big) \big [ \hat{L}_{k_3} - \sm \hat{L}_{k_3} \big ].$$
\begin{align*}
& \hat{L}_{k_3} (r,i;s,j) = \ind{k_1 < r^{*}, \, s < k_3^{*} < k_2} \, c(r,i;s,j)\, \times \\
& \qquad \frac{1}{w_c} \oint\limits_{\gamma_{\tau_1}} d\zeta_1 \oint\limits_{ \gamma_{\tau_2}} d\zeta_2 \oint\limits_{\gamma_{\tau_3}} d\zeta_3
\oint\limits_{\vec{\gamma}_{R^{\veps}}} dz_{k_1+1} \cdots dz_{k_2} \, \left ( \frac{1-\zeta_1}{1-z_1} \right)^{\ind{k_1=0}} \times \\
& \frac{\prod_{k_1 < k \leq k_2} G \big(z_k \vt \D_k (n,m,a) \big) \prod_{k_1 < k < k_2}(z_k - z_{k+1})^{-1} \,
(z_{k_1+1}-\zeta_1)^{-1} (z_{k_2}-\zeta_2)^{-1} (\zeta_2-\zeta_3)^{-1}}
{G \big(\zeta_1 \vt i-n_{k_1}, \D_{k_1, r^{*}}(m,a)\big) \,G \big(\zeta_2 \vt n_{k_2}-n_{k_3}, \D_{k_3^{*},k_2}(m,a)\big)
\,G \big(\zeta_3 \vt n_{k_3}-j+1, \D_{s, k_3^{*}}(m,a)\big )}.
\end{align*}
The matrix $\sm \hat{L}_{k_3}$ looks the same as $\hat{L}_{k_3}$ with the difference being that $n_{k_3}$ is changed to $n_{k_3-1}$
in the two $G$-functions corresponding to variables $\zeta_2$ and $\zeta_3$.

The matrix $\hat{L}_{k_3}$ looks the same as $L^{\veps}[{\scriptstyle k_1,k_2,k_3 \vt (k_1,k_2]}]$ except that
$k_3^{*}$ appears instead of $k_3$ in $\ind{s < k_3^{*} < k_2}$, $\D_{k_3^{*},k_2}(m,a)$ and $\D_{s, k_3^{*}}(m,a)$.
Now $k_3^{*} = k_3$ if $k_3 < p$. An exception occurs if $k_3 = k_2 = p$. In this case $n_{k_2}-n_{k_3} = 0$,
so there is no pole at $\zeta_2 = 0$ in the integrand. The $\zeta_2$-contour is the innermost one since $\tau_2 < \tau_3$,
and it can be contracted to zero. So we may assume $k_3 < p$, and then replace $k_3^{*}$ with $k_3$ in the above.
This results in $L^{\veps}[{\scriptstyle k_1,k_2,k_3 \vt (k_1,k_2]}]$.

Now consider $\sm \hat{L}_{k_3}$. It will also equal $(\sm L)^{\veps}[{\scriptstyle k_1,k_2,k_3 \vt (k_1,k_2]}]$ unless
$k_3 = k_2 = p$. In the latter case, since $k_3^{*} = p-1$, the matrix is $L^{\veps}[{\scriptstyle k_1,p,p-1 \vt (k_1,k_2]}]$.
Accounting for this case we get the representation of $L^{\veps}[{\scriptstyle k_1,k_2 \vt (k_1,k_2]}]\cdot B$ given by the lemma.

Next we prove that $L^{\veps}[{\scriptstyle k_1,k_2,k_3 \vt (k_1,k_2]}]$, which we simply write $L$, is good.
Fix $k_1,k_2,k_3$ and an $(r,s)$-block such that $k_1 < r^{*}$ and $s < k_3 < k_2$. The argument is the same
as the one for goodness of $L^{\veps}[{\scriptstyle k_1,k_2 \vt (k_1,k_2]}]$ since these matrices have the
same structure. The variable $\zeta_3$ now has the same role as the variable $\zeta_2$ did for $L^{\veps}[{\scriptstyle k_1,k_2 \vt (k_1,k_2]}]$,
i.e., it carries the $j$-index. The difference now is that $\zeta_2$ appears in $(\zeta_2-\zeta_3)^{-1}/G(\zeta_2 \vt \D_{k_3,k_2}(n,m,a))$.

We choose $\zeta_2$ to lie on the contour $\gamma_{\tau_2} = w_0(\sigma_2, d_2)$ with $K \coloneqq \D_{k_3,k_2} t\, T$.
The number $d_2$ is to be chosen so that we have the estimate \eqref{Gest0} from Lemma \ref{lem:Gest}, i.e.,
$$\big |G\big (\zeta_2(\sigma_2) \vt \D_{k_3,k_2}(n,m,a)\big) \big|^{-1} \leq C_3 e^{-C_4\sigma_2^2}.$$
As before, $\zeta_3$ is chosen to lie on $\gamma_{\tau_3} = w_0(\sigma_3, d(-v))$ so that we have the estimate
$$\big |G\big (\zeta_3(\sigma_3) \vt \D_{s, k_3} n - \nu_T v, \D_{s,k_3}(m,a)\big) \big|^{-1} \leq
C_3 e^{-C_4 \sigma_3^2 + \Psi \big (-v/(\D_{s,k_3} t)^{1/3} \big)}.$$
We have $|\zeta_2 - \zeta_3|^{-1} \leq C_{q,L} T^{1/3}$ uniformly over the contours, and also $| d\zeta_2/ d\sigma_2| \leq C_{q,L} T^{-1/3}$.

Due to the term $(\zeta_2 - \zeta_3)^{-1}$ we have to ensure that the contours are chosen so that they remain ordered, i.e.,
$\tau_2 < \tau_3$. This means we want $d(-v) < (d_2-1) \cdot (\D_{s,k_3} t / \D_{k_3,k_2} t)^{1/3} \leq C_{q,L} d_2$, say.
Since the column block $s < p$, we have $v \leq 0$, and both $d_2$ and $d(-v)$ can be chosen from
intervals of order $T^{1/3}$ in length. So we can order the contours.

Using the estimates above and arguing as in the proof of goodness of $L^{\veps}[{\scriptstyle k_1,k_2 \vt (k_1,k_2]}]$
we find that $L$ is good as well. Specifically, if $F_T$ is the re-scaled kernel of $L$ according to \eqref{Fredholmembed}, then
$$|F_T(r,u;s,v)| \leq C_{q,L} e^{-\mu u + \Psi \big(u / (\D_{k_1,r^{*}} t)^{1/3} \big)}\cdot e^{\mu v + \Psi \big(-v/ (\D_{s,k_3}t)^{1/3} \big)}.$$
This bound certifies goodness.

Now we argue that $L$ is convergent to $(-1)^{k_2-k_1} F^{\veps}[{\scriptstyle k_1,k_2,k_3 \vt (k_1,k_2]}]$.
This is the same as the earlier proof of convergence of $L^{\veps}[{\scriptstyle k_1,k_2 \vt (k_1,k_2]}]$.
In the KPZ scaling limit the function $G(\zeta_2 \vt \D_{k_3,k_2}(n,m,a))$ converges to $\G(\zeta'_2 \vt \D_k(t,x,\xi))$.
Then the KPZ re-scaled kernel is seen to converge as before.

Finally, we prove that the matrices $\sm L^{\veps}[{\scriptstyle k_1, k_2, k_3 \vt (k_1,k_2]}]$ are small.
Let us fix $k_1,k_2,k_3$, and consider a block $(r,s)$ such that $k_1 < r^{*}$ and $s < k_3 < k_2$.
We have that
\begin{align*}
& \sm L^{\veps}[{\scriptstyle k_1, k_2, k_3 \vt (k_1,k_2]}]\, (r,i;s,j) = \frac{c(r,i;s,j)}{w_c} \intz{1} \intz{2} \intz{3}\, f(\zeta_1,\zeta_2) \,\times \\
& \frac{(\zeta_2-\zeta_3)^{-1}}{G \big(\zeta_1 \vt i-n_{k_1}, \D_{k_1,r^{*}}(m,a)\big) G \big(\zeta_2 \vt n_{k_2} - n_{k_3-1}, \D_{k_3,k_2}(m,a)\big)
G \big(\zeta_3 \vt n_{k_3-1}-j+1, \D_{s,k_3}(m,a)\big)}.
\end{align*}
The function $f(\zeta_1,\zeta_2)$ is from \eqref{Alimit} and satisfies the bound \eqref{fbound}.
The contours are ordered such that $\tau_2 < \tau_3$.

For convenience, introduce
$$\D_1 = (\D_{k_1,r^{*}}t)^{1/3}, \; \D_2 = (\D_{k_3,k_2}t)^{1/3}, \; \D_3 = (\D_{s,k_3}t)^{1/3}, \;
\lambda = \frac{\D_{k_3} n}{\nu_T} = \frac{\D_{k_3}t}{c_0}\,T^{2/3} + C_{q,L}T^{1/3}.$$
We find, ignoring rounding, that
\begin{align*}
(i-n_{k_1}, \D_{k_1,r^{*}}m, \D_{k_1,r^{*}} a) &= \D_{k_1,r^{*}}(n,m,a) + c_0 \big(u/\D_1,0,0\big) \cdot (\D_{k_1,r^{*}}t\, T)^{1/3},\\
(n_{k_2} - n_{k_3-1}, \D_{k_3,k_2}m, \D_{k_3,k_2}a) &= \D_{k_3,k_2}(n,m,a) + c_0 \big (\lambda/\D_2,0,0\big) \cdot (\D_{k_3,k_2} t\,T)^{1/3},\\
(n_{k_3-1}-j, \D_{s,k_3}m, \D_{s,k_3}a) &= \D_{s,k_3}(n,m,a) + c_0 \big(-(v+\lambda)/\D_3,0,0 \big) \cdot (\D_{s,k_3}t\, T)^{1/3}.
\end{align*}
Note $n_{k_3-1}-j \geq 0$ because $j \in (n_{s-1},n_s]$ and $s < k_3$.

Now we choose contours for the variables. We choose $\gamma_{\tau_1}$ to be $w_0(\sigma_1, d(u))$ with $K \coloneqq (\D_1)^3 T$
such that we have the estimate \eqref{Gest0} , namely,
$$|G(\zeta_1(\sigma_1) \vt i-n_{k_1}, \D_{k_1,r^{*}}(m,a))|^{-1} \leq C_3e^{-C_4 \sigma_1^2 + \Psi(u/\D_1)}.$$
Next we choose $\gamma_{\tau_2}$ to be $w_0(\sigma_2, d(\lambda))$ with $K \coloneqq (\D_2)^3 T$ such that we have
$$|G(\zeta_2(\sigma_2) \vt n_{k_2} - n_{k_3-1}, \D_{k_3,k_2}(m,a))|^{-1} \leq C_3e^{-C_4 \sigma_2^2 + \Psi(\lambda/\D_2)}.$$
Finally, $\gamma_{\tau_3}$ is chosen to be $w_0(\sigma_3, d(-v-\lambda))$ with $K \coloneqq (\D_3)^3 T$ such that
$$|G(\zeta_3(\sigma_3) \vt n_{k_3-1}-j+1, \D_{s,k_3}(m,a)|^{-1} \leq C_3 e^{-C_4 \sigma_3^2 + \Psi \big(-(v+\lambda)/\D_3 \big)}.$$

We have to maintain the ordering $\tau_2 < \tau_3$ due to the term $(\zeta_2-\zeta_3)^{-1}$ in the integrand.
So we should have $d(-v-\lambda)/\D_3 < (d(\lambda)-1)/\D_2$, say. We know that $d(\lambda)/\D_2$
may belong to the interval $[C_1/\D_2, C_2 T^{1/3}]$ if $T$ is sufficiently large in terms of $q$ and $L$.
If $v+\lambda \leq 0$ then $d(-(v+\lambda))/\D_3$ may belong to $[C_1/\D_3, C_2 T^{1/3}]$, and we may
order the contours as we wish.

On the other hand, if $v + \lambda > 0$ then $d(-v-\lambda)/\D_3$ may belong
to the interval
$$[C_1/\D_3 + \delta (v+\lambda)^{1/2}/\D_3^{3/2}, C_2 T^{1/3}].$$
Since $d(\lambda)/\D_2$ belongs to $[C_1/\D_2, C_2 T^{1/3}]$, we can ensure that
$d(-v-\lambda)/\D_3 < (d(\lambda)-1)/\D_2$ for all sufficiently large $T$ so long as
$\delta (v+\lambda)^{1/2} < C_2 \Delta_3^{3/2} T^{1/3} - C_1 \D_3^{1/2}$. Now observe that $v \leq 0$
because index $j$ belongs to column block $s$ with $s < p$ due to $s < k_3 < k_2$.
Therefore, $(v+\lambda)^{1/2} \leq \lambda^{1/2} = (\D_{k_3} t/c_0)^{1/2} \, T^{1/3} + C_{q,L}T^{1/6}$.
So we are fine if $\delta < C_2 c_0^{1/2} \D_3^{3/2} (\D_{k_3}t)^{-1/2}$.
We note that $\D_3^{3/2} \geq \min_{k} \, \{(\D_k t)^{1/2}\}$ and $\D_{k_3}t \leq t_p$.
So $\delta$ satisfies the required bound as it is chosen according to \eqref{mudelta}.

Let $F_T(r,u;s,v)$ be our matrix re-scaled according to \eqref{Fredholmembed}.
Recall $|f(\zeta_1,\zeta_2)| \leq C_{q,L}T^{1/3}$ according to \eqref{fbound}. Then,
using the above bounds for the $G$-functions and arguing as in the proof of goodness
of $L^{\veps}[{\scriptstyle k_1,k_2 \vt (k_1,k_2]}]$, we find that
\begin{align*}
|F_T(r,u;s,v)| &\leq C_{q,L} \, e^{\mu(v-u)} e^{\Psi(u/\D_1) + \Psi(\lambda/\D_2) + \Psi (-(v+\lambda)/\D_3)} \\
&= \underbrace{C_{q,L} \, e^{-\mu \lambda + \Psi(\lambda/\D_2) }}_{\eta_T} \cdot
e^{-\mu u + \Psi(u/\D_1)} \cdot e^{\mu (v+\lambda) + \Psi (-(v+\lambda)/\D_3) }.
\end{align*}
Note that every $\D_{\ell} \geq \min_k \, \{(\D_k t)^{1/3}\}$ and $\mu$ satisfies \eqref{mudelta}.
Therefore, from \eqref{psiproperty}, we have that $\eta_T \to 0$ as $T \to \infty$ due to $\lambda \to \infty$.
We also see that the functions $e^{-\mu u + \Psi(u/\D_1)}$ and $e^{\mu (v+\lambda) + \Psi (-(v+\lambda)/\D_3) }$
are bounded and integrable over the reals. This certifies smallness of $\sm L^{\veps}[{\scriptstyle k_1,k_2,k_3 \vt (k_1,k_2]}]$.
\qed

\subsubsection{Proof of claims regarding $L^{\veps}[{\scriptstyle k_1 \vt (k_1, k_2]}]$}
We will first prove $L^{\veps}[{\scriptstyle k_1 \vt (k_1, k_2]}]$ is good and convergent as stated by Proposition \ref{prop:Alimit}.
Then we will prove Lemma \ref{lem:JcdB}.

\paragraph{\textbf{Proof that it is good and convergent}}
Fix $\veps$ and $k_1 < k_2$. Note $L^{\veps}[{\scriptstyle k_1 \vt (k_1, k_2]}]$ has non-zero blocks
only of column block $s = k_2 < p$. Consider the block $(r,s)$ such that $k_1 < r^{*}$ and $s= k_2 < p$.
On this block the matrix has form
\begin{align} \label{Jlimit}
&L^{\veps}[{\scriptstyle k_1 \vt (k_1, k_2]}](r,i;s,j) = c(r,i;s,j) \frac{1}{w_c} \intz{1} \oint\limits_{\gamma{R_{k_2}}(1)} dz_{k_2}\,
f(\zeta_1, z_{k_2})\, \times \\ \nonumber
& \qquad \times \, \frac{G\big (z_{k_2} \vt j-n_{k_2-1} -1, \D_{k_2}(m,a)\big)}{G \big(\zeta_1 \vt i-n_{k_1}, \D_{k_1, r^{*}}(m,a) \big)}.\\ \nonumber
&f(\zeta_1, z_{k_2}) = \oint\limits_{\gamma_{R_{k_1+1}}(1)} dz_{k_1+1}\, \cdots \oint\limits_{\gamma_{R_{k_2-1}}(1)} d z_{k_2-1}\,
\frac{\prod_{k_1 < k < k_2} G(z_k \vt \D_k(n,m,a)) \, \left ( \frac{1-\zeta_1}{1-z_1}\right)^{\ind{k_1=0}}}
{\prod_{k_1 < k < k_2} (z_k - z_{k+1})\, (z_{k_1+1} - \zeta_1)}.
\end{align}
The contours around 1 are ordered according to $\veps$.

Under KPZ scaling the indices $i$ and $j$ are re-scaled as $i = n_{r^{*}} + \nu_T u$ and $j = n_{k_2} + \nu_T v$, where
we ignore rounding. Note that $v \leq 0$ since $s = k_2 < p$. We have that
\begin{align*}
(i-n_{k_1}, \D_{k_1, r^{*}}(m,a)) &= \D_{k_1, r^{*}} (n,m,a) + c_0 \big (u/(\D_{k_1,r^{*}} t)^{1/3},0,0\big ) \cdot(\D_{k_1,r^{*}} t\, T)^{1/3},\\
(j-n_{k_2-1}, \D_{k_2}(m,a)) & = \D_{k_2}(n,m,a) + c_0 \big ( v / (\D_{k_2} t)^{1/3},0,0 \big) \cdot (\D_{k_2} t\, T)^{1/3}.
\end{align*}
The triple $(i-n_{k_1}, \D_{k_1, r^{*}}(m,a))$ has the form \eqref{inmarescale} and $(j-n_{k_2-1}, \D_{k_2}(m,a))$
has the form \eqref{jnmarescale}.

Now we choose contours for the variables. We choose $\gamma_{\tau_1}$ to be $w_0(\hat{\sigma}_1, d(u))$ with $K \coloneqq \D_{k_1,r^{*}} t \, T$.
Then with an appropriate choice of $d(u)$ from Lemma \ref{lem:Gest}, we have the estimate \eqref{Gest0}:
$$ |G(\zeta_1(\hat{\sigma}_1) \vt (i-n_{k_1}, \D_{k_1, r^{*}}(m,a)) )|^{-1} \leq C_3 e^{-C_4 \hat{\sigma}_1^2 + \Psi \big(u/(\D_{k_1,r^{*}} t )^{1/3}\big)}.$$
Next we choose $\gamma_{R_{k_2}}(1)$, the contour of $z_{k_2}$, to be $w_1(\sigma_{k_2}, D(v))$ with $K \coloneqq \D_{k_2}t\, T$
so that we get the estimate \eqref{Gest1}:
$$ |G(z_{k_2}(\sigma_{k_2}) \vt (j-n_{k_2-1}-1, \D_{k_2}(m,a)) )| \leq C_3 e^{-C_4 \sigma_{k_2}^2 + \Psi \big(v/(\D_{k_2} t)^{1/3}\big)}.$$

For $k_1 < k < k_2$, we choose the contour $\gamma_{R_k}(1)$ to be $w_1(\sigma_k, D_k)$ with $K \coloneqq \D_kt\, T$
such that we have the estimate \eqref{Gest1} from Lemma \ref{lem:Gest}:
$$|G(z_k(\sigma_k) \vt \D_k(n,m,a))| \leq C_3 e^{-C_3 \sigma_k^2}.$$
The parameter $D_k$ may be chosen from the range $[C_1, C_2 (\D_k t\, T)^{1/3}]$.
We have seen that we can choose these $D_k$s such that they are ordered according to $\eps$.
The parameter $D_{k_2-1}$ has to be ordered with respect to $D(v)$. We can first chose these two and then
choose the remaining $D_k$s accordingly.

To see that $D_{k_2-1}$ and $D(v)$ can be ordered, set $\D_1 = (\D_{k_2-1})^{1/3}$ and $\D_2 = (\D_{k_2}t)^{1/3}$.
Since $v \leq 0$, $D(v)$ may be chosen such that $D(v)/\D_2$ belongs to the range
$[C_1/\D_2 + \delta (v)_{-}^{1/2}/\D_2^{3/2}, C_2 T^{1/3}]$.
The number $D_{k_2-1}/\D_1$ may belong to $[C_1/\D_1, C_2T^{1/3}]$. If $\eps_{k_2-1} = 2$
then we require $D_{k_2-1}/\D_1 < (D(v)-1)/\D_2$, say, and this is possible
within the aforementioned ranges. Suppose $\eps_{k_2-1} = 1$. Then we are fine so long
as $\delta (v)_{-}^{1/2} < C_2 \D_2^{3/2}\, T^{1/3} - C_1 \D_2^{1/2}$. Now since $j \in (n_{k_2-1}, n_{k_2}]$,
we have $(v)_{-} \leq \D_{k_2}n /\nu_T \leq (\D_{k_2} t / c_0)\, T^{2/3} + C_{q,L} T^{1/3}$.
So $(v)_{-}^{1/2} \leq (\D_{k_2}t / c_0)^{1/2} \, T^{1/3} + C_{q,L} T^{1/6}$.
Therefore, it suffices to have $\delta < C_2 \D_2^{3/2} (\D_{k_2}t / c_0)^{-1/2}$, which is the case
since $\delta$ satisfies \eqref{mudelta}.

Let $F_T(r,u;s,v)$ be the re-scaling of our matrix by \eqref{Fredholmembed}. Having chosen the
contours, the estimates above imply the following, if we argue as in the proof of goodness of $L^{\veps}[{\scriptstyle k_1,k_2 \vt (k_1,k_2]}]$.
\begin{align*}
& |F_T(r,u;s,v)| \leq C_{q,L} \, \nu_T \, (T^{-\frac{1}{3}})^{k_2-k_1+1} \int_{\R^{k_2-k_1+1}} d \hat{\sigma}_1 \cdots \sigma_{k_2}\,
e^{-C_4 (\hat{\sigma}_1^2 + \cdots + \sigma_{k_2}^2)} (T^{\frac{1}{3}})^{k_2-k_1} \times \\
& \qquad \qquad \ind{v \leq 0} e^{-\mu u + \Psi \big(u / (\D_{k_1,r^{*}} t)^{1/3} \big)} \, e^{\mu v + \Psi \big (v / (\D_{k_2}t)^{1/3} \big)} \\
& \qquad \qquad \leq C_{q,L}\,  e^{-\mu u + \Psi \big(u / (\D_{k_1,r^{*}} t)^{1/3} \big)} \cdot \ind{v \leq 0} e^{\mu v + \Psi \big (v / (\D_{k_2}t)^{1/3} \big)}.
\end{align*}
Both $\D_{k_1,r^{*}} t$ and $\D_{k_2}t $ are at least $\min_k \, \{\D_k t \}$ and $\mu$ satisfies $\eqref{mudelta}$.
So the functions of $u$ and $v$ above are bounded and integreble by \eqref{psiproperty}, and the matrix is good.

For the proof of convergence of $L^{\veps}[{\scriptstyle k_1 \vt (k_1,k_2]}]$ to $(-1)^{k_2-k_1} F^{\veps}[{\scriptstyle k_1 \vt (k_1,k_2]}]$
we can repeat the argument for convergence of $L^{\veps}[{\scriptstyle k_1,k_2 \vt (k_1,k_2]}]$. \qed

\paragraph{\textbf{Proof of Lemma \ref{lem:JcdB}}}
Since $L^{\veps}[{\scriptstyle k_1 \vt (k_1,k_2]}]$ has non-zero blocks only on column block $k_2$,
$$L^{\veps}[{\scriptstyle k_1 \vt (k_1,k_2]}]  \cdot B (r,i;s,j) = (1+ \Theta(k_2 \vt s))\, \sum_{\ell \in (n_{k_2-1}, n_{k_2}]}
L^{\veps}[{\scriptstyle k_1 \vt (k_1,k_2]}](r,i;k_2,\ell) \, B(k_2,\ell, s,j).$$
We can compute this using Lemma \ref{lem:multiply} as follows.
\begin{align*}
&L^{\veps}[{\scriptstyle k_1 \vt (k_1,k_2]}] \cdot B (r,i;s,j) =(1 + \Theta(k_2 \vt s)) \, c(r,i;s,j) \, \times \\
&\ind{k_1 < r^{*}, s < k_2 < p} \frac{1}{w_c^2} \intz{1} \oint\limits_{\gamma_{R_{k_2}}(1)} \intz{2}\,
\frac{f(\zeta_1, z_{k_2})}{G\big(\zeta_1 \vt i-n_{k_1}, \D_{k_1,r^{*}}(m,a)\big)} \, \times\\
& \sum_{\ell \in (n_{k_2-1}, n_{k_2}]} \frac{1}{G\big(z_{k_2} \vt n_{k_2-1}-\ell+1, - \D_{k_2}(m,a) \big) \, G \big(\zeta_2 \vt \ell-j+1, \D_{s,k_2}(m,a) \big )}\\
& = (1 + \Theta(k_2 \vt s)) \, c(r,i;s,j)\, \ind{k_1 < r^{*}, s < k_2 < p}\frac{1}{w_c} \intz{1} \intz{2} \oint\limits_{\gamma_{R_{k_2}}(1)} \\
&\Big [ \, \frac{f(\zeta_1, z_{k_2})(z_{k_2}-\zeta_1)^{-1} G\big(z_{k_2} \vt \D_{k_2}(n,m,a)\big)}
{G\big(\zeta_1 \vt i-n_{k_1}, \D_{k_1,r^{*}}(m,a)\big)\, G \big( \zeta_2 \vt n_{k_2}-j+1, \D_{s,k_2} (m,a) \big)} \, - \\
&\frac{f(\zeta_1, z_{k_2})(z_{k_2}-\zeta_1)^{-1} G\big(z_{k_2} \vt 0, \D_{k_2}(m,a)\big)}
{G \big(\zeta_1 \vt i-n_{k_1}, \D_{k_1,r^{*}}(m,a)\big) \, G \big( \zeta_2 \vt n_{k_2-1}-j+1, \D_{s,k_2} (m,a) \big)} \, \Big ] \\
& = (1 + \Theta(k_2 \vt s)) \Big [ L^{\veps}[{\scriptstyle k_1,k_2, \vt (k_1,k_2]}] - \sm L^{\veps}[{\scriptstyle k_1,k_2, \vt (k_1,k_2]}] \Big].
\end{align*}
The function $f$ is from \eqref{Jlimit}. We observed above that
$f(\zeta_1, z_d) (z_{k_2}-\zeta_2)^{-1} G(z_{k_2} \vt \D_{k_2}(n,m,a))$ divided by
$G(\zeta_1 \vt \cdots) \cdot G(\zeta_2 \vt \cdots)$ makes the integrand of
$L^{\veps}[{\scriptstyle k_1,k_2, \vt (k_1,k_2]}]$, as is required.

To complete the proof we show that the matrix $\sm L^{\veps}[{\scriptstyle k_1,k_2, \vt (k_1,k_2]}]$ is small.
The argument is analogous to the prior proof of smallness of $\sm L^{\veps}[{\scriptstyle k_1,k_2, k_3 \vt (k_1,k_2]}]$.
The role of variables $\zeta_1, \zeta_2, \zeta_3$ from there is now given to $\zeta_1, z_{k_2}, \zeta_2$, respectively.
The parameter $\lambda = \D_{k_2} n / \nu_T = (\D_{k_2}t / c_0) T^{2/3} +C_{q,L}T^{1/3}$. Since the
$\zeta_2$-contour lies around 0 and the $z_{k_2}$-contour around 1, there is no ordering between them.
We need the $z_{k_2}$-contour to be ordered with respect to the $z_{k_2-1}$-contour according to $\eps_{k_2-1}$,
and for this we may repeat the prior argument for the goodness of $L^{\veps}[{\scriptstyle k_1, \vt (k_1,k_2]}]$.

After choosing contours as before we get the following estimates for the $G$-functions.
\begin{align*}
& |G \big (\zeta_1(\sigma_1) \vt \D_{k_1,r^{*}}n + \nu_T u, \D_{k_1,r^{*}}(m,a)\big )|^{-1} \leq C_3 e^{-C_4 \sigma_1^2 + \Psi(u/\D_1)} \\
& |G \big(\zeta_2 \vt \D_{s, k_2}n - \nu_T (v+\lambda), \D_{s, k_2}(m,a) \big)|^{-1} \leq C_3 e^{-C_4 \sigma_2^2 + \Psi \big( -(v+\lambda)/\D_2 \big)}\\
& |G \big (z_{k_2}(\sigma_3) \vt \D_{k_2}n -\nu_T \lambda, \D_{k_2}(m,a) \big)| \leq C_3 e^{-C_4 \sigma_3^2 + \Psi(-\lambda/\D_3)}.
\end{align*}
Here, $\D_1 = (\D_{k_1,r^{*}} t)^{1/3},\D_2 = (\D_{s,k_2} t)^{1/3}$ and $\D_3 = (\D_{k_2} t)^{1/3}$.
	
Using these estimates, and arguing as before, we find the following estimate for the re-scaled kernel $F_T$
of $\sm L^{\veps}[{\scriptstyle k_1,k_2, \vt (k_1,k_2]}]$.
$$ |F_T(r,u;s,v)| \leq \underbrace{C_{q,L} e^{-\mu \lambda + \Psi(-\lambda/\D_3)}}_{\eta_T} \cdot
e^{-\mu u + \Psi(u/\D_1)} \cdot e^{\mu(v+\lambda) + \Psi(-(v+\lambda)/\D_2)}.$$
We observe that $\eta_T = C_{q,L} e^{-\mu \lambda - \mu_1 (\lambda/\D_3)^{3/2}} \to 0$, and the two functions
of $u$ and $v$ are bounded and integrable over $\R$ due to \eqref{psiproperty}.
So the matrix is small.\qed

\subsubsection{Proof of claims regarding $L[{\scriptstyle k,k \vt \emptyset}]$}
First we will prove that $L[{\scriptstyle k,k \vt \emptyset}]$ is good and convergent to $F[{\scriptstyle k,k \vt \emptyset}]$.
Then we will prove Lemma \ref{lem:LkB} by first showing that $L[{\scriptstyle k_1, k_1, k_2 \vt \emptyset}]$ is good
and convergent, and then that $\sm L[{\scriptstyle k_1, k_1, k_2 \vt \emptyset}]$ is small.

\paragraph{\textbf{Proof that $L[{\scriptstyle k,k \vt \emptyset}]$ is good and convergent}}
The matrix $L[{\scriptstyle k,k \vt \emptyset}]$ has non-zero blocks $(r,s)$ only if $s < k < r^{*}$.
Let us fix such $k,r$ and $s$, so then $L[{\scriptstyle k,k \vt \emptyset}](r,i;s,j)$ equals
\begin{equation*}
c(r,i;s,j) \, \frac{1}{w_c} \intz{1} \intz{2}\,
\frac{(\zeta_1 - \zeta_2)^{-1}}{G \big ( \zeta_1 \vt i-n_k, \D_{k,r^{*}}(m,a) \big)\, G \big ( \zeta_2 \vt n_k-j+1, \D_{s,k}(m,a) \big)}.
\end{equation*}

Ignoring rounding, the indices are re-scaled according to $i = n_{r^{*}} + \nu_T u$ and $j = n_s + \nu_T v$.
Note that $v \leq 0$ since $s < p$. In this case the KPZ re-scaling of $(i-n_k, \D_{k,r^{*}}(m,a))$ looks like
\eqref{inmarescale}, and that of $(n_k-j, \D_{s,k}(m,a))$ like \eqref{jnmarescale}. Set $\D_1 = (\D_{k,r^{*}} t)^{1/3}$
and $\D_2 = (\D_{s,k}t)^{1/3}$.

For establishing goodness, contours are chosen so that the $\zeta_1$-contour is $w_0(\sigma_1, d(u))$
with $K \coloneqq \D_{k,r^{*}} t\, T$. The $\zeta_2$-contour is $w_0(\sigma_2, d(-v))$ with $K \coloneqq \D_{s,k}t\, T$.
With appropriate choices for $d(u)$ and $d(-v)$, Lemma \ref{lem:Gest} provides the estimates
\begin{align*}
& |G(\zeta_1(\sigma_1) \vt \D_{k,r^{*}} n + \nu_T u, \D_{k,r^{*}} (m,a))|^{-1} \leq C_3 \, e^{-C_4 \sigma_1^2 + \Psi(u/\D_1)},\\
& |G(\zeta_2(\sigma_2) \vt \D_{s,k}n - \nu_T v, \D_{s,k}(m,a))|^{-1} \leq C_3\,  e^{-C_4 \sigma_2^2 + \Psi(-v/\D_2)}.
\end{align*}
We need to have $\tau_2 < \tau_1$, which translates to $d(u)/\D_1 < (d(-v)-1)/\D_2$, say. Since $v \leq 0$,
the number $d(-v)/\D_2$ may be chosen from $[C_1/\D_2, C_2 T^{1/3}]$ once $T$ is large enough it terms
of $q$ and $L$. When $u \geq 0$, $d(u)/\D_1$ can be chosen from $[C_1/\D_1, C_2 T^{1/3}]$, and we can
order the contours accordingly. If $u \leq 0$ then $d(u)/\D_1$ may belong to
$[C_1/\D_1 + \delta (u)_{-}^{1/2}/\D_1^{3/2}, C_2 T^{1/3}]$. We can order the contours
so long as $\delta (u)_{-}^{1/2} < C_2 \D_1^{3/2}\, T^{1/3} - C_1 \D_1^{1/2}$. We have that
$(u)_{-} \leq (\D_{r^{*}}) t/c_0) T^{2/3} + C_{q,L}T^{1/3}$. Therefore, as before,
we are fine since $\delta$ satisfies $\eqref{mudelta}$.

Let $F_T$ be the re-scaled kernel of $L[{\scriptstyle k,k \vt \emptyset}]$ by \eqref{Fredholmembed}.
The estimates above for the $G$-functions and the same argument used to show goodness of
$L^{\veps}[{\scriptstyle k_1,k_2 \vt (k_1,k_2]}]$ implies the following bound.
\begin{equation} \label{FLk}
|F_T(r,u;s,v)| \leq C_{q,L}\, e^{-\mu u + \Psi(u/\D_1)} \cdot e^{\mu v + \Psi(-v/\D_2)}.\end{equation}
This certifies goodness of $L[{\scriptstyle k,k \vt \emptyset}]$ by \eqref{psiproperty}.

The proof of convergence to $F[{\scriptstyle k,k \vt \emptyset}]$ is same as that of
$L^{\veps}[{\scriptstyle k_1,k_2 \vt (k_1,k_2]}]$ converging to the kernel
$(-1)^{k_2-k_1} F^{\veps}[{\scriptstyle k_1,k_2 \vt (k_1,k_2]}]$. So we omit the details. \qed

\paragraph{\textbf{Proof of Lemma \ref{lem:LkB}}}
We multiply $L[{\scriptstyle k,k \vt \emptyset}]$ by $B$ using Lemma \ref{lem:multiply}.
\begin{align*}
& L[{\scriptstyle k,k \vt \emptyset}]\cdot B \, (r,i;s,j) = \sum_{k_2} (1+ \Theta(k_2 \vt s)) \, \ind{k < r^{*},\, s < k_2 < k} \, c(r,i;s,j) \, \times \\
&\frac{1}{w_c^2} \intz{1} \intz{2} \intz{3} \frac{(\zeta_1-\zeta_2)^{-1}}{G(\zeta_1 \vt i-n_k, \D_k(m,a))} \, \times \\
& \Big [ \sum_{\ell \in (n_{k_2-1}, n_{k_2}]} \frac{1}{G\big (\zeta_2 \vt n_k-\ell+1, \D_k (m,a) \big) G\big(\zeta_3 \vt \ell-j+1, \D_{s,k_2}(m,a) \big)} \Big]\\
& = \sum_{k_2} (1+ \Theta(k_2 \vt s)) \cdot \big [L[{\scriptstyle k,k,k_2 \vt \emptyset}](r,i;s,j) - (\sm L)[{\scriptstyle k,k,k_2 \vt \emptyset}](r,i;s,j) \big].
\end{align*}

Now consider $L[{\scriptstyle k_1,k_1,k_2 \vt \emptyset}]$ to see that it is good, and converges to $F[{\scriptstyle k_1,k_1,k_2 \vt \emptyset}]$.
Recall
\begin{align*}
&L[{\scriptstyle k_1,k_1,k_2 \vt \emptyset}](r,i;s,j) = \ind{k_1 < r^{*},\, s < k_2 < k_1}\ c(r,i;s,j) \frac{1}{w_c} \intz{1} \intz{2} \intz{3} \\
& \frac{(\zeta_1 - \zeta_2)^{-1} (\zeta_2-\zeta_3)^{-1}}
{G \big(\zeta_1 \vt i-n_{k_1}, \D_{k_1,r^{*}}(m,a) \big)\, G \big(\zeta_2 \vt \D_{k_2,k_1}(n,m,a) \big)\, G \big(\zeta_1 \vt n_{k_2}-j+1, \D_{s,k_2}(m,a) \big)}.
\end{align*}
This matrix has the same structure as $L[{\scriptstyle k,k \vt \emptyset}]$, and the proof of goodness and convergence is analogous.
The new terms in the integrand are $(\zeta_2-\zeta_3)^{-1}$ and $G(\zeta_2 \vt \D_{k_2,k_1}(n,m,a))$. The latter converges
to $\G(\zeta_2 \vt  \D_{k_2,k_1}(t,x,\xi))$ under KPZ re-scaling by Lemma \ref{lem:Glimit}, which leads to the limit kernel
$F[{\scriptstyle k_1,k_1,k_2 \vt \emptyset}]$. In the proof of goodness, one uses estimate \eqref{Gest0} from Lemma \ref{lem:Gest}
to derive the same bound \eqref{FLk} on the re-scaled kernel of $L[{\scriptstyle k_1,k_1,k_2 \vt \emptyset}]$.

During the estimates leading to goodness, one has to ensure that the contours are ordered appropriately.
We require that $\tau_2 < \tau_1, \tau_3$ due to the term $(\zeta_1-\zeta_2)^{-1} (\zeta_2-\zeta_3)^{-1}$.
We choose the $\zeta_2$-contour to be $w_0(\sigma_2, d_2)$ with $K \coloneqq \D_{k_2,k_2} t \,T$.
The parameter $d_2$ may be chosen from an interval with length of order $T^{1/3}$. Then, the same
argument used for ordering contours in showing goodness of $L[{\scriptstyle k,k \vt \emptyset}]$ shows
that contours can be ordered accordingly.

We are left to prove that $\sm L [{\scriptstyle k_1,k_1,k_2 \vt \emptyset}]$ is small. It is similar to proofs of smallness so far.
Let us fix $k_1, k_2$ and consider a non-zero $(r,s)$-block, so then $k_1 < r^{*}$ and $s < k_2 < k_1$. We have
\begin{align*}
& \sm L[{\scriptstyle k_1,k_1,k_2 \vt \emptyset}] (r,i;s,j) = c(r,i;s,j) \, \frac{1}{w_c} \intz{1} \intz{2} \intz{3} \\
& \frac{(\zeta_1-\zeta_2)^{-1} (\zeta_2-\zeta_3)^{-1}} {G \big ( \zeta_1 \vt i-n_{k_1}, \D_{k_1,r^{*}} (m,a)\big)\,
G \big ( \zeta_2 \vt n_{k_1} - n_{k_2-1}, \D_{k_2,k_1} (m,a)\big)\, G \big ( \zeta_3 \vt n_{k_2-1}-j+1, \D_{s,k_2} (m,a)\big)\,}.
\end{align*}
The radii satisfy $\tau_2 < \tau_1, \tau_3 < 1- \sqrt{q}$.

We have $i = n_{r^{*}} + \nu_T u$ and $j = n_{s} + \nu_T v$. Set $\lambda = \D_{k_2}n /\nu_T = (\D_{k_2}t/c_0) T^{2/3}$.
Also set $\D_1 = (\D_{k_1,r^{*}} t)^{1/3}$, $\D_2 = (\D_{k_2,k_1} t)^{1/3}$ and $\D_1 = (\D_{s,k_2} t)^{1/3}$. Then,
\begin{align*}
& (i-n_{k_1}, \D_{k_1,r^{*}} (m,a)) = (\D_{k_1,r^{*}} n + \nu_T u, \D_{k_1, r^{*}}(m,a)), \\
& (n_{k_1} - n_{k_2-1}, \D_{k_2,k_1} (m,a)) = (\D_{k_2,k_1} n + \nu_T \lambda, \D_{k_2,k_1} (m,a)), \\
& (n_{k_2-1}-j+1, \D_{s,k_2} (m,a)) = (\D_{s,k_2} n - \nu_T(v + \lambda), \D_{s,k_2}(m,a)).
\end{align*}

We choose the $\zeta_1$-contour to be $w_0(\sigma_1, d(u))$, the $\zeta_2$-contour as $w_0(\sigma_2, d(\lambda))$
and the $\zeta_3$-contour as $w_0(\sigma_3, d(-v-\lambda))$. The corresponding values of $K$ are
$\D_{k_1,r^{*}} t\, T$, $\D_{k_2,k_1} t\, T$ and $\D_{s,k_2} t\, T$, respectively. By Lemma \ref{lem:Gest}, we have the following estimates.
\begin{align*}
& |G(\zeta_1(\sigma_1) \vt \D_{k_1,r^{*}} n + \nu_T u, \D_{k_1, r^{*}}(m,a))|^{-1} \leq C_3 \, e^{-C_4 \sigma_1^2 + \Psi(u/\D_1)}, \\
& |G(\zeta_2(\sigma_2) \vt \D_{k_2,k_1} n + \nu_T \lambda, \D_{k_2,k_1} (m,a))|^{-1} \leq C_3 \, e^{-C_4 \sigma_2^2 + \Psi(\lambda/\D_2)}, \\
& |G(\zeta_3 \vt \D_{s,k_2} n - \nu_T(v + \lambda), \D_{s,k_2}(m,a))|^{-1} \leq C_3\, e^{-C_4 \sigma_3^2 + \Psi(- (v+\lambda)/\D_3)}.
\end{align*}
To ensure constraints on the radii of contours, we need $(d(\lambda)-1)/\D_2 > \max \, \{ d(u)/\D_1,\, d(-v-\lambda)/\D_3 \}$, say.
We can choose $d(\lambda)/\D_2$ from the interval $[C_1/\D_2, C_2 T^{1/3}]$. We also have $(u)_{-} \leq \D_{r^{*}}n / \nu_T$,
and the square root of the latter is of order $T^{1/3}$. Since $v \leq 0$ (due to $s < p$), $v+\lambda \leq \lambda$,
and $\lambda^{1/2}$ is of order $T^{1/3}$. Then, since $\delta$ satisfies \eqref{mudelta}, arguing as before we see that 
the $d$s can be chosen to satisfy the constraints.

Let $F_T$ be the re-scaled kernel of $\sm L[{\scriptstyle k_1,k_1,k_2 \vt \emptyset}]$ by \eqref{Fredholmembed}.
Using the estimates above and arguing as before we find the following.
\begin{align*}
|F_T(r,u;s,v)| & \leq C_{q,L}\, e^{\mu(v-u)}\, e^{\Psi(u/\D_1) + \Psi(\lambda/\D_2) + \Psi((-v-\lambda)/\D_3))} \\
& = \underbrace{C_{q,L} e^{-\mu \lambda + \Psi(\lambda/\D_2)}}_{\eta_T}
\cdot e^{-\mu u + \Psi(u/\D_1)} \cdot e^{\mu(v+\lambda) + \Psi((-v-\lambda)/\D_3)}.
\end{align*}
We observe that $\eta_T = C_{q,L} e^{(\frac{\mu_2}{\D_2}-\mu) \lambda}$ tends to zero since $\mu$ satisfies \eqref{mudelta}.
The functions of $u$ and $v$ are bounded and integrable over $\R$. So the matrix is small. \qed

\subsubsection{Proof of claims regarding $L[{\scriptstyle p \vt p}]$}
First we will prove that $L[{\scriptstyle p \vt p}]$ is good with limit $- F[{\scriptstyle p \vt p}]$, which will complete
the proof of Proposition \ref{prop:Alimit}. Then we will prove Lemma \ref{lem:LpB}.

\paragraph{\textbf{Proof that it is good and convergent}}
The argument is similar to the goodness and convergence of $L[{\scriptstyle k_1 \vt (k_1, k_2]}]$ as these matrices are alike.
The only non-zero row block of $L[{\scriptstyle p \vt p}]$ is for $r = p$ (see $L_p$ from Lemma \ref{lem:Lcd}).
On the $(p,s)$-block the indices $i,j$ are re-scaled as $i = n_{p-1} + \nu_T u$ for $0 \leq u \leq \D_p n / \nu_T$,
and $j = n_{s*} + \nu_T v$. We ignore rounding. So we find that
\begin{align*}
(n_p - i, \D_p(m,a)) &= \D_p (n,m,a) + c_0 \big (-u / (\D_p t)^{1/3}, 0,0 \big) \cdot (\D_p t \, T)^{1/3} \\
(n_p-j, \D_{s^{*}, p}(m,a)) & = \D_{s^{*},p}(n,m,a) + c_0 \big (-v/ (\D_{s^{*},p}t)^{1/3} , 0,0 \big) \cdot (\D_{s^{*},p}t \, T)^{1/3}.
\end{align*}

We choose $\gamma_{\tau_2}$ to be the contour $w_0(\sigma_1, d(-v))$ with $K \coloneqq \D_{s^{*},p}t T$ and $\gamma_{R_p}(1)$
to be the contour $w_1(\sigma_2, d(-u))$ with $K \coloneqq \D_p t T$. Since the $\zeta_2$-contour is around 0 and the $z_p$-contour
is around 1, we can ensure that $|z_p - \zeta_2| \geq C_{q,L}T^{-1/3}$ along these contours. According to Lemma \ref{lem:Gest}
we then have the following estimates.
\begin{align} \label{LpGest}
&|G(\zeta_2(\sigma_1) \vt n_p-j+1, \D_{s^{*},p} (m,a))|^{-1} \leq C_3 e^{- C_4 \sigma_1^2 + \Psi(-v/(\D_{s^{*},p}t)^{1/3})}, \\ \nonumber
&|G(z_p(\sigma_2) \vt n_p-i, \D_p (m,a))| \leq C_3 e^{-C_4 \sigma_2^2 + \Psi(-u / (\D_p t)^{1/3})}.
\end{align}

The re-scaled kernel of $L[{\scriptstyle p \vt p}]$ according to \eqref{Fredholmembed} then satisfies the following,
arguing as before.
\begin{equation*}
|F_T(r,u;s,v)| \leq \ind{r=p} C_{q,L} \, \ind{u \geq 0} e^{- \mu u + \Psi(-u / (\D_p t)^{1/3})} \cdot e^{\mu v + \Psi(-v/(\D_{s^{*},p}t)^{1/3})}.
\end{equation*}
The functions of $u$ and $v$ above are bounded and integrable by \eqref{psiproperty}. So $L[{\scriptstyle p \vt p}]$ is good.
The argument for convergence of $L[{\scriptstyle p \vt p}]$ to $- F[{\scriptstyle p \vt p}]$ is the same as before.

\paragraph{\textbf{Proof of Lemma \ref{lem:LpB}}}
We multiply $L[{\scriptstyle p \vt p}]$ by $B$ using Lemma \ref{lem:multiply}:
$$ L[{\scriptstyle p \vt p}] \cdot B (r,i;s,j) = \sum_{k=1}^p (1 + \Theta(k \vt s)) \big ( \hat{L}_k - (\sm \hat{L})_k \big) (r,i;s,j),$$
where
\begin{align*}
\hat{L}_k(r,i;s,j) & = \ind{r=p, \, s < k^{*}}\, c(r,i;s,j)\, \frac{1}{w_c} \intz{2} \intz{3} \oint\limits_{\gamma_{R_p}(1)} d z_p \\
&\frac{G \big (z_p \vt n_p-i, \D_p(m,a)\big) (z_p-\zeta_2)^{-1} (\zeta_2 - \zeta_3)^{-1}}
{G \big (\zeta_2 \vt n_p-n_k, \D_{k^{*},p}(m,a)\big) G \big( \zeta_3 \vt n_k-j+1, \D_{s,k^{*}}(m,a) \big)},
\end{align*}
and $(\sm \hat{L})_k$ looks the same as $\hat{L}_k$ except for $n_k$ being changed to $n_{k-1}$
in both of $G(\zeta_2 \vt n_p-n_k, \cdots)$ and $G(\zeta_3 \vt n_k-j+1, \cdots)$ above.
The contours are arranged to satisfy $\tau_2 < \tau_3 < w_c$.

Now if $k < p$ then we see in the above that $\hat{L}_k$ equals $L[{\scriptstyle p, k \vt p}]$ as $k^{*} = k$.
However, when $k=p$, $\hat{L}_p = 0$ because there is no pole at $\zeta_2 = 0$ in its integrand due to
$n_p = n_k$ and the $\zeta_2$-contour being the innermost one. So in this way we get the matrices
$L[{\scriptsize p,k \vt p}]$.
Now consider the matrix $(\sm \hat{L})_k$. If $k < p$ then it equals $(\sm L)[{\scriptstyle p,k \vt p}]$
by definition. When $k = p$ it is actually $(\sm L)[{\scriptstyle p,p-1 \vt p}]$ by definition since $k^{*}$
then equals $p-1$. This implies the expression for $L[{\scriptstyle p \vt p}] \cdot B$ given in the lemma.

The goodness and convergence of $L[{\scriptstyle p,k \vt p}]$ is analogous to that for $L[{\scriptstyle p \vt p}]$ above.
We explain the difference. We use the estimates from \eqref{LpGest} to estimate the $G$-functions associated to
the $\zeta_3$ and $z_p$ contours. They involve the variables $u$ and $v$ from the kernel. There is an additional
function $G(\zeta_2 \vt \D_p(n,m,a))$ in the denominator of the integrand. For it we choose the $\zeta_2$-contour
to be $w_0(\sigma, d)$ with $K = \D_{k,p} t T$, and use the estimate \eqref{Gest0} from Lemma \ref{lem:Gest}.
We have to keep the $\zeta_2$ and $\zeta_3$ contours ordered ($\tau_2 < \tau_3$), for which we require
$d/ (\D_{k,p}t)^{1/3} > (d(-v)+1)/(\D_{s^{*},p}t)^{1/3}$. This is ensured as before since the parameter $d$
may be chosen from an interval whose length is of order $T^{1/3}$.

The proof of smallness of $(\sm L)[{\scriptstyle p,k \vt p}]$ is also similar to the smallness of
$(\sm L)[{\scriptstyle k_1, k_2 \vt (k_1, k_2])}]$ from before. Arguing as there, we will get the
following estimate for the re-scaled kernel $F_T(r,u;s,v)$ of $(\sm L)[{\scriptstyle p,k \vt p}]$.
Set $\lambda = \D_k n / \nu_T$ and $\eta_T = e^{-\mu \lambda + \Psi(\lambda / (\D_{k,p}t)^{1/3})}$.
Recall $1 \leq k < p$, so $\lambda \to \infty$ and $\D_{k,p}t > 0$. If $\mu$ satisfies \eqref{mudelta},
then $\eta_T \to 0$ and
\begin{equation*}
|F_T(r,u;s,v)| \leq \ind{r=p} C_{q,L} \, \eta_T\, \ind{u \geq 0} e^{- \mu u + \Psi(-u / (\D_p t)^{1/3})} \cdot
e^{\mu (v+\lambda) + \Psi \big(-(v+\lambda)/(\D_{s^{*},k}t)^{1/3} \big)},
\end{equation*}
which guarantees smallness. \qed

\subsection{Tying up loose ends} \label{sec:asy4}
Here we will prove Proposition \ref{prop:small} and that the limit from Theorem \ref{thm:1} is a probability distribution.
\smallskip

\paragraph{\textbf{Proof of Proposition \ref{prop:small}}}
It is enough to show $L \cdot B_2$ is small where $L$ is any one of the matrices
$L[{\scriptstyle k,k \vt \emptyset}] $, $L[{\scriptstyle k_1,k_1,k_2 \vt \emptyset}]$,
$L[{\scriptstyle p \vt p}]$, $L[{\scriptstyle p,k \vt p}]$, $L^{\veps}[{\scriptstyle k_1,k_2 \vt (k_1,k_2]}]$,
$L^{\veps}[{\scriptstyle k_1 \vt (k_1,k_2]}]$ or $L^{\veps}[{\scriptstyle k_1,k_2, k_3 \vt (k_1,k_2]}]$.
Recall from Lemma \ref{lem:Bsquare} that $B_2$ is a weighted sum of the matrices $(\sm L)[{\scriptstyle k,k \vt \emptyset}]$.
So it suffices to prove that each of the aforementioned matrices are small when the multiplication by $B_2$
is replaced by $(\sm L)[{\scriptstyle k,k \vt \emptyset}]$.

\begin{lem} \label{lem:sLk}
Consider the matrix $\sm L[{\scriptstyle k,k \vt \emptyset}]$ and denote $F_{T,k}$ its re-scaled kernel according to \eqref{Fredholmembed}.
Set $\lambda_k = \D_k n/\nu_T = (\D_k t/c_0) \, T^{2/3} + C_{q,L} T^{1/3}$, $\D_1 = (\D_{k,r^{*}} t)^{1/3}$ and $\D_2 = (\D_{s,k}t)^{1/3}$.
The following bound holds for $F_{T,k}$.
\begin{equation*}
|F_{T,k}(r,u;s,v)| \leq \ind{s < k < r^{*}}\, C_{q,L}\, 
e^{-\mu(u+\lambda_k) + \Psi \big((u+\lambda_k)/\D_1\big)} \cdot e^{\mu(v+\lambda_k) + \Psi \big(- (v+\lambda_k)/\D_2\big)}.
\end{equation*}
\end{lem}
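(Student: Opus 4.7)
The plan is to adapt the steepest-descent analysis used earlier for $L[{\scriptstyle k,k \vt \emptyset}]$ (which established the same type of bound without the $\lambda_k$ shift) and for $\sm L^{\veps}[{\scriptstyle k_1,k_2,k_3 \vt (k_1,k_2]}]$ (whose smallness arose from a similar replacement of $n_{k_3}$ by $n_{k_3-1}$). The only change from the $L[{\scriptstyle k,k \vt \emptyset}]$ calculation is that the number $n_k$ in the arguments of the two $G$-functions has been shifted to $n_{k-1}$, which amounts to translating the effective spatial parameters in $u$ and $v$ by $\lambda_k$.

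First I would put the shifted arguments into the standard KPZ form. With $i = n_{r^{*}} + \nu_T u$, $j = n_{s} + \nu_T v$ (here $s^{*} = s$ because $s < k \leq p-1$), and using $n_{k-1} = n_k - \D_k n$ together with $\D_k n = \nu_T \lambda_k$, we get
\begin{align*}
(i - n_{k-1}, \, \D_{k,r^{*}}(m,a)) &= \D_{k,r^{*}}(n,m,a) + c_0\bigl((u+\lambda_k)/\D_1,\, 0,\, 0\bigr) \cdot (\D_{k,r^{*}} t\, T)^{1/3},\\
(n_{k-1} - j + 1,\, \D_{s,k}(m,a)) &= \D_{s,k}(n,m,a) + c_0\bigl(-(v+\lambda_k)/\D_2,\, 0,\, 0\bigr) \cdot (\D_{s,k} t\, T)^{1/3}.
\end{align*}
These are exactly of the forms \eqref{inmarescale} and \eqref{jnmarescale}, but with the KPZ parameters $u$ and $-v$ replaced by $u + \lambda_k$ and $-(v+\lambda_k)$ respectively.

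Next I would choose steepest-descent contours $\gamma_{\tau_1} = w_0(\sigma_1, d(u+\lambda_k))$ with $K \coloneqq \D_{k,r^{*}} t\, T$ and $\gamma_{\tau_2} = w_0(\sigma_2, d(-(v+\lambda_k)))$ with $K \coloneqq \D_{s,k} t\, T$, and apply Lemma \ref{lem:Gest} to obtain
\begin{align*}
|G(\zeta_1(\sigma_1) \vt i-n_{k-1}, \D_{k,r^{*}}(m,a))|^{-1} &\leq C_3 \, e^{-C_4 \sigma_1^2 + \Psi((u+\lambda_k)/\D_1)},\\
|G(\zeta_2(\sigma_2) \vt n_{k-1}-j+1, \D_{s,k}(m,a))|^{-1} &\leq C_3 \, e^{-C_4 \sigma_2^2 + \Psi(-(v+\lambda_k)/\D_2)}.
\end{align*}
The main obstacle is the contour-ordering constraint $\tau_2 < \tau_1$ forced by the factor $(\zeta_1-\zeta_2)^{-1}$, which requires $d(-(v+\lambda_k))/\D_2 < (d(u+\lambda_k)-1)/\D_1$, say. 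This is where one must verify that the admissible intervals from Lemma \ref{lem:Gest} for the two parameters overlap suitably. Since $v \leq 0$ and $\lambda_k = (\D_k t/c_0)T^{2/3} + C_{q,L}T^{1/3}$, we have $(v+\lambda_k)_+^{1/2} \leq \lambda_k^{1/2}$, which is of order $T^{1/3}$; similarly $(u+\lambda_k)_-^{1/2} \leq (-u)_+^{1/2} \leq (\D_{r^{*}}n/\nu_T)^{1/2}$ is of order $T^{1/3}$. Under the bound on $\delta$ from \eqref{mudelta}, the intervals each have length of order $T^{1/3}$, and the ordering can be arranged exactly as in the proof of smallness of $\sm L^{\veps}[{\scriptstyle k_1,k_2,k_3 \vt (k_1,k_2]}]$.

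Finally, a routine change of variable $\zeta_\ell \mapsto \sigma_\ell$ gives $|d\zeta_\ell/d\sigma_\ell| \leq C_{q,L}T^{-1/3}$, while $|(\zeta_1-\zeta_2)^{-1}| \leq C_{q,L}T^{1/3}$, the rescaling contributes a factor $\nu_T$, and $c(r,i;s,j) = e^{\mu(v-u)}(1 + C_{q,L}T^{-1/3})$ by \eqref{rescaleconj}. Assembling these exactly as in the previous arguments yields
\[
|F_{T,k}(r,u;s,v)| \leq \ind{s<k<r^{*}} \, C_{q,L} \, e^{\mu(v-u)} \cdot e^{\Psi((u+\lambda_k)/\D_1)} \cdot e^{\Psi(-(v+\lambda_k)/\D_2)}.
\]
Rewriting $\mu(v-u) = -\mu(u+\lambda_k) + \mu(v+\lambda_k)$ gives the stated bound.
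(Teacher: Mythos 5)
Your proof follows the paper's argument step for step: rewrite $i-n_{k-1}$ and $n_{k-1}-j$ in shifted KPZ form so that $u$ and $-v$ become $u+\lambda_k$ and $-(v+\lambda_k)$, choose descent contours $w_0(\cdot,d(u+\lambda_k))$ and $w_0(\cdot,d(-v-\lambda_k))$ and apply Lemma~\ref{lem:Gest}, account for the Jacobian factors, $(\zeta_1-\zeta_2)^{-1}$, $\nu_T$ and the conjugation, and use the $\delta$-bound in \eqref{mudelta} to order the contours --- this is exactly the paper's proof.

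One small slip: when translating $\tau_2<\tau_1$ into a condition on the $d$'s, the sign is reversed. The contour $w_0(\sigma;d)$ has radius $w_c(1-d/K^{1/3})$, which decreases as $d/K^{1/3}$ increases, so $\tau_2<\tau_1$ requires $d(u+\lambda_k)/\D_1 < (d(-v-\lambda_k)-1)/\D_2$, not the reverse. The binding case is then $u+\lambda_k<0$, where the lower end of the admissible interval for $d(u+\lambda_k)/\D_1$ is $C_1\D_1^{-1}+\D_1^{-3/2}\delta(u+\lambda_k)_-^{1/2}$; the verification succeeds because $(u+\lambda_k)_-\le (u)_- \le \D_{r^*}n/\nu_T$, just as in the paper. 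Since you already observed that both $(u+\lambda_k)_-^{1/2}$ and $(v+\lambda_k)_+^{1/2}$ are $O(T^{1/3})$, the substance of the contour-ordering argument is unaffected by this sign slip.
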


\begin{proof}
Let us recall $\sm L[{\scriptstyle k,k \vt \emptyset}]$ from Lemma \ref{lem:Bsquare}.
The entry $\sm L [{\scriptstyle k,k \vt \emptyset}](r,i;s,j)$ equals
\begin{equation*}
\ind{s < k < r^{*}} \frac{c(r,i;s,j)}{w_c} \intz{1} \intz{2}
\frac{(\zeta_1-\zeta_2)^{-1}}{G \big (\zeta_1 \vt i-n_{k-1}, \D_{k,r^{*}}(m,a) \big)\, G \big (\zeta_2 \vt n_{k-1}-j+1, \D_{s,k}(m,a) \big)}.
\end{equation*}

Indices $i,j$ are re-scaled according to \eqref{ijrescale}. Ignoring the rounding, this means that
\begin{align*}
& (i-n_{k-1}, \D_{k,r^{*}}(m,a))= \D_{k,r^{*}}(n,m,a) \,+\, c_0 \big((u+\lambda_k)/\D_1,0,0 \big)\cdot (\D_{k,r^{*}}t\, T)^{1/3}, \\
& (n_{k-1}-j, \D_{s,k}(m,a)) = \D_{s,k}(n,m,a) \,-\, c_0\big ((v+\lambda_k)/\D_2,0,0 \big )\cdot (\D_{s,k}t\, T)^{1/3}.
\end{align*}

We choose the $\zeta_1$-contour to be $w_0(\sigma_1, d(u+\lambda_k))$ with $K \coloneqq \D_{k,r^{*}} t\, T$.
Similarly, the $\zeta_2$-contour is $w_0(\sigma_2, d(-v-\lambda_k))$ with $K \coloneqq \D_{s,k}t\, T$.
Due to the constraint $\tau_2 < \tau_1$ we should have $d(u+\lambda_k)/\D_1 < (d(-v-\lambda_k)-1)/\D_2$.
In this case, $|\zeta_1(\sigma_1) - \zeta_2(\sigma_2)|^{-1} \leq C_{q,L} T^{1/3}$. Furthermore, with $d(\cdot)$s
chosen according to Lemma \ref{lem:Gest} we have the following estimates.
\begin{align*}
& |G(\zeta_1(\sigma_1) \vt i-n_{k-1}, \, \D_{k,r^{*}}(m,a))|^{-1} \leq C_3 e^{-C_4\sigma_1^2 + \Psi \big( (u+\lambda_k) /\D_1\big)}, \\
& |G(\zeta_2(\sigma_2) \vt n_{k-1}-j+1, \, \D_{s,k}(m,a))|^{-1} \leq C_3 e^{-C_4 \sigma_2^2 + \Psi \big(-(v+\lambda_k)/\D_2 \big)}.
\end{align*}
With these estimates, changing variables $\zeta_{\ell} \mapsto \sigma_{\ell}$ and arguing as before, we see that
\begin{align*}
|F_{T,k}(r,u;s,v)| & \leq \ind{s<k<r^{*}}\, C_{q,L}\, \int_{\R^2} d \sigma_1 d\sigma_2\, e^{-C_4 (\sigma_1^2 + \sigma_2^2)} \, \times \\
& \; e^{\mu(v-u)}\, e^{\Psi \big( (u+\lambda_k) /\D_1\big)} e^{\Psi \big(-(v+\lambda_k)/\D_2 \big)} \\
& = \ind{s<k<r^{*}}\, C_{q,L}\, e^{-\mu(u+\lambda_k) + \Psi \big((u+\lambda_k)/\D_1\big)} \cdot e^{\mu(v+\lambda_k) + \Psi \big(- (v+\lambda_k)/\D_2\big)}.
\end{align*}

It remains to order the contours. We know that if $T$ is sufficiently large in terms of $q$ and $L$, then
\begin{align*}
& d(u+\lambda_k)/ \D_1 \in \begin{cases}
[C_1\D_1^{-1}, \, C_2T^{1/3}] & \text{if}\;\; u+\lambda_k \geq 0 \\
[C_1 \D_1^{-1} + \D_1^{-3/2}\delta (u+\lambda_k)_{-}^{1/2}, \, C_2 T^{1/3}] & \text{if}\;\; u + \lambda_k < 0;
\end{cases} \\
& d(-v-\lambda_k)/\D_2 \in \begin{cases}
[C_1 \D_2^{-1}, \, C_2T^{1/3}] & \text{if}\;\; v+\lambda_k \leq 0 \\
[C_1 \D_2^{-1} + \D_2^{-3/2} \delta (v+\lambda_k)^{1/2}, \, C_2 T^{1/3}] & \text{if}\;\; v + \lambda_k > 0.
\end{cases}
\end{align*}
If $u + \lambda_k \geq 0$ then we can order the contours by first choosing $d(-v-\lambda_k)$ and then choosing $d(u+\lambda)$
accordingly from an interval with length of order $T^{1/3}$. Suppose $u +\lambda_k < 0$. Then we will first choose $d(u+\lambda_k)$
and then $d(-v-\lambda_k)$ accordingly. We are able to do so if $C_1 \D_1^{-1} + \D_1^{-3/2}\delta (u+\lambda_k)_{-}^{1/2} < C_2 T^{1/3}$.
In this regard, since $\lambda_k > 0$, $(u+\lambda_k)_{-} \leq (u)_{-}$.
Now $(u)_{-} \leq \D_{r^{*}}n/\nu_T = (\D_{r^{*}}t/c_0)\, T^{2/3} + C_{q,L}T^{1/3}$. Therefore, we are fine so long as
$\delta < C_2c_0^{1/2} \D_1^{3/2} (\D_{r^{*}}t)^{-1/2}$, which holds because $\delta$ satisfies \eqref{mudelta}.
\end{proof}

\begin{lem} \label{lem:goodsmall}
Let $M_1, M_2, \ldots$ be a sequence of good matrices where $M_n$ is $n \times n$ and
$n = n_p$ is according to \eqref{KPZscaling}. Then the sequence of matrices
$M_n \cdot \sm L[{\scriptstyle k,k \vt \emptyset}]_{n \times n}$ is small.
\end{lem}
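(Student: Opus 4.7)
The plan is to compute the rescaled kernel of $M_N \cdot \sm L[{\scriptstyle k,k \vt \emptyset}]$ as an integral over an intermediate block variable, then apply the goodness bound for $M$ on one factor and the estimate of Lemma \ref{lem:sLk} on the other, and finally show that the resulting middle integral decays to zero as $T \to \infty$ while the outer factors admit uniform $L^\infty$ and $L^1$ control in $T$.

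Concretely, denote $E = \sm L[{\scriptstyle k,k \vt \emptyset}]$ and write
\begin{equation*}
F_{M E}(r,u;s,v) = \sum_{\ell=1}^{p} \int_{X_\ell} F_{M}(r,u;\ell,w)\, F_{E}(\ell,w;s,v)\, dw,
\end{equation*}
where $X_\ell = \R_{<0}$ for $\ell<p$ and $X_p=\R_{>0}$. Goodness of $M$ gives $|F_M(r,u;\ell,w)|\le g_r(u)g_\ell(w)$ with $g_r,g_\ell$ bounded integrable and independent of $T$. Lemma \ref{lem:sLk} gives, with $\D_1(\ell)=(\D_{k,\ell^*}t)^{1/3}$ and $\D_2(s)=(\D_{s,k}t)^{1/3}$,
\begin{equation*}
|F_E(\ell,w;s,v)|\le \ind{s<k<\ell^*}\, C_{q,L}\, e^{-\mu(w+\lambda_k)+\Psi((w+\lambda_k)/\D_1(\ell))}\cdot e^{\mu(v+\lambda_k)+\Psi(-(v+\lambda_k)/\D_2(s))}.
\end{equation*}
Factoring the $u$- and $v$-dependent pieces outside the integral yields $|F_{ME}(r,u;s,v)| \le g_r(u)\,\tilde g_s(v)\,I_T$, where $\tilde g_s(v)=e^{\mu(v+\lambda_k)+\Psi(-(v+\lambda_k)/\D_2(s))}$ and
\begin{equation*}
I_T = C_{q,L}\sum_{\ell:\,\ell^*>k} \int_{X_\ell} g_\ell(w)\,e^{-\mu(w+\lambda_k)+\Psi((w+\lambda_k)/\D_1(\ell))}\,dw.
\end{equation*}

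The main obstacle, and the bulk of the argument, is to show $I_T \to 0$. Fix $\ell$ and substitute $w'=w+\lambda_k$. By \eqref{psiproperty}, the function $h(w')=e^{-\mu w'+\Psi(w'/\D_1(\ell))}$ is bounded on $\R$ and integrable there, and moreover for $w'\ge 0$ we have $h(w')=e^{-(\mu-\mu_2/\D_1(\ell))w'}$ with $\mu-\mu_2/\D_1(\ell)>0$ by \eqref{mudelta}. When $\ell=p$ the domain of $w'$ becomes $(\lambda_k,\infty)$, and $\int_{\lambda_k}^\infty h(w')dw' \to 0$ as $T\to\infty$ (tail of an integrable function), while $g_\ell$ supplies a uniform $L^\infty$ bound. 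When $\ell<p$ the domain is $(-\infty,\lambda_k)$; split at $w'=\lambda_k/2$: on $(-\infty,\lambda_k/2]$ use $h(w')\le\|h\|_\infty$ and the tail estimate $\int_{-\infty}^{-\lambda_k/2} g_\ell(u)\,du \to 0$ (integrable tail of $g_\ell$ after substituting $u=w'-\lambda_k$); on $[\lambda_k/2,\lambda_k]$ use the exponential decay $h(w')\le e^{-\alpha w'}$ together with $\|g_\ell\|_\infty$ to bound this piece by $\|g_\ell\|_\infty e^{-\alpha\lambda_k/2}/\alpha$. Summing the finitely many $\ell$ gives $I_T\to 0$.

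Finally, for the $v$-factor $\tilde g_s(v)$ (non-trivial only for $s<p$, so $v<0$), substitute $z=-(v+\lambda_k)$ to obtain $\tilde g_s(v)=e^{-\mu z+\Psi(z/\D_2(s))}$ viewed as a function of $z$; by \eqref{psiproperty}, this is bounded on $\R$ by a constant and its integral $\int_{-\lambda_k}^\infty e^{-\mu z+\Psi(z/\D_2(s))}dz\le\int_{\R}e^{-\mu z+\Psi(z/\D_2(s))}dz$ is finite, uniformly in $T$. Setting $\eta_T = I_T \to 0$ and $g'_r = g_r$, $g'_s = \tilde g_s$, we obtain $|F_{ME}(r,u;s,v)|\le \eta_T\, g'_r(u)\, g'_s(v)$ with uniform $L^\infty\cap L^1$ bounds on $g'_r,g'_s$ in $T$, which is exactly what is needed for $M \cdot E$ to be small (this suffices in all invocations through Lemma \ref{lem:detest}). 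Summing over the weights $(1+\Theta(r\vt k))(1+\Theta(k\vt s))$ appearing in $B_2$ preserves smallness, completing the proof.
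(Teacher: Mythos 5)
Your proof is correct and follows essentially the same route as the paper: expand the product over an intermediate block integral, apply the goodness bound for $M$ to one factor and Lemma \ref{lem:sLk} to the other, pull the $u$- and $v$-factors outside, and then show the middle $w$-integral vanishes by exactly the same split-at-$\lambda_k/2$ (for $\ell<p$) and tail-of-an-integrable-function (for $\ell=p$) arguments used in the paper. The only cosmetic differences are that the paper temporarily replaces $\ell^*$ by $\ell$ (since $\sm L[\scriptstyle k,k \,|\, \emptyset]$ vanishes unless $k<p-1$), and swaps which of the $L^\infty$/$L^1$ bounds is applied to $g_\ell$ versus $f$ in each term, neither of which affects correctness.
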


\begin{proof}
Let $F_T$ and $F_{T,k}$ be the re-scaled kernels of $M_n$ and $\sm L[{\scriptstyle k,k\vt \emptyset}]_{n \times n}$, respectively,
via \eqref{Fredholmembed}. Let $F'_{T}$ be the one for their product. We have that
$$F'_T(r,u;s,v) = \sum_{\ell=1}^p \, \int dz\, F_T(r,u;\ell,z) F_{T,k}(\ell,z; s,v) \cdot \ind{s < k < \ell^{*}}.$$
The $z$-integral is over $\R_{<0}$ for $\ell < p$ and over $\R_{>0}$ if $\ell =p$.
Note that $\sm L[{\scriptsize k,k\vt \emptyset}]$ is non-zero only for $k < p-1$, and so we may replace $\ell^{*}$ by $\ell$ above.
It suffices to show that for every $\ell$ such that $s < k < \ell$, the corresponding $z$-integral is a small kernel in terms
of $u$ and $v$.

Fix $s,\ell$ and $k$ such that $s < k < \ell$. Let $g_1, \ldots, g_p$ be the bounded and integrable functions
over $\R$ that certify goodness of $F_T$. Recalling Lemma \ref{lem:sLk}, let $\lambda$ denote the parameter
$\lambda_k$ there. Also set $\D_1 = (\D_{k,\ell} t)^{1/3}$, $\D_2 = (\D_{s,k} t)^{1/3}$ and the function $f(z) = e^{-\mu z + \Psi(z/\D_1)}$.

First, suppose $\ell < p$. Due to goodness of $F_T$ and Lemma \ref{lem:sLk}, we infer that
$$ |\, \int_{-\infty}^{0}dz\, F_T(r,u;\ell,z) F_{T,k}(\ell,z; s,v) \, | \leq C_{q,L} \int_{-\infty}^0dz\, g_{\ell}(z) f(z+\lambda) \, \cdot \,
g_r(u) \cdot e^{\mu(v+\lambda) + \Psi \big(-(v+\lambda)/\D_2 \big)}.$$
By \eqref{psiproperty} we see that the function $e^{\mu(v+\lambda) + \Psi \big(-(v+\lambda)/\D_2 \big)}$ is bounded
and integrable over $\R$ in variable $v$. Smallness thus follows if the $z$-integral tends to zero as $T \to \infty$.
In this regard observe that for $x \geq 0$, $f(x) = e^{(\frac{\mu_2}{\D_1} - \mu)x}$, and $\frac{\mu_2}{\D_1} - \mu < 0$
since $\mu$ satisfies \eqref{mudelta}. Therefore, $\max_{x \geq B} f(x) = f(B) \to 0$ as $B \to \infty$. Also, $f$ is bounded.
Therefore,
\begin{align*}
\int_{-\infty}^0 dz\, g_{\ell}(z) f(z+\lambda) &=
\int_{-\infty}^{-\lambda/2} dz\, g_{\ell}(z) f(z+\lambda) + \int_{\lambda/2}^{\lambda}dz\, g_{\ell}(z-\lambda)f(z) \\
& \leq ||f||_{\infty}\, \int_{-\infty}^{-\lambda/2}dz\, g_{\ell}(z) + ||g_{\ell}||_{1}\, \max_{z \geq \lambda/2}\, \{f(z) \}.
\end{align*}
As $T$ goes to $\infty$ so does $\lambda$, and both the integral and maximum above tend to zero.

Now consider $\ell = p$. In this case,
\begin{align*}
&|\, \int_{0}^{\infty}dz\, F_T(r,u;\ell,z) F_{T,k}(\ell,z; s,v) \, | \leq C_{q,L} \int_{0}^{\infty} dz\, g_{\ell}(z) f(z+\lambda) \, \cdot \,
g_r(u) \cdot e^{\mu(v+\lambda) + \Psi \big(-(v+\lambda)/\D_2 \big)} \\
&\leq \underbrace{C_{q,L} \, ||g_{\ell}||_1 \cdot \max_{z \geq \lambda} \, \{f(z) \}}_{\eta_T} \cdot \,
g_r(u) \cdot e^{\mu(v+\lambda) + \Psi \big(-(v+\lambda)/\D_2 \big)}.
\end{align*}
We see that this is small as required.
\end{proof}

Lemma \ref{lem:goodsmall} implies that the matrices $L \cdot B_2$ are small where $L$ is any one of the good matrices
mentioned in the opening of this section. So this concludes the proof of Proposition \ref{prop:small}.
\smallskip

\paragraph{\textbf{Proof that the KPZ-scaling limit is a consistent family of probability distributions}}
Let $P(\xi_1, \ldots, \xi_p)$ denote the limiting expression from Theorem \ref{thm:1} as a function of the parameters $\xi_k$.
Namely, recalling $\mathbf{H}_T$ from \eqref{hxt},
$$ P(\xi_1, \ldots, \xi_p) = \lim_{T \to \infty} \pr{\mathbold{H}_T(x_1, t_1) < \xi_1, \ldots, \mathbold{H}_{T}(x_p,t_p) < \xi_p}.$$

From the discussion for the single time law we know that $P(\xi_1) = F_{GUE}(\xi_1 + x_1^2)$,
which is a probability distribution in $\xi_1$ (see \cite{JoSh, TW}). Assume that $p \geq 2$.
We need to establish that $P$ has appropriate limit values as any $\xi_k \to \pm \infty$ since the other
necessary properties are retained in the limit. Consider the parameter $\xi_1$ for concreteness.
Since $P$ is the limit of probability distribution functions,
$$P(\xi_1, \ldots, \xi_p) \leq P(\xi_1) = F_{GUE}(\xi_1 + x_1^2).$$
So as $\xi_1 \to - \infty$, $P(\xi_1, \ldots, \xi_p)$ tends to 0 as required.

Now consider the limit as $\xi_1 \to \infty$. We have
\begin{align*}
& \pr{\mathbold{H}_T(x_1, t_1) < \xi_1, \mathbold{H}_T(x_2, t_2) < \xi_2, \ldots, \mathbold{H}_{T}(x_p,t_p) < \xi_p} =
\pr{\mathbold{H}_T(x_2, t_2) < \xi_2, \ldots, \mathbold{H}_{T}(x_p,t_p) < \xi_p} \\
& - \pr{\mathbold{H}_T(x_1, t_1) \geq \xi_1, \mathbold{H}_T(x_2, t_2) < \xi_2, \ldots, \mathbold{H}_{T}(x_p,t_p) < \xi_p}.
\end{align*}
Since the first two terms above have limits, so does the third, and we find that
$$P(\xi_1, \xi_2, \ldots, \xi_p) = P(\xi_2,\ldots, \xi_p) - \bar{P}(\xi_1, \xi_2, \ldots, \xi_p),$$
where $\bar{P}$ is the limit of the third term. Moreover, $\bar{P}(\xi_1, \ldots, \xi_p) \leq 1 - F_{GUE}(\xi_1 + x_1^2)$
since the corresponding pre-limit inequality holds. It follows that $P(\xi_1, \ldots, \xi_p)$ tends to
$P(\xi_2, \ldots, \xi_p)$ as $\xi_1 \to \infty$. This shows that the KPZ-scaling limit provides a consistent
family of probability distribution functions. It also concludes the proof of Theorem \ref{thm:1}.

\end{document}